\documentclass[12pt]{article}
\usepackage{amsmath,amssymb,amsthm,mathtools,mathrsfs,latexsym}
\usepackage{graphicx}
\usepackage{xcolor}
\usepackage{tikz}
\usepackage[linkcolor=blue,colorlinks=true,urlcolor=red,bookmarksopen=true]{hyperref}
\usepackage{newclude}
\usepackage{imakeidx}
\makeindex
\usepackage{indentfirst}
\usepackage{bm}

\allowdisplaybreaks

\title{On the large-time behavior of strong solutions to the generalized compressible Navier-Stokes-Korteweg system in 2D and 3D for arbitrarily large initial data}
\date{}
\author{
	\bf\large Xiangdi Huang$^{a}$, Weili Meng$^a$\thanks{E-mail addresses: xdhuang@amss.ac.cn (X. Huang); mengweili@amss.ac.cn (W. Meng).}\\
	\small a. State Key Laboratory of Mathematical Sciences, Academy of Mathematics and Systems Science,\\
	\small Chinese Academy of Sciences, Beijing 100190, China;\\
}

\newcommand{\divg}{{\rm  div}}
\newcommand{\norm}[1]{\left\Vert#1\right\Vert}
\let\div\relax
\DeclareMathOperator*{\div}{div}

\usepackage{hyperref}
\hypersetup{hypertex=true,
	colorlinks=true,
	linkcolor=blue,
	anchorcolor=blue,
	citecolor=blue}

\usepackage{color}
\newtheorem{thm}{Theorem}[section]
\newtheorem{corl}{Corollary}[section]
\newtheorem{lema}{Lemma}[section]
\newtheorem{prop}{Proposition}[section]
\newtheorem{defi}{Definition}[section]
\newtheorem{rmk}{Remark}[section]

\allowdisplaybreaks

\begin{document}
	\maketitle %显示标题
	\begin{abstract}
		In this paper, we establish the global existence and large-time behavior of strong solutions for the two- and three-dimensional periodic compressible Navier--Stokes--Korteweg system with arbitrarily large initial data $(\rho_0,u_0)\in H^3\times H^2$. The viscosity coefficients satisfy the BD relation $\mu(\rho)=\nu\rho^\alpha$ and $\lambda(\rho)=2\nu(\alpha-1)\rho^\alpha$, while the capillarity coefficient is given by $\kappa(\rho)=\varepsilon^2\alpha^2\rho^{2\alpha-3}$. For the case $\alpha<1$, we first enlarge the admissible parameter range for the global existence of strong solutions established in Gu--Huang--Meng--Zhou [arXiv:2603.11762 (2026)] by exploiting the doubly parabolic structure of the density--effective velocity system. We then develop a time-discretization strategy to establish uniform integrability estimates for the effective velocity, yielding a uniform upper bound for the density. Furthermore, we introduce a novel bootstrap argument to successively improve these integrability estimates, which leads to a uniform positive lower bound for the density. Finally, we derive global-in-time higher-order estimates and prove the large-time behavior
		\[
		\left\|\rho(t)-\frac{1}{|\mathbb{T}^N|}\int_{\mathbb T^N}\rho_0 dx\right\|_{H^3}
		+\|\nabla u(t)\|_{H^1}
		\longrightarrow0,
		\qquad
		t\to\infty,
		\]
		without imposing any smallness assumption on the initial data.
		For the critical case $\alpha=1$, we improve the admissible parameter range established in Huang--Meng--Zhang [arXiv:2602.00455 (2026)] and establish a uniform upper bound for the density.\\[4mm]
		{\bf Keywords:} compressible Navier-Stokes-Korteweg system; global strong solutions; large initial data; large-time behavior; uniform bounds for the density.\\[4mm]
		{\bf Mathematics Subject Classifications (2020):} 35D35; 35Q35; 35Q40; 76N10.\\[4mm]
	\end{abstract}
	
	\tableofcontents 
	
	\section{Introduction}
	In this paper, we study the following compressible Navier--Stokes--Korteweg system:
	\begin{equation}
		\label{Equ1}
		\left\{
		\begin{array}{l}
			\rho_t+\div(\rho u)=0,\\
			(\rho u)_t+\divg(\rho u\otimes u)+\nabla P=\div(2\mu(\rho) \mathbb{D}u)+\nabla(\lambda(\rho)\divg u)+\div\mathbb{K}.
		\end{array}
		\right.
	\end{equation}
	Here, $t$ denotes the time variable and $x$ the spatial variable. The unknowns $\rho$ and $u=(u_1,\dots,u_N)^{\top}$ $(N=2,3)$ stand for the fluid density and velocity, respectively, while $P=\rho^\gamma$ $(\gamma\ge 1)$ is the pressure. Moreover, $\mathbb{D}u$ denotes the deformation tensor defined by
	\begin{align*}
		\mathbb{D} u=\frac{\nabla u+(\nabla u)^{\top}}{2}.
	\end{align*}
	The density-dependent viscosity coefficients $\mu(\rho)$ and $\lambda(\rho)$ are assumed to satisfy the physical conditions
	\begin{align*}
		\mu(\rho)\ge 0, \quad 2\mu(\rho)+N\lambda(\rho)\ge 0.
	\end{align*}
	The capillarity tensor $\mathbb{K}$ is defined by
	\begin{align*}
		\mathbb{K}=\Big(\rho \operatorname{div}(\kappa(\rho)\nabla\rho)-\frac{\rho\kappa'(\rho)-\kappa(\rho)}{2}|\nabla\rho|^2\Big)\mathbb{I}
		-\kappa(\rho)\nabla\rho\otimes\nabla\rho,
	\end{align*}
	where $\mathbb{I}$ denotes the identity matrix. Accordingly, its divergence can be expressed as
	\begin{align*}
		\operatorname{div}\mathbb{K}=\nabla\left(\rho\kappa(\rho)\Delta \rho+\frac{\kappa(\rho)+\rho\kappa'(\rho)}{2}|\nabla\rho|^2\right)-\operatorname{div}(\kappa(\rho)\nabla\rho\otimes\nabla\rho).
	\end{align*}

    The mathematical formulation of the compressible Navier--Stokes--Korteweg system can be traced back to Korteweg's work \cite{Korteweg}, in which capillarity effects were modeled through a stress tensor involving density gradients. Later, Dunn and Serrin \cite{Dunn-Serrin} generalized this formulation within the framework of capillary fluid theory.

	In the sequel, we prescribe the viscosity coefficients $\mu(\rho)$ and $\lambda(\rho)$, as well as the capillarity coefficient $\kappa(\rho)$, by
	\begin{align}\label{vis coff}
		\mu(\rho)=\nu\rho^\alpha,\quad\lambda(\rho)=2\nu(\alpha-1)\rho^\alpha,\quad\kappa(\rho)=\varepsilon^2\alpha^2\rho^{2\alpha-3},\quad  \nu\ge\varepsilon>0,
	\end{align}
	where the positive constants $\nu$ and $\varepsilon$ characterize the strengths of viscosity and capillarity, respectively. The system \eqref{Equ1}--\eqref{vis coff} is considered on the periodic domain $\mathbb{T}^N=\mathbb{R}^N/\mathbb{Z}^N$ and is complemented by the initial conditions
	\begin{align}\label{ini data}
		\rho(x,0)=\rho_0(x),\quad u(x,0)=u_0(x),\quad x\in\mathbb{T}^N,
	\end{align}
	where, without loss of generality, we assume that both $\rho_0$ and $u_0$ are $1$-periodic in each spatial variable.
	
	When $\kappa(\rho)=0$, the compressible Navier--Stokes--Korteweg system reduces to the compressible Navier--Stokes system with density-dependent viscosity. The regularity, global existence, and large-time behavior of solutions under different viscosity laws have attracted considerable attention over the past decades. A particularly important class consists of viscosity coefficients satisfying the Bresch--Desjardins (BD) relation
	\begin{align*}
		\lambda(\rho)=2\rho\mu'(\rho)-2\mu(\rho).
	\end{align*}
	This relation leads to a fundamental mathematical entropy estimate, established in \cite{Bresch-Desjardins-Lin, Bresch-Desjardins}, which provides additional regularity for the density and has played a crucial role in the development of the weak solution theory. We refer to Mellet and Vasseur \cite{Mellet-Vasseur} for the stability of weak solution sequences, Guo, Jiu and Xin \cite{Guo-Jiu-Xin} for the existence of finite-energy spherically symmetric weak solutions, Vasseur and Yu \cite{Vasseur-Yu} for the viscous Saint--Venant system, Li and Xin \cite{Li-Xin} for power-law viscosity coefficients, Bresch, Vasseur and Yu \cite{Bresch-Vasseur-Yu} for general physically symmetric viscous stress tensors, and Huang, Meng and Zhang \cite{Huang-Meng-Zhang-111} for the existence of spherically symmetric weak solutions with higher regularity and the disappearance of vacuum states.
	
	Despite these developments, whether arbitrarily large smooth initial data generate global smooth solutions in higher dimensions for compressible Navier--Stokes systems with BD viscosity remains a longstanding open problem. Only recently has substantial progress been made for power-law viscosity coefficients satisfying
    \begin{align*}
    \mu(\rho)=\nu\rho^\alpha, \quad \lambda(\rho)=2\nu(\alpha-1)\rho^\alpha, \quad \nu>0,
    \end{align*}
    under spherical symmetry. Zhang \cite{Zhang} first established the global existence of spherically symmetric classical solutions away from vacuum in bounded domains in higher dimensions for $\alpha<1$, under additional restrictions on the parameters.  Subsequently, Huang, Meng and Zhang \cite{Huang-Meng-Zhang-111} improved the admissible parameter range in \cite{Zhang} and established uniform density bounds together with the large-time behavior of classical solutions. Their result also covers the viscous Saint--Venant model $\alpha=1$.  Gu and Huang \cite{Gu-Huang} further extended the result for $\alpha=1$ in \cite{Huang-Meng-Zhang-111} to the three-dimensional case and generalized the framework to non-isentropic flows involving entropy transport.

When the viscosity and capillarity coefficients are given by
\begin{align*}
	\mu(\rho)=\nu\rho,\qquad
	\lambda(\rho)=0,\qquad
	\kappa(\rho)=\frac{\varepsilon^2}{\rho},
\end{align*}
where $\nu,\varepsilon>0$ are constants, the compressible Navier--Stokes--Korteweg system reduces to the compressible quantum Navier--Stokes system. In the one-dimensional case, Jüngel \cite{Jüngel} established the global existence of smooth solutions away from vacuum under the condition $\varepsilon=\nu$. Subsequently, Chen and Zhao \cite{Chen-Zhao} extended this result to the case $\varepsilon<\nu$ and further proved the large-time behavior of solutions. In higher dimensions, Huang, Meng and Zhang \cite{Huang-Meng-Zhang-2026} established the global existence of strong solutions away from vacuum to the two- and three-dimensional quantum Navier--Stokes system on the periodic domain under the assumption $\varepsilon=\nu$. Subsequently, Gu, Huang and Lei \cite{Huang-Gu-Lei} extended this result to the Cauchy problem under the same assumption $\varepsilon=\nu$. It should be pointed out that, in higher dimensions, the above global existence results are only available in the intermediary regime $\varepsilon=\nu$. The main reason is that the derivation of the positive lower bound for the density relies crucially on an $L^\infty$ estimate for the effective velocity, which can be closed only when the effective momentum equation is governed solely by the standard Laplace-type diffusion $\operatorname{div}(\rho\nabla v)$, namely, when $\varepsilon=\nu$.

For the general compressible Navier--Stokes--Korteweg system, various existence results for strong and smooth solutions have also been established. In the one-dimensional setting, Chen, Chai, Dong and Zhao \cite{Chen-Chai-Dong-Zhao} established the global existence of classical solutions away from vacuum for the Cauchy problem with large initial data under certain power-law assumptions on the viscosity and capillarity coefficients. Subsequently, Chen \cite{Chen} obtained uniform upper and lower bounds for the density and further established the large-time behavior of solutions. Burtea and Haspot \cite{Burtea-Haspot-40} also established the global existence of strong solutions under certain restrictions on the viscosity and capillarity coefficients by introducing an effective velocity transformation. In higher dimensions, Hattori and Li \cite{Hattori-Li, Hattori-Li2} established the local well-posedness and global existence of strong solutions with small initial data for the Cauchy problem, assuming that the initial density is bounded away from vacuum. This local existence theory was subsequently extended to initial--boundary value problems by Kotschote \cite{Kotschote}. Danchin and Desjardins \cite{Danchin-Desjardins}, as well as Haspot \cite{Haspot}, established the global existence of strong solutions with small initial data in the framework of Besov spaces. For studies on weak solutions to general compressible Navier--Stokes--Korteweg systems, including the quantum Navier--Stokes system, we refer to \cite{Antonelli-Bresch-Spirito, Antonelli-Spirito,Burtea-Haspot, Dong, Germain-LeFloch, Haspot2,  Jiang, Jüngel2, Lacroix-Violet-Vasseur, Lu-Zhang-Zhong, Tsyganov} and the references therein.

However, the global existence of strong/smooth solutions with large initial data for the multidimensional compressible Navier--Stokes--Korteweg system remains largely open. A recent breakthrough was made by Gu, Huang, Meng and Zhou \cite{Gu-Huang-Meng-Zhou-2026}, who established the global existence of strong solutions away from vacuum on the periodic domain under the assumptions \eqref{vis coff}, $\alpha<1$, and suitable restrictions on the parameters. Subsequently, Huang, Lei and Zhou \cite{Huang-Lei-Zhou} extended this result to the Cauchy problem under the same assumptions. In contrast to the corresponding results for the quantum Navier--Stokes system \cite{Huang-Meng-Zhang-2026, Huang-Gu-Lei}, these results cover not only the intermediary regime $\varepsilon=\nu$, but also the parabolic regime $\varepsilon<\nu$. This difference stems from the fact that, in the non-critical case $\alpha<1$, the derivation of the positive lower bound for the density relies only on an $L^p$ estimate for the effective velocity. As a consequence, the effective momentum equation is allowed to contain non-Laplace-type diffusion terms, provided that they can be absorbed by the standard Laplace-type diffusion.

Although \cite{Huang-Meng-Zhang-2026} and  \cite{Gu-Huang-Meng-Zhou-2026} established the global existence of strong solutions with arbitrarily large initial data for the multidimensional periodic quantum Navier--Stokes system and the generalized compressible Navier--Stokes--Korteweg system, respectively, it remains open whether these solutions admit uniform-in-time upper and lower bounds for the density and whether they converge to the equilibrium state as time tends to infinity. The present paper provides an affirmative answer to these questions. Moreover, we further enlarge the admissible range of the adiabatic exponent $\gamma$ in both results.

Before presenting our main results, we first specify some notation and conventions that will be used throughout the paper. We write
\begin{align*}
\int fdx=\int_{\mathbb{T}^N} fdx.
\end{align*}
For $1\le s\le\infty$ and $k\in\mathbb{N}^+$, the Lebesgue and Sobolev spaces are denoted by
\begin{align*}
L^s=L^s(\mathbb{T}^N),\quad W^{k,s}=W^{k,s}(\mathbb{T}^N),\quad H^k=W^{k,2}.
\end{align*}
Throughout the proofs, the generic constant $C$ may change from one occurrence to another. Its dependence will be clear from the context and may involve, in particular, suitable norms of the initial data and the parameters of the system appearing in the corresponding propositions or lemmas.

	We begin by defining the global strong solution to the initial-value problem \eqref{Equ1}--\eqref{ini data}.
	\begin{defi}
		A pair $(\rho,u)$ is called a global strong solution to the initial-value problem \eqref{Equ1}--\eqref{ini data} if, for any $0<T<\infty$ and $(x,t)\in \mathbb{T}^N\times[0,T]$, 
		\begin{align}
			\left\{
			\begin{array}{l}
				(C(T))^{-1}\leq \rho(x,t)\leq C(T),\\
				\rho\in C([0,T];H^3)\cap L^2(0,T;H^4),\ \rho_t\in C([0,T];H^1)\cap L^2(0,T;H^2),\\
				u\in C([0,T];H^2)\cap L^2(0,T;H^3),\ u_t\in L^\infty(0,T;L^2)\cap L^2(0,T;H^1),
			\end{array}
			\right.
		\end{align}
		where $C(T)$ is a positive constant depending on $N,\alpha,\gamma,\nu,\varepsilon,\rho_0,u_0$, and $T$.
	\end{defi}
	
	The first theorem establishes the global existence and asymptotic behavior of strong solutions for the case $\alpha<1$.
	\begin{thm}\label{Thm 1.1}
		Let $N\in \{2,3\}$ and $\beta=\sqrt{1-\frac{\varepsilon^2}{\nu^2}}$. Assume that $(\alpha,\gamma,\beta)$ satisfies
		\begin{align}
			&N=2,\quad\alpha\in \Big(\frac{\sqrt{5}-1}{2},1\Big), \quad\gamma\in[1,\infty),\quad \beta\in [0,\beta_{2}^+(\alpha));\label{2d gamma}\\
			&N=3,\quad \alpha\in\left(
			\frac{9\sqrt3-4\sqrt2}{9\sqrt3-2\sqrt2},\,1
			\right),\quad\gamma\in \Big[1,\frac{111\alpha^2-122\alpha+33}{21\alpha-13}\Big),\quad \beta\in[0,\beta_{3}^+(\alpha)), \label{3d gamma}
		\end{align}
		where $\beta_{N}^+(x)\in(0,1)$ denotes the unique positive root of the following quadratic equation:
		\begin{align*}
			\frac{2x}{1-x}=\frac{4(1-\beta_{N}^+(x))(Nx-N+1)}{(\beta_{N}^+(x)+\sqrt{N}(1-x)(1-\beta_{N}^+(x)))^2}.
		\end{align*}
		Moreover, assume that the initial data $(\rho_0,u_0)$ satisfy 
		\begin{align}\label{equ ini data}
			0<\underline{\rho_0}\le \rho_0\leq \overline{\rho_0},\quad\rho_0\in H^3,\quad u_0\in H^2,
		\end{align}
		where $\underline{\rho_0}$ and $\overline{\rho_0}$ are positive constants. Then the initial-value problem \eqref{Equ1}--\eqref{ini data} admits a unique global strong solution $(\rho,u)$ satisfying
		\begin{align*}
			C^{-1}\leq \rho(x,t)\leq C,\quad \forall (x,t)\in \mathbb{T}^N\times[0,\infty),
		\end{align*}
		where $C>0$ depends only on $N, \alpha,\gamma,\nu,\varepsilon,\rho_0$ and $u_0$. Furthermore, the following asymptotic behavior holds:
		\begin{align*}
			\lim\limits_{t\to\infty}\big(\|\rho(t)-\rho_s\|_{H^3}+\|\nabla u(t)\|_{H^1}\big)=0,
		\end{align*}
		where $\rho_s=\int \rho_0 dx$. 
	\end{thm}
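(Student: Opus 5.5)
We follow the strategy outlined in the abstract, based on the effective velocity reformulation. Set $\phi(\rho)=\rho^{\alpha-1}$ (up to constants) and introduce the effective velocity
\[
v=u+c\,\nabla\phi(\rho),\qquad c=\frac{\nu\alpha}{\alpha-1}\,(1-\beta)\ \text{(up to normalization)},
\]
where the constant $c$ is chosen so that the Korteweg term and the BD viscous term combine. With this choice $\beta=\sqrt{1-\varepsilon^2/\nu^2}$ appears as the root of the associated quadratic. The continuity equation then becomes a parabolic equation $\rho_t+v\cdot\nabla\rho=\tilde\nu\,\div(\rho^{\alpha-1}\nabla\rho)$-type. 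The momentum equation becomes a parabolic equation for $v$ with Laplace-type diffusion $\div(\rho^\alpha\nabla v)$, plus non-Laplace terms of size proportional to $\beta$. This doubly parabolic structure, together with the basic energy identity and the BD entropy, gives $\sup_t\int(\rho|u|^2+\rho|\nabla\phi|^2+G(\rho))dx+\int_0^\infty\int\rho^\alpha|\nabla u|^2+\rho^{\alpha}|\nabla^2\phi|^2\,dxdt\le C$ uniformly in time. Local existence in $H^3\times H^2$ together with a blow-up criterion in terms of $\|\rho\|_{L^\infty}+\|\rho^{-1}\|_{L^\infty}$ then reduces global existence to a priori density bounds on $[0,T]$. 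For the enlarged $\gamma$ range, we test the $v$-equation with $\rho|v|^{p-2}v$. The restrictions on $(\alpha,\beta)$ in \eqref{2d gamma}--\eqref{3d gamma}, in particular the defining equation of $\beta_N^+$, are exactly what allow the cross terms $\beta\rho^\alpha\nabla v:\nabla\phi\otimes\cdots$ to be absorbed by $\rho^\alpha|v|^{p-2}|\nabla v|^2$ for some $p>N$. The $\gamma$ condition handles the pressure term $\nabla\rho^\gamma$ through the parabolic gain for $\rho$.

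For uniform-in-time bounds, we partition $[0,\infty)$ into unit intervals $[k,k+1]$. On each interval, time-integrated dissipation from the uniform energy/entropy estimates provides a good time $t_k\in[k-1,k]$ at which $\|\rho^{1/p}v\|_{L^p}$ (or a weaker integrability norm) is controlled by a constant independent of $k$. Restarting the $L^p$ estimate of $v$ from $t_k$ gives $\sup_{[k,k+1]}\int\rho|v|^p\le C$ uniformly. Since $\alpha<1$, the equation $\rho_t+v\cdot\nabla\rho=\div(\rho^{\alpha-1}\nabla\rho)$ with drift in $L^\infty_tL^p$, $p>N$, yields a uniform upper bound for $\rho$ via De Giorgi/Moser iteration on $\rho$ or on $\rho^{\alpha}$. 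For the lower bound we bootstrap. With $\rho\le C$, the weights in the $v$-estimate improve, which raises $p$ in the good-time argument. Iterating finitely many times gives $v\in L^\infty_tL^{p}$ with $p$ large enough that the iteration for $\phi=\rho^{\alpha-1}$ (which satisfies a similar parabolic equation, and is large where $\rho$ is small) yields $\rho^{-1}\le C$ uniformly in time.

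With two-sided uniform bounds in hand, the system is uniformly parabolic in $(\rho,v)$. We derive global-in-time estimates for $\|\nabla\rho\|_{H^2}+\|u\|_{H^2}$ by standard energy estimates at the $H^1$, $H^2$ and $H^3$ levels, again using the good-time strategy on unit intervals so that constants do not grow. Convergence follows from $\int_0^\infty(\|\nabla u\|_{L^2}^2+\|\nabla\rho\|_{H^1}^2)dt<\infty$ combined with a bound on the time derivative of these quantities: a function $f\ge0$ with $f\in L^1(0,\infty)$ and $f'$ bounded tends to zero. Poincaré's inequality, with conservation of mass $\int\rho=\rho_s$, gives $\|\rho-\rho_s\|_{L^2}\to0$, and interpolation with the uniform higher-order bounds gives $H^3$ and $\nabla u\in H^1$ convergence. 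Uniqueness follows from standard strong-solution stability.

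The main obstacle is the bootstrap for the lower bound. The first $L^p$ estimate for $v$ only gives $p$ slightly above $N$, which suffices for the upper bound but not for controlling $\rho^{-1}$. The non-Laplace terms of size proportional to $\beta$ must be absorbed at every step of the bootstrap with constants uniform in $k$. I would first check that the upper bound, substituted into the coefficient estimates, strictly enlarges the admissible $p$ at each step, so that the sequence of exponents reaches any prescribed $p$ in finitely many steps.
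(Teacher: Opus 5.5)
Your overall architecture matches the paper's: effective velocity, Gr\"onwall restarted at good times on intervals of bounded length, Moser iteration with $p>N$ for the upper bound, a bootstrap in $p$ for the lower bound, then higher-order estimates and decay. However, the step you identify as the main obstacle rests on a mechanism that fails.

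In the $L^p$ estimate for $v$, every diffusion term carries the same weight $\rho^\alpha$. Absorbing the non-Laplace terms into $\rho^\alpha|v|^{p-2}|\nabla v|^2$ is therefore a pointwise algebraic condition, namely $p-2<n_{N,\alpha,\beta}:=\frac{4(1-\beta)(N\alpha-N+1)}{(\beta+\sqrt N(1-\alpha)(1-\beta))^2}$. Density bounds do not affect this condition. For $\alpha<1$ the window is finite even at $\beta=0$, because the non-Laplace part is not only of size $\beta$: it also contains $(\alpha-1)(2\nu-c)\nabla(\rho^\alpha\divg v)$.

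Consequently, the exponents cannot be pushed to an arbitrary $p$ this way. Moreover, $\beta<\beta_N^+(\alpha)$ is not what gives absorption for some $p>N$, which is a far weaker requirement. It is exactly the condition $n_{N,\alpha,\beta}>\frac{2\alpha}{1-\alpha}$, i.e.\ that the exponent $p>\frac{2}{1-\alpha}$ required by the Moser iteration for $\rho^{-1}$ lies inside this fixed window.

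What really limits time-uniform integrability is the restart datum, for two reasons.
\begin{itemize}
\item Converting $\int\rho|v|^2$ and $\int\rho^\alpha|\nabla v|^2$ into a bound on $\|v(t_n)\|_{H^1}$ requires $\rho(t_n)$ to be pinched pointwise. The paper (Lemma~\ref{Lem 3.1}) obtains $\frac12\underline{\rho_0}\le\rho(t_n)\le\frac32\overline{\rho_0}$ from smallness of $\|\nabla\rho^{\frac{3\alpha-2}{4}}(t_n)\|_{L^4}$, the embedding $W^{1,4}\hookrightarrow L^\infty$, and mass conservation. This forces intervals of a large length $T_0$, not unit intervals with merely bounded dissipation.
\item Even then, in 3D Sobolev embedding gives only $p\le 6$ at the good times.
\end{itemize}

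The missing idea is a dissipation bound at every level (Lemmas~\ref{Lem5.5}--\ref{Lem 5.6}). Integrate the $L^p$ inequality in time, and control the pressure term using $\rho\le C$, the uniform $L^p$ bound, and the dissipation of the previous level. This yields $\int_{\tau_1}^{\tau_2}\int\rho^\alpha|v|^{p-2}|\nabla v|^2\,dxdt=o(\tau_2-\tau_1)$. That bound produces new good times at which $|v|^{p/2}\in H^1\hookrightarrow L^6$, i.e.\ $v\in L^{3p}$. Restarting Gr\"onwall there gives $p=6,18,54,\dots$ until the window $p<n_{N,\alpha,\beta}+2$ is exhausted.

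A second, smaller gap is convergence at top order. Interpolating $\|\rho-\rho_s\|_{L^2}\to0$ and $\|\nabla u\|_{L^2}\to0$ against uniform $H^3$, resp.\ $H^2$, bounds gives decay only strictly below top order. Also, a bound on $f'$ is not available for the top-order quantities.

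Use the one-sided version instead: if $f\ge0$, $f\in L^1(0,\infty)$ and $f'\le h$ with $0\le h\in L^1(0,\infty)$, then $f(t)\le\int_{t-1}^t(f+h)\,d\zeta\to0$. The paper applies this to $\|v_t\|_{L^2}^2$ and $\|\nabla\rho_t\|_{L^2}^2$. It then recovers $\|\nabla^2 v\|_{L^2}$ and $\|\nabla^3\rho\|_{L^2}$ from the elliptic form of the equations (Lemma~\ref{Lema 8.2} and the equation for $\Delta\rho$). Alternatively, apply it directly to $\|\nabla^2v\|_{L^2}^2$ and $\|\nabla^3\rho\|_{L^2}^2$, whose time derivatives are bounded by $\|\nabla^3v\|_{L^2}^2+\|\nabla v_t\|_{L^2}^2$ and $\|\nabla^4\rho\|_{L^2}^2+\|\nabla^2\rho_t\|_{L^2}^2$ respectively. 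Either way you need these dissipations to be integrable on $(0,\infty)$. Once two-sided density bounds are known, this follows from Gr\"onwall with time-integrable coefficients, with no further good-time restarts.

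Note also that the first-level higher-order estimate is not routine. The term $\int|\nabla\rho|^2|\nabla^2\rho|^2dx$ is closed only through the $\frac{d}{dt}\int|\nabla\rho|^4dx$ inequality of Lemma~\ref{Lemma 8.1}, which is where the lower bound on $\alpha$ in \eqref{3d gamma} is used.
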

	
	\begin{rmk}
		Gu--Huang--Meng--Zhou \cite{Gu-Huang-Meng-Zhou-2026} established the global existence of strong solutions to the initial-value problem \eqref{Equ1}--\eqref{ini data} for the compressible Navier--Stokes--Korteweg system under the parameter condition \eqref{2d gamma} in two dimensions and under the condition obtained by replacing the upper bound $\frac{111\alpha^2-122\alpha+33}{21\alpha-13}$ for $\gamma$ in \eqref{3d gamma} with $\frac{15\alpha-7}{3}$ in three dimensions. Since, for every $\alpha>\frac23$,
		\[
		\frac{111\alpha^2-122\alpha+33}{21\alpha-13}
		>
		\frac{15\alpha-7}{3}.
		\]
		Therefore, Theorem~\ref{Thm 1.1} enlarges the admissible range of $\gamma$ and improves the three-dimensional global existence result obtained in \cite{Gu-Huang-Meng-Zhou-2026}.
	\end{rmk}
    \begin{rmk}
        One of the main features of Theorem~\ref{Thm 1.1} is the establishment of uniform-in-time upper and lower bounds for the density, which implies that the density remains uniformly bounded away from both vacuum and concentration on $\mathbb{T}^N\times[0,\infty)$. These uniform bounds provide a key ingredient for the analysis of the large-time behavior of solutions. Moreover, by exploiting the fact that the original system \eqref{Equ1} can be reformulated, through the so-called effective velocity transformation, as a coupled doubly parabolic system, we further prove that the $H^3$-norm of $\rho-\rho_s$ and the $H^1$-norm of $\nabla u$ both converge to zero as time tends to infinity, without any smallness assumption on the initial data.
    \end{rmk}

	The next theorem establishes the global existence of strong solutions for the case $\alpha=1$.
	\begin{thm}\label{Thm 1.1'}
		Let $N\in \{2,3\}$ and $\beta=\sqrt{1-\frac{\varepsilon^2}{\nu^2}}$. Assume that $(\alpha, \gamma,\beta)$ satisfies
		\begin{align}
			&N=2,\quad\alpha=1, \quad\gamma\in[1,\infty),\quad \beta=0;\label{2d gamma'}\\
			&N=3,\quad\alpha=1, \quad\gamma\in \Big[1,\frac{11}{4}\Big),\quad \beta=0. \label{3d gamma'}
		\end{align}
		Moreover, assume that the initial data $(\rho_0,u_0)$ satisfy	\eqref{equ ini data}.  Then the initial-value problem \eqref{Equ1}–\eqref{ini data} admits a unique global strong solution $(\rho,u)$ satisfying
		\begin{align*}
			\rho(x,t)\leq C,\quad \forall (x,t)\in \mathbb{T}^N\times[0,\infty),
		\end{align*}
		where $C>0$ depends only on $N, \alpha,\gamma,\nu,\varepsilon,\rho_0$, and $u_0$. 
	\end{thm}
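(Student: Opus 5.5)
Since $\alpha=1$ and $\beta=0$, we have $\varepsilon=\nu$, $\mu(\rho)=\nu\rho$, $\lambda=0$ and $\kappa(\rho)=\nu^2/\rho$. This is exactly the quantum Navier--Stokes case in the intermediary regime. The plan is to work with the effective velocity $v=u+\nu\nabla\log\rho$.

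\textbf{Reformulation and basic estimates.} A direct computation turns \eqref{Equ1} into a doubly parabolic system:
\begin{align*}
\rho_t+\div(\rho v)=\nu\Delta\rho,\qquad (\rho v)_t+\div(\rho u\otimes v)+\nabla\rho^\gamma=\nu\div(\rho\nabla v).
\end{align*}
The Korteweg term and the non-Laplace part of the viscous stress cancel exactly because $\varepsilon=\nu$. From the energy identity for $(\rho,u)$ and the BD entropy identity for $(\rho,v)$ we get bounds uniform in time:
\[
\sup_t\Big(\int\rho|u|^2+\rho|v|^2+|\nabla\sqrt\rho|^2+G(\rho)\,dx\Big)+\int_0^\infty\!\!\int\rho|\nabla v|^2+\rho|\nabla^2\log\rho|^2+\rho^{\gamma-2}|\nabla\rho|^2\,dx\,dt\le C,
\]
with $G$ the relative potential energy.

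\textbf{Uniform upper bound.} I will test the $v$-equation against $|v|^{p-2}v$. The pressure term is handled by writing $\nabla\rho^\gamma=\rho\nabla h(\rho)$ and integrating by parts onto $\div(|v|^{p-2}v)$. This gives uniform-in-time $\sup_t\int\rho|v|^p$ bounds for $p$ up to a threshold. In 3D the constraint $\gamma<11/4$ is used here, when the pressure contribution is closed via Sobolev and the entropy bound on $\nabla\rho^{\gamma/2}$.

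To remove the time dependence, I will use a time-discretization: split $[0,\infty)$ into unit intervals. On each interval the dissipation integral is small on average, so a Gronwall-type bound on $[k,k+1]$ has constants independent of $k$.

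With $\rho v$ controlled in $L^\infty_tL^q$ for some $q>N$, the density equation is a heat equation with a divergence-form drift. A De Giorgi or Moser iteration on $\rho_t-\nu\Delta\rho=-\div(\rho v)$ then yields $\rho\le C$ uniformly, with an average-in-time argument to prevent growth. This step is the main obstacle: making the $L^p$ iteration close with constants independent of $T$ at the borderline $\alpha=1$, where no extra power of $\rho$ helps.

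\textbf{Global existence.} I will rely on the quantum Navier--Stokes theory \cite{Huang-Meng-Zhang-2026}, which gives, for finite $T$, $\|v\|_{L^\infty}$ bounds and hence a lower bound on $\rho$ on $[0,T]$. Standard higher-order estimates in $H^3\times H^2$ together with local existence and uniqueness then give the global strong solution. The only change needed is to check that the enlarged $\gamma$ range is compatible with the $L^p$ estimates above. The uniform upper bound is what the theorem asserts; no uniform lower bound is claimed.
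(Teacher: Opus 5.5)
Your outline matches the paper's: effective velocity $v=u+\nu\nabla\log\rho$, energy and BD bounds, weighted $L^p$ bounds for $v$, time discretization, then an iteration on the density equation. However, the step you say delivers the enlarged range $\gamma<\frac{11}{4}$ fails as described.

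\textbf{The range $\gamma\in(\frac83,\frac{11}{4})$.} Testing with $|v|^{s}v$ leaves the pressure term $C\int\rho^{2\gamma-1}|v|^{s}\,dx$. At that stage your density information is essentially two bounds: $\sup_t\|\rho\|_{L^3}$, from $\nabla\sqrt\rho\in L^\infty_tL^2$, and $\sqrt\rho\in L^2_tH^2$ on bounded time intervals. The entropy bound on $\nabla\rho^{\gamma/2}$ only gives $\rho\in L^\gamma_tL^{3\gamma}_x$. For $\gamma\in(\frac83,3)$ this is weaker than what interpolating the two bounds above already gives, namely $\rho\in L^\gamma_tL^{3\gamma/(\gamma-2)}_x$, so it adds nothing. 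Closing the pressure term this way forces $\gamma\le\frac{2(s+3)}{s+2}$, which is \eqref{3-6} with $q=3$. At $s=1$ this is $\gamma\le\frac83$, the old threshold. So for $\gamma\in(\frac83,\frac{11}{4})$ you do not even reach $p=3$, let alone the $p>3$ your density iteration needs.

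The missing idea is the alternating bootstrap of Proposition~\ref{Prop 3.2}, Proposition~\ref{Prop 3.4} and Lemma~\ref{Lem 3.6}:
\begin{itemize}
\item Take the subcritical bound at $s_1=\frac{2(3-\gamma)}{\gamma-2}<1$.
\item Feed it into the density equation tested with $\rho^r$ to get $\sup_t\|\rho\|_{L^{q_1}}$ with $q_1=\frac1{1-s_1}$.
\item This exceeds $3$ exactly when $s_1>\frac23$, i.e.\ $\gamma<\frac{11}{4}$; that is where $\frac{11}{4}$ comes from.
\item Iterate finitely often to reach $q\ge2\gamma-\frac73$, hence $s=1$, hence all uniform $L^q$ bounds on $\rho$ (Corollary~\ref{Cor 4.2}).
\end{itemize}
You need those $L^q$ bounds in any case to turn $\sup_t\int\rho|v|^p$ into $\rho v\in L^\infty_tL^{3+}$ for your heat/Moser step. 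From energy alone you only have $\rho\in L^\infty_tL^3$, so that step is not self-contained either.

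\textbf{Uniformity in time.} Your time-discretization does not make the Gr\"onwall constants uniform. On unit intervals the average dissipation is only bounded by $\hat C$, not small. More importantly, Gr\"onwall on each interval needs the initial value $\int\rho|v|^p$ at the left endpoint bounded independently of the interval. Small dissipation at a time does not give this: without two-sided pointwise bounds on $\rho$ at that time, $\int\rho|v|^2$ and $\int\rho|\nabla v|^2$ do not control $\|v\|_{H^1}$.

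Lemma~\ref{Lem 3.1} handles this as follows. It chooses $t_n\in[nT_0,(n+1)T_0]$, with $T_0$ large depending only on the data, so that $\|\nabla\rho^{1/4}(t_n)\|_{L^4}$ and $\|\sqrt\rho\,\nabla v(t_n)\|_{L^2}$ are small (average $\le\hat C/T_0$). Then $W^{1,4}\hookrightarrow L^\infty$ and conservation of mass pin $\frac12\underline{\rho_0}\le\rho(t_n)\le\frac32\overline{\rho_0}$. This bounds $\|v(t_n)\|_{H^1}$, hence $\int\rho|v|^p(t_n)$ for $p\le6$ and $\|\rho(t_n)\|_{L^\infty}$, uniformly in $n$.

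Conversely, the step you flag as the main obstacle is routine once these ingredients are in place. At $\alpha=1$ the density equation is linear. Uniform $L^q$ bounds on $\rho$, a uniform weighted $L^p$ bound on $v$ with $p>3$, bounded interval lengths and bounded $\|\rho(t_n)\|_{L^\infty}$ give the uniform upper bound via the iteration of Proposition~\ref{Prop 5.3}.
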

	\begin{rmk}
		Huang--Meng--Zhang \cite{Huang-Meng-Zhang-2026} established
		the global existence of strong solutions to the
		initial-value problem \eqref{Equ1}--\eqref{ini data} for
		the compressible Navier--Stokes--Korteweg system under the
		parameter condition \eqref{2d gamma'} in two dimensions and
		under the condition obtained by replacing the upper bound
		$\frac{11}{4}$ for $\gamma$ in \eqref{3d gamma'} with
		$\frac{7}{3}$ in three dimensions.
		Furthermore, Gu--Huang--Lei
		\cite{Huang-Gu-Lei} established the global existence
		of strong solutions to the corresponding Cauchy problem and
		improved the admissible range of $\gamma$ in three dimensions
		from $[1,\frac{7}{3})$ to $[1,\frac{8}{3})$.
		Since $\frac{11}{4}>\frac83,$ Theorem~\ref{Thm 1.1'} further enlarges the admissible range
		of $\gamma$ in three dimensions and therefore improves the
		previous global existence results for the periodic problem.
	\end{rmk}
	\begin{rmk}
		Theorem~\ref{Thm 1.1'} establishes a uniform-in-time upper
		bound for the density of global strong solutions.
		Nevertheless, establishing a uniform positive lower bound for the density remains a challenging problem. We leave this question for future work.
	\end{rmk}
\begin{rmk}
It would be interesting to apply the strategy developed in this paper to the corresponding Cauchy problem to establish uniform estimates and large-time behavior for the solutions obtained in \cite{Huang-Gu-Lei,Huang-Lei-Zhou}, as well as to enlarge the admissible range of the parameters. We leave these questions for future work.
\end{rmk}

The remainder of this paper is organized as follows. Section 2 presents the main ideas and strategies of our analysis. Section 3 is devoted to collecting the existing global existence results for strong solutions to the periodic compressible Navier--Stokes--Korteweg system. Sections 4--8 focus on establishing uniform-in-time lower-order estimates in the three-dimensional case. More precisely, Section 4 establishes the energy estimate (the $L^2$ estimate for the effective velocity), Section 5 derives the subcritical estimate (the $L^2$--$L^{3-}$ estimate for the effective velocity), Section 6 establishes the critical estimate (the $L^3$ estimate for the effective velocity), Section 7 develops the first supercritical estimate (the $L^{3+}$--$L^6$ estimate for the effective velocity) and obtains the uniform upper bound for the density in the case $\alpha\leq1$, and Section 8 establishes the second supercritical estimate (the $L^{6+}$ estimate for the effective velocity) and the uniform positive lower bound for the density in the case $\alpha<1$. Section 9 is devoted to deriving uniform-in-time higher-order estimates for the density and the effective velocity, as well as the large-time behavior of solutions in the three-dimensional case with $\alpha<1$. Finally, Section 10 completes the proof of the main theorems.

\section{Main Strategy}

In this section, we outline the main ideas of our analysis in the three-dimensional case from four perspectives. The first point is to exploit the doubly parabolic coupling structure of the density--effective velocity system, which enables us to enlarge the admissible range of the adiabatic exponent $\gamma$. The second point is to develop a time-discretization strategy that provides uniform higher-integrability estimates for the effective velocity and, in turn, yields a uniform-in-time upper bound for the density. The third point is to introduce a novel bootstrap procedure to further improve the integrability estimates of the effective velocity. This procedure allows us to derive a uniform positive lower bound for the density when $\alpha<1$, while also explaining why the same approach cannot be applied to obtain such a lower bound in the critical case $\alpha=1$. The final point is to further exploit the doubly parabolic coupling structure of the density--effective velocity system to establish global higher-order estimates and the large-time behavior of solutions based on the lower-order estimates, without imposing any smallness assumption on the initial data.

	\subsection{Improvement of the admissible range of $\gamma$}
	
	Under the structural assumptions \eqref{vis coff}, the standard energy estimate for the compressible NSK system yields the bound
	\[
	\|\nabla\rho^{\alpha-\frac12}\|_{L^\infty(0,T;L^2)},
	\]
	which originates from the capillarity term. In the one-dimensional case, the Sobolev embedding $H^1(\mathbb{T}^1)\hookrightarrow L^\infty(\mathbb{T}^1)$ immediately converts this estimate into pointwise control of the density, making it possible to derive upper and lower bounds at the energy level, as demonstrated in \cite{Chen-Zhao}. In higher dimensions, however, the embedding $H^1(\mathbb{T}^N)\hookrightarrow L^\infty(\mathbb{T}^N)$ is no longer valid. Consequently, obtaining  bounds for the density becomes one of the principal difficulties in the analysis of the multidimensional compressible NSK system.
	
One of the key ideas in \cite{Gu-Huang-Meng-Zhou-2026} for overcoming this difficulty is to introduce the effective velocity
\begin{align}\label{eff vel}
v:=u+c\alpha\rho^{\alpha-2}\nabla\rho,
\end{align}
where $c:=\nu+\sqrt{\nu^2-\varepsilon^2}$.
 Then the pair $(\rho,v)$ satisfies the following density--effective velocity system, whose derivation can be found in Lemma 8.2 of \cite{Gu-Huang-Meng-Zhou-2026}:
	\begin{equation}\label{Equ_0'}
		\left\{
		\begin{array}{l}
			\rho_t + \div(\rho v) - c\Delta\rho^\alpha = 0,\\[4pt]
			\begin{aligned}
				\rho v_t + \rho u\cdot\nabla v + \nabla P
				&= \nu\divg(\rho^\alpha\nabla v)
				+(\nu-c)\divg(\rho^\alpha(\nabla v)^\top)\\
				&\quad+(\alpha-1)(2\nu-c)\nabla(\rho^\alpha\divg v).
			\end{aligned}
		\end{array}
		\right.
	\end{equation}
	Based on this reformulation, they developed a modified Nash--Moser iteration scheme to establish bounds for the density. More precisely, they showed that if one can obtain the estimate
	\[
	\|\rho^{1/p}v\|_{L^\infty(0,T;L^p)}<\infty,\qquad p>N,
	\]
	then the nonlinear parabolic equation satisfied by the density can be iterated to yield an  upper bound for $\rho$. Therefore, the key ingredient in deriving the density upper bound is the density-weighted estimate of the effective velocity in the space $L^{N+}$.
	
	In the three-dimensional case, the density-weighted $L^{3+}$ estimate for the effective velocity is obtained by exploiting the parabolic structure of the effective velocity equation. The main difficulty lies in controlling the pressure term, which relies on the a priori estimate $\|\rho\|_{L^\infty(0,T;L^{6\alpha-3})}$. Consequently, this approach requires $\gamma\in\big[1,\frac{15\alpha-7}{3}\big)$.
	
	The improvement of the admissible range of $\gamma$ is based on the key observation that the gap between the energy estimate (the weighted $L^2$ estimate for the effective velocity) and the critical estimate (the weighted $L^3$ estimate) in \cite{Gu-Huang-Meng-Zhou-2026} can be bridged through a bootstrap argument. Indeed, when $\gamma\in\big[\frac{15\alpha-7}{3},\,\frac{111\alpha^2-122\alpha+33}{21\alpha-13}\big),$
	the energy estimate still provides the bound
	\[
	\|\rho\|_{L^\infty(0,T;L^{6\alpha-3})}<\infty.
	\]
	Using this estimate together with the parabolic equation satisfied by the effective velocity, namely \eqref{Equ_0'}$_2$, we first derive the weighted estimate
	\[
	\|\rho^{1/(s_1+2)}v\|_{L^\infty(0,T;L^{s_1+2})}<\infty,
	\]
	where $s_1\in(0,1]$. Substituting this estimate into the density equation \eqref{Equ_0'}$_1$ then yields the improved density estimate
	\[
	\|\rho\|_{L^\infty(0,T;L^{q_1})}<\infty,
	\]
	where $q_1>6\alpha-3$. Repeating this procedure, we obtain increasingly stronger integrability estimates for both the effective velocity and the density. After finitely many iterations, the integrability exponent of the effective velocity exceeds the critical exponent $3$, allowing the modified Nash--Moser iteration developed in \cite{Gu-Huang-Meng-Zhou-2026} to be applied.
	
	The crucial point that enables this bootstrap procedure to start is precisely the strict improvement $q_1>6\alpha-3$, which is guaranteed by the condition $\gamma<\frac{111\alpha^2-122\alpha+33}{21\alpha-13}.$ In contrast, when
	$
	\gamma=\frac{111\alpha^2-122\alpha+33}{21\alpha-13},
	$
	the first bootstrap step only reproduces the same density integrability, namely
	$
	q_1=6\alpha-3.
	$
	As a consequence, no gain in integrability is achieved, and the bootstrap procedure cannot proceed beyond its initial step.
	
	\subsection{Uniform upper bound for the density}
	To derive a uniform upper bound for the density, we adopt a time-discretization strategy inspired by \cite{Chen-Zhao}. The starting point is the basic energy estimate, which yields
	\[
	\int_0^\infty\Big(\norm{\nabla\rho^{\frac{3\alpha-2}{4}}}_{L^4}^4
	+\norm{\rho^{\frac{\alpha}{2}}\nabla v}_{L^2}^2\Big)\,dt\leq C.
	\]
	This estimate enables us to partition the time interval $[0,\infty)$ into a sequence of subintervals
	\[
	[0,t_0],\ [t_0,t_1],\ \ldots,\ [t_n,t_{n+1}],\ \ldots,
	\]
	whose lengths are uniformly bounded. Furthermore, the density and the effective velocity satisfy essentially the same quantitative bounds at the initial time of every subinterval. More precisely, there exists a sufficiently large constant $T_0$, depending only on the initial data and the physical parameters, such that for every $n\in\mathbb{N}$, one can find $t_n\in[nT_0,(n+1)T_0]$ satisfying
	\begin{align*}
		&\frac12\underline{\rho_0}\leq \rho(x,t_n)\leq \frac32\overline{\rho_0},
		\qquad x\in\mathbb{T}^3,\\
		&\int_{\mathbb{T}^3}|\nabla v(x,t_n)|^2\,dx\le1.
	\end{align*}
	
	Based on this time-discretization strategy, we establish the uniform estimate
	\[
	\|\rho^{1/p}v\|_{L^\infty(0,\infty;L^p)}<\infty,
	\qquad 2<p\le6.
	\]
	The proof relies on applying Grönwall's inequality on each time subinterval. The resulting estimate is uniform with respect to the subintervals for two reasons. First, the lengths of the subintervals are uniformly bounded, ensuring that the contribution of the forcing terms in Grönwall's inequality remains uniformly controlled. Second, the density and the effective velocity satisfy the same initial bounds at the left endpoint of every subinterval, so that the initial value in Grönwall's inequality can be estimated uniformly. Indeed,
	\begin{align*}
		\int \rho|v|^{p}(t_n)\,dx
		&\le \|\rho(t_n)\|_{L^\infty}\|v(t_n)\|_{L^p}^p\\
		&\le C\|\rho(t_n)\|_{L^\infty}\|v(t_n)\|_{H^1}^p\\
		&\le C\|\rho(t_n)\|_{L^\infty}
		\Big(
		\|\rho^{-1/2}(t_n)\|_{L^\infty}
		\|\rho^{1/2}v(t_n)\|_{L^2}
		+\|\nabla v(t_n)\|_{L^2}
		\Big)^p,
	\end{align*}
	which is bounded independently of $n$. It is worth emphasizing that this argument only provides density-weighted \(L^p\) estimates for the effective velocity with \(2<p\le6\), rather than for all \(p>2\). Nevertheless, the endpoint \(p=6\)  already exceeds the critical threshold $3$ required by the modified Nash--Moser iteration, and is therefore sufficient to derive the uniform upper bound for the density.
	
	Finally, we apply the Nash--Moser iteration developed in \cite{Gu-Huang-Meng-Zhou-2026} on each time subinterval. Since the effective velocity enjoys a uniform-in-time supercritical estimate, the lengths of the subintervals are uniformly bounded, and the density has the same quantitative bounds at the initial time of every subinterval, the iteration produces a density upper bound that is independent of the particular subinterval. Consequently, the resulting upper bound is uniform in time.
	
	\subsection{Uniform positive lower bound for the density}
	
	To establish a uniform positive lower bound for the density, we introduce the reciprocal variable $\tau:=\rho^{-1}$. A direct computation shows that $\tau$ satisfies
	\begin{align*}
		\partial_t \tau
		-c\alpha\divg(\tau^{1-\alpha}\nabla \tau)
		+2c\alpha \tau^{-\alpha}|\nabla \tau|^2
		+v\cdot\nabla \tau
		-\tau\divg v
		=0.
	\end{align*}
	It was shown in \cite{Gu-Huang-Meng-Zhou-2026} that if
	\[
	\|\rho^{1/p}v\|_{L^\infty(0,T;L^{p})}<\infty,
	\qquad
	p>\frac{2}{1-\alpha},
	\]
	then one can apply the modified Nash--Moser iteration in a similar manner to the above equation and thereby derive a positive lower bound for the density. Therefore, under our framework, the main task is to establish uniform-in-time higher integrability estimates for the effective velocity.
	
	A fundamental difference between the upper- and lower-bound arguments is that the latter requires substantially stronger integrability of the effective velocity. Indeed, since $\alpha\in(2/3,1]$, we have $\frac{2}{1-\alpha}>6.$ Hence, the uniform estimate established in the previous subsection,
	\[
	\|\rho^{1/6}v\|_{L^\infty(0,\infty;L^{6})}<\infty,
	\]
	although already sufficient for deriving the density upper bound, is no longer adequate for the lower-bound argument. The central challenge is therefore to further improve the uniform integrability of the effective velocity.
	
	We overcome this difficulty by developing a bootstrap argument. The bootstrap procedure is organized inductively. Assume that, for some $k\in\mathbb N$, the following $(k-1)$-th level estimates have already been established:
	\begin{align*}
		\sup_{0\le t<\infty}\int \rho |v|^{6\times3^{k-1}}dx
		&<\infty,
		&&\text{(uniform estimate)},\\
		\int_{\tau_1}^{\tau_2}\int
		\rho^\alpha|v|^{6\times3^{k-1}-2}
		|\nabla v|^2\,dxdt
		&=o(\tau_2-\tau_1),
		\quad
		\tau_2-\tau_1\to\infty,
		&&\text{(dissipation estimate).}
	\end{align*}
	The objective is to show that both estimates remain valid after replacing $k-1$ by $k$. The induction step consists of two parts. The first part is to upgrade the uniform estimate. Combining the $(k-1)$-th level dissipation estimate with the available estimates for the density derivatives yields
	\begin{align*}
		\int_{\tau_1}^{\tau_2}
		\int
		\Big(
		|\nabla\rho|^4
		+
		\rho^\alpha
		|v|^{6\times3^{k-1}-2}
		|\nabla v|^2
		\Big)
		dxdt
		=o(\tau_2-\tau_1),
		\quad
		\tau_2-\tau_1\to\infty.
	\end{align*}
	Consequently, there exists a sufficiently large constant $\widetilde T_0$ such that, for every $n\in\mathbb N$, one can choose
	$\widetilde t_n\in[n\widetilde T_0,(n+1)\widetilde T_0]$
	satisfying
	\begin{align*}
		&\frac12\underline{\rho_0}\leq \rho(x,\widetilde t_n)\leq \frac32\overline{\rho_0},
		\qquad x\in\mathbb T^3,\\
		&\int_{\mathbb T^3}|v|^{6\times3^{k-1}-2}|\nabla v(\widetilde t_n)|^2\,dx\le1.
	\end{align*}
	These uniformly controlled initial states, together with the Sobolev embedding theorem, provide a uniform bound for the initial value of the $L^{6\times3^k}$-energy functional in the Gr\"onwall argument, namely, $\int \rho(\widetilde t_n)|v(\widetilde t_n)|^{6\times3^k}\,dx$ is uniformly bounded with respect to $n$. Applying Gr\"onwall's inequality on each interval $[\widetilde t_n,\widetilde t_{n+1}]$ then yields the $k$-th level uniform estimate
	\[
	\sup_{0\le t<\infty}\int \rho|v|^{6\times3^k}dx<\infty .
	\]
	The second part is to upgrade the corresponding dissipation estimate. Testing the effective velocity equation with $|v|^{6\times3^k-2}v$ yields
	\[
	\frac{1}{6\times3^k}\frac{d}{dt}\int \rho|v|^{6\times3^k}dx
	+\delta_{6\times3^k-2}
	\int \rho^\alpha|v|^{6\times3^k-2}|\nabla v|^2dx
	\le
	C\int \rho^\gamma|v|^{6\times3^k-2}|\nabla v|dx.
	\]
	The pressure term on the right-hand side is controlled by combining the newly established $k$-th level uniform estimate, the $(k-1)$-th level dissipation estimate, and the uniform upper bound of the density. Since the $(k-1)$-th level dissipation estimate already exhibits sublinear growth in time, the same property carries over to the time integral of the pressure term. Consequently, the $k$-th level dissipation estimate also satisfies
	\[
	\int_{\tau_1}^{\tau_2}
	\int
	\rho^\alpha|v|^{6\times3^k-2}|\nabla v|^2\,dxdt
	=o(\tau_2-\tau_1),
	\qquad
	\tau_2-\tau_1\to\infty.
	\]
	The above argument provides the induction step of the bootstrap procedure. Since the basic energy estimate
	\[
	\sup_{0\le t<\infty}\int \rho|v|^2dx
	+\int_0^\infty\int\rho^\alpha|\nabla v|^2dxdt
	<\infty
	\]
	serves as the starting point, the bootstrap can be carried out recursively.
	
	For the case $\alpha<1$, repeating the above bootstrap argument upgrades both the uniform integrability and the corresponding dissipation estimates. The iteration terminates after finitely many steps, thereby reaching the integrability required by the modified Nash--Moser iteration to establish a uniform positive lower bound for the density.
	
	The critical case $\alpha=1$ requires a different argument. In this case, the modified Nash--Moser iteration can only be closed if the effective velocity admits an $L^\infty$ estimate. Although such an estimate is currently unavailable, \cite{Huang-Meng-Zhang-2026} established the following logarithmic control:
	\[
	\sup_{0\le t\le T}\|v\|_{L^\infty}
	\le
	C_T\sqrt{\log\bigl(e+\|\rho^{-1}\|_{L^\infty(0,T;L^\infty)}\bigr)},
	\]
	which is sufficient to derive a positive lower bound for the density on every finite time interval. Within our framework, the bootstrap argument developed above remains valid for $\alpha=1$, but it cannot be closed at the $L^\infty$ level of the effective velocity. The main obstacle is that we are unable to construct a time-discretization strategy that provides a uniform $L^\infty$ bound for the effective velocity at the initial time of each discrete interval. As a result, the constant $C_T$ in the logarithmic estimate inevitably depends on the time horizon $T$, and our approach does not yield a time-uniform positive lower bound for the density in the critical case $\alpha=1$.
	
	\subsection{Higher-order estimates and the large-time behavior $(\alpha<1)$}
	When $\alpha<1$, the uniform upper and lower bounds for the density established in the previous subsections, together with the uniform higher integrability estimate for the effective velocity, guarantee that, for some $q>3$,
	\begin{align*}
		\sup_{0\le t<\infty}
		\Big(
		\|\rho\|_{L^\infty}
		&+\|\rho^{-1}\|_{L^\infty}
		+\|v\|_{L^q}
		+\|\nabla\rho\|_{L^2}
		\Big)\\
		&+\int_0^\infty
		\Big(
		\|\nabla v\|_{L^2}^2
		+\|\nabla\rho\|_{L^2}^2
		+\|\nabla\rho\|_{L^4}^4
		+\|\nabla^2\rho\|_{L^2}^2
		\Big)\,dt
		<\infty.
	\end{align*}
	
	These estimates form the starting point of the higher-order analysis. A key feature of the density--effective velocity formulation is that it transforms the original system into a coupled parabolic system. Consequently, after differentiating the equations, all nonlinear terms appearing on the right-hand side of the resulting differential inequalities can be controlled by quantities that are already known to be integrable in time. This makes it possible to propagate higher-order regularity globally in time. By adapting the high-order energy method developed in \cite{Gu-Huang-Meng-Zhou-2026} to incorporate the uniform density bounds obtained in the present paper, we successively derive global first- and second-order energy estimates for both the density and the effective velocity.

	An additional consequence of this higher-order analysis is the large-time behavior of solutions. More precisely, the higher-order energy inequalities, together with the corresponding estimates, imply
	\[
	\lim_{t\to\infty}
	\big(
	\|\rho(t)-\rho_s\|_{H^3}
	+\|\nabla v(t)\|_{H^1}
	\big)
	=0.
	\]
	Since the effective velocity differs from the original velocity by a density-gradient correction, the above convergence immediately yields
	\[
	\lim_{t\to\infty}
	\big(
	\|\rho(t)-\rho_s\|_{H^3}
	+\|\nabla u(t)\|_{H^1}
	\big)
	=0.
	\]
	
	It is worth emphasizing that the above convergence is obtained without imposing any additional assumptions on the initial data, in particular, without any smallness condition.

	\section{Global Existence of Strong Solutions}
	In this section, we collect the global existence results for
	strong solutions to the initial-value problem of the
	compressible Navier--Stokes--Korteweg system established in
	\cite{Gu-Huang-Meng-Zhou-2026, Huang-Meng-Zhang-2026}.
	\begin{lema}\label{Lem glo sol}
		Assume that one of the following parameter conditions is satisfied:
		\eqref{2d gamma}, \eqref{2d gamma'},
		\begin{align}\label{3d gamma old}
			N=3,\quad \alpha\in\left(
			\frac{9\sqrt3-4\sqrt2}{9\sqrt3-2\sqrt2},\,1
			\right),\quad
			\gamma\in \Big[1,\frac{15\alpha-7}{3}\Big),\quad
			\beta\in[0,\beta_{3}^+(\alpha)),
		\end{align}
		or
		\begin{align}\label{3d gamma old'}
			N=3,\quad
			\alpha=1,\quad
			\gamma\in \Big[1,\frac{8}{3}\Big),\quad
			\beta=0.
		\end{align} 
		Moreover, assume that the initial data
		$(\rho_0,u_0)$ satisfy \eqref{equ ini data}. Then the
		initial-value problem \eqref{Equ1}--\eqref{ini data}
		admits a unique global strong solution $(\rho,u)$.
	\end{lema}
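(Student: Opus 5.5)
This lemma only collects known results, so the plan is to reduce each parameter regime to a global existence theorem already in the literature, and to check that the hypotheses there match ours. The argument has three steps.

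\textbf{Step 1: the case $\alpha<1$.} Under \eqref{2d gamma} in two dimensions, or \eqref{3d gamma old} in three dimensions, with data satisfying \eqref{equ ini data}, I would invoke the main theorem of Gu--Huang--Meng--Zhou \cite{Gu-Huang-Meng-Zhou-2026} directly. Its structure is as follows.
\begin{itemize}
\item First, local well-posedness in the class of the Definition holds for data with $\rho_0\in H^3$ bounded away from zero and $u_0\in H^2$.
\item Second, on any $[0,T]$ one proves a priori bounds. The basic energy and BD entropy estimates give $\rho^{1/2}v\in L^\infty L^2$ and $\rho^{\alpha/2}\nabla v\in L^2L^2$, where $v$ is the effective velocity \eqref{eff vel}.
\item Testing \eqref{Equ_0'}$_2$ with $|v|^{p-2}v$, $p>3$, controls $\|\rho^{1/p}v\|_{L^\infty(0,T;L^p)}$. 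This uses $\|\rho\|_{L^\infty L^{6\alpha-3}}$ for the pressure, which is where $\gamma<\frac{15\alpha-7}{3}$ enters. It also uses $\beta<\beta_3^+(\alpha)$, which lets the non-Laplacian diffusion terms be absorbed.
\item A Nash--Moser iteration on \eqref{Equ_0'}$_1$ then bounds $\rho$ from above. The analogous iteration for $\tau=\rho^{-1}$ bounds $\rho$ from below, given $p>2/(1-\alpha)$.
\item Higher-order energy estimates follow, and a continuation argument extends the local solution globally. Uniqueness comes from the local theory.
\end{itemize}
Since all of this is proved in that reference, I only need to verify that the parameter ranges and regularity class coincide with \eqref{2d gamma}, \eqref{3d gamma old} and the Definition.

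\textbf{Step 2: the case $\alpha=1$.} Under \eqref{2d gamma'}, where $\beta=0$ means $\varepsilon=\nu$, I would cite Huang--Meng--Zhang \cite{Huang-Meng-Zhang-2026}. There the lower bound comes from the logarithmic $L^\infty$ control of $v$ on each finite interval $[0,T]$. Under \eqref{3d gamma old'}, that is $\gamma<8/3$ in three dimensions, I would cite the improved three-dimensional range recalled in the remark following Theorem~\ref{Thm 1.1'}. That range is stated for the Cauchy problem in \cite{Huang-Gu-Lei}.

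\textbf{Main obstacle.} The only non-routine point is the periodic case $\gamma\in[\frac73,\frac83)$ for $\alpha=1$, since \cite{Huang-Gu-Lei} treats $\mathbb R^3$. I would handle it by transferring the argument of \cite{Huang-Gu-Lei} to $\mathbb T^3$. Every estimate there is local in character: energy and entropy estimates, $L^p$ estimates of $v$, Nash--Moser iteration, and logarithmic $L^\infty$ control. On the torus these carry over, and are in fact simpler, because there is no far-field condition and the Sobolev and Gagliardo--Nirenberg inequalities only pick up lower-order mean terms. The mean terms are controlled by conservation of mass $\int\rho\,dx=\rho_s$ and by the energy bound. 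With the a priori bounds on every finite interval in hand, the continuation criterion again gives a unique global strong solution.
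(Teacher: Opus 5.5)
Your proposal is correct and takes the same route as the paper: the lemma is justified purely by citation. Gu--Huang--Meng--Zhou covers $\alpha<1$ under \eqref{2d gamma} and \eqref{3d gamma old}, and Huang--Meng--Zhang covers $\alpha=1$ in two dimensions and in three dimensions for $\gamma<\frac73$; you leave this last point implicit and should state it. The remaining range $\gamma\in[\frac73,\frac83)$ comes from adapting the Cauchy-problem argument of Gu--Huang--Lei to $\mathbb{T}^3$, an adaptation the paper also asserts without carrying it out.
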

	\begin{rmk}
		The global existence result under the condition
		\eqref{3d gamma old'} follows from a combination of
		\cite{Huang-Meng-Zhang-2026} and
		\cite{Huang-Gu-Lei}. More precisely,
		\cite{Huang-Meng-Zhang-2026} established the global
		existence of strong solutions with the upper bound
		$\frac{8}{3}$ for $\gamma$ in \eqref{3d gamma old'}
		replaced by $\frac{7}{3}$. For
		$\gamma\in\big[\frac{7}{3},\frac{8}{3}\big)$,
		the argument developed in \cite{Huang-Gu-Lei} for
		the Cauchy problem can be adapted  to the
		periodic setting, which yields the corresponding global
		existence result.
	\end{rmk}
	
	For the sake of simplicity, throughout the remainder of this paper, we restrict our attention to the three-dimensional case. Accordingly, \textbf{unless otherwise specified, we shall always assume that the parameters satisfy either \eqref{3d gamma} or \eqref{3d gamma'}.} The corresponding arguments in the two-dimensional case are simpler and can be obtained in an analogous manner.
	
	For $\gamma\in\left[1,\frac{15\alpha-7}{3}\right)$, the global existence of strong solutions to the initial-value problem \eqref{Equ1}--\eqref{ini data} follows from Lemma \ref{Lem glo sol}. For $\gamma\in\left[\frac{15\alpha-7}{3},
	\frac{111\alpha^2-122\alpha+33}{21\alpha-13}\right)$, the global existence will follows from the a priori estimates established in the present paper. Indeed, once these estimates are available, the existence proof can be completed by following essentially the same argument as in \cite{Gu-Huang-Meng-Zhou-2026}, and is therefore omitted.
	
	In what follows, $(\rho,u)$ denotes the unique global strong solution to the initial-value problem \eqref{Equ1}--\eqref{ini data} on $\mathbb{T}^3\times[0,\infty)$ under either of the parameter conditions \eqref{3d gamma} or \eqref{3d gamma'}, and the effective velocity $v$ is defined by \eqref{eff vel}.

	\section{Energy Estimates: $L^2$ Estimates for $v$}
	We begin with the standard energy estimate. Define the initial energy by
	\begin{align*}
		E_0 = \int \left( \frac{1}{2}\rho_0|u_0|^2 +\pi_+(\rho_0) + \frac{2\varepsilon^2\alpha^2}{(2\alpha-1)^2}|\nabla\rho_0^{\alpha-\frac{1}{2}}|^2 \right) dx,
	\end{align*}
	where
	\begin{equation*}
		\pi_+(\rho_0)=\left\{
		\begin{array}{ll}
			\frac{1}{\gamma-1}\rho_0^\gamma,\quad &\text{ if }\gamma>1,\\
			\rho_0\log_+\rho_0, \quad &\text{ if }\gamma=1.
		\end{array}
		\right.
	\end{equation*}
	The following standard energy estimate was established in Proposition 4.1 of \cite{Gu-Huang-Meng-Zhou-2026} for the case $\alpha<1$, and in Proposition 2.1 of \cite{Huang-Meng-Zhang-2026} for the case $\alpha=1$.
	\begin{prop}
		There exists a constant $C>0$, depending only on $\alpha,\gamma,\nu,\varepsilon,$ and $E_0$, such that
		\begin{align}\label{2-1}
			\sup_{0\leq t<\infty}\int \left(\rho|u|^2+\rho^\gamma+|\nabla\rho^{\alpha-\frac{1}{2}}|^2\right)dx+\int_0^\infty\int  \rho^\alpha|\mathbb{D}u|^2dxdt\leq C.
		\end{align}
	\end{prop}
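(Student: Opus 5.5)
The plan is to test the momentum equation \eqref{Equ1}$_2$ against $u$, integrate over $\mathbb{T}^3$, and combine this with the continuity equation \eqref{Equ1}$_1$. This yields an exact energy identity in which each stress contributes a sign-definite dissipation or an exact time derivative. I will check the structure term by term.

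\textbf{Kinetic and pressure parts.} Using $\rho_t+\div(\rho u)=0$, the transport terms give $\frac{d}{dt}\int\frac12\rho|u|^2dx$. The pressure term $\int\nabla P\cdot u\,dx=-\int P\div u\,dx$ equals $\frac{d}{dt}\int\pi(\rho)dx$, where $\pi(\rho)=\frac{\rho^\gamma-\rho}{\gamma-1}$ for $\gamma>1$ and $\pi(\rho)=\rho\log\rho-\rho+1$ for $\gamma=1$. Mass is conserved, so the linear part is harmless. Since $\pi\ge \pi_+-C\rho$ up to constants and $\int\rho\,dx=\int\rho_0dx$, we get control of $\sup_t\int\rho^\gamma dx$. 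In the case $\gamma=1$ this gives $\rho\log\rho$, which still bounds $\int\rho\,dx$; for the statement, $\int\rho^\gamma=\int\rho$ is then trivially bounded.

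\textbf{Viscous part.} Integration by parts gives
\[
\int\big(2\mu(\rho)|\mathbb{D}u|^2+\lambda(\rho)(\div u)^2\big)dx .
\]
Here $\lambda=2\nu(\alpha-1)\rho^\alpha<0$ when $\alpha<1$. I will use $(\div u)^2\le 3|\mathbb{D}u|^2$ to bound this below by $2\nu\rho^\alpha(1+3(\alpha-1))|\mathbb{D}u|^2=2\nu(3\alpha-2)\rho^\alpha|\mathbb{D}u|^2$. This is positive because the admissible $\alpha$ in \eqref{3d gamma} exceeds $2/3$ (indeed $\frac{9\sqrt3-4\sqrt2}{9\sqrt3-2\sqrt2}>2/3$), and for $\alpha=1$ it is trivially positive. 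This produces the dissipation $\int_0^\infty\!\int\rho^\alpha|\mathbb{D}u|^2$.

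\textbf{Capillarity part.} This is the main obstacle. I will show that
\[
-\int\div\mathbb{K}\cdot u\,dx=\frac{d}{dt}\int\frac{\kappa(\rho)}{2}|\nabla\rho|^2dx .
\]
The standard route uses the identity $\div\mathbb{K}=\rho\nabla\big(\div(\kappa\nabla\rho)-\frac12\kappa'(\rho)|\nabla\rho|^2\big)$, which I will verify directly from the given expression. Then
\[
-\int\div\mathbb{K}\cdot u\,dx=\int\div(\rho u)\Big(\div(\kappa\nabla\rho)-\tfrac12\kappa'|\nabla\rho|^2\Big)dx .
\]
Replacing $\div(\rho u)=-\rho_t$ and integrating by parts gives exactly $\frac{d}{dt}\int\frac12\kappa|\nabla\rho|^2dx$. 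For $\kappa=\varepsilon^2\alpha^2\rho^{2\alpha-3}$ we have $\frac12\kappa|\nabla\rho|^2=\frac{2\varepsilon^2\alpha^2}{(2\alpha-1)^2}|\nabla\rho^{\alpha-\frac12}|^2$, which matches $E_0$.

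Integrating the resulting identity in time then gives \eqref{2-1}, with $C$ depending only on $E_0$, $\int\rho_0\,dx$ (controlled by $E_0$), and the parameters. All manipulations are justified for the strong solutions of Lemma~\ref{Lem glo sol}, or for the a priori smooth solutions, since $\rho$ is bounded below on each $[0,T]$.
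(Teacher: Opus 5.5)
Your proposal is correct and follows the standard energy argument that the paper imports, without reproducing it, from Gu--Huang--Meng--Zhou (for $\alpha<1$) and Huang--Meng--Zhang (for $\alpha=1$). The three ingredients are: testing the momentum equation with $u$; using the identity $\div\mathbb{K}=\rho\nabla\big(\kappa\Delta\rho+\tfrac12\kappa'(\rho)|\nabla\rho|^2\big)$ so that $\int\div\mathbb{K}\cdot u\,dx=-\frac{d}{dt}\int\frac{\kappa(\rho)}{2}|\nabla\rho|^2dx$; and bounding the viscous term below by $2\nu(3\alpha-2)\int\rho^\alpha|\mathbb{D}u|^2dx$ via $(\div u)^2\le 3|\mathbb{D}u|^2$. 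The supporting computations you outline (the identity for $\div\mathbb{K}$, the conversion to $\frac{2\varepsilon^2\alpha^2}{(2\alpha-1)^2}|\nabla\rho^{\alpha-\frac12}|^2$, and the pressure potential for $\gamma=1$) are all correct.
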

	
	We next derive the $L^2$ estimate for the effective velocity. The proof is given in Proposition 4.2 of \cite{Gu-Huang-Meng-Zhou-2026} for  the case $\alpha<1$, and in Proposition 2.2 of \cite{Huang-Meng-Zhang-2026} for  the case $\alpha=1$.
	\begin{prop}
		There exists a constant $C>0$, depending only on $\alpha,\gamma,\nu,\varepsilon,$ and $E_0$, such that
		\begin{align}\label{2-2}
			\sup_{0\leq t<\infty}\int \rho|v|^2dx+\int_0^\infty\int \rho^{\gamma+\alpha-3}|\nabla\rho|^2dxdt+\int_0^\infty\int \rho^\alpha|\nabla v|^2dxdt\leq C.
		\end{align}
	\end{prop}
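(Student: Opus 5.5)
The plan is to test the effective-velocity equation \eqref{Equ_0'}$_2$ against $v$ and to use the mass equation \eqref{Equ_0'}$_1$ to recover the time derivative of $\int\rho|v|^2$. Concretely, multiplying by $v$ and integrating by parts, the transport part combined with $\rho_t+\div(\rho u)=0$ gives exactly $\frac12\frac{d}{dt}\int\rho|v|^2dx$. This works because the effective-velocity equation is written with $\rho v_t+\rho u\cdot\nabla v$, where $u$ is the true transport velocity.

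The viscous terms give
\[
\nu\int\rho^\alpha|\nabla v|^2+(\nu-c)\int\rho^\alpha\nabla v:(\nabla v)^\top+(\alpha-1)(2\nu-c)\int\rho^\alpha(\div v)^2 .
\]
Here $\nu-c=-\sqrt{\nu^2-\varepsilon^2}=-\nu\beta$. I will show this quadratic form is bounded below by $\delta\int\rho^\alpha|\nabla v|^2$ under the constraint $\beta<\beta_3^+(\alpha)$ and $\alpha$ close to $1$. The argument is pointwise matrix algebra: split $\nabla v$ into symmetric and antisymmetric parts, use $|\nabla v:(\nabla v)^\top|\le|\nabla v|^2$ and $(\div v)^2\le 3|\mathbb{D}v|^2$. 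The $(\alpha-1)$ factor is small and has a favourable or controllable sign. This is exactly the coercivity already exploited in \cite{Gu-Huang-Meng-Zhou-2026}, so I may quote it.

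The pressure term is where the extra dissipation appears. I write $-\int\nabla P\cdot v=-\int\nabla P\cdot u-c\alpha\int\rho^{\alpha-2}\nabla P\cdot\nabla\rho$. The second piece equals $-c\alpha\gamma\int\rho^{\gamma+\alpha-3}|\nabla\rho|^2$, which is the good term in \eqref{2-2}. For the first piece, I integrate by parts and use the mass equation; it equals $-\frac{d}{dt}\int\pi(\rho)$ up to the pressure potential. Thus I get the differential identity
\[
\frac{d}{dt}\int\Big(\tfrac12\rho|v|^2+\pi(\rho)\Big)+\delta\int\rho^\alpha|\nabla v|^2+c\alpha\gamma\int\rho^{\gamma+\alpha-3}|\nabla\rho|^2\le0 .
\]
When $\gamma=1$, $\pi$ is replaced by $\rho\log\rho-\rho+1$. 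Its lower-order parts are controlled via mass conservation and the bound on $\int\rho\log_+\rho$ from \eqref{2-1}.

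Integrating in time finishes the argument. The initial value $\int\rho_0|v_0|^2$ is bounded by $E_0$, since $|v_0|^2\le 2|u_0|^2+C|\nabla\rho_0^{\alpha-1/2}|^2\rho_0^{-1}$ up to constants; more precisely, $\rho^{1/2}\rho^{\alpha-2}\nabla\rho=\frac{1}{\alpha-1/2}\nabla\rho^{\alpha-1/2}$. The pressure potential is bounded by \eqref{2-1}.

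The main obstacle is the coercivity step, i.e. identifying precisely the parameter regime (encoded in $\beta_3^+$) in which the anisotropic terms $(\nabla v)^\top$ and $\div v$ are absorbed by $\nu|\nabla v|^2$.
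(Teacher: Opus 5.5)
Your proposal is correct and is essentially the standard argument behind this estimate (which the paper cites from Gu--Huang--Meng--Zhou, Prop.~4.2). You test \eqref{Equ_0'}$_2$ with $v$, use $\rho_t+\divg(\rho u)=0$ to get $\frac12\frac{d}{dt}\int\rho|v|^2dx$, split $\int\nabla P\cdot v\,dx$ into $\frac{d}{dt}\int\pi(\rho)dx$ plus the good term $c\alpha\gamma\int\rho^{\gamma+\alpha-3}|\nabla\rho|^2dx$, and bound $\int\rho_0|v_0|^2dx$ by $CE_0$ via $\rho^{\alpha-\frac32}\nabla\rho=\frac{2}{2\alpha-1}\nabla\rho^{\alpha-\frac12}$. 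The only inaccuracy is your stated ``main obstacle'': coercivity needs no restriction on $\beta$, since pointwise $\mathbb{F}[v]:\nabla v\ge(3\alpha-2)(2\nu-c)|\nabla v|^2$ for every $\alpha>\frac23$ and $\varepsilon>0$ (exactly as in \eqref{15-4}), while $\beta<\beta_3^+(\alpha)$ matters only for the higher $L^p$ estimates of $v$.
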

	
	The dissipation estimates in \eqref{2-1} and \eqref{2-2} further imply higher-order dissipation estimates involving the density. For $\alpha<1$, the proof is given in Proposition 4.3 of \cite{Gu-Huang-Meng-Zhou-2026}. For $\alpha=1$, the proof can be obtained by a slight modification of the argument for the case $\alpha<1$; see also Corollaries 3.2 and 3.3 of \cite{Huang-Gu-Lei}.
	\begin{prop}
		There exists a constant $C>0$, depending only on $\alpha,\gamma,\nu,\varepsilon,$ and $E_0$, such that
		\begin{align}\label{nabla rho 4}
			\int_0^\infty\int\big|\nabla\rho^{\frac{3\alpha-2}{4}}\big|^4dxdt+\int_0^\infty\int|\nabla^2\rho^{\frac{3}{2}\alpha-1}|^2dxdt\leq C.
		\end{align}
	\end{prop}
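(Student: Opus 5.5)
The plan is to derive \eqref{nabla rho 4} from the dissipation already contained in \eqref{2-1} and \eqref{2-2}. The key identity is the definition \eqref{eff vel}, $v-u=c\alpha\rho^{\alpha-2}\nabla\rho$. Combining the dissipation bounds $\int_0^\infty\!\int\rho^\alpha|\mathbb{D}u|^2$ and $\int_0^\infty\!\int\rho^\alpha|\nabla v|^2$ then controls
\begin{align*}
\int_0^\infty\int \rho^\alpha\big|\nabla(\rho^{\alpha-2}\nabla\rho)\big|^2dxdt\le C,
\end{align*}
since $\nabla(\rho^{\alpha-2}\nabla\rho)=\frac{1}{\alpha-1}\nabla^2\rho^{\alpha-1}$ is symmetric. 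For $\alpha=1$ one reads this as $\nabla^2\log\rho$. Symmetry lets $\mathbb{D}$ of the gradient field coincide with the full gradient, so
\[
\big|\nabla(v-u)\big|\le |\nabla v|+|\mathbb{D}(v-u)|+|\mathbb{D}v|\le 2|\nabla v|+|\mathbb{D}u|.
\]
This yields
\begin{align*}
\int_0^\infty\int\rho^{\alpha}\big|\nabla^2\rho^{\alpha-1}\big|^2dxdt\le C .
\end{align*}

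The second step converts this weighted Hessian bound into the two quantities in \eqref{nabla rho 4}. Set $w=\rho^{\frac32\alpha-1}$. Expanding $\rho^{\alpha/2}\nabla^2\rho^{\alpha-1}$ in terms of $w$ gives
\[
\rho^{\alpha/2}\nabla^2\rho^{\alpha-1}=a_1\nabla^2 w+a_2\,\rho^{-\frac{3\alpha}{2}+1}\cdot\frac{\nabla w\otimes\nabla w}{w}\cdots .
\]
More precisely, one obtains a combination $a_1\nabla^2 w+a_2\, w^{-1}\nabla w\otimes\nabla w$ with explicit constants depending on $\alpha$. Note also that $|\nabla\rho^{\frac{3\alpha-2}{4}}|^4$ is a constant multiple of $w^{-2}|\nabla w|^4$, because $\rho^{\frac{3\alpha-2}{4}}=w^{1/2}$.

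Hence the task reduces to the following: from $\int\!\int|a_1\nabla^2w+a_2 w^{-1}\nabla w\otimes\nabla w|^2\le C$, deduce bounds on $\int\!\int|\nabla^2 w|^2$ and $\int\!\int w^{-2}|\nabla w|^4$ separately. I would expand the square and treat the cross term $\int w^{-1}\nabla^2 w:(\nabla w\otimes\nabla w)$ by integrating by parts on $\mathbb{T}^3$:
\[
\int w^{-1}\nabla w\cdot\nabla^2 w\nabla w=\tfrac12\int w^{-1}\nabla w\cdot\nabla|\nabla w|^2=-\tfrac12\int \big(w^{-1}\Delta w-w^{-2}|\nabla w|^2\big)|\nabla w|^2 .
\]
The term $\int w^{-1}\Delta w|\nabla w|^2$ is handled by Cauchy--Schwarz, together with the identity $\int|\nabla^2w|^2=\int|\Delta w|^2$ on the torus. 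The same strategy appears in the classical BD-entropy / Jüngel-type inequality $\int\rho|\nabla^2\log\rho|^2\ge c\int|\nabla^2\sqrt\rho|^2+c\int|\nabla\rho^{1/4}|^4$.

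I expect the main obstacle to be verifying that the resulting quadratic form in $(\|\nabla^2w\|^2,\|w^{-1}|\nabla w|^2\|^2)$ is positive definite for the admissible $\alpha$, i.e. for $\alpha>\frac{9\sqrt3-4\sqrt2}{9\sqrt3-2\sqrt2}>\frac23$. If it is not, I would borrow additional dissipation from the term $\int_0^\infty\!\int\rho^{\gamma+\alpha-3}|\nabla\rho|^2$ in \eqref{2-2}. Alternatively, I would use the full-Hessian form $\int w^{-1}|\nabla w|^2\Delta w$ with an optimized Young parameter, exploiting the dimension-dependent inequality $|\Delta w|^2\le N|\nabla^2 w|^2$. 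Integrating in time then gives \eqref{nabla rho 4}, with constants depending only on $\alpha,\gamma,\nu,\varepsilon,E_0$.
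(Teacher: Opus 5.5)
Your first step is correct. Since $c\alpha\nabla(\rho^{\alpha-2}\nabla\rho)=\nabla(v-u)$ is symmetric, you have $|\nabla(v-u)|=|\mathbb{D}(v-u)|\le|\mathbb{D}v|+|\mathbb{D}u|$, so \eqref{2-1}--\eqref{2-2} give $\int_0^\infty\!\int\rho^\alpha|\nabla(\rho^{\alpha-2}\nabla\rho)|^2\,dxdt\le C$. With $w=\rho^{\frac32\alpha-1}$ one has exactly $\rho^{\alpha/2}\nabla(\rho^{\alpha-2}\nabla\rho)=\frac{2}{3\alpha-2}\big(\nabla^2w-k\,w^{-1}\nabla w\otimes\nabla w\big)$ with $k=\frac{\alpha}{3\alpha-2}$. (The paper does not reprove the proposition; it defers to Prop.~4.3 of Gu--Huang--Meng--Zhou and to Gu--Huang--Lei.)

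\textbf{The gap is the coercivity of $Q=\int|\nabla^2w-k\,w^{-1}\nabla w\otimes\nabla w|^2dx$.} This is the real content of the proposition, and your tools do not deliver it on the required range. Set $X=\int|\nabla^2w|^2$, $Y=\int w^{-2}|\nabla w|^4$, $W=\int w^{-1}|\nabla w|^2\Delta w$. Your integration by parts gives $Q=X+kW+(k^2-k)Y$, and Cauchy--Schwarz with $\|\Delta w\|_{L^2}=\|\nabla^2w\|_{L^2}$ gives $Q\ge X-k\sqrt{XY}+(k^2-k)Y$. This is positive definite only if $k>\frac43$, i.e.\ $\alpha<\frac89$. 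Even the best splitting of the cross term between the identity and Cauchy--Schwarz only reaches $k>1$, i.e.\ $\alpha<1$.

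The proposition is also needed at $\alpha=1$: Lemma~\ref{Lem 3.1} feeds Theorem~\ref{Thm 1.1'}. There $k=1$, and no argument built only from your relations can work. Put $g=w^{-1}|\nabla w|^2$ and $h=|\nabla w|^{-2}\nabla^2w:(\nabla w\otimes\nabla w)$, and take the values $h=g$, $\Delta w=-g$, $|\nabla^2w|=g$ at every point. These are consistent with the identity $\int(3hg+(\Delta w-h)g-g^2)dx=0$, with $\int|\Delta w|^2=\int|\nabla^2w|^2$, and with $|\Delta w|^2\le N|\nabla^2w|^2$. Yet they make the integrand $|\nabla^2w|^2-2khg+k^2g^2$ vanish.

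Your fallbacks do not change this. The bound $|\Delta w|^2\le N|\nabla^2w|^2$ is weaker than the torus identity you already use. The term $\int\rho^{\gamma+\alpha-3}|\nabla\rho|^2$ involves two derivatives, while $Q$ and $Y$ involve four. Under $x\mapsto Mx$ they scale like $M^2$ versus $M^4$, so that term cannot compensate a degeneracy of $Q$.

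\textbf{The missing ingredient is a pointwise decomposition of the Hessian along $e=\nabla w/|\nabla w|$.} This is the systematic integration-by-parts device of Jüngel--Matthes. In an orthonormal frame with $e_1=e$, one has $|\nabla^2w|^2\ge h^2+\frac{(\Delta w-h)^2}{N-1}$. Set $b=\Delta w-h$ and add $\lambda\int(3hg+bg-g^2)dx=0$ to $Q=\int(|\nabla^2w|^2-2khg+k^2g^2)dx$. Coercivity then reduces to positivity of the quadratic form $h^2+\frac{b^2}{N-1}+(3\lambda-2k)hg+\lambda bg+(k^2-\lambda)g^2$ in $(h,b,g)$. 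Its Schur complement is $\lambda(3k-1)-\frac{N+8}{4}\lambda^2$. Since $k>\frac13$ for every $\alpha>\frac23$, this is positive for $0<\lambda<\frac{4(3k-1)}{N+8}$. Keeping a small multiple of $|\nabla^2w|^2$ aside gives $Q\ge c_0(X+Y)$. Integrating in time then yields \eqref{nabla rho 4} on the full range, including $\alpha=1$.
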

   \begin{rmk}
	Since the viscosity coefficients satisfy the BD algebraic relation, the physical conditions $2\mu(\rho)+3\lambda(\rho)\ge 0$ are fulfilled only when $\alpha\geq \frac{2}{3}$. Moreover, the lower bound of $\alpha$ in \eqref{3d gamma} satisfies $\frac{9\sqrt{3}-4\sqrt{2}}{9\sqrt{3}-2\sqrt{2}}>\frac{2}{3}$. Therefore, the power of $\rho$ in \eqref{nabla rho 4} does not vanish.
\end{rmk}
	
	\section{Subcritical Estimates: $L^p$ Estimates for $v$ ($2<p<3$)}
	In this section, we fully exploit the doubly parabolic structure of system \eqref{Equ_0'} to simultaneously improve the integrability of the effective velocity and the density in the subcritical regime.
	
	Our analysis begins with a time-discretization argument, which plays a crucial role in deriving uniform-in-time estimates for the solution. Inspired by \cite{Chen-Zhao}, we decompose the time axis $[0,\infty)$ into a sequence of subintervals
	\[
	[0,t_0],\ [t_0,t_1],\ \ldots,\ [t_n,t_{n+1}],\ \ldots,
	\]
	whose lengths are uniformly bounded, and such that the density and the effective velocity satisfy comparable estimates at the initial time of each subinterval. The following lemma provides precisely such a decomposition.	\begin{lema}\label{Lem 3.1}
		There exists a positive constant $T_0$, depending only on
		$\gamma,\alpha,\nu,\varepsilon,E_0,\overline{\rho_0}$, and
		$\underline{\rho_0}$, such that, for every
		$n\in\mathbb{N}$, one can find
		$t_n\in[nT_0,(n+1)T_0]$ satisfying
		\begin{align}
			\frac{1}{2}\underline{\rho_0}\leq \rho(x,t_n)\leq \frac{3}{2}\overline{\rho_0},\quad\text{ for all }x\in \mathbb{T}^3 ,\label{prop 3.1 0}\\
			\int|\nabla v|^2(x,t_n)dx\leq \frac{\hat{C}}{T_0}\Big(\frac{2}{\underline{\rho_0}}\Big)^\alpha, \label{prop 3.1 1}
		\end{align}
		where $\hat{C}$ is a positive constant depending only on
		$\gamma,\alpha,\nu,\varepsilon,$ and $E_0$.
	\end{lema}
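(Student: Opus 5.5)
The plan is a pigeonhole argument in time, based only on the uniform dissipation bounds \eqref{2-2} and \eqref{nabla rho 4}, followed by an elliptic-type embedding at the selected time. Set $\theta:=\frac32\alpha-1>0$ (positive because $\alpha>\frac23$) and $f:=\rho^{\theta}$. By \eqref{2-2} and \eqref{nabla rho 4},
\begin{align*}
\int_0^\infty\Big(\|\nabla^2 f\|_{L^2}^2+\|\rho^{\frac{\alpha}{2}}\nabla v\|_{L^2}^2\Big)dt\le \hat C ,
\end{align*}
with $\hat C$ depending only on $\gamma,\alpha,\nu,\varepsilon,E_0$. The strong-solution regularity makes both integrands measurable functions of $t$ that are finite for every $t$. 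Fix $n$ and consider $I_n=[nT_0,(n+1)T_0]$. The set of $t\in I_n$ where $\|\nabla^2 f(t)\|_{L^2}^2> 3\hat C/T_0$ has measure less than $T_0/3$. The set where $\|\rho^{\alpha/2}\nabla v(t)\|_{L^2}^2>3\hat C/T_0$ also has measure less than $T_0/3$. Hence there is $t_n\in I_n$ at which both quantities are at most $3\hat C/T_0$; I will absorb the factor $3$ into $\hat C$.

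Next, I would turn the smallness of $\|\nabla^2 f(t_n)\|_{L^2}$ into pointwise closeness of $\rho(t_n)$ to the mean $\rho_s=\int\rho_0\,dx\in[\underline{\rho_0},\overline{\rho_0}]$. This uses conservation of mass and $|\mathbb T^3|=1$. On $\mathbb T^3$, $\nabla f$ has zero mean, so Poincaré's inequality applied twice, together with $H^2(\mathbb T^3)\hookrightarrow L^\infty$, gives
\begin{align*}
\|f-\bar f\|_{L^\infty}\le C_*\|\nabla^2 f\|_{L^2}\le C_*\big(\hat C/T_0\big)^{1/2}=:\delta, \qquad \bar f=\int f\,dx .
\end{align*}
Thus $\bar f-\delta\le f(t_n)\le\bar f+\delta$ pointwise. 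Since $\rho=f^{1/\theta}$ and the map $s\mapsto s^{1/\theta}$ is monotone, it follows that
\begin{align*}
(\bar f-\delta)_+^{1/\theta}\le\rho(t_n)\le(\bar f+\delta)^{1/\theta}.
\end{align*}
Integrating, $\rho_s$ also lies in this interval. Therefore $\bar f\in[\rho_s^\theta-\delta,\rho_s^\theta+\delta]$, and $\rho(t_n)$ takes values in $\big[(\rho_s^\theta-2\delta)_+^{1/\theta},(\rho_s^\theta+2\delta)^{1/\theta}\big]$. The function $s\mapsto s^{1/\theta}$ is uniformly continuous on compact subsets of $(0,\infty)$, and $\rho_s^\theta$ ranges over the fixed compact set $[\underline{\rho_0}^\theta,\overline{\rho_0}^\theta]$. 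So I can choose $T_0$ large, depending only on the listed quantities, so that $\delta$ is small enough for this interval to lie inside $[\frac12\underline{\rho_0},\frac32\overline{\rho_0}]$. This gives \eqref{prop 3.1 0}.

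Finally, \eqref{prop 3.1 1} follows directly from the lower bound just obtained:
\begin{align*}
\int|\nabla v|^2(t_n)\,dx\le\|\rho^{-\alpha}(t_n)\|_{L^\infty}\int\rho^\alpha|\nabla v|^2(t_n)\,dx\le\Big(\frac{2}{\underline{\rho_0}}\Big)^\alpha\frac{\hat C}{T_0}.
\end{align*}
The only delicate step is the middle one. There, the choice of $T_0$ must depend only on the data; this works because both the smallness threshold $\delta$ and the admissible window for $\rho_s$ are controlled purely by $\underline{\rho_0}$, $\overline{\rho_0}$, $\theta$ and $\hat C$. One must also make sure $\bar f-\delta$ stays positive, which the choice of $\delta$ guarantees. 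In two dimensions, the same argument works with the embedding $H^2(\mathbb T^2)\hookrightarrow L^\infty$.
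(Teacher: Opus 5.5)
Your proof is correct. Its outer structure is the paper's: pick a good time in each window $[nT_0,(n+1)T_0]$ from the time-integrable dissipation in \eqref{2-2} and \eqref{nabla rho 4}, use an embedding into $L^\infty$ together with $\int\rho\,dx=\rho_s$ to pin $\rho(t_n)$ near $\rho_s$, and then read off \eqref{prop 3.1 1} from the lower bound exactly as the paper does.

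You differ in the middle step. The paper uses the first-order part of \eqref{nabla rho 4}. From smallness of $\|\nabla\rho^{\frac{3\alpha-2}{4}}(t_n)\|_{L^4}$ it first obtains a preliminary $L^\infty$ bound on $\rho(t_n)$, via $W^{1,4}\hookrightarrow L^\infty$ and H\"older with $\|\rho^{\frac{3\alpha-2}{4}}\|_{L^{4/(3\alpha-2)}}=\rho_s^{\frac{3\alpha-2}{4}}$. It then uses the chain rule to make $\|\nabla\rho(t_n)\|_{L^4}$ small, and concludes from $\|\rho-\rho_s\|_{L^\infty}\le\tilde C_e\|\nabla\rho\|_{L^4}$, so the mass constraint acts on $\rho$ directly. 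You instead use the Hessian part $\|\nabla^2\rho^{\frac32\alpha-1}\|_{L^2}$ and control the oscillation of $f=\rho^\theta$ by Poincar\'e twice and $H^2\hookrightarrow L^\infty$. You replace the preliminary bound and chain rule by monotonicity of $s\mapsto s^{1/\theta}$: integrating the pointwise bracket locates $\bar f$ within $\delta$ of $\rho_s^\theta$, even though the mass constraint is on $\rho$ rather than on $f$.

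Your version is slightly more self-contained and gives explicit thresholds for $\delta$. The paper's version produces smallness of $\int|\nabla\rho|^4dx$ at $t_n$, which is the form it reuses when it selects good times again in Lemma~\ref{Lem 5.6} and Proposition~\ref{Prop 5.4}.

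One caveat on your closing aside about two dimensions: you cannot literally keep $\theta=\frac32\alpha-1$ together with \eqref{nabla rho 4} there. Indeed \eqref{2d gamma} allows $\alpha\le\frac23$, where this exponent is nonpositive, so the two-dimensional analogue needs its own dissipation exponents. This does not affect the three-dimensional statement.
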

	\begin{proof}
		By \eqref{2-2} and \eqref{nabla rho 4}, there exists a positive constant $\hat{C}$, depending only on $\gamma,\alpha,\nu,\varepsilon,$ and $E_0$, such that
		\begin{align*}
			\int_0^\infty\Big(\norm{\nabla\rho^{\frac{3\alpha-2}{4}}}_{L^4}^4+\norm{\rho^{\frac{\alpha}{2}}\nabla v}_{L^2}^2\Big)dt\leq \hat{C}.
		\end{align*}
		Choose a positive constant $T_0$, whose value will be specified later in \eqref{T_0 def'}. In particular, for every $n\in\mathbb{N}$, one has
		\begin{align*}
			\int_{nT_0}^{(n+1)T_0}\Big(\norm{\nabla\rho^{\frac{3\alpha-2}{4}}}_{L^4}^4+\norm{\rho^{\frac{\alpha}{2}}\nabla v}_{L^2}^2\Big)dt\leq \hat{C},
		\end{align*}
		which, by the mean value theorem for integrals, implies the existence of a point
		$t_n\in[nT_0,(n+1)T_0]$ such that
		\begin{align}\label{prop 4.1 2}
			\norm{\nabla\rho^{\frac{3\alpha-2}{4}}(t_n)}_{L^4}^4+\norm{\rho^{\frac{\alpha}{2}}\nabla v(t_n)}_{L^2}^2\leq \frac{\hat{C}}{T_0}.
		\end{align}
		It follows from the Sobolev embedding theorem and H\"older's inequality that
		\begin{align}\label{100-8}
			\begin{split}
			\norm{\rho^{\frac{3\alpha-2}{4}}(t_n)}_{L^\infty}&\leq C_{e}\norm{\rho^{\frac{3\alpha-2}{4}}(t_n)}_{W^{1,4}}\\
			&\leq C_{e}\norm{\rho^{\frac{3\alpha-2}{4}}(t_n)}_{L^{\frac{4}{3\alpha-2}}}\norm{1_{\mathbb{T}^3}}_{L^{\frac{4}{3-3\alpha}}}+C_e\norm{\nabla\rho^{\frac{3\alpha-2}{4}}(t_n)}_{L^4}\\
			&\leq C_e\rho_s^{\frac{3\alpha-2}{4}}+C_e\Big(\frac{\hat{C}}{T_0}\Big)^{\frac{1}{4}},
				\end{split}
		\end{align}
		where $C_e>0$ denotes the Sobolev embedding constant. When $\alpha=1$, the exponent $\frac{4}{3-3\alpha}$ is interpreted as $+\infty$. Since $\alpha\in\left(\frac23,1\right]$, it follows from \eqref{prop 4.1 2} and \eqref{100-8}  that
		\begin{align*}
			\norm{\nabla\rho(t_n)}_{L^4}&\leq C_\alpha\norm{\rho^{\frac{6-3\alpha}{4}}(t_n)}_{L^\infty}\norm{\nabla\rho^{\frac{3\alpha-2}{4}}(t_n)}_{L^4}\\
			&\leq C_\alpha C_e^{\frac{6-3\alpha}{3\alpha-2}}\Big(\rho_s^{\frac{3\alpha-2}{4}}+\Big(\frac{\hat{C}}{T_0}\Big)^{\frac{1}{4}}\Big)^{\frac{6-3\alpha}{3\alpha-2}}\Big(\frac{\hat{C}}{T_0}\Big)^{\frac{1}{4}},
		\end{align*}
		where $C_\alpha>0$ is a constant depending only on $\alpha$. Applying the Sobolev embedding theorem once again, we obtain
		\begin{align}\label{3-1}
			\norm{\rho(t_n)-\rho_s}_{L^\infty}\leq \tilde{C}_e\norm{\nabla\rho(t_n)}_{L^4}\leq  \tilde{C}_e C_\alpha C_e^{\frac{6-3\alpha}{3\alpha-2}}\Big(\rho_s^{\frac{3\alpha-2}{4}}+\Big(\frac{\hat{C}}{T_0}\Big)^{\frac{1}{4}}\Big)^{\frac{6-3\alpha}{3\alpha-2}}\Big(\frac{\hat{C}}{T_0}\Big)^{\frac{1}{4}},
		\end{align}
		where $\tilde{C}_e>0$ denotes the Sobolev embedding constant. Here we have also used the conservation of mass $\int \rho(t_n)\,dx=\rho_s$. Choose $T_0$ sufficiently large so that
		\begin{align}\label{T_0 def'}
			\tilde{C}_e C_\alpha C_e^{\frac{6-3\alpha}{3\alpha-2}}\Big(\rho_s^{\frac{3\alpha-2}{4}}+\Big(\frac{\hat{C}}{T_0}\Big)^{\frac{1}{4}}\Big)^{\frac{6-3\alpha}{3\alpha-2}}\Big(\frac{\hat{C}}{T_0}\Big)^{\frac{1}{4}}\leq \frac{1}{2}\underline{\rho_0}.
		\end{align}
		Since $\underline{\rho_0}\le \rho_s\le \overline{\rho_0}$,
		it follows from \eqref{3-1} and \eqref{T_0 def'} that
		\begin{align*}
			\rho(x,t_n)\leq \frac{3}{2}\overline{\rho_0},\quad
			\rho(x,t_n)\geq \frac{1}{2}\underline{\rho_0},\quad\forall x\in \mathbb{T}^3,
		\end{align*}
		which proves \eqref{prop 3.1 0}.
		
		It remains to verify \eqref{prop 3.1 1}. This follows immediately from \eqref{prop 3.1 0} and \eqref{prop 4.1 2}. Therefore, the proof of Lemma \ref{Lem 3.1} is complete.
	\end{proof}
	
	The following proposition shows that the integrability of the density can be improved by exploiting the integrability of the effective velocity through the parabolic equation \eqref{Equ_0'}$_1$ satisfied by the density.
	\begin{prop}\label{Prop 3.2}
		Assume that
		\begin{align}\label{3-1.2}
			\sup_{0\leq t<\infty}\int \rho |v|^{s+2}dx\leq C_{v,s}.
		\end{align}
		\textbf{\rm{(i)}} If $s\in\left(\frac{5\alpha-3}{8\alpha-5},1\right)$, then there exists a constant $C>0$, depending only on
		$\gamma,\alpha,\nu,\varepsilon,E_0,\overline{\rho_0},\underline{\rho_0},s,$ and $C_{v,s}$, such that
		\begin{align*}
			\sup_{0\leq t<\infty}\int \rho^q dx\leq C,
		\end{align*}
		where $q=2-2\alpha+\frac{3\alpha-2}{1-s}$.\\
		\textbf{\rm{(ii)}} If $s=1$, then for every $q\in(1,\infty)$, there exists a constant $C>0$, depending only on
		$\gamma,\alpha,\nu,\varepsilon,E_0,\overline{\rho_0},\underline{\rho_0},C_{v,1},$ and $q$, such that
		\begin{align*}
			\sup_{0\leq t<\infty}\int \rho^q dx\leq C.
		\end{align*}
	\end{prop}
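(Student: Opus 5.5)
Multiplying the density equation \eqref{Equ_0'}$_1$ by $q\rho^{q-1}$ and integrating over $\mathbb T^3$ gives, for $q>1$,
\begin{align*}
\frac{d}{dt}\int\rho^q dx+cq(q-1)\alpha\int\rho^{q+\alpha-3}|\nabla\rho|^2dx
= q(q-1)\int\rho^{q-1}v\cdot\nabla\rho\, dx.
\end{align*}
The dissipation term is a multiple of $\|\nabla\rho^{\frac{q+\alpha-1}{2}}\|_{L^2}^2$. We bound the convective term by Hölder's inequality, splitting $\rho^{q-1}v\cdot\nabla\rho=(\rho^{\frac1{s+2}}v)\cdot\rho^{q-1-\frac1{s+2}}\nabla\rho$. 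Using \eqref{3-1.2}, it is controlled by
\begin{align*}
C_{v,s}^{\frac1{s+2}}\,\big\|\rho^{q-1-\frac1{s+2}}\nabla\rho\big\|_{L^{\frac{s+2}{s+1}}}.
\end{align*}
We then write $\rho^{q-1-\frac1{s+2}}\nabla\rho=\rho^{\frac{q+\alpha-3}{2}}\nabla\rho\cdot\rho^{\theta}$, with $\theta=\frac{q-\alpha+1}{2}-\frac1{s+2}$. A second Hölder step bounds this by
\begin{align*}
\|\rho^{\frac{q+\alpha-3}{2}}\nabla\rho\|_{L^2}\,\|\rho^{\theta}\|_{L^{r}},\qquad \tfrac1r=\tfrac{s+1}{s+2}-\tfrac12=\tfrac{s}{2(s+2)}.
\end{align*}
By Young's inequality, half of the first factor is absorbed into the dissipation.

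The remaining factor is $\|\rho\|_{L^{\theta r}}^{2\theta}$, and we control it by interpolating between two quantities. The first is the uniform bound $\|\rho^{\alpha-\frac12}\|_{H^1}\le C$ from \eqref{2-1}, which by Sobolev gives $\rho\in L^\infty_tL^{6\alpha-3}$. The second is the dissipation, which by Sobolev controls $\|\rho^{\frac{q+\alpha-1}{2}}\|_{L^6}$, hence $\rho\in L^{3(q+\alpha-1)}$ (up to a lower-order mean term). Gagliardo–Nirenberg then gives a differential inequality
\begin{align*}
\frac{d}{dt}\int\rho^q+\delta\|\nabla\rho^{\frac{q+\alpha-1}2}\|_{L^2}^2\le C\|\nabla\rho^{\frac{q+\alpha-1}2}\|_{L^2}^{2\lambda}+C,
\end{align*}
with some $\lambda<1$ exactly when $q<2-2\alpha+\frac{3\alpha-2}{1-s}$. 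At the critical value $q$ the exponent is $\lambda=1$. We therefore expect to handle the endpoint in one of two ways. One option is to exploit the smallness of the constant, interpolating instead with $\|\nabla\rho^{\frac{3\alpha-2}{4}}\|_{L^4}$, which is time-integrable. The other is to choose the splitting via the $L^{6\alpha-3}$ bound so that the right side becomes $h(t)\|\nabla\rho^{\frac{q+\alpha-1}{2}}\|_{L^2}^2$, and then absorb it. The lower bound $s>\frac{5\alpha-3}{8\alpha-5}$ is exactly what makes $q>6\alpha-3$, so that the estimate is genuinely an improvement and the interpolation exponents are admissible. Checking this exponent bookkeeping, in particular the endpoint $\lambda=1$, is the main obstacle.

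To make the bound uniform in time, we combine Poincaré's inequality with mass conservation. This converts the dissipation into a lower bound $\ge c\big(\int\rho^q\big)^{\kappa}-C$, so the differential inequality becomes
\begin{align*}
y'+c\,y^\kappa\le C,\qquad y=\int\rho^q dx.
\end{align*}
This yields $\sup_t y\le \max\{y(0),C\}$. Alternatively, we can integrate on each interval $[t_n,t_{n+1}]$ from Lemma \ref{Lem 3.1}: since $\rho(t_n)\le\frac32\overline{\rho_0}$ there, $y(t_n)$ is uniformly bounded, and the interval lengths are at most $2T_0$, so a Grönwall argument with the time-integrable forcing from \eqref{nabla rho 4} gives a uniform bound.

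For (ii), $s=1$, the conjugate exponent is $r=6$. The same computation then gives $\lambda<1$ for every $q$, since $\frac{3\alpha-2}{1-s}=\infty$. Hence every $q\in(1,\infty)$ is admissible, with constants depending on $q$.
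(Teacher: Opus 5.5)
Your scheme is the paper's. You test $\eqref{Equ_0'}_1$ with a power of $\rho$, apply Hölder against $\int\rho|v|^{s+2}dx$, use Sobolev for $\rho^{\frac{q+\alpha-1}{2}}$, and absorb with Young. Your remainder $\|\rho^\theta\|_{L^r}^2=(\int\rho^{\theta r}dx)^{\frac{s}{s+2}}$ is literally the paper's $(\int\rho^{m_r\frac{s+2}{s}}dx)^{\frac{s}{s+2}}$ with $r=q-1$.

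\textbf{The gap.} As written, the proposal does not prove (i). You claim that at $q=2-2\alpha+\frac{3\alpha-2}{1-s}$ the gradient power degenerates to $\lambda=1$, and you leave that endpoint to two unexecuted workarounds. The second one, absorbing $h(t)\|\nabla\rho^{\frac{q+\alpha-1}{2}}\|_{L^2}^2$, would need $h$ to be small, and nothing supplies that. Since (i) asserts exactly this endpoint, that is a gap.

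\textbf{The fix.} The obstacle is a miscount. Set $a=\frac{q+\alpha-1}{2}$. Then $\theta r=\frac{(q-\alpha+1)(s+2)-2}{s}$, and $\theta r\le 6a$ is equivalent to $q\le 2-2\alpha+\frac{3\alpha-2}{1-s}$. At equality,
\[
\|\rho^\theta\|_{L^r}^2=\|\rho^{a}\|_{L^6}^{\frac{6s}{s+2}}\le C\|\rho^{a}\|_{H^1}^{\frac{6s}{s+2}},\qquad \frac{6s}{s+2}<2\quad\text{since } s<1 .
\]
So $\lambda=\frac{3s}{s+2}<1$, and Young's inequality closes at once. For general admissible $q$ the power is $\frac{2(q-\alpha+1)-4/(s+2)}{q+\alpha-1}$. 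This is $<2$ iff $\alpha>\frac{s+1}{s+2}$, which is automatic because $s\le 1$ and $\alpha>\frac23$ give $\frac{s+1}{s+2}\le\frac23$.

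The critical value is simply the Sobolev endpoint $H^1\hookrightarrow L^6$, which is admissible in three dimensions. What reaches $1$ there is your interpolation weight between $L^{6\alpha-3}$ and $L^{6a}$, not $\lambda$. The interpolation with the energy bound is unnecessary. Above the critical value the method fails because $\theta r>6a$ leaves the Sobolev range, not because $\lambda\ge 1$. This is exactly what the paper does: it takes $r+1=q$ so that $\frac{2m_r}{\alpha+r}\frac{s+2}{s}=6$, and uses $\frac{2m_r}{\alpha+r}<2$.

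\textbf{Time-uniformity.} Once the endpoint is fixed, your first route is correct and genuinely different from the paper's.
\begin{itemize}
\item Mass conservation plus Poincaré and interpolation give $\|\rho^a\|_{H^1}\le C\|\nabla\rho^a\|_{L^2}+C$.
\item This disposes of the lower-order term.
\item It also makes the dissipation coercive: $\|\nabla\rho^a\|_{L^2}^2\ge c\,(\int\rho^q dx)^{\kappa}-C$.
\item Hence $y'+cy^\kappa\le C$, which gives $\sup_t y\le\max\{\int\rho_0^q dx,\,C\}$ without using Lemma \ref{Lem 3.1}.
\end{itemize}
The paper instead bounds $\|\rho^a\|_{L^2}^2=\int\rho^{q+\alpha-1}dx\le\int\rho^q dx+C$ using $\alpha\le 1$. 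It then discards the dissipation to get $\frac{d}{dt}\int\rho^q dx\le C_1\int\rho^q dx+C_1$. Finally it applies Grönwall on each $[t_n,t_{n+1}]$, using $\rho(t_n)\le\frac32\overline{\rho_0}$ and $t_{n+1}-t_n\le 2T_0$. Your version is more self-contained. The paper's has the merit of being the same device it reuses for the effective-velocity estimates.

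\textbf{Part (ii).} Your treatment is fine. For $s=1$ one has $\theta r=3q-3\alpha+1<6a$, and the power $\frac{2(3q-3\alpha+1)}{3(q+\alpha-1)}$ is $<2$; both facts follow from $\alpha>\frac23$.
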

	\begin{rmk}
    For every
    $s\in\left(\frac{5\alpha-3}{8\alpha-5},1\right]$, the exponent satisfies
    \[
    2-2\alpha+\frac{3\alpha-2}{1-s}>6\alpha-3,
    \]
    where the left-hand side is interpreted as $+\infty$ when $s=1$.
    \end{rmk}
	\begin{proof}
		Assume that
        $s\in\Big(\frac{5\alpha-3}{8\alpha-5},1\Big]$.
        Since $\alpha\in(\frac23,1]$, we have
        $s>\frac23$.
		
		For any $r>0$, multiplying   \eqref{Equ_0'}$_1$ by $\rho^r$, integrating the resulting equation over $\mathbb{T}^3$, and applying integration by parts, we obtain
		\begin{align*}
			\frac{1}{r+1}\frac{d}{dt}\int \rho^{r+1}dx+c\alpha r\int \rho^{\alpha+r-2}|\nabla\rho|^2dx=r\int \rho^{r}v\cdot \nabla\rho dx.
		\end{align*}
		The right-hand side can be estimated by applying Young's inequality, Hölder's inequality, and \eqref{3-1.2} as follows:
		\begin{align*}
			\begin{split}
				&\quad r\int \rho^{r}v\cdot \nabla\rho dx\\
				&\leq \frac{c\alpha r}{2}\int \rho^{\alpha+r-2}|\nabla\rho|^2dx+C\int \rho^{-\alpha+r+2}|v|^2dx\\
				&\leq \frac{c\alpha r}{2}\int \rho^{\alpha+r-2}|\nabla\rho|^2dx+C\Big(\int \rho|v|^{s+2}dx\Big)^{\frac{2}{s+2}}\Big(\int \rho^{m_r\frac{s+2}{s}}dx\Big)^{\frac{s}{s+2}}\\
				&\leq \frac{c\alpha r}{2}\int \rho^{\alpha+r-2}|\nabla\rho|^2dx+C\Big(\int \rho^{m_r\frac{s+2}{s}}dx\Big)^{\frac{s}{s+2}},
			\end{split}
		\end{align*}
		where $m_r=-\alpha+r+2-\frac{2}{s+2}$. Consequently,
		\begin{align}\label{3-2}
			\begin{split}
				\frac{1}{r+1}\frac{d}{dt}\int \rho^{r+1}dx+\frac{c\alpha r}{2}\int \rho^{\alpha+r-2}|\nabla\rho|^2dx\leq C\Big(\int \rho^{m_r\frac{s+2}{s}}dx\Big)^{\frac{s}{s+2}}.
			\end{split}
		\end{align}
        
		We distinguish two cases.
		\medskip
		
		\noindent\textbf{Case I: $s\in\Big(\frac{5\alpha-3}{8\alpha-5},1\Big).$}
		Let $r=1-2\alpha+\frac{3\alpha-2}{1-s}.$
		By the Sobolev embedding theorem,
		\begin{align}\label{3-3}
			\begin{split}
				C\Big(\int \rho^{m_r\frac{s+2}{s}}dx\Big)^{\frac{s}{s+2}}
				=C\norm{\rho^{\frac{\alpha+r}{2}}}_{L^{\frac{2m_r}{\alpha+r}\frac{s+2}{s}}}^{\frac{2m_r}{\alpha+r}}
				\leq
				C\norm{\rho^{\frac{\alpha+r}{2}}}_{H^1}^{\frac{2m_r}{\alpha+r}},
			\end{split}
		\end{align}
		where we have used the fact that
		\begin{align*}
			1\leq \frac{2m_r}{\alpha+r}\frac{s+2}{s}\leq 6,
		\end{align*}
		which follows from $s\in\left(\frac23,1\right)$ and the definition of $r$. Indeed,
		\begin{align*}
			\frac{2m_r}{\alpha+r}\frac{s+2}{s}\ge \frac{2(-\alpha+r+\frac{5}{4})}{\alpha+r}\times3\ge 1
			&\Leftarrow 7\alpha\leq 5r+\frac{15}{2},\\
			\frac{2m_r}{\alpha+r}\frac{s+2}{s}\leq 6
			&\Leftrightarrow r\leq 1-2\alpha+\frac{3\alpha-2}{1-s}.
		\end{align*}
		
		\noindent\textbf{Case II: $s=1.$}
		Let $r>0$ be arbitrary. By the Sobolev embedding theorem, we obtain
		\begin{align}\label{3-4}
			\begin{split}
				C\Big(\int \rho^{3m_r}dx\Big)^{\frac{1}{3}}=C\norm{\rho^{\frac{\alpha+r}{2}}}_{L^{\frac{6m_r}{\alpha+r}}}^{\frac{2m_r}{\alpha+r}}\leq C\norm{\rho^{\frac{\alpha+r}{2}}}_{H^1}^{\frac{2m_r}{\alpha+r}},
			\end{split}
		\end{align}
		where we have used the fact that
		\begin{align*}
			1\leq \frac{6m_r}{\alpha+r}\leq 6,
		\end{align*}
		which follows from
		\begin{align*}
			\frac{6m_r}{\alpha+r}= \frac{6(-\alpha+r+\frac{4}{3})}{\alpha+r}\ge 1&\Leftarrow 7\alpha\leq 5r+8, \\
			\frac{6m_r}{\alpha+r}= \frac{6(-\alpha+r+\frac{4}{3})}{\alpha+r}\leq 6&\Leftarrow \alpha>\frac{2}{3}.
		\end{align*}
		
		Combining the two cases, we deduce from \eqref{3-2} together with
		\eqref{3-3} or \eqref{3-4} that
		\begin{align}\label{3-5}
			\begin{split}
				\frac{1}{r+1}\frac{d}{dt}\int \rho^{r+1}dx+\frac{c\alpha r}{2}\int \rho^{\alpha+r-2}|\nabla\rho|^2dx\leq C\norm{\rho^{\frac{\alpha+r}{2}}}_{H^1}^{\frac{2m_r}{\alpha+r}}. 
			\end{split}
		\end{align}
		Since $\frac{s+1}{s+2}\le \frac{2}{3}<\alpha$, we have
		$\frac{2m_r}{\alpha+r}<2$. Applying Young's inequality and Hölder's inequality, we obtain
		\begin{align*}
			C\norm{\rho^{\frac{\alpha+r}{2}}}_{H^1}^{\frac{2m_r}{\alpha+r}}&\leq \delta\norm{\rho^{\frac{\alpha+r}{2}}}_{H^1}^{2}+C_\delta\\
			&\leq \delta\norm{\nabla \rho^{\frac{\alpha+r}{2}}}_{L^2}^2+C_\delta\int \rho^{r+1}dx+C_\delta.
		\end{align*}
		Therefore, by choosing $\delta>0$ sufficiently small, we get
		\begin{align*}
			\frac{d}{dt}\int \rho^{r+1}dx\leq C_1\int \rho^{r+1}dx+C_1,
		\end{align*}
		where $C_1>0$ is a positive constant depending only on the quantities stated in Proposition \ref{Prop 3.2}. Set $t_{-1}=0$. For each
		$n\in\mathbb{N}\cup\{-1\}$, applying Grönwall's inequality on
		$[t_n,t_{n+1}]$ and using \eqref{prop 3.1 0}, we obtain
		\begin{align*}
			\sup_{t\in [t_n,t_{n+1}]}\int \rho^{r+1}(t)dx&\le e^{C_1(t_{n+1}-t_n)}\int \rho^{r+1}(t_n)dx+e^{C_1(t_{n+1}-t_n)}-1\\
			&\leq e^{2C_1T_0}\Big(\frac{3}{2}\overline{\rho_0}\Big)^{r+1}+e^{2C_1T_0}-1.
		\end{align*}
		Since the bound on the right-hand side of the above inequality is independent of $n$, we obtain
		\begin{align*}
			\sup_{0\leq t<\infty}\int \rho^{r+1}dx\leq C,
		\end{align*}
		which, together with the definitions of $r$ in the two cases above, completes the proof of Proposition \ref{Prop 3.2}.
	\end{proof}
	
	Next, we show that the integrability of the effective velocity can be improved by exploiting the higher integrability of the density through the parabolic system \eqref{Equ_0'}$_2$ satisfied by the effective velocity. To this end, we introduce the following notation. Let
	\begin{align*}
		n_{3,\alpha,\beta}= \left\{
		\begin{array}{lc}
			\frac{4(1-\beta)(3\alpha-2)}{(\beta+\sqrt{3}(1-\alpha)(1-\beta))^2}, & \text{ if }\alpha<1, \\
			\infty, &\text{ if }\alpha=1. 
		\end{array}
		\right.
	\end{align*}
	When $\alpha<1$, the parameter condition \eqref{3d gamma} implies that
	\begin{align}\label{3-6.5}
		n_{3,\alpha,\beta}>\frac{2\alpha}{1-\alpha}>\frac{2\alpha}{1-\alpha}|_{\alpha=\frac{9\sqrt{3}-4\sqrt{2}}{9\sqrt{3}-2\sqrt{2}}}>7.
	\end{align}
	\begin{prop}\label{Prop 3.4}
		Assume that, for some $q\geq1$,
		\begin{align}\label{3-5.5}
			\sup_{0\leq t<\infty}\int \rho^q dx\leq C_q.
		\end{align}
		Then, for any $s\in(0,4]$ satisfying
		\begin{align}
			(6\gamma-12\alpha+3-q)s+12\gamma-24\alpha+12-8q&\leq 0, \label{3-6}
		\end{align}
		there exists a constant $C>0$, depending only on
		$\gamma,\alpha,\nu,\varepsilon,E_0,\overline{\rho_0},
		\underline{\rho_0},q,C_q,s$, and
		$\norm{\rho_0^{1/(s+2)}v_0}_{L^{s+2}}$, such that
		\begin{align*}
			\sup_{0\leq t<\infty}\int \rho |v|^{s+2}dx\leq C.
		\end{align*}
	\end{prop}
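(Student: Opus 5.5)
Test \eqref{Equ_0'}$_2$ with $|v|^{s}v$ and integrate over $\mathbb{T}^3$. By the continuity equation, the transport part gives $\frac{1}{s+2}\frac{d}{dt}\int\rho|v|^{s+2}dx$. The three viscous terms on the right produce
\[
\nu\int\rho^\alpha|v|^s|\nabla v|^2dx+\nu s\int\rho^\alpha|v|^{s}|\nabla|v||^2dx
\]
plus the non-Laplace contributions with coefficients $(\nu-c)$ and $(\alpha-1)(2\nu-c)$. Following the algebra in \cite{Gu-Huang-Meng-Zhou-2026}, these extra terms are absorbed by the Laplace-type part exactly when $s+2$ lies below the threshold $n_{3,\alpha,\beta}$. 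By \eqref{3-6.5}, $n_{3,\alpha,\beta}>7$, so every $s\le 4$ is admissible. This gives
\[
\frac{1}{s+2}\frac{d}{dt}\int\rho|v|^{s+2}dx+\delta_s\int\rho^\alpha|v|^s|\nabla v|^2dx\le C\int\rho^\gamma|v|^s|\nabla v|dx,
\]
where the right-hand side comes from integrating the pressure term by parts.

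For the pressure term I use Young's inequality:
\[
C\int\rho^\gamma|v|^s|\nabla v|dx\le\frac{\delta_s}{2}\int\rho^\alpha|v|^s|\nabla v|^2dx+C\int\rho^{2\gamma-\alpha}|v|^sdx.
\]
Next, Hölder's inequality splits off $\big(\int\rho|v|^{s+2}\big)^{\theta}$ together with a density factor. To close, I do not let the density factor be bounded by $\|\rho\|_{L^q}$ alone. Instead, I interpolate $|v|$ in a higher Lebesgue space using the dissipation. Set $w=\rho^{\alpha/2}|v|^{(s+2)/2}$; its gradient is controlled by the dissipation together with $\int\rho^{\alpha-2}|v|^{s+2}|\nabla\rho|^2$. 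Sobolev then puts $\rho^{\cdot}|v|^{s+2}$ into $L^3$. The term $\int\rho^{2\gamma-\alpha}|v|^s$ is bounded by Hölder's inequality using three pieces: the $L^3$ control of $w^2$, the uniform bound $\sup_t\int\rho^q\le C_q$ from \eqref{3-5.5}, and $\sup_t\int\rho|v|^{s+2}$. The exponents balance, with a power below $1$ on the dissipation, precisely under \eqref{3-6}. This is the bookkeeping step I expect to be the main obstacle: I must check that the linear constraint \eqref{3-6} is exactly the condition that the total power of $\rho$ fits into $L^q$ with Sobolev exponent at most $6$.

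The cross term involving $\nabla\rho$, arising from $\nabla w$, is handled in the same way. I weight it by $|\nabla\rho^{(3\alpha-2)/4}|^4$, which is integrable in time by \eqref{nabla rho 4}. This yields
\[
\frac{d}{dt}\int\rho|v|^{s+2}dx\le C\,(1+g(t))\Big(1+\int\rho|v|^{s+2}dx\Big),
\]
where $\int_{t_n}^{t_{n+1}}g\,dt\le C$ uniformly in $n$, with $g$ built from $\|\nabla\rho^{(3\alpha-2)/4}\|_{L^4}^4$ and $\|\rho^{\alpha/2}\nabla v\|_{L^2}^2$.

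Finally, I apply Gr\"onwall's inequality on each interval $[t_n,t_{n+1}]$ from Lemma \ref{Lem 3.1}; on the first interval I use $\|\rho_0^{1/(s+2)}v_0\|_{L^{s+2}}$. The starting value at $t_n$ is bounded uniformly in $n$:
\[
\int\rho|v|^{s+2}(t_n)\le\tfrac32\overline{\rho_0}\,\|v(t_n)\|_{H^1}^{s+2}.
\]
Here $\|v(t_n)\|_{H^1}$ is bounded using \eqref{prop 3.1 0}, \eqref{prop 3.1 1} and \eqref{2-2}, and the Sobolev embedding applies since $s+2\le6$. Since $t_{n+1}-t_n\le 2T_0$, the bound is independent of $n$, which gives the claim.
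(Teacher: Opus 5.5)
\paragraph{The gap is in the step you flagged as the main obstacle, and it does not close.}

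Suppose you bound $\int\rho^{2\gamma-\alpha}|v|^s$ using only your three pieces: $\|\rho^\alpha|v|^{s+2}\|_{L^3}=\|w\|_{L^6}^2$, $\int\rho|v|^{s+2}$, and $\int\rho^q$. Concretely, write $\rho^{2\gamma-\alpha}|v|^s=(\rho^\alpha|v|^{s+2})^{a}(\rho|v|^{s+2})^{b}\rho^{e}$ with $(a+b)(s+2)=s$, and apply H\"older in $L^{3/a}\times L^{1/b}\times L^{q/e}$. The most favourable choice is $a=\frac{s}{s+2}$, $b=0$, and it closes only if
\begin{align*}
(6\gamma-6\alpha-2q)s+12\gamma-6\alpha-6q\le 0 .
\end{align*}
This is not \eqref{3-6}.
\begin{itemize}
\item At $(q,s)=(6\alpha-3,1)$, which is exactly the case used in Case I of Proposition~\ref{Prop 4.1}, this condition reads $\gamma\le\frac{10\alpha-4}{3}$.
\item Condition \eqref{3-6} at the same point allows $\gamma\le\frac{15\alpha-7}{3}$, which is strictly larger since $\alpha>\frac35$.
\end{itemize}

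There is also a density-weight mismatch that your argument does not address.
\begin{itemize}
\item On $\mathbb{T}^3$, the Sobolev step for $w=\rho^{\alpha/2}|v|^{(s+2)/2}$ needs a lower-order norm of $w$. For $\alpha<1$, this can be compared with $\int\rho|v|^{s+2}$ only through negative moments of $\rho$, e.g.\ $\|w\|_{L^1}\le(\int\rho|v|^{s+2})^{1/2}(\int\rho^{\alpha-1})^{1/2}$.
\item Your cross term satisfies $\int\rho^{\alpha-2}|\nabla\rho|^2|v|^{s+2}\le C\|\nabla\rho^{\frac{3\alpha-2}{4}}\|_{L^4}^2\|\rho^{1-\frac\alpha2}|v|^{s+2}\|_{L^2}$. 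Interpolating $\rho^{2-\alpha}|v|^{2s+4}$ between $\rho^{3\alpha}|v|^{3s+6}$ and $\rho|v|^{s+2}$ always leaves a power of $\rho$ that is at most $2-3\alpha<0$.
\end{itemize}
Both points therefore require a uniform lower bound on the density. That bound appears only later (Proposition~\ref{Prop 7.2}, for $\alpha<1$), and its proof rests on this very proposition.

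\paragraph{The missing idea is to avoid the velocity dissipation for the pressure entirely.}

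After Young's inequality, the paper uses H\"older against $(\int\rho|v|^{s+2})^{\frac{s}{s+2}}$. This turns the inequality into
\[
\frac{d}{dt}\|\rho^{\frac1{s+2}}v\|_{L^{s+2}}^2\le C\Big(\int\rho^{\frac{(2\gamma-\alpha)(s+2)-s}{2}}dx\Big)^{\frac2{s+2}},
\]
whose right side involves the density alone. The paper then uses the density's own parabolic smoothing.
\begin{itemize}
\item From \eqref{nabla rho 4} and $\sup_t\|\rho\|_{L^{6\alpha-3}}\le C$, one gets $\|\rho^{\frac32\alpha-1}\|_{L^2(t_n,t_{n+1};H^2)}\le C$ uniformly in $n$.
\item Gagliardo--Nirenberg interpolation with $\sup_t\int\rho^q\le C_q$ then gives $\int_{t_n}^{t_{n+1}}(\int\rho^{\zeta'}dx)^{\frac2{s+2}}dt\le C$, where $\zeta'=\frac{(9\alpha-6+q)s+18\alpha-12+8q}{6}$.
\item Condition \eqref{3-6} is exactly the requirement $\frac{(2\gamma-\alpha)(s+2)-s}{2}\le\zeta'$.
\end{itemize}
One then integrates directly on $[t_n,t]$, with no Gr\"onwall argument. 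Sobolev is applied to $v$ only at $t=t_n$, where Lemma~\ref{Lem 3.1} keeps $\rho$ between $\frac12\underline{\rho_0}$ and $\frac32\overline{\rho_0}$, so no negative density moments are ever needed. Your first energy inequality and your bound on the initial values at $t_n$ are fine as they stand.
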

	\begin{proof}
		For any $s$ satisfying the assumptions of the proposition,
		we multiply   \eqref{Equ_0'}$_2$ by $|v|^sv$, integrate the resulting
		identity over $\mathbb{T}^3$, and then apply integration by parts to obtain
		\begin{align}\label{3-8}
			\begin{split}
				&\quad\frac{1}{s+2}\frac{d}{dt}\int \rho|v|^{s+2}dx+\nu\int \rho^\alpha|v|^{s}|\nabla v|^2dx+\nu s\int\rho^\alpha|v|^{s}|\nabla|v||^2dx\\
				&\quad+(\nu-c)\int \rho^\alpha(\nabla v)^\top:\nabla(|v|^{s}v)dx+(\alpha-1)(2\nu-c)\int\rho^\alpha\divg v\divg(|v|^{s}v)dx\\
				&\leq C\int \rho^\gamma|v|^{s}|\nabla v|dx.
			\end{split}
		\end{align}
		Since $c\in[\nu,2\nu)$, we have
		\begin{align}\label{3-9}
			\begin{split}
				&\quad(\nu-c)\int \rho^\alpha(\nabla v)^\top:\nabla(|v|^{s} v)dx\\
				&\ge (\nu-c)\int \rho^\alpha|v|^{s}|\nabla v|^2dx+(\nu-c)s\int \rho^\alpha|v|^{s}|\nabla v||\nabla|v||dx.
			\end{split}
		\end{align}
		Moreover, since $c\in[\nu,2\nu)$ and $\alpha\in(2/3,1]$, it holds that
		\begin{align}\label{3-10}
			\begin{split}
				&\quad(\alpha-1)(2\nu-c)\int \rho^\alpha\divg v\divg(|v|^{s} v)dx\\
				&\ge (\alpha-1)(2\nu-c)\int \rho^\alpha|v|^{s}(\divg v)^2dx+(\alpha-1)(2\nu-c)s\int\rho^\alpha |v|^{s}|\divg v||\nabla|v||dx\\
				&\ge 3(\alpha-1)(2\nu-c)\int \rho^\alpha|v|^{s}|\nabla v|^2dx+\sqrt{3}(\alpha-1)(2\nu-c)s\int\rho^\alpha |v|^{s}|\nabla v||\nabla|v||dx,
			\end{split}
		\end{align}
		where, in the last inequality, we have used
		$|\divg v|\leq\sqrt{3}|\nabla v|$. Adding \eqref{3-9} and \eqref{3-10}, and applying Young's inequality, we obtain
		\begin{align}\label{3-11}
			\begin{split}
				&\quad(\nu-c)\int \rho^\alpha(\nabla v)^\top:\nabla(|v|^{s} v)dx+(\alpha-1)(2\nu-c)\int \rho^\alpha\divg v\divg(|v|^{s} v)dx\\
				&\ge \big(\nu-c+3(\alpha-1)(2\nu-c)\big)\int \rho^\alpha|v|^{s}|\nabla v|^2dx\\
				&\quad+\big(\nu-c+\sqrt{3}(\alpha-1)(2\nu-c)\big)s\int\rho^\alpha |v|^{s}|\nabla v||\nabla|v||dx\\
				&\ge \big(\nu-c+3(\alpha-1)(2\nu-c)\big)\int \rho^\alpha|v|^{s}|\nabla v|^2dx\\
				&\quad-\nu s\int\rho^\alpha|v|^{s}|\nabla|v||^2dx-\frac{(c-\nu+\sqrt{3}(1-\alpha)(2\nu-c))^2}{4\nu}s\int \rho^\alpha|v|^{s}|\nabla v|^2dx.
			\end{split}
		\end{align}
		Substituting \eqref{3-11} into \eqref{3-8}, we have
		\begin{align}\label{8-0}
			\begin{split}
				\frac{1}{s+2}\frac{d}{dt}\int \rho|v|^{s+2}dx+\delta_s\int \rho^\alpha|v|^{s}|\nabla v|^2dx\leq C\int \rho^\gamma|v|^{s}|\nabla v|dx,
			\end{split}
		\end{align}
		where
        \[
        \delta_s:=(2\nu-c)(3\alpha-2)
        -\frac{(c-\nu+\sqrt{3}(1-\alpha)(2\nu-c))^2}{4\nu}s>0.
        \]
        The positivity of $\delta_s$ follows from the fact that
        $s<n_{3,\alpha,\beta}$. Using Young's inequality and Hölder's inequality, we get
		\begin{align*}
			\begin{split}
				&\quad\frac{1}{s+2}\frac{d}{dt}\int \rho|v|^{s+2}dx+\delta_s\int \rho^\alpha|v|^{s}|\nabla v|^2dx\\
				&\leq \frac{\delta_s}{2}\int \rho^\alpha|v|^s|\nabla v|^2dx+C\int\rho^{2\gamma-\alpha}|v|^sdx\\
				&\leq \frac{\delta_s}{2}\int \rho^\alpha|v|^s|\nabla v|^2dx+C\Big(\int \rho|v|^{s+2}dx\Big)^{\frac{s}{s+2}}\Big(\int \rho^{\frac{(2\gamma-\alpha)(s+2)-s}{2}}dx\Big)^{\frac{2}{s+2}},
			\end{split}
		\end{align*}
		which implies that
		\begin{align}\label{10--1}
			\begin{split}
				\frac{d}{dt}\|\rho^{\frac{1}{s+2}}v\|_{L^{s+2}}^2&\leq C\left(\int \rho^{\frac{(2\gamma-\alpha)(s+2)-s}{2}}dx\right)^{\frac{2}{s+2}}.
			\end{split}
		\end{align}
		
		We next estimate the right-hand side of the above inequality.
		To this end, we claim that there exists a constant
		$C_2>0$, depending only on
		$\gamma,\alpha,\nu,\varepsilon,E_0,\underline{\rho_0},\overline{\rho_0}$,
		but independent of $n$, such that, for every
		$n\in\mathbb{N}\cup\{-1\}$,
		\begin{align}\label{10-0}
			\norm{\rho^{\frac{3}{2}\alpha-1}}_{L^2(t_n,t_{n+1}; H^2)}\leq C_2.
		\end{align}
		Indeed, since $\gamma>\alpha-\frac{1}{2}$, it follows from
		\eqref{2-1} that
		$\sup_{0\leq t<\infty}\norm{\nabla\rho^{\alpha-1/2}}_{L^2}$ and
		$\sup_{0\leq t<\infty}\norm{\rho^{\alpha-1/2}}_{L^1}$ are both
		bounded. Hence, by the Sobolev embedding theorem,
		$\sup_{0\leq t<\infty}\norm{\rho}_{L^{6\alpha-3}}$ is also bounded,
		where the bound depends only on
		$\gamma,\alpha,\nu,\varepsilon$, and $E_0$. Consequently,
		Young's inequality yields
		\begin{align}\label{10-1}
			\int_{t_n}^{t_{n+1}}\int  \rho^{3\alpha-2}dxdt\leq
			\int_{t_n}^{t_{n+1}}\int (\rho^{6\alpha-3}+1)dxdt
			\leq C(t_{n+1}-t_n)\leq C.
		\end{align}
		It remains to estimate
		$\norm{\nabla\rho^{\frac{3}{2}\alpha-1}}_{L^2(t_n,t_{n+1};L^2)}$.
		Applying Young's inequality together with
		\eqref{nabla rho 4} and \eqref{10-1}, we obtain
		\begin{align}\label{10-2}
			\begin{split}
				&\quad\int_{t_n}^{t_{n+1}}\int  |\nabla\rho^{\frac{3}{2}\alpha-1}|^2dxdt\\
				&=\frac{(3\alpha-2)^2}{4}	\int_{t_n}^{t_{n+1}}\int  \rho^{3\alpha-4}|\nabla\rho|^2 dxdt\\
				&\leq \frac{(3\alpha-2)^2}{4}	\int_{t_n}^{t_{n+1}}\int  \rho^{3\alpha-6}|\nabla\rho|^4 dxdt+\frac{(3\alpha-2)^2}{4}	\int_{t_n}^{t_{n+1}}\int  \rho^{3\alpha-2} dxdt\\
				&=\frac{64}{(3\alpha-2)^2}\int_{t_n}^{t_{n+1}}\int |\nabla\rho^{\frac{3\alpha-2}{4}}|^4dxdt+\frac{(3\alpha-2)^2}{4}	\int_{t_n}^{t_{n+1}}\int  \rho^{3\alpha-2} dxdt\\
				&\leq C.
			\end{split}
		\end{align}
		Combining \eqref{10-1}, \eqref{10-2}, and \eqref{nabla rho 4},
		we conclude that \eqref{10-0} holds. By \eqref{3-5.5}, \eqref{10-0}, and the interpolation inequality,
		we obtain
		\begin{align*}
			\begin{split}
				\|\rho^{\frac{3}{2}\alpha-1}\|_{L^{\frac{2}{s+2}\zeta}(t_n,t_{n+1};L^\zeta)}
				&\le C\|\rho^{\frac{3}{2}\alpha-1}\|_{L^\infty(t_n,t_{n+1}; L^{\frac{2q}{3\alpha-2}})}^{\theta}
				\|\rho^{\frac{3}{2}\alpha-1}\|_{L^{2}(t_n,t_{n+1};W^{1,6})}^{1-\theta}\\
				&\le C,
			\end{split}
		\end{align*}
		where
		\begin{align*}
			\theta=\frac{qs+8q}{(9\alpha-6+q)s+18\alpha-12+8q},\quad
			\zeta=\frac{(9\alpha-6+q)s+18\alpha-12+8q}{9\alpha-6}.
		\end{align*}
		Therefore, there exists a constant $C_3>0$,
		depending only on
		$\gamma,\alpha,\nu,\varepsilon,E_0,\underline{\rho_0},
		\overline{\rho_0},q,C_q$, and $s$, but independent of $n$,
		such that
		\begin{align}\label{10-3}
			\int_{t_n}^{t_{n+1}}\Big(\int \rho^{\zeta'}dx\Big)^{\frac{2}{s+2}}dt\leq C_3,
		\end{align}
		where
		\begin{align*}
			\zeta'=\frac{(9\alpha-6+q)s+18\alpha-12+8q}{6}.
		\end{align*}
		Combining \eqref{3-6} with Young's inequality, we deduce from
        \eqref{10--1} that
		\begin{align*}
			\begin{split}
				\frac{d}{dt}\|\rho^{\frac{1}{s+2}}v\|_{L^{s+2}}^2&\leq C\left(\int \rho^{\frac{(2\gamma-\alpha)(s+2)-s}{2}}dx\right)^{\frac{2}{s+2}}\\
				&=C\left(\int \rho^{\zeta'}\rho^{\frac{(6\gamma-12\alpha+3-q)s+12\gamma-24\alpha+12-8q}{6}}dx\right)^{\frac{2}{s+2}}\\
				&\leq C\left(\int (\rho^{\zeta'}+1)dx\right)^{\frac{2}{s+2}}\\
				&\leq C\left(\int \rho^{\zeta'} dx\right)^{\frac{2}{s+2}}+C.
			\end{split}
		\end{align*}
		Integrating the above inequality over $[t_n,t]$ and taking the supremum over
		$t\in[t_n,t_{n+1}]$, we obtain
		\begin{align*}
			\sup_{t\in[t_{n},t_{n+1}]}\|\rho^{\frac{1}{s+2}}v\|_{L^{s+2}}^2\leq \|\rho^{\frac{1}{s+2}}v(t_n)\|_{L^{s+2}}^2+C	\int_{t_n}^{t_{n+1}}\Big(\int \rho^{\zeta'}dx\Big)^{\frac{2}{s+2}}dt+C(t_{n+1}-t_n).
		\end{align*}
		The second and third terms on the right-hand side are uniformly
        bounded with respect to $n$ by \eqref{10-3} and the uniform
        bound on $t_{n+1}-t_n$, respectively. It remains to estimate the first term. If $n\neq -1$, by Lemma~\ref{Lem 3.1} and the Sobolev embedding theorem (which is applicable since $s\le4$),
		\begin{align*}
			\begin{split}
				\|\rho^{\frac{1}{s+2}}v(t_n)\|_{L^{s+2}}
				&\leq \Big(\frac{3}{2}\overline{\rho_0}\Big)^{\frac{1}{s+2}}
				\norm{v(t_n)}_{H^1}\\
				&\leq \Big(\frac{3}{2}\overline{\rho_0}\Big)^{\frac{1}{s+2}}
				\Big(\Big(\frac{2}{\underline{\rho_0}}\Big)^{\frac12}
				\norm{\rho^{\frac12}v(t_n)}_{L^2}
				+\norm{\nabla v(t_n)}_{L^2}\Big).
			\end{split}
		\end{align*}
		This, together with \eqref{2-2} and
		\eqref{prop 3.1 1}, shows that
		$\|\rho^{\frac{1}{s+2}}v(t_n)\|_{L^{s+2}}$ is uniformly bounded
		with respect to $n$. If $n=-1$, the boundedness of
		$\|\rho^{\frac{1}{s+2}}v(0)\|_{L^{s+2}}$ follows directly from
		the assumption on the initial data. This completes the proof
		of Proposition~\ref{Prop 3.4}.
	\end{proof}
	
	The following technical lemma provides the key step in
	bootstrapping the regularity of both the density and the
	effective velocity starting from the energy estimates.
	\begin{lema}\label{Lem 3.6}
		Assume that $\gamma\in (\frac{15\alpha-7}{3},\frac{111\alpha^2-122\alpha+33}{21\alpha-13})$ and let $q_0=6\alpha-3$. Then there exists an integer
		$n\in\mathbb{N}^+$, depending only on $\alpha$ and $\gamma$,
		such that the sequence $\{q_k\}_{k=0}^{n}$ defined recursively by
		\begin{align*}
			\left\{
			\begin{array}{lc}
				s_{k+1}=\frac{-12\gamma+24\alpha-12+8q_k}{6\gamma-12\alpha+3-q_k},\\
				q_{k+1}=2-2\alpha+\frac{3\alpha-2}{1-s_{k+1}},
			\end{array}
			\right.
		\end{align*}
		for $k=0,\dots,n-1$, satisfies
		\begin{align*}
			6\alpha-3=q_0<\cdots<q_{n-1}
			<2\gamma-4\alpha+\frac{5}{3},
			\quad
			q_n\ge 2\gamma-4\alpha+\frac{5}{3}.
		\end{align*}
	\end{lema}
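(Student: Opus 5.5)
The plan is to reduce the lemma to an elementary study of a single scalar map. Set $Q_*:=2\gamma-4\alpha+\frac53$ and $A:=6\gamma-12\alpha+3$. First I record the two facts that make the recursion well defined.
\begin{itemize}
\item Since $\gamma>\frac{15\alpha-7}{3}=5\alpha-\frac73$, we have $q_0=6\alpha-3<Q_*$; this is exactly where the lower bound on $\gamma$ enters.
\item We have $A-Q_*=4\gamma-8\alpha+\frac43>0$, because $\gamma>\frac{15\alpha-7}{3}>2\alpha-\frac13$ for $\alpha>\frac23$. Hence the denominator $A-q$ of $s_{k+1}$ is positive whenever $q_k<Q_*$.
\end{itemize}
A direct computation gives
\[
1-s(q)=\frac{18\gamma-36\alpha+15-9q}{A-q}=\frac{9(Q_*-q)}{A-q},
\]
so $s(q)<1$ if and only if $q<Q_*$. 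Consequently, as long as $q_k<Q_*$, the next exponent $q_{k+1}=F(q_k)$ is finite, where
\[
F(q):=2-2\alpha+\frac{(3\alpha-2)(A-q)}{9(Q_*-q)}
=2-2\alpha+\frac{3\alpha-2}{9}\Big(1+\frac{A-Q_*}{Q_*-q}\Big),\qquad q<Q_*.
\]
On $(-\infty,Q_*)$ the map $F$ is increasing and convex, and $F(q)\to+\infty$ as $q\uparrow Q_*$. I define $q_{k+1}=F(q_k)$ for as long as $q_k<Q_*$, and let $n$ be the first index with $q_n\ge Q_*$.

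The core step is to show that $G(q):=F(q)-q$ is strictly positive on $[q_0,Q_*)$. Multiplying by $9(Q_*-q)>0$, this is equivalent to positivity of the quadratic
\[
P(q):=9(2-2\alpha-q)(Q_*-q)+(3\alpha-2)(A-q),
\]
whose leading coefficient is $9>0$. I check two things.
\begin{itemize}
\item At the right endpoint, $P(Q_*)=(3\alpha-2)(A-Q_*)>0$.
\item At the left endpoint, $P(q_0)>0$ should be equivalent, after expanding with $q_0=6\alpha-3$, to $\gamma<\frac{111\alpha^2-122\alpha+33}{21\alpha-13}$. This matches the remark in Section 2 that equality gives $q_1=q_0$. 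I would verify this by direct expansion; note that $21\alpha-13>0$ since $\alpha>\frac{13}{21}$.
\end{itemize}
Positivity at both endpoints is not enough for a convex quadratic, which could still dip below zero in between. I will therefore try to show $P'(q_0)\ge0$, so that $P$ is increasing on $[q_0,\infty)$. One computes $P'(q_0)=123\alpha-70-9Q_*$, so this requires $\gamma\le\frac{159\alpha-85}{18}$, and I would compare this with the admissible upper bound on $\gamma$ for $\alpha$ in the range \eqref{3d gamma}. If that comparison fails somewhere, the fallback is to locate the vertex $q_v$ of $P$ and show that either $q_v\notin(q_0,Q_*)$ or $P(q_v)>0$ there, via the discriminant. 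This verification of $G>0$ on the whole interval, rather than just at $q_0$, is the step I expect to be the main obstacle, since it is pure but somewhat messy algebra in $(\alpha,\gamma)$.

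Once $G>0$ on $[q_0,Q_*)$ is known, finiteness follows by compactness. Since $G(q)\to+\infty$ as $q\uparrow Q_*$, there is $\eta>0$ with $G\ge1$ on $[Q_*-\eta,Q_*)$. By continuity $G$ attains a positive minimum $m$ on $[q_0,Q_*-\eta]$. Thus every step with $q_k<Q_*$ increases $q$ by at least $\min\{m,1\}>0$, and the iterates form a strictly increasing sequence $q_0<q_1<\cdots$. It must therefore leave $[q_0,Q_*)$ after at most $\lceil (Q_*-q_0)/\min\{m,1\}\rceil$ steps, giving $n\ge1$ with $q_{n-1}<Q_*\le q_n$. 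The number $n$ depends only on $\alpha$ and $\gamma$ because $m$, $\eta$, $q_0$ and $Q_*$ do.

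If it is convenient for the later use of Proposition \ref{Prop 3.2}, I would also note that $s_{k+1}=s(q_k)$ is increasing in $q_k$ (the derivative of $1-s(q)$ is $-9(A-Q_*)/(A-q)^2<0$). I would further check that $s_1=s(q_0)>\frac{5\alpha-3}{8\alpha-5}$, which should be the same inequality as $q_1>q_0$ because $F$ is monotone in $s$. These two facts give $s_k\in\big(\frac{5\alpha-3}{8\alpha-5},1\big)$ for every $k\le n-1$. They are not part of the lemma as stated, but they fall out of the same computation.
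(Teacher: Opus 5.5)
Your proposal is correct and follows the paper's proof. Your quadratic $P$ coincides with the paper's, you use the same vertex condition $\gamma\le\frac{159\alpha-85}{18}$ and the same evaluation at $q_0=6\alpha-3$, and the only difference is that you conclude with a uniform lower bound on the increment $F(q)-q$ rather than the paper's contradiction argument via the monotone limit $q^*$. The comparison you left open does hold, so no fallback is needed: $\frac{111\alpha^2-122\alpha+33}{21\alpha-13}\le\frac{159\alpha-85}{18}$ reduces to $1341\alpha^2-1656\alpha+511\ge0$, whose roots ($\approx0.604$ and $\approx0.631$) both lie below $\frac23$.
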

	\begin{rmk}\label{RMK 3.7}
		Under the assumptions of Lemma~\ref{Lem 3.6}, the sequence
		$\{s_k\}_{k=1}^{n}$ satisfies
		\begin{align*}
			\frac{5\alpha-3}{8\alpha-5}<s_1<\cdots<s_n<1.
		\end{align*}
	\end{rmk}
	\begin{proof}
		The proof is by contradiction. Suppose that the conclusion
		of the lemma fails. 
		
		Define the auxiliary function
		\[
		g(q):=\frac{-12\gamma+24\alpha-12+8q}{6\gamma-12\alpha+3-q},
		\qquad
		q\in\Big[6\alpha-3,\,
		2\gamma-4\alpha+\frac{5}{3}\Big).
		\]
		We claim that $g(q)\in(0,1)$ throughout its domain of definition
		and that $g$ is strictly increasing with respect to $q$.
		Since the monotonicity of $g$ is obvious, it suffices to verify
		that $g(q)\in(0,1)$. Indeed,
		\begin{align*}
			&\quad -12\gamma+24\alpha-12+8q\\
			&\ge -12\gamma+24\alpha-12+8(6\alpha-3)\\
			&=-12\gamma+72\alpha-36\\
			&>-12\times\frac{111\alpha^2-122\alpha+33}{21\alpha-13}+72\alpha-36\\
			&=\frac{12(3\alpha-2)(5\alpha-3)}{21\alpha-13}\\
			&>0,
		\end{align*}
		where the last inequality follows from the fact that
		$\alpha>\frac23$. Moreover,
		\begin{align*}
			\begin{split}
				&\quad 6\gamma-12\alpha+3-q\\
				&>6\gamma-12\alpha+3-(2\gamma-4\alpha+\frac{5}{3})\\
				&=4\gamma-8\alpha+\frac{4}{3}\\
				&>4\times\frac{15\alpha-7}{3}-8\alpha+\frac{4}{3}\\
				&=12\alpha-8\\
				&>0,
			\end{split}
		\end{align*}
		where the last inequality again follows from
		$\alpha>\frac23$. Finally,
		\begin{align*}
			-12\gamma+24\alpha-12+8q
			<6\gamma-12\alpha+3-q
			\iff
			q<2\gamma-4\alpha+\frac53,
		\end{align*}
		which is satisfied by every
		$q\in\left[6\alpha-3,\,2\gamma-4\alpha+\frac53\right)$.
		Therefore, $g(q)\in (0,1)$  throughout the domain of definition of $g$.
		
		Next, define another auxiliary function
		\begin{align*}
		f(q)&:=2-2\alpha+\frac{3\alpha-2}{1-g(q)}
		\\
		&=\frac{(6\gamma-12\alpha+3)\alpha-(2-2\alpha)(-12\gamma+24\alpha-12)+(15\alpha-16)q}
		{18\gamma-36\alpha+15-9q},
			\end{align*}
		for
		\[
		q\in\Big[6\alpha-3,\,
		2\gamma-4\alpha+\frac{5}{3}\Big).
		\]
		We claim that
		\begin{align}\label{11-1}
			f(q)>q
		\end{align}
		throughout the domain of definition of $f$.  Indeed, \eqref{11-1} is equivalent to
		\begin{align*}
			9q^2+(51\alpha-18\gamma-31)q
			+(6\gamma-12\alpha+3)\alpha
			-(2-2\alpha)(-12\gamma+24\alpha-12)>0.
		\end{align*}
		Viewing the left-hand side as a quadratic polynomial in $q$,
		its axis of symmetry is located at
		\[
		q=\frac{18\gamma+31-51\alpha}{18}.
		\]
		Moreover,
		\begin{align*}
			6\alpha-3>\frac{18\gamma+31-51\alpha}{18}
			\iff
			\gamma<\frac{159\alpha-85}{18},
		\end{align*}
		which is automatically satisfied under the assumptions of the
		lemma. Therefore, the quadratic polynomial is increasing on
		the interval $\Big[6\alpha-3,\,
		2\gamma-4\alpha+\frac{5}{3}\Big)$  and hence attains its minimum at $q=6\alpha-3$. It remains to
		verify that
		\begin{align*}
			&9q^2+(51\alpha-18\gamma-31)q
			+(6\gamma-12\alpha+3)\alpha
			-(2-2\alpha)(-12\gamma+24\alpha-12)
			\Big|_{q=6\alpha-3}>0,
		\end{align*}
		which follows  from the assumption
		\[
		\gamma<
		\frac{111\alpha^2-122\alpha+33}{21\alpha-13}.
		\]
		This proves the claim.
		
		Now  define $s_1=g(q_0)$. Since $g(q_0)\in(0,1)$, the quantity
		\[
		q_1=2-2\alpha+\frac{3\alpha-2}{1-g(q_0)}=f(q_0)
		\]
		is well defined. Moreover, \eqref{11-1} implies that
		$q_1>q_0$. Under our contradictory assumption,  we
		necessarily have
		$q_1<2\gamma-4\alpha+\frac53$.
		Repeating the same argument, we define
		$s_2=g(q_1)$ and
		\[
		q_2
		=2-2\alpha+\frac{3\alpha-2}{1-g(q_1)}
		=f(q_1).
		\]
		Again, $q_2>q_1,$ while the contradictory assumption ensures that
		$q_2<2\gamma-4\alpha+\frac53$.	Continuing inductively, we obtain a sequence
		$\{q_k\}_{k=0}^{\infty}$ satisfying
		\begin{align}
			q_{k+1}=f(q_k),\qquad k\in\mathbb N,\label{11-2'}
		\end{align}
		and
		\[
		6\alpha-3=q_0<q_1<\cdots<q_k<\cdots<
		2\gamma-4\alpha+\frac53.
		\]
		Since $\{q_k\}_{k=0}^\infty$ is monotone increasing and bounded above,
		there exists
		$q^*\in\left(6\alpha-3,\,2\gamma-4\alpha+\frac53\right]$
		such that
		\[
		q_k\to q^*,\text{ as }k\to\infty.
		\]
		If
		$q^*\in\left(6\alpha-3,\,2\gamma-4\alpha+\frac53\right)$,
		then passing to the limit in \eqref{11-2'} gives
		$q^*=f(q^*)$, contradicting \eqref{11-1}. It remains to consider the case
		$q^*=2\gamma-4\alpha+\frac53$. In this case,
		\begin{align*}
			&\lim_{q\to q^*}
			\Big((6\gamma-12\alpha+3)\alpha
			-(2-2\alpha)(-12\gamma+24\alpha-12)
			+(15\alpha-16)q\Big)\\
			&=(12\alpha-8)\gamma-24\alpha^2+20\alpha-\frac83\\
			&>\frac{(12\alpha-8)(15\alpha-7)}3
			-24\alpha^2+20\alpha-\frac83\\
			&=36\alpha^2-48\alpha+16>0,
		\end{align*}
		where the strict inequality follows from
		$\gamma>\frac{15\alpha-7}{3}$. Moreover,
		\[
		\lim_{q\to q^*}
		\left(18\gamma-36\alpha+15-9q\right)=0.
		\]
		Therefore,
		\[
		\lim_{q\to q^*}f(q)=+\infty.
		\]
		Since $q_k\to q^*$, we conclude that
		\[
		\lim_{k\to\infty}f(q_k)=+\infty.
		\]
		On the other hand, \eqref{11-2'} implies that
		\[
		f(q_k)=q_{k+1}
		<
		2\gamma-4\alpha+\frac53,
		\qquad
		\forall\,k\in\mathbb N,
		\]
		which is impossible. This contradiction completes the proof.
	\end{proof}
	
	\section{Critical Estimates: $L^3$ Estimates for $v$}
	The following proposition establishes the critical $L^3$ estimate for the effective velocity, which serves as the key step toward the supercritical estimates.
	\begin{prop}\label{Prop 4.1}
		There exists a constant $C>0$, depending only on
		$\gamma,\alpha,\nu,\varepsilon,E_0,
		\overline{\rho_0},\underline{\rho_0}$ and
		$\norm{\rho_0^{1/3}v_0}_{L^3}$, such that
		\begin{align}\label{rho v 3}
			\sup_{0\leq t<\infty}\int \rho|v|^3dx\leq C.
		\end{align}
	\end{prop}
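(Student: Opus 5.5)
The plan is to apply Proposition~\ref{Prop 3.4} with $s=1$. This requires a uniform bound $\sup_{t}\int\rho^q\,dx\le C_q$ for some $q$ large enough that \eqref{3-6} holds at $s=1$, namely
\begin{align*}
(6\gamma-12\alpha+3-q)+12\gamma-24\alpha+12-8q\le0
\iff q\ge 2\gamma-4\alpha+\tfrac53.
\end{align*}
So the whole task reduces to producing a uniform-in-time $L^q$ bound on $\rho$ with $q\ge 2\gamma-4\alpha+\frac53$. The initial term $\|\rho_0^{1/3}v_0\|_{L^3}$ is finite by assumption, and it enters exactly as allowed in the statement.

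\textbf{Case $\gamma\le\frac{15\alpha-7}{3}$.} The energy estimate \eqref{2-1} together with Sobolev embedding gives $\sup_t\|\rho\|_{L^{6\alpha-3}}\le C$, as already noted in the proof of Proposition~\ref{Prop 3.4}. Since $6\alpha-3\ge 2\gamma-4\alpha+\frac53$ is equivalent to $\gamma\le\frac{15\alpha-7}{3}$, we may take $q_0=6\alpha-3$ and conclude directly. (For $\alpha=1$ the same comparison applies with $q_0=3$, and $\gamma<\frac{11}{4}$ is then the relevant range. If the threshold is not met in that case, the iteration described next is used there as well.)

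\textbf{Case $\gamma\in\big(\frac{15\alpha-7}{3},\frac{111\alpha^2-122\alpha+33}{21\alpha-13}\big)$.} Here we iterate using Lemma~\ref{Lem 3.6}. Start from $q_0=6\alpha-3$. Given a uniform $L^{q_k}$ bound with $q_k<2\gamma-4\alpha+\frac53$, the exponent $s_{k+1}=g(q_k)$ makes \eqref{3-6} an equality. By Remark~\ref{RMK 3.7} we also have $s_{k+1}\in\big(\frac{5\alpha-3}{8\alpha-5},1\big)\subset(0,4]$. Proposition~\ref{Prop 3.4} then gives $\sup_t\int\rho|v|^{s_{k+1}+2}\,dx\le C$, and Proposition~\ref{Prop 3.2}(i) upgrades this to $\sup_t\int\rho^{q_{k+1}}\,dx\le C$.

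After the $n$ steps provided by Lemma~\ref{Lem 3.6} we reach $q_n\ge 2\gamma-4\alpha+\frac53$, and one final application of Proposition~\ref{Prop 3.4} with $s=1$ yields \eqref{rho v 3}.

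\textbf{Main obstacle.} The delicate point is the initial data dependence at the intermediate levels. Proposition~\ref{Prop 3.4} at exponent $s_k+2<3$ requires $\|\rho_0^{1/(s_k+2)}v_0\|_{L^{s_k+2}}<\infty$. By Hölder's inequality on $\mathbb{T}^3$ this is controlled by $\|\rho_0^{1/3}v_0\|_{L^3}$ together with $\underline{\rho_0}$ and $\overline{\rho_0}$. The constants therefore depend only on the listed quantities, and the bound is uniform because every step is uniform in time.

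The case $\gamma=\frac{15\alpha-7}{3}$ is covered by the direct argument in the first case. The endpoint of the second case is excluded by the assumption \eqref{3d gamma}.
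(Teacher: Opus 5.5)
Your proposal is correct and follows the paper's proof essentially step by step: for $\gamma\le\frac{15\alpha-7}{3}$ you apply Proposition~\ref{Prop 3.4} directly with $(q,s)=(6\alpha-3,1)$, and otherwise you run the Lemma~\ref{Lem 3.6} iteration, alternating Propositions~\ref{Prop 3.4} and \ref{Prop 3.2}(i), before a final application of Proposition~\ref{Prop 3.4} at $s=1$. Your H\"older argument bounding $\|\rho_0^{1/(s_k+2)}v_0\|_{L^{s_k+2}}$ by $\|\rho_0^{1/3}v_0\|_{L^3}$ and the mass is a detail the paper leaves implicit, and it correctly justifies the stated dependence of the constant.
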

	\begin{proof}
		By \eqref{2-1} and the Sobolev embedding theorem, there exists a
		positive constant $C$, depending only on
		$\gamma,\alpha,\nu,\varepsilon,$ and $E_0$, such that
		\begin{align*}
			\sup_{0\leq t<\infty}\int \rho^{6\alpha-3}\,dx\leq C.
		\end{align*}
		
		We distinguish two cases.
		
		\noindent\textbf{Case I: $\gamma\in\left[1,\frac{15\alpha-7}{3}\right]$.}
		For $(q,s)=(6\alpha-3,1)$, the conditions of
        Proposition~\ref{Prop 3.4} are satisfied. Hence, Proposition~\ref{Prop 3.4}
        gives \eqref{rho v 3}.
		
		\noindent\textbf{Case II: $\gamma\in\left(\frac{15\alpha-7}{3},\frac{111\alpha^2-122\alpha+33}{21\alpha-13}\right)$.}
		Let $\{q_k\}_{k=0}^n$ and $\{s_k\}_{k=1}^n$ be the sequences constructed in Lemma~\ref{Lem 3.6}. Taking $(q,s)=(q_0,s_1)$ in Proposition~\ref{Prop 3.4}, we obtain a positive constant
		$C$, depending only on
		$\gamma,\alpha,\nu,\varepsilon,E_0,
		\overline{\rho_0},\underline{\rho_0}$ and
		$\norm{\rho_0^{1/3}v_0}_{L^3}$, such that
		\begin{align*}
			\sup_{0\leq t<\infty}\int \rho |v|^{s_1+2}\,dx\leq C.
		\end{align*}
		Proposition~\ref{Prop 3.2}(i), together with
		Remark~\ref{RMK 3.7}, then yields
		\begin{align*}
			\sup_{0\leq t<\infty}\int \rho^{q_1}\,dx\leq C.
		\end{align*}
		Iterating the above argument, we obtain
		\begin{align*}
			\sup_{0\leq t<\infty}\int \rho^{q_n}\,dx\leq C.
		\end{align*}
		Since $q_n\ge 2\gamma-4\alpha+\frac53$, Hölder's inequality further implies
		\begin{align*}
			\sup_{0\leq t<\infty}
			\int \rho^{2\gamma-4\alpha+\frac53}\,dx
			\leq C.
		\end{align*}
		It remains to apply Proposition~\ref{Prop 3.4} with
        $(q,s)=\big(2\gamma-4\alpha+\frac53,1\big)$.
        The corresponding assumptions are satisfied, and hence
        \eqref{rho v 3} follows from Proposition~\ref{Prop 3.4}.

		This completes the proof of Proposition~\ref{Prop 4.1}. 
	\end{proof}
	
As a consequence of Proposition~\ref{Prop 4.1} and
Proposition~\ref{Prop 3.2}(ii), we obtain the following corollary.
	\begin{corl}\label{Cor 4.2}
		For every $q\in[1,\infty)$, there exists a constant $C>0$,
		depending only on
		$\gamma,\alpha,\nu,\varepsilon,E_0,
		\underline{\rho_0},\overline{\rho_0},
		\norm{\rho_0^{1/3}v_0}_{L^3}$ and $q$, such that
		\begin{align*}
			\sup_{0\leq t<\infty}\int \rho^q\,dx\leq C.
		\end{align*}
	\end{corl}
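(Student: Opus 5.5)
The corollary follows by feeding the critical bound of Proposition~\ref{Prop 4.1} directly into Proposition~\ref{Prop 3.2}(ii). Proposition~\ref{Prop 4.1} gives
\[
\sup_{0\le t<\infty}\int\rho|v|^3\,dx\le C,
\]
with $C$ depending only on $\gamma,\alpha,\nu,\varepsilon,E_0,\overline{\rho_0},\underline{\rho_0}$ and $\norm{\rho_0^{1/3}v_0}_{L^3}$. This is exactly hypothesis \eqref{3-1.2} with $s=1$ and $C_{v,1}$ equal to that constant. Proposition~\ref{Prop 3.2}(ii) then yields, for every $q\in(1,\infty)$, a uniform-in-time bound on $\int\rho^q\,dx$. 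Its constant depends on $\gamma,\alpha,\nu,\varepsilon,E_0,\overline{\rho_0},\underline{\rho_0},C_{v,1}$ and $q$, hence only on the quantities listed in the corollary.

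The endpoint $q=1$ needs no estimate, since conservation of mass gives $\int\rho\,dx=\rho_s\le\overline{\rho_0}$ for all $t\ge0$. One could also reach small $q$ from any larger $q$ by H\"older's inequality on the unit torus.

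The only point to check is that the hypotheses of Proposition~\ref{Prop 3.2}(ii) hold on the whole time axis with a constant independent of $t$. This is the content of the $\sup_{0\le t<\infty}$ in \eqref{rho v 3}. Case II of Proposition~\ref{Prop 4.1} has already absorbed the dependence on the intermediate bootstrap constants from Lemma~\ref{Lem 3.6}. Those constants depend only on $\alpha,\gamma$ and the initial data (through $\norm{\rho_0^{1/(s_k+2)}v_0}_{L^{s_k+2}}$). Each of these is controlled by $\norm{\rho_0^{1/3}v_0}_{L^3}$ and $\rho_0$ via H\"older's inequality, because $s_k+2<3$. With that verified, there is no real obstacle.
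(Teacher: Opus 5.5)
Your proposal is correct and follows the paper's route exactly: the paper obtains Corollary~\ref{Cor 4.2} by inserting the uniform bound \eqref{rho v 3} of Proposition~\ref{Prop 4.1} into Proposition~\ref{Prop 3.2}(ii) with $s=1$. Your two additional checks are valid details that the paper leaves implicit: the endpoint $q=1$ follows from conservation of mass, and the intermediate constants $\norm{\rho_0^{1/(s_k+2)}v_0}_{L^{s_k+2}}$ are controlled by $\norm{\rho_0^{1/3}v_0}_{L^3}$ through weighted H\"older.
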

	
	\section{Supercritical Estimates I: $L^p$ Estimates for $v$ ($3<p\le6$) and Uniform Density Upper Bounds}
	With the aid of Corollary~\ref{Cor 4.2}, we establish in this section the $L^p$ estimates for the effective velocity with $3<p\le6$, which are crucial for closing the upper bound of the density.
	
	We begin with the following lemma.
	\begin{lema}\label{Lem 5.1}
		For every $s\in (0,n_{3,\alpha,\beta})$, it holds that
		\begin{align}\label{13-1}
			\frac{1}{s+2}\frac{d}{dt}\int \rho|v|^{s+2}dx+\delta_s\int \rho^\alpha|v|^{s}|\nabla v|^2dx\leq C\int \rho^\gamma|v|^{s}|\nabla v|dx,
		\end{align}
		where
		$\delta_s=(2\nu-c)(3\alpha-2)-\frac{(c-\nu+\sqrt{3}(1-\alpha)(2\nu-c))^2}{4\nu}s>0$ and $C>0$ depends only on $s$. Furthermore,
		\begin{align}\label{13-2}
			\frac{d}{dt}\int \rho|v|^{s+2}dx\leq C_{4,s}\int \rho |v|^{s+2}dx+C_{4,s},
		\end{align}
		where $C_{4,s}>0$ depends only on
		$\gamma,\alpha,\nu,\varepsilon,E_0,\underline{\rho_0},
		\overline{\rho_0},
		\norm{\rho_0^{1/3}v_0}_{L^3}$ and $s$.
	\end{lema}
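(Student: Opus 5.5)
The first inequality \eqref{13-1} is exactly the computation already carried out in the proof of Proposition~\ref{Prop 3.4}, and the plan is to repeat it. Multiply \eqref{Equ_0'}$_2$ by $|v|^sv$ and integrate over $\mathbb{T}^3$, using \eqref{Equ_0'}$_1$ (continuity for $\rho$ transported by $u$) to produce $\frac{1}{s+2}\frac{d}{dt}\int\rho|v|^{s+2}dx$. This yields \eqref{3-8}. The two non-Laplacian viscous terms are bounded from below as in \eqref{3-9}--\eqref{3-11}, using $c\in[\nu,2\nu)$, $\alpha\le1$, $|\divg v|\le\sqrt3|\nabla v|$, and Young's inequality. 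The Young step absorbs the cross term $\int\rho^\alpha|v|^s|\nabla v||\nabla|v||$ into $\nu s\int\rho^\alpha|v|^s|\nabla|v||^2$ and a multiple of $s\int\rho^\alpha|v|^s|\nabla v|^2$. The resulting coefficient is $\delta_s$, and $\delta_s>0$ is equivalent to $s<n_{3,\alpha,\beta}$ after substituting $c=\nu(1+\beta)$; this is just the definition of $n_{3,\alpha,\beta}$, and when $\alpha=1$, $\beta=0$ gives $c=\nu$ and $\delta_s=\nu>0$ for all $s$. The pressure term is handled by integration by parts, $\int\nabla\rho^\gamma\cdot|v|^sv\,dx=-\int\rho^\gamma\divg(|v|^sv)dx$, together with $|\divg(|v|^sv)|\le C(s)|v|^s|\nabla v|$, which gives \eqref{13-1}.

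For \eqref{13-2}, apply Young's inequality to the right-hand side of \eqref{13-1}:
\[
C\int\rho^\gamma|v|^s|\nabla v|dx\le\frac{\delta_s}{2}\int\rho^\alpha|v|^s|\nabla v|^2dx+C\int\rho^{2\gamma-\alpha}|v|^sdx.
\]
Then use Hölder's inequality with exponents $\frac{s+2}{s}$ and $\frac{s+2}{2}$, writing $\rho^{2\gamma-\alpha}|v|^s=(\rho|v|^{s+2})^{\frac{s}{s+2}}\rho^{2\gamma-\alpha-\frac{s}{s+2}}$. This gives
\[
\int\rho^{2\gamma-\alpha}|v|^sdx\le\Big(\int\rho|v|^{s+2}dx\Big)^{\frac{s}{s+2}}\Big(\int\rho^{\frac{(2\gamma-\alpha)(s+2)-s}{2}}dx\Big)^{\frac{2}{s+2}}.
\]
The second factor is bounded uniformly in time by Corollary~\ref{Cor 4.2}. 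If the exponent $\frac{(2\gamma-\alpha)(s+2)-s}{2}$ happens to be below $1$, bound it by $\int(\rho+1)dx\le C$ via mass conservation.

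A final Young's inequality, $X^{\frac{s}{s+2}}\le X+1$, gives
\[
\frac{d}{dt}\int\rho|v|^{s+2}dx\le C_{4,s}\Big(\int\rho|v|^{s+2}dx+1\Big),
\]
after dropping the remaining nonnegative dissipation. The constant depends on the stated quantities through Corollary~\ref{Cor 4.2}, hence on $\norm{\rho_0^{1/3}v_0}_{L^3}$. There is no real obstacle: the only points to check are the sign bookkeeping giving $\delta_s>0$ for all $s<n_{3,\alpha,\beta}$, which is already done in \eqref{3-11}, and the fact that Corollary~\ref{Cor 4.2} makes every power of $\rho$ uniformly integrable, so no restriction like \eqref{3-6} is needed.
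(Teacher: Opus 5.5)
Your proposal is correct and follows the paper's proof essentially step for step. You get \eqref{13-1} by repeating \eqref{3-8}--\eqref{3-11}, and \eqref{13-2} by Young's inequality combined with the uniform integrability of all powers of $\rho$ from Corollary~\ref{Cor 4.2}. The only difference is that the paper splits $\rho^{2\gamma-\alpha}|v|^s$ with a pointwise Young inequality instead of your H\"older step followed by $X^{\frac{s}{s+2}}\le X+1$, which amounts to the same thing. Your caveat about the exponent $\frac{(2\gamma-\alpha)(s+2)-s}{2}$ possibly lying below $1$ is unnecessary, since $\gamma\ge1\ge\alpha$ forces it to be at least $1$.
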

	\begin{proof}
		For any $s\in(0,n_{3,\alpha,\beta})$, multiplying   \eqref{Equ_0'}$_2$ by $|v|^s v$, integrating the resulting equation over $\mathbb{T}^3$, and arguing exactly as in \eqref{3-8}--\eqref{3-11}, we obtain
		\begin{align}\label{11-3}
			\begin{split}
				\frac{1}{s+2}\frac{d}{dt}\int \rho|v|^{s+2}dx+\delta_s\int \rho^\alpha|v|^{s}|\nabla v|^2dx
				\leq C\int \rho^\gamma|v|^{s}|\nabla v|dx,
			\end{split}
		\end{align}
		where $\delta_s>0$ since $s<n_{3,\alpha,\beta}$. This proves \eqref{13-1}.
		
		We next estimate the term on the right-hand side of \eqref{11-3}. By Young's inequality,
		\begin{align*}
			\begin{split}
				C\int \rho^\gamma|v|^{s}|\nabla v|dx
				&\leq \frac{\delta_s}{2}\int \rho^\alpha|v|^s|\nabla v|^2dx
				+C\int \rho^{2\gamma-\alpha}|v|^sdx\\
				&\leq \frac{\delta_s}{2}\int \rho^\alpha|v|^s|\nabla v|^2dx
				+C\int \rho |v|^{s+2}dx
				+C\int \rho^{\frac{(2\gamma-\alpha)(s+2)-s}{2}}dx.
			\end{split}
		\end{align*}
		Substituting the above estimate into \eqref{13-1} and invoking Corollary~\ref{Cor 4.2}, we obtain \eqref{13-2}.
		
		This completes the proof of Lemma~\ref{Lem 5.1}.
	\end{proof}

	The following proposition establishes the $L^3$--$L^6$ estimates for the effective velocity.
	\begin{prop}\label{Prop 5.1}
		For every $s\in(1,4]$, there exists a  constant $C>0$,
		depending only on
		$\gamma,\alpha,\nu,\varepsilon,E_0,
		\underline{\rho_0},\overline{\rho_0},
		s$, and
		$\norm{\rho_0^{1/(s+2)}v_0}_{L^{s+2}}$,
		such that
		\begin{align}\label{11-5}
			\sup_{0\leq t<\infty}\int \rho|v|^{s+2}dx\leq C.
		\end{align}
	\end{prop}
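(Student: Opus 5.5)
The plan is to integrate the differential inequality \eqref{13-2} of Lemma~\ref{Lem 5.1} with Gr\"onwall's inequality on each subinterval $[t_n,t_{n+1}]$ from Lemma~\ref{Lem 3.1}. This is the same time-discretization device already used in Propositions~\ref{Prop 3.2} and~\ref{Prop 3.4}. First we check admissibility. For $s\in(1,4]$, \eqref{3-6.5} gives $n_{3,\alpha,\beta}>7>4\ge s$ when $\alpha<1$, and $n_{3,\alpha,\beta}=\infty$ when $\alpha=1$. Hence $\delta_s>0$, and \eqref{13-2} holds with a constant $C_{4,s}$ independent of time. Its constant uses Corollary~\ref{Cor 4.2}, whose input $\norm{\rho_0^{1/3}v_0}_{L^3}$ is controlled by $\norm{\rho_0^{1/(s+2)}v_0}_{L^{s+2}}$ and $E_0$ via H\"older's inequality on $\mathbb{T}^3$.

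Set $t_{-1}=0$ and $Y(t)=\int\rho|v|^{s+2}dx$. For each $n\in\mathbb{N}\cup\{-1\}$, Gr\"onwall applied to \eqref{13-2} on $[t_n,t_{n+1}]$ gives
\begin{align*}
\sup_{t\in[t_n,t_{n+1}]}Y(t)\le e^{C_{4,s}(t_{n+1}-t_n)}\big(Y(t_n)+1\big)\le e^{2C_{4,s}T_0}\big(Y(t_n)+1\big),
\end{align*}
since $t_{n+1}-t_n\le 2T_0$. So the whole matter reduces to bounding $Y(t_n)$ independently of $n$.

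For $n=-1$, $Y(0)=\norm{\rho_0^{1/(s+2)}v_0}_{L^{s+2}}^{s+2}$, which is part of the allowed dependence. For $n\ge0$, we use $s+2\le6$ and the Sobolev embedding $H^1(\mathbb{T}^3)\hookrightarrow L^6$, exactly as at the end of the proof of Proposition~\ref{Prop 3.4}:
\begin{align*}
Y(t_n)\le \Big(\frac32\overline{\rho_0}\Big)\norm{v(t_n)}_{H^1}^{s+2}\le C\Big(\Big(\frac{2}{\underline{\rho_0}}\Big)^{1/2}\norm{\rho^{1/2}v(t_n)}_{L^2}+\norm{\nabla v(t_n)}_{L^2}\Big)^{s+2}.
\end{align*}
The right-hand side is bounded uniformly in $n$ by \eqref{2-2}, \eqref{prop 3.1 0} and \eqref{prop 3.1 1}. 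Taking the supremum over $n$ yields \eqref{11-5}.

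The only genuinely delicate point is already absorbed into Lemma~\ref{Lem 5.1}. There, the pressure contribution $\int\rho^{(2\gamma-\alpha)(s+2)/2-s/2}dx$ is bounded uniformly in time by Corollary~\ref{Cor 4.2}, for every exponent. That in turn rests on the critical estimate of Proposition~\ref{Prop 4.1}, and it is what makes the linear inequality \eqref{13-2} time-uniform. The restriction $s\le 4$ is dictated solely by the Sobolev step at $t_n$, since only $\nabla v(t_n)\in L^2$ is available there. This is why the proposition stops at $p=6$.
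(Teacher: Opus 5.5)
Your proposal is correct and follows the paper's proof essentially step by step. You apply Gr\"onwall to \eqref{13-2} on each $[t_n,t_{n+1}]$ with $t_{n+1}-t_n\le 2T_0$, and you bound $\int\rho|v|^{s+2}(t_n)\,dx$ uniformly using $s+2\le 6$, the Sobolev embedding, \eqref{2-2} and Lemma~\ref{Lem 3.1}. Your remark that $\norm{\rho_0^{1/3}v_0}_{L^3}$ (which enters $C_{4,s}$ through Corollary~\ref{Cor 4.2}) is controlled by $\norm{\rho_0^{1/(s+2)}v_0}_{L^{s+2}}$ and $E_0$ is a correct detail that the paper leaves implicit.
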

	\begin{proof}
		Fix any $s\in(1,4]$. Since \eqref{3-6.5} implies that $4<n_{3,\alpha,\beta}$, Lemma~\ref{Lem 5.1} is applicable. Applying Gr\"onwall's inequality to \eqref{13-2} on each interval $[t_n,t_{n+1}]$, where $n\in\mathbb{N}\cup\{-1\}$, yields
		\begin{align}\label{11-6}
			\begin{split}
				\sup_{t\in[t_{n},t_{n+1}]}\int \rho |v|^{s+2}dx
				&\leq \int \rho |v|^{s+2}(t_n)dx\,e^{C_{4,s}(t_{n+1}-t_n)}
				+e^{C_{4,s}(t_{n+1}-t_n)}-1\\
				&\leq \int \rho |v|^{s+2}(t_n)dx\,e^{2C_{4,s}T_0}
				+e^{2C_{4,s}T_0}-1.
			\end{split}
		\end{align}
		Next, we estimate the first term on the right-hand side. By Lemma~\ref{Lem 3.1}, the Sobolev embedding theorem, and \eqref{2-2}, for every $n\in\mathbb{N}$,
		\begin{align*}
			\int \rho |v|^{s+2}(t_n)dx
			&\leq \frac{3\overline{\rho_0}}{2}C(\norm{v(t_n)}_{L^2}+\norm{\nabla v(t_n)}_{L^2})^{s+2}\\
			&\leq \frac{3\overline{\rho_0}}{2}C\Big(\big(\frac{2}{\underline{\rho_0}}\big)^{\frac12}
			\norm{\rho^{\frac12}v(t_n)}_{L^2}
			+\norm{\nabla v(t_n)}_{L^2}\Big)^{s+2}\\
			&\leq C_5,
		\end{align*}
		where the positive constant $C_5$ depends only on
		$\gamma,\alpha,\nu,\varepsilon,E_0,
		\overline{\rho_0},\underline{\rho_0}$, and $s$, but is independent of $n$. For $n=-1$, the boundedness of
		$\int \rho |v|^{s+2}(t_n)\,dx$
		follows directly from the assumption on the initial data.
		Therefore, the right-hand side of \eqref{11-6} is bounded uniformly with respect to $n$, which proves \eqref{11-5}.
	\end{proof}

	Next, we exploit the uniform integrability estimate for the effective velocity established in Proposition~\ref{Prop 5.1} to derive a uniform upper bound for the density.  To this end, for any $(\alpha,\gamma,\beta)$ satisfying \eqref{3d gamma} or \eqref{3d gamma'}, we choose
	$\hat{q}_3\in(1,4]$, depending only on $\alpha$ and $\beta$, such that
	\begin{align}
		&\alpha>\frac{\hat{q}_3+1}{\hat{q}_3+2}, \label{3d alpha 1}\\
		&1<\hat{q}_3<n_{3,\alpha,\beta}. \label{3d alpha 2}
	\end{align}
	Such a choice is possible since $\alpha>\frac23$ and \eqref{3-6.5} holds. By Proposition~\ref{Prop 5.1}, we have
	\begin{align}\label{4-3}
		\sup_{0\leq t<\infty}\int \rho|v|^{\hat{q}_3+2}dx\leq C,
	\end{align}
	where the positive constant $C$ depends only on
	$\gamma,\alpha,\nu,\varepsilon,E_0,
	\underline{\rho_0},\overline{\rho_0},
	\hat{q}_3$, and
	$\norm{\rho_0^{1/(\hat{q}_3+2)}v_0}_{L^{\hat{q}_3+2}}$.
	
	The following proposition establishes a uniform upper bound for the density via the modified Nash--Moser iteration developed in \cite{Gu-Huang-Meng-Zhou-2026}, applied on each interval $[t_n,t_{n+1}]$. 
	\begin{prop}\label{Prop 5.3}
		There exists a positive constant $C>0$, depending only on
		$\gamma,\alpha,\nu,\varepsilon,E_0,
		\underline{\rho_0}$, $\overline{\rho_0},$ 
		$\hat{q}_3$, and
		$\norm{\rho_0^{1/(\hat{q}_3+2)}v_0}_{L^{\hat{q}_3+2}}$,
		such that
		\begin{align}\label{3d RT}
			\sup_{0\le t<\infty}\|\rho\|_{L^\infty}\le C.
		\end{align}
	\end{prop}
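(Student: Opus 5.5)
The plan is to run a De Giorgi/Moser-type iteration on the density equation \eqref{Equ_0'}$_1$, $\rho_t+\divg(\rho v)=c\Delta\rho^\alpha$, separately on each interval $[t_n,t_{n+1}]$ given by Lemma~\ref{Lem 3.1} (with $t_{-1}=0$). The $v$-dependence will be controlled only through the uniform bound \eqref{4-3} on $\int\rho|v|^{\hat q_3+2}dx$, where $\hat q_3+2>3$ is supercritical in three dimensions. All constants are then independent of $n$ for three reasons: the intervals have length at most $2T_0$; the data at the left endpoint satisfy $\rho(t_n)\le\frac32\overline{\rho_0}$ by \eqref{prop 3.1 0}, or $\rho_0\le\overline{\rho_0}$ when $n=-1$; and the forcing bound \eqref{4-3} and the integrability from Corollary~\ref{Cor 4.2} hold uniformly in time.

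\textbf{Energy inequality for level sets.} For $r\ge1$ (or truncations $(\rho-k)_+$), multiply the density equation by $\rho^{r}$ as in the proof of Proposition~\ref{Prop 3.2} to obtain
\[
\frac{1}{r+1}\frac{d}{dt}\int\rho^{r+1}dx+c\alpha r\int\rho^{\alpha+r-2}|\nabla\rho|^2dx=r\int\rho^r v\cdot\nabla\rho\,dx .
\]
The right-hand side is bounded by $\frac{c\alpha r}{2}\int\rho^{\alpha+r-2}|\nabla\rho|^2dx+Cr\int\rho^{r+2-\alpha}|v|^2dx$. Hölder with \eqref{4-3} then gives
\[
\int\rho^{r+2-\alpha}|v|^2dx\le C\Big(\int\rho^{(r+2-\alpha-\frac{2}{\hat q_3+2})\frac{\hat q_3+2}{\hat q_3}}dx\Big)^{\frac{\hat q_3}{\hat q_3+2}}.
\]
Condition \eqref{3d alpha 1}, $\alpha>\frac{\hat q_3+1}{\hat q_3+2}$, makes the power of $\rho$ here strictly below $r+1$ per unit of the natural scaling, so it is dominated by a power of $w:=\rho^{\frac{\alpha+r}{2}}$ in $L^{2\theta}$ with $\theta<3$, because $\frac{\hat q_3+2}{\hat q_3}<3$. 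Sobolev in space and interpolation with $L^\infty_t L^2_x$ then produce the parabolic gain $L^{10/3}_{t,x}$ for $w$, i.e.\ an exponent ratio $\chi>1$ between successive steps. I would follow exactly the modified Nash--Moser iteration of \cite{Gu-Huang-Meng-Zhou-2026}, whose hypothesis $\|\rho^{1/p}v\|_{L^\infty_tL^p}<\infty$ with $p>3$ is our \eqref{4-3}. The iteration is done without time cutoffs: since the initial datum on $[t_n,t_{n+1}]$ is bounded in $L^\infty$, the sequence $r_k\sim\chi^k$ is iterated from the initial value, yielding
\[
\sup_{[t_n,t_{n+1}]}\|\rho\|_{L^{r_k+1}}\le C^{\sum k\chi^{-k}}\max\big\{\tfrac32\overline{\rho_0},\ \sup_{[t_n,t_{n+1}]}\|\rho\|_{L^{q_*}}\big\}^{\text{bounded power}} .
\]
The base level $q_*$ is finite and uniform by Corollary~\ref{Cor 4.2}.

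\textbf{Main obstacle.} The hard part is tracking the dependence on $r$ of every constant: Young's factors like $r$, the interval length $\le 2T_0$, and the initial term $\|\rho(t_n)\|_{L^{r+1}}\le C\,(\frac32\overline{\rho_0})$. The recursion $M_{k+1}\le (Cr_k)^{a}M_k^{\chi}$-type, after taking $1/r_k$ roots, must converge as $k\to\infty$. This requires the nonlinearity from the $v$-term to have exponent strictly less than the diffusion exponent, which is precisely \eqref{3d alpha 1}, and $\hat q_3$ admissible in Lemma~\ref{Lem 5.1}, which is \eqref{3d alpha 2}. Since $\alpha\le1$, the degenerate power $\rho^{\alpha+r-2}$ is handled through $w$ rather than $\nabla\rho$. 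I would first try to copy the proof in \cite{Gu-Huang-Meng-Zhou-2026} verbatim on $[t_n,t_{n+1}]$ and check that only the three $n$-uniform inputs enter. Letting $k\to\infty$ gives $\sup_{[t_n,t_{n+1}]}\|\rho\|_{L^\infty}\le C$ with $C$ independent of $n$, and taking the union over $n$ gives \eqref{3d RT}.
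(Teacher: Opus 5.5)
Your proposal matches the paper's proof: a Moser iteration on each $[t_n,t_{n+1}]$ without time cutoffs, obtained by testing the density equation with powers of $\rho$. As in the paper, you control the drift by \eqref{4-3} and H\"older, use \eqref{3d alpha 1} to keep the reverse-H\"older step from growing, and get $n$-independence from $t_{n+1}-t_n\le 2T_0$, \eqref{prop 3.1 0}, and Corollary~\ref{Cor 4.2} for the base level.

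The one point to make precise is the quantity you iterate. It must be the mixed space-time norm $\|\rho\|_{L^l(t_n,t_{n+1};L^{\frac{\hat q_3+2}{\hat q_3}l})}$, maxed with a power of $\|\rho(t_n)\|_{L^\infty}$ as in the paper's $\Upsilon_n$. A $\sup_t L^{r+1}$ norm alone will not close the recursion, because the drift term requires spatial integrability $\frac{\hat q_3+2}{\hat q_3}l$, which exceeds that level.
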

	\begin{proof}
		Fix an arbitrary $n\in\{-1\}\cup\mathbb{N}$. We aim to establish a uniform upper bound for the density on the interval $[t_n,t_{n+1}]$, independent of $n$.
		
		\noindent\textbf{Step 1: Preparation for the iteration.}
		
		We claim that there exists a universal constant $C_6\ge1$, depending only on the quantities stated in Proposition~\ref{Prop 5.3} but independent of $p$ and $n$, such that, for every $p>0$,
		\begin{align}\label{2-8.4}
			\|\rho^{\frac{p+2}{2}}\|_{L^\infty(t_n,t_{n+1};L^2)}^2
			+\|\rho^{\frac{p+\alpha+1}{2}}\|_{L^2(t_n,t_{n+1};H^1)}^2
			\le
			C_6(p+2)^2
			\|\rho\|_{L^{l}(t_n,t_{n+1};L^{l\frac{\hat{q}_3+2}{\hat{q}_3}})}^l
			+C_6\|\rho^{\frac{p+2}{2}}(t_n)\|_{L^2}^2,
		\end{align}
		where
		$
		l=p+3-\alpha-\frac{2}{\hat{q}_3+2}
		=\frac{p(\hat{q}_3+2)+(3-\alpha)(\hat{q}_3+2)-2}{\hat{q}_3+2}.
		$
		
		To prove this estimate, let $p>0$ be arbitrary. Multiplying $\eqref{Equ_0'}_1$ by $\rho^{p+1}$, integrating over $\mathbb{T}^3$, and applying integration by parts together with Young's inequality, we arrive at
		\begin{align}\label{2-6}
			\begin{split}
				&\quad\frac{1}{p+2}\frac{d}{dt}\int \rho^{p+2}dx
				+c\alpha(p+1)\int \rho^{\alpha+p-1}|\nabla\rho|^2dx\\
				&=\int \rho v\cdot\nabla\rho^{p+1}dx\\
				&\le \frac{c\alpha(p+1)}{2}\int \rho^{\alpha+p-1}|\nabla\rho|^2dx
				+C(p+1)\int \rho^{p-\alpha+3}|v|^2dx\\
				&\le \frac{c\alpha(p+1)}{2}\int \rho^{\alpha+p-1}|\nabla\rho|^2dx
				+C(p+1)
				\left(\int \rho|v|^{\hat{q}_3+2}dx\right)^{\frac{2}{\hat{q}_3+2}}
				\left(\int \rho^{l\frac{\hat{q}_3+2}{\hat{q}_3}}dx\right)^{\frac{\hat{q}_3}{\hat{q}_3+2}}.
			\end{split}
		\end{align}
		Invoking \eqref{4-3}, the above inequality reduces to
		\begin{align}\label{2-8}
			\begin{split}
				&\quad\frac{1}{p+2}\frac{d}{dt}\int \rho^{p+2}dx
				+\frac{c\alpha(p+1)}{2}\int \rho^{\alpha+p-1}|\nabla\rho|^2dx\\
				&\le C(p+1)
				\left(\int \rho^{l\frac{\hat{q}_3+2}{\hat{q}_3}}dx\right)^{\frac{\hat{q}_3}{\hat{q}_3+2}}.
			\end{split}
		\end{align}
		Integrating \eqref{2-8} over the interval $[t_n,t_{n+1}]$ yields
		{\small
			\begin{align}\label{2-8.5}
				\begin{split}
					\|\rho^{\frac{p+2}{2}}\|_{L^\infty(t_n,t_{n+1};L^2)}^2
					+\|\nabla\rho^{\frac{p+\alpha+1}{2}}\|_{L^2(t_n,t_{n+1};L^2)}^2
					\le
					C(p+2)^2
					\|\rho\|_{L^{l}(t_n,t_{n+1};L^{l\frac{\hat{q}_3+2}{\hat{q}_3}})}^l
					+C\|\rho^{\frac{p+2}{2}}(t_n)\|_{L^2}^2.
				\end{split}
			\end{align}
		}
		It remains to estimate
		$\|\rho^{\frac{p+\alpha+1}{2}}\|_{L^2(t_n,t_{n+1};L^2)}^2$.
		By Hölder's inequality,
		\begin{align}\label{2-9}
			\begin{split}
				\|\rho^{\frac{p+\alpha+1}{2}}\|_{L^2(t_n,t_{n+1};L^2)}^2
				&=\int_{t_n}^{t_{n+1}}\int \rho^{p+\alpha+1}dxdt\\
				&\le
				\int_{t_n}^{t_{n+1}}
				\left(\int \rho^{p+2}dx\right)^{\frac{p+\alpha+1}{p+2}}
				\left(\int 1dx\right)^{\frac{1-\alpha}{p+2}}dt\\
				&\le
				(t_{n+1}-t_n)
				\left(
				\sup_{t_n\le t\le t_{n+1}}
				\int \rho^{p+2}dx
				\right)^{\frac{p+\alpha+1}{p+2}}\\
				&\le
				2T_0
				\left(
				\sup_{t_n\le t\le t_{n+1}}
				\int \rho^{p+2}dx
				\right)^{\frac{\alpha-1}{p+2}}
				\sup_{t_n\le t\le t_{n+1}}
				\int \rho^{p+2}dx.
			\end{split}
		\end{align}
		On the other hand, the conservation of mass implied by $\eqref{Equ_0'}_1$ together with Hölder's inequality yields, for every $t\in[t_n,t_{n+1}]$,
		\begin{align*}
			\rho_s
			=\int\rho_0dx
			=\int\rho(x,t)dx
			&\le
			\left(\int\rho^{p+2}(x,t)dx\right)^{\frac1{p+2}}
			\left(\int1dx\right)^{\frac{p+1}{p+2}}\\
			&\le
			\left(\int\rho^{p+2}(x,t)dx\right)^{\frac1{p+2}}\\
			&\le
			\left(
			\sup_{t_n\le t\le t_{n+1}}
			\int\rho^{p+2}dx
			\right)^{\frac1{p+2}},
		\end{align*}
		which, together with the fact that $\alpha\le1$, implies
		\begin{align}\label{2-10}
			\left(
			\sup_{t_n\le t\le t_{n+1}}
			\int\rho^{p+2}dx
			\right)^{\frac{\alpha-1}{p+2}}
			\le
			\rho_s^{\alpha-1}.
		\end{align}
		Substituting \eqref{2-10} into \eqref{2-9}, we obtain
		\begin{align*}
			\|\rho^{\frac{p+\alpha+1}{2}}\|_{L^2(t_n,t_{n+1};L^2)}^2
			\le
			2T_0\rho_s^{\alpha-1}
			\|\rho^{\frac{p+2}{2}}\|_{L^\infty(t_n,t_{n+1};L^2)}^2.
		\end{align*}
		Combining this estimate with \eqref{2-8.5} completes the proof of Step~1.
		
		\noindent\textbf{Step 2: Reverse Hölder inequality.}
		
		We next derive the reverse Hölder inequality, which is the key ingredient in the iteration procedure. To this end, we choose
		$r\in(2,6)$, depending only on $\alpha$ and $\hat{q}_3$, such that
		\begin{align}
			&\frac{\hat{q}_3}{\hat{q}_3+2}-\frac{2}{r}>0,\label{2-5.55}\\
			&\alpha\ge
			\frac{\Big(\frac{\hat{q}_3}{\hat{q}_3+2}-\frac{2}{r}\Big)\hat{q}_3+2\hat{q}_3+2}
			{\Big(\frac{\hat{q}_3}{\hat{q}_3+2}-\frac{2}{r}\Big)(\hat{q}_3+2)+2\hat{q}_3+4}.
			\label{2-5.6}
		\end{align}
		Such a choice is available under the assumptions above. In fact, \eqref{2-5.55} follows  from
		$\hat{q}_3>1$, while \eqref{2-5.6} is guaranteed by
		\eqref{3d alpha 1}. By Hölder's inequality in both space and time, we obtain
		\begin{align}\label{2-11}
			\|\rho\|_{L^o(t_n,t_{n+1};L^{\frac{\hat{q}_3+2}{\hat{q}_3}o})}
			\le
			\|\rho^{\frac{p+2}{2}}\|_{L^\infty(t_n,t_{n+1};L^2)}^\xi
			\|\rho^{\frac{p+\alpha+1}{2}}\|_{L^2(t_n,t_{n+1};L^r)}^\eta,
		\end{align}
		where
		{\small
			\begin{align}\label{2-11.5}
				\begin{split}
					&o=(p+2)\Big(\frac{\hat{q}_3}{\hat{q}_3+2}-\frac{2}{r}\Big)
					+p+\alpha+1,\\
					&\eta=
					\frac{2}
					{(p+2)\Big(\frac{\hat{q}_3}{\hat{q}_3+2}-\frac{2}{r}\Big)
						+p+\alpha+1},
					\qquad
					\xi=
					\frac{2\Big(\frac{\hat{q}_3}{\hat{q}_3+2}-\frac{2}{r}\Big)}
					{(p+2)\Big(\frac{\hat{q}_3}{\hat{q}_3+2}-\frac{2}{r}\Big)
						+p+\alpha+1}.
				\end{split}
			\end{align}
		}
	Since $H^1\hookrightarrow L^r$, combining \eqref{2-11} with \eqref{2-8.4} and the Sobolev embedding inequality yields
	\begin{align}\label{2-12}
		\begin{split}
			\|\rho\|_{L^o(t_n,t_{n+1};L^{\frac{\hat{q}_3+2}{\hat{q}_3}o})}
			&\le
			C_r^\eta
			\|\rho^{\frac{p+2}{2}}\|_{L^\infty(t_n,t_{n+1};L^2)}^\xi
			\|\rho^{\frac{p+\alpha+1}{2}}\|_{L^2(t_n,t_{n+1};H^1)}^\eta\\
			&\le
			C_r^\eta
			\left(
			C_6(p+2)^2
			\|\rho\|_{L^l(t_n,t_{n+1};L^{l\frac{\hat{q}_3+2}{\hat{q}_3}})}^l
			+
			C_6
			\|\rho^{\frac{p+2}{2}}(t_n)\|_{L^2}^2
			\right)^{\frac{\xi+\eta}{2}},
		\end{split}
	\end{align}
	where $C_r\ge 1$ denotes the Sobolev embedding constant. For convenience, define $
		C_7(n)
		=
		\|\rho(t_n)\|_{L^2}
		+
		\|\rho(t_n)\|_{L^\infty}
		+
		1.
		$
		Then
		\begin{align}\label{2-13}
			\|\rho^{\frac{p+2}{2}}(t_n)\|_{L^2}^2
			=
			\|\rho(t_n)\|_{L^{p+2}}^{p+2}
			\le
			(\|\rho(t_n)\|_{L^2}+\|\rho(t_n)\|_{L^\infty})^{p+2}
			\le
			C_7(n)^{p+2}
			\le
			C_7(n)^{C_8l},
		\end{align}
		where $C_8>0$ depends only on $\alpha$ and $\hat{q}_3$, is independent of $p$, and is chosen such that
		\[
		\frac{p+2}{l}\le C_8.
		\]
		Next define
		$
		\Upsilon_n(l)
		=
		\max
		\left\{
		\|\rho\|_{L^l(t_n,t_{n+1};L^{\frac{\hat{q}_3+2}{\hat{q}_3}l})},
		\,
		C_7(n)^{C_8}
		\right\}.
		$
		Then, by \eqref{2-12} and \eqref{2-13},
		\begin{align*}
			\begin{split}
				\Upsilon_n(o)
				&\le
				\max
				\left\{
				C_r^\eta
				\left(
				C_6(p+2)^2
				\|\rho\|_{L^l(t_n,t_{n+1};L^{l\frac{\hat{q}_3+2}{\hat{q}_3}})}^l
				+
				C_6
				\|\rho^{\frac{p+2}{2}}(t_n)\|_{L^2}^2
				\right)^{\frac{\xi+\eta}{2}},
				C_7(n)^{C_8}
				\right\}\\
				&\le
				\max
				\left\{
				C_r^\eta
				\left(
				2C_6(p+2)^2
				\Upsilon_n(l)^l
				\right)^{\frac{\xi+\eta}{2}},
				\Upsilon_n(l)
				\right\}.
			\end{split}
		\end{align*}
		Finally, by the definition of $\xi,\eta,l$, the condition \eqref{2-5.6} implies that
		\[
		\frac{\xi+\eta}{2}l\le1.
		\]
		Consequently,
		\begin{align}\label{2-14}
			\Upsilon_n(o)
			\le
			(2C_r^2C_6)^{\frac{\xi+\eta}{2}}
			(p+2)^{\xi+\eta}
			\Upsilon_n(l).
		\end{align}
		Thus, we obtain the desired reverse Hölder inequality, which will be used to initiate the subsequent iteration.

		\noindent \textbf{Step 3: Initialization and iteration scheme.}
		
		We next initialize the iteration. By virtue of \eqref{2-5.55}, we have
		\begin{align*}
			\frac{o}{l}
			&=\left((p+2)\Big(\frac{\hat{q}_3}{\hat{q}_3+2}-\frac{2}{r}\Big)+p+\alpha+1\right)
			\frac{\hat{q}_3+2}{p(\hat{q}_3+2)+(3-\alpha)(\hat{q}_3+2)-2}\\
			&\to1+\frac{\hat{q}_3}{\hat{q}_3+2}-\frac{2}{r}>1,
			\qquad\text{as }p\to\infty.
		\end{align*}
		Hence, one can choose $p_0\ge2$ sufficiently large such that, for every $p>p_0$,
		\begin{align}
			\frac{o}{l}\ge
			1+\frac12\left(
			\frac{\hat{q}_3}{\hat{q}_3+2}-\frac{2}{r}
			\right)
			=:d>1,
			\label{2-15}
		\end{align}
		and
		\begin{align}
			\frac{o}{l}\le
			1+\frac32\left(
			\frac{\hat{q}_3}{\hat{q}_3+2}-\frac{2}{r}
			\right)
			=:d'.
			\label{2-16}
		\end{align}
		Replacing $p$ by $p_0$ in the definitions of $l$, $o$, $\xi$, and $\eta$, we obtain the corresponding quantities $l_0$, $o_0$, $\xi_0$, and $\eta_0$. We then define the iteration recursively by setting $l_1=o_0$, choosing $p_1$ according to the definition of $l_1$, and letting  $o_1$, $\xi_1$, and $\eta_1$ denote the associated quantities. Repeating this procedure inductively generates sequences
		$\{p_k\}$, $\{l_k\}$, $\{o_k\}$, $\{\xi_k\}$, and $\{\eta_k\}$. Applying \eqref{2-14} at each iteration step yields, for any $k\in\mathbb{N}$, 
		\begin{align}\label{2-16.1}
			\begin{split}
				\Upsilon_n(l_{k+1})
				&\leq
				(2C_r^2C_6)^{\frac{\xi_k+\eta_k}{2}}
				(p_k+2)^{\xi_k+\eta_k}
				\Upsilon_n(l_k)\\
				&\leq
				(2C_r^2C_6)^{
					\frac{\xi_k+\eta_k}{2}
					+\frac{\xi_{k-1}+\eta_{k-1}}{2}}
				(p_k+2)^{\xi_k+\eta_k}
				(p_{k-1}+2)^{\xi_{k-1}+\eta_{k-1}}
				\Upsilon_n(l_{k-1})\\
				&\leq
				(2C_r^2C_6)^{
					\sum_{i=0}^{k}\frac{\xi_i+\eta_i}{2}}
				\prod_{i=0}^{k}
				(p_i+2)^{\xi_i+\eta_i}
				\Upsilon_n(l_0).
			\end{split}
		\end{align}
		
		We now control the factors appearing on the right-hand side. It follows from \eqref{2-15} that
		\begin{align}\label{2-16.109}
			o_i\ge d^{i+1}l_0.
		\end{align}
		Combining this with \eqref{2-11.5}, we infer that
		\begin{align}\label{2-16.11}
			\begin{split}
				\frac{\xi_i+\eta_i}{2}
				=
				\frac{
					\frac{\hat q_3}{\hat q_3+2}
					-\frac2r+1}
				{o_i}
				\le
				\frac{
					\frac{\hat q_3}{\hat q_3+2}
					-\frac2r+1}
				{l_0}
				\frac1{d^{i+1}}.
			\end{split}
		\end{align}
		Consequently,
		\begin{align}\label{2-16.115}
			\sum_{i=0}^{\infty}
			\frac{\xi_i+\eta_i}{2}
			<\infty.
		\end{align}
		On the other hand, \eqref{2-16} implies
		\[
		o_i\le d'^{\,i+1}l_0,
		\]
		which further yields
		\begin{align}\label{2-16.12}
			p_i
			\le
			\frac{
				d'^{\,i+1}l_0
				-2\left(
				\frac{\hat q_3}{\hat q_3+2}
				-\frac2r
				\right)
				-\alpha-1}
			{\frac{\hat q_3}{\hat q_3+2}
				-\frac2r+1}
			\le
			\frac{l_0}
			{\frac{\hat q_3}{\hat q_3+2}
				-\frac2r+1}
			d'^{\,i+1}.
		\end{align}
		By \eqref{2-16.11} and \eqref{2-16.12},
		\begin{align*}
			&\quad
			\sum_{i=0}^{\infty}
			(\xi_i+\eta_i)\log(p_i+2)\\
			&\le
			\sum_{i=0}^{\infty}
			\frac{
				2\left(
				\frac{\hat q_3}{\hat q_3+2}
				-\frac2r+1
				\right)}
			{l_0}
			\frac1{d^{i+1}}
			\log\left(
			\frac{2l_0}
			{\frac{\hat q_3}{\hat q_3+2}
				-\frac2r+1}
			d'^{\,i+1}
			\right)\\
			&\le
			\sum_{i=0}^{\infty}
			\frac{
				2\left(
				\frac{\hat q_3}{\hat q_3+2}
				-\frac2r+1
				\right)}
			{l_0}
			\frac1{d^{i+1}}
			\left(
			\log\left(
			\frac{2l_0}
			{\frac{\hat q_3}{\hat q_3+2}
				-\frac2r+1}
			\right)
			+(i+1)\log d'
			\right)
			<\infty,
		\end{align*}
		and hence
		\begin{align}\label{2-16.13}
			\prod_{i=0}^{\infty}
			(p_i+2)^{\xi_i+\eta_i}
			<\infty.
		\end{align}
        
		\noindent\textbf{Step 4: Uniform upper bound for the density.}
		Recalling \eqref{2-16.1}, we have, for every $k\in\mathbb N$,
		\begin{align}\label{4-7.1}
			\begin{split}
				\Upsilon_n(l_{k+1})
				\le
				(2C_r^2C_6)^{\sum_{i=0}^{\infty}\frac{\xi_i+\eta_i}{2}}
				\prod_{i=0}^{\infty}(p_i+2)^{\xi_i+\eta_i}
				\Upsilon_n(l_0),
			\end{split}
		\end{align}
		where both
		\[
		\sum_{i=0}^{\infty}\frac{\xi_i+\eta_i}{2}
		\quad\text{and}\quad
		\prod_{i=0}^{\infty}(p_i+2)^{\xi_i+\eta_i}
		\]
		are finite and independent of $n$, by \eqref{2-16.115} and \eqref{2-16.13}, respectively.
		
		It remains to verify that $\Upsilon_n(l_0)$ is also bounded independently of $n$. Indeed, Hölder's inequality gives
		\begin{align*}
			\|\rho\|_{L^{l_0}(t_n,t_{n+1};L^{\frac{\hat q_3+2}{\hat q_3}l_0})}
			&\le
			\|\rho\|_{L^\infty(t_n,t_{n+1};L^{\frac{\hat q_3+2}{\hat q_3}l_0})}
			(t_{n+1}-t_n)^{\frac1{l_0}}\\
			&\le
			(2T_0)^{\frac1{l_0}}
			\|\rho\|_{L^\infty(t_n,t_{n+1};L^{\frac{\hat q_3+2}{\hat q_3}l_0})}.
		\end{align*}
		Combining this estimate with Corollary~\ref{Cor 4.2}, we conclude that$
		\|\rho\|_{L^{l_0}(t_n,t_{n+1};L^{\frac{\hat q_3+2}{\hat q_3}l_0})}
		$ is bounded independently of $n$. On the other hand,
		\begin{align*}
			(C_7(n))^{C_8}
			=
			(\|\rho(t_n)\|_{L^2}
			+
			\|\rho(t_n)\|_{L^\infty}
			+
			1)^{C_8},
		\end{align*}
		which, together with \eqref{prop 3.1 0}, shows that $(C_7(n))^{C_8}$ is also bounded independently of $n$. Consequently, $\Upsilon_n(l_0)$ admits a bound independent of $n$.
		
		Therefore, Hölder's inequality and \eqref{4-7.1} imply
		\begin{align}
			\|\rho\|_{L^{l_{k+1}}(\mathbb{T}^3\times[t_n,t_{n+1}])}
			\le
			\Upsilon_n(l_{k+1})
			\le
			(2C_r^2C_6)^{\sum_{i=0}^{\infty}\frac{\xi_i+\eta_i}{2}}
			\prod_{i=0}^{\infty}(p_i+2)^{\xi_i+\eta_i}
			\Upsilon_n(l_0).
		\end{align}
		Finally, letting $k\to\infty$, we infer from \eqref{2-16.109} that $l_{k+1}\to\infty$. Hence,
		\begin{align*}
			\|\rho\|_{L^\infty(\mathbb{T}^3\times[t_n,t_{n+1}])}
			\le
			(2C_r^2C_6)^{\sum_{i=0}^{\infty}\frac{\xi_i+\eta_i}{2}}
			\prod_{i=0}^{\infty}(p_i+2)^{\xi_i+\eta_i}
			\Upsilon_n(l_0).
		\end{align*}
		Since the right-hand side is bounded independently of $n$, the desired uniform upper bound follows immediately.
		
		This completes the proof of Proposition~\ref{Prop 5.3}.
	\end{proof}
	
	\section{Supercritical Estimates II: $L^p$ Estimates for $v$ ($6\le p<n_{3,\alpha,\beta}+2$) and Uniform Density Lower Bounds}
	In this section, we further improve the integrability of the effective velocity to exponents exceeding $6$. This improvement is essential for establishing a uniform lower bound for the density.
	
	We begin with the following lemma.
	\begin{lema}\label{Lem5.5}
		Assume that, for some $k\in\mathbb{N}$ satisfying
		\[
		6\times3^k-2<n_{3,\alpha,\beta},
		\]
		the following estimates hold:
		\begin{align}
			&\sup_{0\leq t<\infty}\int \rho|v|^{6\times3^{k}}dx\leq C,\label{13-4}\\
			&\int_{\tau_1}^{\tau_2}\int \rho^\alpha|v|^{6\times3^{k-1}-2}|\nabla v|^2\,dxdt
			=o(\tau_2-\tau_1),
			\qquad\text{as }\tau_2-\tau_1\to\infty. \label{13-5}
		\end{align}
		Then it holds that
		\begin{align}\label{13-8}
			\int_{\tau_1}^{\tau_2}\int \rho^\alpha|v|^{6\times3^{k}-2}|\nabla v|^2\,dxdt
			=o(\tau_2-\tau_1),
			\qquad\text{as }\tau_2-\tau_1\to\infty.
		\end{align}
	\end{lema}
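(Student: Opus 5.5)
Write $s=6\times 3^k-2$ and $s'=6\times3^{k-1}-2$. The plan is to integrate the energy inequality \eqref{13-1} of Lemma~\ref{Lem 5.1} for this $s$ over $[\tau_1,\tau_2]$. This is legitimate because $s<n_{3,\alpha,\beta}$, so $\delta_s>0$. The result is
\begin{align*}
\delta_s\int_{\tau_1}^{\tau_2}\int\rho^\alpha|v|^{s}|\nabla v|^2dxdt\le \frac{1}{s+2}\int\rho|v|^{s+2}(\tau_1)dx+C\int_{\tau_1}^{\tau_2}\int\rho^\gamma|v|^s|\nabla v|dxdt.
\end{align*}
By \eqref{13-4}, the boundary term is bounded uniformly, hence $o(\tau_2-\tau_1)$. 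The whole task therefore reduces to showing that the pressure term is $o(\tau_2-\tau_1)$, up to a part that can be absorbed on the left.

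For the pressure term, I use the uniform upper bound \eqref{3d RT} of Proposition~\ref{Prop 5.3} to replace $\rho^\gamma$ by $C\rho^{\alpha}\cdot\rho^{\gamma-\alpha}\le C\rho^{\alpha/2}\cdot\rho^{\alpha/2}$, with constants depending on $\sup\rho$. Then I split
\[
\rho^\gamma|v|^s|\nabla v|=\big(\rho^{\alpha/2}|v|^{s/2}|\nabla v|\big)\big(\rho^{\gamma-\alpha/2}|v|^{s/2}\big).
\]
Cauchy--Schwarz with a small weight gives
\[
\frac{\delta_s}{2}\int\rho^\alpha|v|^s|\nabla v|^2dx+C\int\rho^{2\gamma-\alpha}|v|^{s}dx.
\]
The first piece is absorbed. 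It remains to show $\int_{\tau_1}^{\tau_2}\int\rho^{2\gamma-\alpha}|v|^sdxdt=o(\tau_2-\tau_1)$. Note that $s=3s'+4$.

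The key step is to control $\int|v|^s$ by the level-$(k-1)$ dissipation. Set $w=|v|^{(s'+2)/2}$. Then $|\nabla w|^2\le C|v|^{s'}|\nabla v|^2$, so \eqref{13-5} and the upper bound on $\rho$ give a lower bound of the form $\int_{\tau_1}^{\tau_2}\|\nabla w\|_{L^2}^2dt=o(\tau_2-\tau_1)$, after inserting a lower bound for $\rho^{\alpha}$. Sobolev gives $\|w\|_{L^6}^2\le C(\|\nabla w\|_{L^2}^2+\|w\|_{L^1}^2)$, and $\|w\|_{L^6}^6=\int|v|^{3s'+6}=\int|v|^{s+2}$. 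This is exactly why the exponents jump by a factor of $3$. Interpolating $|v|^s$ between $|v|^{s+2}$ and low moments, the remaining task is to show $\int|v|^{s}\lesssim \|\nabla w\|_{L^2}^{2}\cdot(\text{uniformly bounded factor})+\text{small}$ in time average.

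I would arrange this via Hölder: write $\int|v|^s\le \|w\|_{L^6}^{2}\,\big\||v|^{s-s'-2}\big\|_{L^{3/2}}$. The second factor is uniformly bounded by \eqref{13-4}, because $\tfrac32(s-s'-2)=\tfrac32(2s'+2)\le s+2$. Hence
\[
\int\rho^{2\gamma-\alpha}|v|^s\le C\|\nabla w\|_{L^2}^2+C\|w\|_{L^1}^2.
\]
The first term integrates to $o(\tau_2-\tau_1)$ by \eqref{13-5}.

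The main obstacle is the zeroth-order term $\|w\|_{L^1}^2$, together with the fact that no uniform lower bound on $\rho$ is yet available to convert $\rho^\alpha|v|^{s'}|\nabla v|^2$ into $|\nabla w|^2$. To handle the lower bound, I would keep the weight: work with $\rho^{\alpha/2}w$ and use the dissipation estimate \eqref{nabla rho 4} for $\nabla\rho$ (which is $o(\tau_2-\tau_1)$ since it is integrable on $(0,\infty)$) to absorb the commutator $w\nabla\rho^{\alpha/2}$. To handle the $L^1$ term, I would replace the inhomogeneous Sobolev inequality by a Poincaré-type one against $\rho$-weighted means. Alternatively, I would exploit that the pressure gradient pairs with $\nabla(\rho^\gamma)$ after integration by parts, $\int\nabla P\cdot|v|^sv=-\int P\,\divg(|v|^sv)$; combined with $\int\rho^{2\gamma-2}|\nabla\rho|^2$ being time-integrable, this gains sublinear growth without any zeroth-order term. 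I expect this last rearrangement to be where the real difficulty lies.
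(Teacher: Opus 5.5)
There is a genuine gap, at the reduction you make right after integrating \eqref{13-1}. When you apply Young's inequality to absorb the entire factor $|\nabla v|$ into the dissipation, you discard the only small quantity in the pressure term. The claim left over, namely that $\int_{\tau_1}^{\tau_2}\int\rho^{2\gamma-\alpha}|v|^{s}\,dxdt=o(\tau_2-\tau_1)$, is false in general. The reason is that $\int\rho v\,dx=\int\rho u\,dx+c\int\nabla\rho^\alpha\,dx=\int\rho_0u_0\,dx$ is conserved, so $v$ need not decay. Concretely, $\rho\equiv\rho_s$, $u\equiv a\neq0$ is an exact solution with $v\equiv a$. It satisfies \eqref{13-4} and \eqref{13-5}, since the dissipation vanishes, yet $\int_{\tau_1}^{\tau_2}\int\rho^{2\gamma-\alpha}|v|^s\,dxdt=\rho_s^{2\gamma-\alpha}|a|^s(\tau_2-\tau_1)$.

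The term $\|w\|_{L^1}^2$ that you flagged as the main obstacle is exactly this non-decaying part. No Poincar\'e inequality around (weighted) means can remove it, because it is the mean itself that does not decay. The alternative at the end does not close either:
\begin{itemize}
\item Integrating by parts simply returns $\int\rho^\gamma|v|^s|\nabla v|\,dx$.
\item Pairing $\rho^{\gamma-1}\nabla\rho$ with $|v|^{s+1}$ in $L^2$ needs $v\in L^{2s+2}$, with $2s+2=12\times3^k-2$. This is beyond the available $L^{6\times3^k}$ bound.
\end{itemize}
Your Sobolev step also relied on a lower bound for $\rho$, which is not yet available at this stage.

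The missing idea is to keep one factor of $|\nabla v|$ and pair it, by Cauchy--Schwarz in $x$, with a lower-level dissipation already known to be $o(\tau_2-\tau_1)$:
\begin{align*}
\int\rho^\gamma|v|^{\sigma}|\nabla v|\,dx\le\Big(\int\rho^\alpha|v|^{\iota}|\nabla v|^2dx\Big)^{\frac12}\Big(\int\rho^{2\gamma-\alpha}|v|^{2\sigma-\iota}dx\Big)^{\frac12}.
\end{align*}
The second factor is at most $\|\rho\|_{L^\infty}^{2\gamma-\alpha-1}\int\rho|v|^{2\sigma-\iota}dx$. Since $2\gamma-\alpha-1\ge0$, only the upper bound \eqref{3d RT} is needed. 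This factor is uniformly bounded by \eqref{13-4} provided $2\sigma-\iota\le6\times3^k$. Cauchy--Schwarz in time then gives $\int_{\tau_1}^{\tau_2}(\cdots)^{1/2}dt\le (o(\tau_2-\tau_1))^{1/2}(\tau_2-\tau_1)^{1/2}=o(\tau_2-\tau_1)$, and the boundary term is bounded.

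For $\sigma=6\times3^k-2$ and $\iota=6\times3^{k-1}-2$ one has $2\sigma-\iota=10\times3^k-2>6\times3^k$, so this cannot be done in one step. The paper instead climbs through the exponents $\iota_{i+1}=(6\times3^k+\iota_i)/2$, for which $2\iota_{i+1}-\iota_i=6\times3^k$ exactly. At each step the $o(\tau_2-\tau_1)$ property transfers upward, with boundary terms bounded via $|v|^{\iota_{i+1}+2}\le|v|^{6\times3^k}+1$. This continues until $\iota_{i_*}>6\times3^k-4$. The final step with $\sigma=6\times3^k-2$ then closes, because $2\sigma-\iota_{i_*}<6\times3^k$.
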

	\begin{proof}
		\noindent\textbf{Step 1.}
		Set
		\[
		\iota_0=6\times 3^{k-1}-2,\qquad
		\iota_1=\frac{6\times 3^k+\iota_0}{2}.
		\]
		Since $\iota_1<6\times3^k-2<n_{3,\alpha,\beta}$, we may apply \eqref{13-1} with $s=\iota_1$ to obtain
		\begin{align}\label{13-3}
			\frac{1}{\iota_1+2}\frac{d}{dt}\int \rho|v|^{\iota_1+2}dx+\delta_{\iota_1}\int \rho^\alpha|v|^{\iota_1}|\nabla v|^2dx\leq C\int \rho^\gamma|v|^{\iota_1}|\nabla v|dx.
		\end{align}
		Next, applying Hölder's inequality together with \eqref{13-4} and \eqref{3d RT}, we obtain
		\begin{align*}
			\begin{split}
				C\int\rho^\gamma|v|^{\iota_1}|\nabla v|dx
				&\leq C\Big(\int \rho^\alpha|v|^{\iota_0}|\nabla v|^2dx\Big)^{\frac{1}{2}}
				\Big(\int \rho^{2\gamma-\alpha}|v|^{6\times 3^k}dx\Big)^{\frac{1}{2}}\\
				&\leq C\Big(\int \rho^\alpha|v|^{\iota_0}|\nabla v|^2dx\Big)^{\frac{1}{2}}
				\norm{\rho}_{L^\infty}^{\frac{2\gamma-\alpha-1}{2}}\Big(\int \rho|v|^{6\times 3^k}dx\Big)^{\frac{1}{2}}\\
				&\leq C\Big(\int \rho^\alpha|v|^{\iota_0}|\nabla v|^2dx\Big)^{\frac{1}{2}}.
			\end{split}
		\end{align*}
		
		Therefore, integrating \eqref{13-3} over $(\tau_1,\tau_2)$ for any $0\le \tau_1<\tau_2<\infty$, and using \eqref{13-4}, \eqref{13-5}, Hölder's inequality, and Young's inequality, we arrive at
		\begin{align*}
			\begin{split}
				&\quad\delta_{\iota_1}\int_{\tau_1}^{\tau_2}\int \rho^\alpha|v|^{\iota_1}|\nabla v|^2dxdt\\
				&\leq \frac{1}{\iota_1+2}\int \rho|v|^{\iota_1+2}(\tau_1)dx
				-\frac{1}{\iota_1+2}\int \rho|v|^{\iota_1+2}(\tau_2)dx\\
				&\qquad
				+C\int_{\tau_1}^{\tau_2}\Big(\int \rho^\alpha|v|^{\iota_0}|\nabla v|^2dx\Big)^{\frac{1}{2}}dt\\
				&\leq \frac{1}{\iota_1+2}\int \rho(|v|^{6\times 3^k}+1)(\tau_1)dx+C\Big(\int_{\tau_1}^{\tau_2}\int \rho^\alpha|v|^{\iota_0}|\nabla v|^2dxdt\Big)^{\frac{1}{2}} \sqrt{\tau_2-\tau_1}\\
				&\leq C+o(\sqrt{\tau_2-\tau_1})\sqrt{\tau_2-\tau_1},
			\end{split}
		\end{align*}
		where we have used the fact that $\iota_1+2\leq 6\times 3^k$.  Consequently,
		\begin{align}
			\int_{\tau_1}^{\tau_2}\int \rho^\alpha|v|^{\iota_1}|\nabla v|^2dxdt=o(\tau_2-\tau_1),\text{ as }\tau_2-\tau_1\to\infty.
		\end{align}
		
		\noindent\textbf{Step 2.}
		Define the sequence $\{\iota_i\}_{i\ge1}$ recursively by
		\[
		\iota_i=\frac{6\times3^k+\iota_{i-1}}{2},\qquad i\in\mathbb{N}^+.
		\]
		The argument in Step 1 can be iterated. More precisely, whenever
		\begin{align}\label{13-5.55}
			\int_{\tau_1}^{\tau_2}\int \rho^\alpha|v|^{\iota_i}|\nabla v|^2dxdt=o(\tau_2-\tau_1),\text{ as }\tau_2-\tau_1\to\infty,
		\end{align}
		and
		\[
		\iota_{i+1}\leq 6\times3^k-2,\quad (\text{which automatically implies $\iota_{i+1}<n_{3,\alpha,\beta}$})
		\]
		it follows that
		\begin{align}\label{13-3.56}
			\int_{\tau_1}^{\tau_2}\int \rho^\alpha|v|^{\iota_{i+1}}|\nabla v|^2dxdt=o(\tau_2-\tau_1),\text{ as }\tau_2-\tau_1\to\infty.
		\end{align}
		
		Indeed, replacing $s$ by $\iota_{i+1}$ in \eqref{13-1} yields
		\begin{align}\label{13-3.55}
			\frac{1}{\iota_{i+1}+2}\frac{d}{dt}\int \rho|v|^{\iota_{i+1}+2}dx+\delta_{\iota_{i+1}}\int \rho^\alpha|v|^{\iota_{i+1}}|\nabla v|^2dx\leq C\int \rho^\gamma|v|^{\iota_{i+1}}|\nabla v|dx.
		\end{align}
		Applying Hölder's inequality together with \eqref{13-4} and \eqref{3d RT}, we obtain
		\begin{align*}
			\begin{split}
				C\int\rho^\gamma|v|^{\iota_{i+1}}|\nabla v|dx
				&\leq C\Big(\int \rho^\alpha|v|^{\iota_i}|\nabla v|^2dx\Big)^{\frac12}
				\Big(\int \rho^{2\gamma-\alpha}|v|^{6\times3^k}dx\Big)^{\frac12}\\
				&\leq C\Big(\int \rho^\alpha|v|^{\iota_i}|\nabla v|^2dx\Big)^{\frac12}.
			\end{split}
		\end{align*}
		Therefore, integrating \eqref{13-3.55} over $(\tau_1,\tau_2)$, and using
		\eqref{13-4}, \eqref{13-5.55}, Hölder's inequality, and Young's inequality, we arrive at
		\begin{align*}
			\begin{split}
				&\quad\delta_{\iota_{i+1}}\int_{\tau_1}^{\tau_2}\int \rho^\alpha|v|^{\iota_{i+1}}|\nabla v|^2dxdt\\
				&\leq \frac{1}{\iota_{i+1}+2}\int \rho|v|^{\iota_{i+1}+2}(\tau_1)dx
				-\frac{1}{\iota_{i+1}+2}\int \rho|v|^{\iota_{i+1}+2}(\tau_2)dx\\
				&\qquad
				+C\int_{\tau_1}^{\tau_2}\Big(\int \rho^\alpha|v|^{\iota_i}|\nabla v|^2dx\Big)^{\frac12}dt\\
				&\leq C+o(\sqrt{\tau_2-\tau_1})\sqrt{\tau_2-\tau_1},
			\end{split}
		\end{align*}
		where we have used the fact that $\iota_{i+1}+2\leq6\times3^k$.
		This proves \eqref{13-3.56}.
		
		\noindent\textbf{Step 3.}
		Since $\{\iota_i\}$ is a strictly increasing sequence converging to $6\times3^k$, there exists an integer $i_*\in\mathbb{N}^+$ such that
		\begin{align*}
			\iota_{i_*}\leq 6\times3^k-2,\qquad
			\iota_{i_*+1}>6\times3^k-2.
		\end{align*}
		By repeatedly applying the argument developed in Steps~1 and 2, we obtain
		\begin{align}\label{13-7}
			\int_{\tau_1}^{\tau_2}\int \rho^\alpha|v|^{\iota_{i_*}}|\nabla v|^2dxdt=o(\tau_2-\tau_1),\text{ as }\tau_2-\tau_1\to\infty.
		\end{align}
		
		Since $6\times3^k-2<n_{3,\alpha,\beta}$, we may substitute $s=6\times3^k-2$ into \eqref{13-1} to obtain
		\begin{align}\label{13-6}
			\frac{1}{6\times3^k}\frac{d}{dt}\int \rho|v|^{6\times3^k}dx+\delta_{6\times3^k-2}\int \rho^\alpha|v|^{6\times3^k-2}|\nabla v|^2dx\leq C\int \rho^\gamma|v|^{6\times3^k-2}|\nabla v|dx.
		\end{align}
		Applying Hölder's inequality together with \eqref{13-4} and \eqref{3d RT}, we obtain
		\begin{align*}
			\begin{split}
				C\int\rho^\gamma|v|^{6\times3^k-2}|\nabla v|dx
				&\leq C\Big(\int \rho^\alpha|v|^{\iota_{i_*}}|\nabla v|^2dx\Big)^{\frac12}
				\Big(\int \rho^{2\gamma-\alpha}|v|^{12\times3^k-4-\iota_{i_*}}dx\Big)^{\frac12}\\
				&\leq C\Big(\int \rho^\alpha|v|^{\iota_{i_*}}|\nabla v|^2dx\Big)^{\frac12},
			\end{split}
		\end{align*}
		where we have used the fact that
		\begin{align*}
			12\times3^k-4-\iota_{i_*}<6\times3^k
			\Longleftrightarrow
			\iota_{i_*}>6\times3^k-4
			\Longleftrightarrow
			\iota_{i_*+1}>6\times3^k-2.
		\end{align*}
		
		Therefore, integrating \eqref{13-6} over $(\tau_1,\tau_2)$ for any $0\leq \tau_1<\tau_2<\infty$, and using \eqref{13-4}, \eqref{13-7}, Hölder's inequality, and Young's inequality, we obtain
		\begin{align*}
			\begin{split}
				&\quad\delta_{6\times3^k-2}\int_{\tau_1}^{\tau_2}\int \rho^\alpha|v|^{6\times3^k-2}|\nabla v|^2dxdt\\
				&\leq \frac{1}{6\times3^k}\int \rho|v|^{6\times3^k}(\tau_1)dx
				-\frac{1}{6\times3^k}\int \rho|v|^{6\times3^k}(\tau_2)dx\\
				&\qquad
				+C\int_{\tau_1}^{\tau_2}\Big(\int \rho^\alpha|v|^{\iota_{i_*}}|\nabla v|^2dx\Big)^{\frac12}dt\\
				&\leq C+o(\sqrt{\tau_2-\tau_1})\sqrt{\tau_2-\tau_1},
			\end{split}
		\end{align*}
		which implies that \eqref{13-8} holds. 
		
		This completes the proof of Lemma~\ref{Lem5.5}.
	\end{proof}

\begin{lema}\label{Lem 5.6}
Assume that, for some $k\in\mathbb{N}$ satisfying
\begin{align}\label{63k+1}
6\times3^{k+1}-2<n_{3,\alpha,\beta},
\end{align}
the following estimates hold:
\begin{align}
&\sup_{0\leq t<\infty}\int \rho|v|^{6\times3^{k}}dx\le C,
\label{13-9}\\
&\int_{\tau_1}^{\tau_2}\int \rho^\alpha|v|^{6\times3^{k}-2}|\nabla v|^2dxdt
=o(\tau_2-\tau_1),
\qquad\text{as }\tau_2-\tau_1\to\infty.
\label{13-10}
\end{align}
Then there exists a  constant $C>0$ (possibly different) such that
\begin{align}
\sup_{0\leq t<\infty}\int \rho|v|^{6\times3^{k+1}}dx
\le C.
\label{13-11}
\end{align}
\end{lema}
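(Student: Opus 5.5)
The plan follows the time-discretization strategy of Section~2.3: first build a new sequence of good times adapted to the $k$-th level dissipation \eqref{13-10}, then run a Gr\"onwall argument for $\int\rho|v|^{6\times3^{k+1}}dx$ on each subinterval.

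\emph{Step 1: good times.} By \eqref{13-10} there is $\tau_0$ such that $\int_{\tau_1}^{\tau_2}\int\rho^\alpha|v|^{6\times3^k-2}|\nabla v|^2dxdt\le \tau_2-\tau_1$ whenever $\tau_2-\tau_1\ge\tau_0$. The estimate \eqref{nabla rho 4} is integrable on $(0,\infty)$, so its integral over any interval is bounded by $\hat C$. Fix $\widetilde T_0\ge\max\{\tau_0,T_0\}$ large; the lower bound $T_0$ is needed because the density control below uses \eqref{T_0 def'}. On each $[n\widetilde T_0,(n+1)\widetilde T_0]$ the mean value theorem gives $\widetilde t_n$ with
\[
\norm{\nabla\rho^{\frac{3\alpha-2}{4}}(\widetilde t_n)}_{L^4}^4\le \frac{\hat C}{\widetilde T_0}\qquad\text{and}\qquad \int\rho^\alpha|v|^{6\times3^k-2}|\nabla v|^2(\widetilde t_n)dx\le 2 .
\]
Here we apply the mean value theorem to the sum of the two integrands normalized suitably, with total integral at most $\hat C+\widetilde T_0$. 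Exactly as in the proof of Lemma~\ref{Lem 3.1}, this yields $\frac12\underline{\rho_0}\le\rho(\widetilde t_n)\le\frac32\overline{\rho_0}$. The subintervals $[\widetilde t_n,\widetilde t_{n+1}]$ then have length at most $2\widetilde T_0$.

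\emph{Step 2: uniform initial values.} Set $w=|v|^{3^{k+1}}$, so that $|w|^2=|v|^{6\times3^k}$ and $|w|^6=|v|^{6\times3^{k+1}}$, while $|\nabla w|^2\le C|v|^{6\times3^k-2}|\nabla v|^2$. The density lower bound at $\widetilde t_n$ gives $\norm{\nabla w(\widetilde t_n)}_{L^2}\le C$. Step 1 together with \eqref{13-9} gives $\norm{w(\widetilde t_n)}_{L^2}\le C$. Sobolev embedding $H^1\hookrightarrow L^6$ and $\rho(\widetilde t_n)\le\frac32\overline{\rho_0}$ then give a bound on $\int\rho|v|^{6\times3^{k+1}}(\widetilde t_n)dx$ that is independent of $n$. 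This is exactly why the exponent triples: it is what $H^1\hookrightarrow L^6$ permits. For $n=-1$ we use the initial data.

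\emph{Step 3: Gr\"onwall.} Since $6\times3^{k+1}-2<n_{3,\alpha,\beta}$ by \eqref{63k+1}, Lemma~\ref{Lem 5.1} applies with $s=6\times3^{k+1}-2$, giving \eqref{13-2}. Alternatively, one can redo the Young step using the uniform bound \eqref{3d RT} to handle $\int\rho^{2\gamma-\alpha}|v|^s\le C\int\rho|v|^{s+2}+C$. Gr\"onwall on each $[\widetilde t_n,\widetilde t_{n+1}]$, with interval length at most $2\widetilde T_0$ and the initial bound from Step 2, gives a bound uniform in $n$, which proves \eqref{13-11}.

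The main obstacle is Step 1. The sublinear bound \eqref{13-10} is only $o(\tau_2-\tau_1)$, not a finite total integral, so the mean value theorem gives only an $O(1)$ bound rather than a small one. This must be reconciled with the density control, which needs the smallness $\hat C/\widetilde T_0$. Handling the two quantities with separate normalizations, as in Step 1, resolves this, because only boundedness of the velocity term is required.
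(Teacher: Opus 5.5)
Your proposal is correct and is essentially the paper's proof: good times are chosen from the dissipation bounds, the density at those times is pinned exactly as in Lemma~\ref{Lem 3.1}, the initial value is controlled by applying $H^1\hookrightarrow L^6$ to $|v|^{3^{k+1}}$, and Gr\"onwall is applied via \eqref{13-2} with $s=6\times3^{k+1}-2$. The only difference is how $\widetilde t_n$ is chosen. The paper uses that $o(\tau_2-\tau_1)$ makes the window average of $\|\nabla\rho\|_{L^4}^4+\int\rho^\alpha|v|^{6\times3^k-2}|\nabla v|^2dx$ arbitrarily small, so the obstacle you describe does not actually arise. Your weighted-sum argument needs only a linear bound on the dissipation, but averaging $\frac{\widetilde T_0}{\hat C}\|\nabla\rho^{\frac{3\alpha-2}{4}}\|_{L^4}^4+\int\rho^\alpha|v|^{6\times3^k-2}|\nabla v|^2dx$ gives only $\|\nabla\rho^{\frac{3\alpha-2}{4}}(\widetilde t_n)\|_{L^4}^4\le 2\hat C/\widetilde T_0$, so you should take $\widetilde T_0\ge\max\{\tau_0,2T_0\}$.
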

\begin{proof}
		Fix $\delta^{(1)}>0$ to be sufficiently small. By \eqref{3d RT}, \eqref{nabla rho 4}, and \eqref{13-10}, we have
		\begin{align*}
			\int_{\tau_1}^{\tau_2}\int \Big(|\nabla\rho|^4+\rho^\alpha|v|^{6\times 3^{k}-2}|\nabla v|^2\Big)dxdt
			=o(\tau_2-\tau_1),\qquad\text{as }\tau_2-\tau_1\to\infty.
		\end{align*}
		Therefore, there exists a sufficiently large constant $T_0^{(1)}>0$, depending on $\delta^{(1)}$, such that, for every $n\in\mathbb N$,
		\begin{align*}
			\frac{1}{T_0^{(1)}}\int_{nT_0^{(1)}}^{(n+1)T_0^{(1)}}\int
			\Big(|\nabla\rho|^4+\rho^\alpha|v|^{6\times 3^{k}-2}|\nabla v|^2\Big)dxdt
			\le\delta^{(1)}.
		\end{align*}
		By the mean value theorem for integrals, there exists a point
		$t_n^{(1)}\in[nT_0^{(1)},(n+1)T_0^{(1)}]$ such that
		\begin{align*}
			\int\Big(|\nabla\rho|^4+\rho^\alpha|v|^{6\times 3^{k}-2}|\nabla v|^2\Big)(t_n^{(1)})dx
			\le\delta^{(1)}.
		\end{align*}
		Arguing as in the proof of Lemma~\ref{Lem 3.1}, we now choose $\delta^{(1)}>0$ sufficiently small so that
		\begin{align}
			\frac{1}{2}\underline{\rho_0}\leq \rho(x,t_n^{(1)})\leq \frac{3}{2}\overline{\rho_0},
			\quad \forall x\in\mathbb{T}^3,
			\label{13-13}\\
			\int |v|^{6\times3^{k}-2}|\nabla v|^2(t_n^{(1)})dx
			\leq
			\Big(\frac{2}{\underline{\rho_0}}\Big)^\alpha.
			\label{13-14}
		\end{align}
		
		Since \eqref{63k+1} holds, we may substitute
		$s=6\times3^{k+1}-2$ into \eqref{13-2} to obtain
		\begin{align}
			\begin{split}
				\frac{d}{dt}\int \rho|v|^{6\times3^{k+1}}dx
				\le
				C_{4,6\times3^{k+1}-2}
				\int \rho|v|^{6\times3^{k+1}}dx
				+
				C_{4,6\times3^{k+1}-2}.
			\end{split}
		\end{align}
		Set $t_{-1}^{(1)}=0$. Applying Grönwall's inequality on each interval
		$[t_n^{(1)},t_{n+1}^{(1)}]$, $n\in\mathbb N\cup\{-1\}$, yields
		\begin{align}\label{13-15}
			\begin{split}
				&\quad
				\sup_{t\in[t_n^{(1)},t_{n+1}^{(1)}]}
				\int \rho|v|^{6\times3^{k+1}}dx\\
				&\le
				\int \rho|v|^{6\times3^{k+1}}(t_n^{(1)})dx
				e^{C_{4,6\times3^{k+1}-2}(t_{n+1}^{(1)}-t_n^{(1)})}
				+
				e^{C_{4,6\times3^{k+1}-2}(t_{n+1}^{(1)}-t_n^{(1)})}
				-
				1\\
				&\le
				\int \rho|v|^{6\times3^{k+1}}(t_n^{(1)})dx
				e^{2C_{4,6\times3^{k+1}-2}T_0^{(1)}}
				+
				e^{2C_{4,6\times3^{k+1}-2}T_0^{(1)}}
				-
				1.
			\end{split}
		\end{align}
		
		It remains to show that the first term on the right-hand side of
		\eqref{13-15} is bounded independently of $n$. By \eqref{13-13} and the Sobolev embedding theorem,
		\begin{align*}
			\int \rho |v|^{6\times3^{k+1}}(t_n^{(1)})dx
			&\le
			\frac{3}{2}\overline{\rho_0}
			\|v(t_n^{(1)})\|_{L^{6\times3^{k+1}}}^{6\times3^{k+1}}\\
			&\le
			\frac{3}{2}\overline{\rho_0}
			\||v|^{3^{k+1}}(t_n^{(1)})\|_{L^6}^{6}\\
			&\le
			\frac{3}{2}\overline{\rho_0}
			C\||v|^{3^{k+1}}(t_n^{(1)})\|_{H^1}^{6}.
		\end{align*}
		The right-hand side is uniformly bounded with respect to $n$ in view of
		\eqref{13-13}, \eqref{13-14}, and \eqref{13-9}. Consequently, the
		right-hand side of \eqref{13-15} is bounded independently of $n$, which
		establishes \eqref{13-11}. This completes the proof of
		Lemma~\ref{Lem 5.6}.
	\end{proof}
	
	With the aid of Lemmas~\ref{Lem5.5} and \ref{Lem 5.6}, we establish the $L^p$ integrability of the effective velocity for all $6\le p<n_{3,\alpha,\beta}+2.$
	\begin{prop}\label{Prop 5.4}
		For any $s\in(4,n_{3,\alpha,\beta})$, there exists a constant $C>0$, depending only on
		$\gamma,\alpha,\nu,\varepsilon,E_0,\underline{\rho_0},\overline{\rho_0},
		\hat{q}_3,s,
		\norm{\rho_0^{1/(\hat{q}_3+2)}v_0}_{L^{\hat{q}_3+2}}$, and
		$\norm{\rho_0^{1/(s+2)}v_0}_{L^{s+2}}$, such that
		\begin{align}\label{13-16}
			\sup_{0\le t<\infty}\int \rho|v|^{s+2}\,dx\le C.
		\end{align}
	\end{prop}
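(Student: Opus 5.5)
The plan is an induction on $k$ that alternates Lemma~\ref{Lem5.5} and Lemma~\ref{Lem 5.6}, followed by an interpolation step to reach the general exponent $s+2$.

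\emph{Base level $k=0$.} Proposition~\ref{Prop 5.1} with $s=4$ gives $\sup_t\int\rho|v|^6dx\le C$. For the dissipation we need
\[
\int_{\tau_1}^{\tau_2}\int\rho^\alpha|v|^{0}|\nabla v|^2\,dxdt=o(\tau_2-\tau_1),
\]
which follows from \eqref{2-2}: the left-hand side is bounded by a constant. Since $6-2=4<n_{3,\alpha,\beta}$ by \eqref{3-6.5}, Lemma~\ref{Lem5.5} applies with $k=0$ and yields the level-$0$ dissipation estimate
\[
\int_{\tau_1}^{\tau_2}\int\rho^\alpha|v|^{4}|\nabla v|^2\,dxdt=o(\tau_2-\tau_1).
\]
Here $6\times3^{-1}-2=0$, so Lemma~\ref{Lem5.5} with $k=0$ is formally meaningful.

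\emph{Inductive step.} Suppose that at level $k$ both the uniform bound $\sup_t\int\rho|v|^{6\times3^k}dx\le C$ and the dissipation estimate with weight $|v|^{6\times3^k-2}$ hold, and that $6\times3^{k+1}-2<n_{3,\alpha,\beta}$. Lemma~\ref{Lem 5.6} then gives the uniform bound at level $k+1$. With this bound and the level-$k$ dissipation estimate as the hypotheses \eqref{13-4}--\eqref{13-5} at index $k+1$, Lemma~\ref{Lem5.5} gives the dissipation estimate at level $k+1$. The iteration continues as long as $6\times3^{k+1}-2<n_{3,\alpha,\beta}$. Let $K$ be the largest $k$ with $6\times3^k-2<n_{3,\alpha,\beta}$. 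After finitely many steps we obtain
\[
\sup_t\int\rho|v|^{6\times3^K}dx\le C
\]
together with the matching dissipation estimate. Since $n_{3,\alpha,\beta}<\infty$ when $\alpha<1$, $K$ is finite. When $\alpha=1$ every level is admissible, and we simply stop once $6\times3^K\ge s+2$.

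\emph{Reaching a general $s\in(4,n_{3,\alpha,\beta})$.} This is the main obstacle, because the tripling ladder may jump past $s+2$ or stop short of it. I would redo the Gr\"onwall argument of Lemma~\ref{Lem 5.6} directly at exponent $s+2$. Take the largest $k\le K$ with $6\times3^k\le s+2$. We have $s+2\le 6\times 3^{k+1}$ when $k<K$; in the case $k=K$, maximality of $K$ gives $6\times3^{K+1}-2\ge n_{3,\alpha,\beta}>s$, so again $s+2<6\times 3^{K+1}$. Using the level-$k$ dissipation estimate, choose times $t^{(1)}_n$ in windows of uniform length at which the density lies between $\frac12\underline{\rho_0}$ and $\frac32\overline{\rho_0}$ and $\int|v|^{6\times3^k-2}|\nabla v|^2(t_n^{(1)})dx\le C$. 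At those times $|v|^{3^{k+1}}$ is bounded in $H^1$, which controls $\|v\|_{L^{6\times3^{k+1}}}\ge C^{-1}\|v\|_{L^{s+2}}$ on the torus. The estimate \eqref{13-2} with this $s$, which is valid since $s<n_{3,\alpha,\beta}$, and Gr\"onwall's inequality on $[t^{(1)}_n,t^{(1)}_{n+1}]$ then give \eqref{13-16} uniformly in $n$. On the first interval we use the assumption on $\rho_0^{1/(s+2)}v_0$. The only delicate bookkeeping is checking that the initial-time Sobolev bound needs only the level-$k$ quantities, which the choice $s+2\le6\times3^{k+1}$ guarantees.
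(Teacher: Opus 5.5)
Your proposal is correct and follows the paper's proof essentially step for step. It uses the same induction alternating Lemma~\ref{Lem5.5} and Lemma~\ref{Lem 5.6}, starting from the base supplied by Proposition~\ref{Prop 5.1} and \eqref{2-2}, and then a fresh time-discretization and Gr\"onwall argument at exponent $s+2$. The only difference is cosmetic: the paper runs this last step only at the top level $k_0$ (for $s>6\times3^{k_0}-2$) and leaves intermediate exponents to the ladder plus interpolation, whereas you run it at the level adapted to $s$, which covers every $s\in(4,n_{3,\alpha,\beta})$ in one uniform argument.
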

	
	\begin{proof} 
		We first consider the case $\alpha<1$. Let $k_0\in\mathbb{N}$ be such that
		\begin{align*}
			6\times 3^{k_0}-2<n_{3,\alpha,\beta}\leq 6\times 3^{k_0+1}-2.
		\end{align*}
		
		\noindent\textbf{Step I.}
		We claim that \eqref{13-16} holds with $s=6\times3^{k_0}-2$.
		
		If $k_0=0$, then the claim follows directly from Proposition~\ref{Prop 5.1}.
		
		If $k_0\ge1$, we prove the claim by induction. Assume that, for some
		$k\le k_0-1$ with $k\in\mathbb{N}$,
		\begin{align}
			\int_{\tau_1}^{\tau_2}\int \rho^\alpha|v|^{6\times 3^{k-1}-2}|\nabla v|^2dxdt&= o(\tau_2-\tau_1),\text{ as }\tau_2-\tau_1\to\infty,\\
			\sup_{0\leq t<\infty}\int \rho |v|^{6\times 3^k}dx&\leq C.\label{14-5}
		\end{align}
		Then, by Lemmas~\ref{Lem5.5} and \ref{Lem 5.6}, it follows that
		\begin{align}
			\int_{\tau_1}^{\tau_2}\int \rho^\alpha|v|^{6\times 3^{k}-2}|\nabla v|^2dxdt&= o(\tau_2-\tau_1),\text{ as }\tau_2-\tau_1\to\infty,\label{14-6}\\
			\sup_{0\leq t<\infty}\int \rho |v|^{6\times 3^{k+1}}dx&\leq C. \label{14-7}
		\end{align}
		It remains to verify the induction basis. Proposition~\ref{Prop 5.1} together with \eqref{2-2} yields
		\begin{align}
			\int_{\tau_1}^{\tau_2}\int \rho^\alpha|\nabla v|^2dxdt&= o(\tau_2-\tau_1),\text{ as }\tau_2-\tau_1\to\infty, \\
			\sup_{0\leq t<\infty}\int \rho |v|^{6}dx&\leq C_9,
		\end{align}
		where $C_9>0$ depends only on
		$\gamma,\alpha,\nu,\varepsilon,E_0,
		\underline{\rho_0},\overline{\rho_0}$,
		and $\norm{\rho_0^{1/6}v_0}_{L^{6}}$.
		Therefore, the induction can be carried out successively for
		$k=0,1,\ldots,k_0-1$, which proves that \eqref{13-16} holds with
		$s=6\times3^{k_0}-2$. This completes the proof of the claim.
		
		\noindent\textbf{Step II.}
		We next claim that \eqref{13-16} holds for every
		$s\in(6\times3^{k_0}-2,n_{3,\alpha,\beta})$.
		
		By Step~I, we have already established
		\begin{align}
			\int_{\tau_1}^{\tau_2}\int \rho^\alpha|v|^{6\times 3^{k_0-1}-2}|\nabla v|^2dxdt&= o(\tau_2-\tau_1),\text{ as }\tau_2-\tau_1\to\infty,\\
			\sup_{0\leq t<\infty}\int \rho |v|^{6\times 3^{k_0}}dx&\leq C. \label{14-9}
		\end{align}
		Applying Lemma~\ref{Lem5.5}, we further obtain
		\begin{align}\label{14-8}
			\int_{\tau_1}^{\tau_2}\int \rho^\alpha|v|^{6\times 3^{k_0}-2}|\nabla v|^2dxdt
			=o(\tau_2-\tau_1),\text{ as }\tau_2-\tau_1\to\infty.
		\end{align}
		Next, we fix a sufficiently small constant $\delta^{(2)}>0$. By
		\eqref{3d RT}, \eqref{nabla rho 4}, and \eqref{14-8}, we have
		\begin{align*}
			\int_{\tau_1}^{\tau_2}\int
			\Big(|\nabla\rho|^4+\rho^\alpha|v|^{6\times3^{k_0}-2}|\nabla v|^2\Big)
			dxdt
			=o(\tau_2-\tau_1),
			\quad\text{as }\tau_2-\tau_1\to\infty.
		\end{align*}
		Consequently, there exists $T_0^{(2)}>0$, sufficiently large depending on $\delta^{(2)}$, such that for every $n\in\mathbb{N}$,
		\begin{align*}
			\frac{1}{T_0^{(2)}}
			\int_{nT_0^{(2)}}^{(n+1)T_0^{(2)}}
			\int
			\Big(|\nabla\rho|^4+\rho^\alpha|v|^{6\times3^{k_0}-2}|\nabla v|^2\Big)
			dxdt
			\le\delta^{(2)}.
		\end{align*}
		By the mean value theorem, there exists
		$t_n^{(2)}\in[nT_0^{(2)},(n+1)T_0^{(2)}]$ such that
		\begin{align*}
			\int
			\Big(|\nabla\rho|^4+\rho^\alpha|v|^{6\times3^{k_0}-2}|\nabla v|^2\Big)
			(t_n^{(2)})dx
			\le\delta^{(2)}.
		\end{align*}
		Arguing as in the proof of Lemma~\ref{Lem 3.1}, we choose $\delta^{(2)}>0$ sufficiently small so that
		\begin{align}
			\frac{1}{2}\underline{\rho_0}
			\le
			\rho(x,t_n^{(2)})
			\le
			\frac{3}{2}\overline{\rho_0},
			\qquad
			\forall x\in\mathbb{T}^3,
			\label{14-13}\\
			\int |v|^{6\times3^{k_0}-2}
			|\nabla v|^2(t_n^{(2)})dx
			\le
			\Big(\frac{2}{\underline{\rho_0}}\Big)^\alpha.
			\label{14-14}
		\end{align}
		
		Since $s<n_{3,\alpha,\beta}$, it follows from \eqref{13-2} that
		\begin{align*}
			\frac{d}{dt}\int \rho|v|^{s+2}dx
			\le
			C_{4,s}\int \rho|v|^{s+2}dx+C_{4,s}.
		\end{align*}
		Set $t_{-1}^{(2)}=0$. Applying Gr\"onwall's inequality on each interval
		$[t_n^{(2)},t_{n+1}^{(2)}]$, $n\in\mathbb{N}\cup\{-1\}$, yields
		\begin{align}\label{14-15}
			\begin{split}
				&\quad
				\sup_{t\in[t_n^{(2)},t_{n+1}^{(2)}]}
				\int \rho|v|^{s+2}dx\\
				&\le
				\int \rho|v|^{s+2}(t_n^{(2)})dx
				e^{C_{4,s}(t_{n+1}^{(2)}-t_n^{(2)})}
				+
				e^{C_{4,s}(t_{n+1}^{(2)}-t_n^{(2)})}
				-
				1\\
				&\le
				\int \rho|v|^{s+2}(t_n^{(2)})dx
				e^{2C_{4,s}T_0^{(2)}}
				+
				e^{2C_{4,s}T_0^{(2)}}
				-
				1.
			\end{split}
		\end{align}
		
		It remains to estimate the first term on the right-hand side of
		\eqref{14-15}. Since
		$s<n_{3,\alpha,\beta}\le6\times3^{k_0+1}-2$, it follows from
		\eqref{14-13}, Hölder's inequality, and the Sobolev embedding that
		\begin{align*}
			\int \rho|v|^{s+2}(t_n^{(2)})dx
			&\le
			\frac{3}{2}\overline{\rho_0}
			\|v(t_n^{(2)})\|_{L^{6\times3^{k_0+1}}}^{\,s+2}\\
			&\le
			\frac{3}{2}\overline{\rho_0}
			\||v|^{3^{k_0+1}}(t_n^{(2)})\|_{L^6}^{\frac{s+2}{3^{k_0+1}}}\\
			&\le
			\frac{3}{2}\overline{\rho_0}
			C\||v|^{3^{k_0+1}}(t_n^{(2)})\|_{H^1}^{\frac{s+2}{3^{k_0+1}}}.
		\end{align*}
		The right-hand side is uniformly bounded with respect to $n$ by
		\eqref{14-13}, \eqref{14-14}, and \eqref{14-9}. Consequently,
		\eqref{14-15} yields the desired estimate, thereby completing Step~II.
		
		Finally, when $\alpha=1$, $n_{3,1,0}=\infty$. Therefore, the same induction argument as in Step~I can be carried out successively for
		$k_0=0,1,2,\ldots$, which completes the proof of the proposition.
	\end{proof}
	
	We are now in a position to establish a uniform positive lower bound for the density in the case $\alpha<1$. To this end, we introduce the following notation. For any $(\alpha,\beta,\gamma)$ satisfying \eqref{3d gamma}, we choose
	$\check{q}_3$, depending only on $\alpha$ and $\beta$, such that
	\begin{align}\label{check q3}
		\frac{2\alpha}{1-\alpha}<\check{q}_3<n_{3,\alpha,\beta}.
	\end{align}
	The existence of such a $\check{q}_3$ is guaranteed by the assumption
	$\beta\in[0,\beta_3^{+}(\alpha))$, which implies that
	$\frac{2\alpha}{1-\alpha}<n_{3,\alpha,\beta}$.
	Then Proposition~\ref{Prop 5.4} immediately yields
	\begin{align}\label{14-0}
		\sup_{0\leq t<\infty}\int \rho|v|^{\check{q}_3+2}dx\leq C,
	\end{align}
	where the constant $C>0$ depends only on
	$\gamma,\alpha,\nu,\varepsilon,E_0,
	\underline{\rho_0},\overline{\rho_0},
	\hat{q}_3,\check{q}_3,
	\|\rho_0^{1/(\hat{q}_3+2)}v_0\|_{L^{\hat{q}_3+2}}$,
	and
	$\|\rho_0^{1/(\check{q}_3+2)}v_0\|_{L^{\check{q}_3+2}}$.
	
	With the above uniform integrability estimate for the effective velocity at hand, we are ready to establish a uniform positive lower bound for the density.
	\begin{prop}\label{Prop 7.2}
		Assume that \eqref{3d gamma} holds. Then there exists a positive constant $C>0$, depending only on
		$\gamma,\alpha,\nu,\varepsilon,E_0,
		\underline{\rho_0},\overline{\rho_0},
		\hat{q}_3,\check{q}_3,
		\|\rho_0^{1/(\hat{q}_3+2)}v_0\|_{L^{\hat{q}_3+2}}$,
		and
		$\|\rho_0^{1/(\check{q}_3+2)}v_0\|_{L^{\check{q}_3+2}}$,
		such that
		\begin{align}\label{14-16}
			\sup_{0\leq t<\infty}\norm{\rho^{-1}}_{L^\infty}\leq C.
		\end{align}
	\end{prop}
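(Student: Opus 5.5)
The plan is to mirror the proof of Proposition~\ref{Prop 5.3}: run a modified Nash--Moser iteration for $\tau:=\rho^{-1}$ on each interval $[t_n,t_{n+1}]$ from Lemma~\ref{Lem 3.1}. All constants must be independent of $n$, which will follow from three facts: $t_{n+1}-t_n\le 2T_0$, the bound $\rho(t_n)\ge \frac12\underline{\rho_0}$ from \eqref{prop 3.1 0}, and the uniform estimate \eqref{14-0}. The equation for $\tau$ is
\[
\partial_t\tau-c\alpha\divg(\tau^{1-\alpha}\nabla\tau)+2c\alpha\tau^{-\alpha}|\nabla\tau|^2+v\cdot\nabla\tau-\tau\divg v=0 .
\]
I would test it with $\rho\,\tau^{p+1}=\tau^{p}$, or equivalently test the mass equation in \eqref{Equ_0'} with $\rho^{-p-1}$. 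This choice has two advantages. The term $2c\alpha\tau^{-\alpha}|\nabla\tau|^2$ has a good sign, and the transport term becomes $\int \rho v\cdot\nabla\rho^{-p-1}\,dx$ after integration by parts. The latter term is dominated by $(p+1)\int \rho^{-p-1}|v||\nabla\rho|\,dx$, up to a harmless constant.

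\textbf{Energy inequality.} Young's inequality absorbs half of the term into the dissipation $c\alpha(p+1)\int\rho^{\alpha-p-3}|\nabla\rho|^2\,dx$. The remainder is $C(p+1)\int\rho^{1-\alpha-p}|v|^2\,dx$. Hölder's inequality against $\int\rho|v|^{\check q_3+2}\,dx$ bounds this by
\[
C(p+1)\Bigl(\int \tau^{(p+\alpha-1+\frac{2}{\check q_3+2})\frac{\check q_3+2}{\check q_3}}\,dx\Bigr)^{\frac{\check q_3}{\check q_3+2}} .
\]
This mirrors \eqref{2-6}--\eqref{2-8}, with the roles of $\rho$ and $\tau$ exchanged. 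Integrating over $[t_n,t_{n+1}]$ gives an inequality of the form \eqref{2-8.4}: $\|\tau^{p/2}\|_{L^\infty L^2}^2$ plus the $L^2$-norm of the gradient of a power of $\tau$ is bounded by $C(p+1)^2\|\tau\|_{L^l L^{l(\check q_3+2)/\check q_3}}^l$ plus the initial term. The initial term $\|\tau(t_n)\|_{L^p}^p\le (2/\underline{\rho_0})^p$ is controlled via \eqref{prop 3.1 0}.

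\textbf{Lower-order part and reverse H\"older step.} The lower-order part of the $H^1$ norm is handled as in \eqref{2-9}--\eqref{2-10}. Here I would use the uniform upper bound \eqref{3d RT} in place of mass conservation, since the powers of $\tau$ may now carry $\rho^{\alpha-1}$-type factors. The dissipated quantity is $\nabla\tau^{(p+1-\alpha)/2}$ up to constants. The gain in the exponent therefore comes with a \emph{loss} of $1-\alpha$, whereas for $\rho$ it was a gain of $\alpha+1$. Sobolev embedding $H^1\hookrightarrow L^r$ ($r<6$) combined with the $L^\infty L^2$ bound gives $o/l\to d>1$ as $p\to\infty$ exactly when
\[
\frac{\check q_3}{\check q_3+2}-\frac{2}{r}>0
\]
and the analogue of \eqref{2-5.6} holds. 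Balancing the exponents, the condition becomes $\frac{2}{\check q_3+2}<1-\alpha$, that is, $\check q_3>\frac{2\alpha}{1-\alpha}$. This is precisely the choice \eqref{check q3}, so $r$ can be taken close to $6$.

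\textbf{Iteration and initialization.} Steps~3--4 of Proposition~\ref{Prop 5.3} then carry over verbatim. The convergent sums and products are independent of $n$. The iteration is initialized with $\|\tau\|_{L^{l_0}L^{(\cdot)}}$ on $[t_n,t_{n+1}]$, uniformly in $n$, and obtaining this starting bound is the step I expect to be the main obstacle. My plan is to run the same $L^{p}$ energy inequality for a fixed moderate $p$ with Grönwall on $[t_n,t_{n+1}]$, starting from \eqref{prop 3.1 0}, and to absorb the right-hand side by Gagliardo--Nirenberg interpolation exactly as in Proposition~\ref{Prop 3.2}. This closes because the power on the right-hand side is subcritical relative to the dissipation, again by $\check q_3>\frac{2\alpha}{1-\alpha}$. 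Letting $k\to\infty$ gives $\|\tau\|_{L^\infty(\mathbb{T}^3\times[t_n,t_{n+1}])}\le C$ independently of $n$, which is \eqref{14-16}.
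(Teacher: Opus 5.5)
Your plan matches the paper's. The paper runs the lower-bound Nash--Moser iteration of Proposition~5.4 of \cite{Gu-Huang-Meng-Zhou-2026} for $\tau=\rho^{-1}$ on each $[t_n,t_{n+1}]$. Uniformity in $n$ comes from $t_{n+1}-t_n\le 2T_0$, \eqref{prop 3.1 0}, and \eqref{14-0}, exactly the three facts you isolate.

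\textbf{The gap.} Your treatment of the lower-order part of the $H^1$ norm fails as written. The dissipation controls $\nabla\tau^{\frac{p+1-\alpha}{2}}$, so you must bound $\int_{t_n}^{t_{n+1}}\int\tau^{p+1-\alpha}\,dx\,dt$ by $\sup_t\int\tau^p\,dx$. Since $\alpha<1$, the power $p+1-\alpha$ exceeds $p$, so H\"older's inequality on $\mathbb{T}^3$ goes the wrong way. The bound \eqref{3d RT} only gives $\tau\ge M^{-1}$, which controls lower powers of $\tau$ by higher ones, not the reverse. Bounding the extra factor $\tau^{1-\alpha}=\rho^{\alpha-1}$ from above is exactly the lower bound you are trying to prove. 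For $\rho$ this issue did not arise, because the dissipated power $p+\alpha+1$ lies below $p+2$. The same flaw appears in your initialization. Proposition~\ref{Prop 3.2} used $\int\rho^{\alpha+r}dx\le C\int\rho^{r+1}dx+C$, and its analogue $\int\tau^{p+1-\alpha}dx\le C\int\tau^p dx+C$ is false.

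\textbf{The repair.} Use $\|f\|_{L^r}\le C\|\nabla f\|_{L^2}+C\|f\|_{L^1}$ with $f=\tau^{\frac{p+1-\alpha}{2}}$, together with $\|f\|_{L^1}^2\le\big(\int\tau^p\,dx\big)^{\frac{p+1-\alpha}{p}}$. This gives $\|f\|_{L^2(t_n,t_{n+1};L^r)}^2\le CX+CT_0X^{\frac{p+1-\alpha}{p}}$, where $X$ is the left-hand side of your integrated energy inequality. This bound is superlinear in $X$.

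Insert it into the reverse H\"older step, with $\Upsilon_n$ defined as in Proposition~\ref{Prop 5.3} but with $\tau$ in place of $\rho$. Set $o=p\kappa+p+1-\alpha$, $\kappa=\frac{\check q_3}{\check q_3+2}-\frac2r$, and $l=p-(1-\alpha)+\frac{2}{\check q_3+2}$. One obtains $\Upsilon_n(o)\le C^{1/o}(Cp^2)^{1/p}\Upsilon_n(l)^{l/p}$. This is a linear recursion of the type \eqref{2-14} precisely when $l\le p$, i.e. $\check q_3\ge\frac{2\alpha}{1-\alpha}$.

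The same condition turns your fixed-$p$ differential inequality into the linear form $\frac{d}{dt}\int\tau^p dx\le C\int\tau^p dx+C$, rather than $C(\int\tau^p dx)^{l/p}+C$. That linearity is what makes Gr\"onwall work on intervals of length up to $2T_0$, uniformly in $n$.

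So \eqref{check q3} enters through this superlinear lower-order term, not through the analogue of \eqref{2-5.6}. With your linear bookkeeping, that analogue, $(1+\kappa)l\le o$, would already hold under the weaker condition $(1+\kappa)\big(\frac{2}{\check q_3+2}-(1-\alpha)\big)\le 1-\alpha$.
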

	\begin{proof}
		We carry out, on each interval $[t_n,t_{n+1}]$, $n\in\mathbb{N}\cup\{-1\}$, the same iteration procedure as that in the proof of Proposition~5.4 of \cite{Gu-Huang-Meng-Zhou-2026} to derive a positive lower bound for the density on $[t_n,t_{n+1}]$. By Lemma~\ref{Lem 3.1}, the lengths of the intervals $[t_n,t_{n+1}]$ are uniformly bounded from above, and the density at each initial time $t_n$ admits uniform positive upper and lower bounds. Moreover, the effective velocity estimate \eqref{14-0} is uniform in time. Consequently, the lower bound for the density obtained through the iteration on each interval is independent of $n$, which yields the desired uniform positive lower bound for the density. For brevity, we omit the details, since the proof follows that of Proposition~5.4 in \cite{Gu-Huang-Meng-Zhou-2026} step by step.
	\end{proof}
	
	\section{Higher-order Estimates ($\alpha<1$)}
	In this section, we derive higher-order estimates for the density and the effective velocity under assumption \eqref{3d gamma}, for which the effective velocity formulation reads as
	\begin{equation}
		\label{0}
		\left\{
		\begin{array}{l}
			\rho_t+\div(\rho v)-c\Delta\rho^\alpha=0,\\
			\rho v_t+\rho u\cdot\nabla v+\nabla P=\divg(\rho^\alpha\mathbb{F}[v]),\\
			(\rho, v)(x,0)=(\rho_0(x),v_0(x)),
		\end{array}
		\right.
	\end{equation}
	where
	\[
	\mathbb{F}[v]
	=\nu\nabla v
	+(\nu-c)(\nabla v)^\top
	+(\alpha-1)(2\nu-c)\divg v\,\mathbb{I}.
	\]
	As an application of these estimates, we further establish the large-time behavior of both the density and the effective velocity.
	
	The following proposition is a consequence of the energy estimates, the uniform upper and lower bounds for the density, and the higher integrability of the effective velocity.
	\begin{prop}\label{Prop 8.0}
		Assume that \eqref{3d gamma} holds. Then there exists a constant $M>0$, depending only on
		$\gamma,\alpha,\nu,\varepsilon,E_0,
		\underline{\rho_0},\overline{\rho_0},
		\hat{q}_3,\check{q}_3,
		\|\rho_0^{1/(\hat{q}_3+2)}v_0\|_{L^{\hat{q}_3+2}}$,
		and
		$\|\rho_0^{1/(\check{q}_3+2)}v_0\|_{L^{\check{q}_3+2}}$,
		such that
		\begin{align}\label{14-1}
			\sup_{0\leq t<\infty}\big(\norm{\rho}_{L^\infty}+\norm{\rho^{-1}}_{L^\infty}\big)\leq M.
		\end{align}
		Moreover, there exist a constant $C>0$ and an exponent $q>3$, depending only on
		$\gamma,\alpha,\nu,\varepsilon,E_0,M$, and the initial data, such that
		\begin{align}\label{hhh00}
			\sup_{0\leq t<\infty}\big(\norm{v}_{L^{q}}+\norm{\nabla\rho}_{L^2}\big)
			+\int_0^\infty
			\big(
			\norm{\nabla v}_{L^2}^2
			+\norm{\nabla\rho}_{L^2}^2
			+\norm{\nabla\rho}_{L^4}^4
			+\norm{\nabla^2\rho}_{L^2}^2
			\big)\,dt
			\leq C.
		\end{align}
	\end{prop}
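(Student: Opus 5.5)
The bound \eqref{14-1} is simply the combination of Proposition~\ref{Prop 5.3} and Proposition~\ref{Prop 7.2}, so the content lies in \eqref{hhh00}. Throughout we use \eqref{14-1}: every density weight $\rho^{a}$ is comparable to a constant (depending only on $M$ and $a$), so the weighted bounds of Sections 4--8 become unweighted ones.

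\textbf{Velocity bounds.} For the $L^q$ bound on $v$, take $q=\hat q_3+2>3$. By \eqref{4-3}, $\int |v|^{q}dx\le M\int\rho|v|^{q}dx\le C$ uniformly in time. The time-integrated gradient follows from \eqref{2-2}: $\int_0^\infty\|\nabla v\|_{L^2}^2dt\le M^{\alpha}\int_0^\infty\int\rho^\alpha|\nabla v|^2dxdt\le C$.

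\textbf{Dissipation bounds on the density.} These come from \eqref{2-2} and \eqref{nabla rho 4}. First, $\int_0^\infty\|\nabla\rho\|_{L^2}^2dt\le C\int_0^\infty\int\rho^{\gamma+\alpha-3}|\nabla\rho|^2dxdt\le C$. Next, since $\nabla\rho^{\frac{3\alpha-2}{4}}=\frac{3\alpha-2}{4}\rho^{\frac{3\alpha-6}{4}}\nabla\rho$ and $3\alpha-2>0$, we get $|\nabla\rho|^4\le C|\nabla\rho^{\frac{3\alpha-2}{4}}|^4$, hence $\int_0^\infty\|\nabla\rho\|_{L^4}^4dt\le C$. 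For the Hessian, write $w=\rho^{\frac32\alpha-1}$, so $\rho=w^{\frac{2}{3\alpha-2}}$. Then
\[
\nabla^2\rho=\tfrac{2}{3\alpha-2}w^{\frac{2}{3\alpha-2}-1}\nabla^2w+c_\alpha w^{\frac{2}{3\alpha-2}-2}\nabla w\otimes\nabla w ,
\]
and $w$ is bounded above and below by \eqref{14-1}. This gives $|\nabla^2\rho|^2\le C|\nabla^2w|^2+C|\nabla\rho|^4$, and integrating in time with \eqref{nabla rho 4} and the $L^4$ bound above yields $\int_0^\infty\|\nabla^2\rho\|_{L^2}^2dt\le C$.

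\textbf{Uniform bound on $\|\nabla\rho\|_{L^2}$.} This is the only step needing more than rescaling, and it is the one I expect to be the main obstacle. The quickest route is \eqref{2-1}: $\sup_t\|\nabla\rho^{\alpha-\frac12}\|_{L^2}\le C$, and $|\nabla\rho|=\frac{1}{\alpha-1/2}\rho^{\frac32-\alpha}|\nabla\rho^{\alpha-\frac12}|\le C|\nabla\rho^{\alpha-\frac12}|$ by \eqref{14-1}. So $\sup_t\|\nabla\rho\|_{L^2}\le C$ follows directly from the energy estimate combined with the two-sided density bound, with no differential inequality needed.

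If one instead wanted an independent derivation, the natural approach would be to test $\eqref{0}_1$ with $-\Delta\rho$. After integrating by parts and using the bounds on $\rho$, the term $c\Delta\rho^\alpha$ produces dissipation $\|\nabla^2\rho\|_{L^2}^2$ minus a term $C\|\nabla\rho\|_{L^4}^4$. The convection term $\div(\rho v)$ is controlled by $\|\nabla v\|_{L^2}^2+\|v\|_{L^q}^2\|\nabla\rho\|_{L^{2q/(q-2)}}^2$, which is then interpolated. All resulting right-hand terms are integrable in time by the previous steps, and Gr\"onwall's inequality closes the bound. Collecting the four estimates gives \eqref{hhh00}.
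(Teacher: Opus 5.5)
Your proposal is correct and follows the paper's proof essentially step by step. You obtain \eqref{14-1} from Propositions~\ref{Prop 5.3} and~\ref{Prop 7.2}, then convert the weighted bounds \eqref{2-1}, \eqref{2-2}, \eqref{nabla rho 4} into unweighted ones, using the same chain-rule argument through $\rho^{\frac32\alpha-1}$ for $\nabla^2\rho$ and reading $\sup_t\|\nabla\rho\|_{L^2}$ directly off $\sup_t\|\nabla\rho^{\alpha-\frac12}\|_{L^2}$, so your fallback Gr\"onwall argument is unnecessary; the only deviation is that you take $q=\hat q_3+2$ from \eqref{4-3} instead of the paper's $q=\check q_3+2$ from \eqref{14-0}, which is harmless since $\hat q_3>1$ still gives $q>3$, all the statement requires.
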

	\begin{proof}
		Estimate \eqref{14-1} follows directly from Propositions \ref{Prop 5.3} and \ref{Prop 7.2}.
		
		Let $q=\check{q}_3+2$. By the choice of $\check{q}_3$ in \eqref{check q3}, we have $\check{q}_3>1$, and hence $q>3$. Combining \eqref{14-1} with \eqref{14-0}, we obtain
		\begin{align*}
			\sup_{0\le t<\infty}\norm{v}_{L^{q}}\le C.
		\end{align*}
		
		Furthermore, it follows from \eqref{14-1}, \eqref{2-1}, and \eqref{2-2} that
		\begin{align*}
			\sup_{0\leq t<\infty}\norm{\nabla\rho}_{L^2}
			+\int_0^\infty
			\big(
			\norm{\nabla v}_{L^2}^2
			+\norm{\nabla\rho}_{L^2}^2
			\big)dt
			\leq C.
		\end{align*}
		
		To estimate the second-order derivatives of the density, we deduce from \eqref{14-1} and \eqref{nabla rho 4} that
		\begin{align*}
        \begin{split}
			\int_0^\infty\int |\nabla^2 \rho|^2dxdt
			&= \int_0^\infty \int |\nabla^2(\rho^{\frac{3\alpha-2}{2}\cdot\frac{2}{3\alpha-2}})|^2dxdt\\
			&\leq C\int_0^\infty\int |\nabla(\rho^{\frac{4-3\alpha}{2}}\nabla\rho^{\frac{3\alpha-2}{2}})|^2dxdt\\
			&\leq C\int_0^\infty\int \rho^{4-3\alpha}|\nabla^2\rho^{\frac{3\alpha-2}{2}}|^2dxdt
			+C\int_0^\infty\int \rho^{4-3\alpha}|\nabla\rho^{\frac{3\alpha-2}{4}}|^4dxdt\\
			&\leq C\int_0^\infty\int |\nabla^2\rho^{\frac{3\alpha-2}{2}}|^2dxdt
			+C\int_0^\infty\int |\nabla\rho^{\frac{3\alpha-2}{4}}|^4dxdt\\
			&\leq C.
         \end{split}
		\end{align*}
		Similarly, the $L^4$ space-time estimate for the first-order derivatives of the density follows from \eqref{14-1} and \eqref{nabla rho 4}.
		
		This completes the proof of Proposition \ref{Prop 8.0}.
	\end{proof}

	The following proposition establishes the decay of the $L^2$-norm of $\nabla\rho$ as $t\to \infty$.
	\begin{prop}\label{Prop 8.1.5}
		Assume that \eqref{3d gamma} holds. Then
		\begin{align}\label{16-4}
			\lim_{t\to\infty}\norm{\nabla\rho(t)}_{L^2}=0.
		\end{align}
	\end{prop}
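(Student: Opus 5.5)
We will use the standard argument that a nonnegative quantity that is integrable in time on $[0,\infty)$ and whose time derivative is controlled by integrable quantities must tend to zero. By Proposition~\ref{Prop 8.0} we already know
\[
\int_0^\infty\|\nabla\rho\|_{L^2}^2\,dt<\infty,
\]
so it suffices to show that
\[
\Big|\frac{d}{dt}\|\nabla\rho\|_{L^2}^2\Big|\le h(t),\qquad h\in L^1(0,\infty).
\]
Then $f(t)=\|\nabla\rho(t)\|_{L^2}^2$ satisfies $f\in L^1(0,\infty)$ and $f'\in L^1(0,\infty)$. Hence $f$ has a limit as $t\to\infty$, and integrability of $f$ forces this limit to be $0$. (Alternatively, a one-sided bound $f'\le h$ suffices: integrate from a point $t^*$ with $f(t^*)$ small, chosen near any given time.)

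To compute $f'$, we test the density equation \eqref{0}$_1$ with $-\Delta\rho$ and integrate by parts:
\[
\frac12\frac{d}{dt}\|\nabla\rho\|_{L^2}^2=\int \div(\rho v)\,\Delta\rho\,dx-c\int \Delta\rho^\alpha\,\Delta\rho\,dx .
\]
The second term is $c\alpha\int\rho^{\alpha-1}|\Delta\rho|^2\,dx$ plus a term bounded by $C\int|\nabla\rho|^2|\Delta\rho|\,dx$. This quantity is bounded by
\[
C\big(\|\nabla^2\rho\|_{L^2}^2+\|\nabla\rho\|_{L^4}^4\big),
\]
using \eqref{14-1}. For the first term, write $\div(\rho v)=v\cdot\nabla\rho+\rho\,\div v$. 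The piece $\rho\,\div v\,\Delta\rho$ is bounded by
\[
C\big(\|\nabla v\|_{L^2}^2+\|\nabla^2\rho\|_{L^2}^2\big).
\]
The convective piece $\int v\cdot\nabla\rho\,\Delta\rho\,dx$ is the delicate one, since $v$ is only uniformly bounded in $L^q$ with $q>3$ (again by Proposition~\ref{Prop 8.0}). Using Hölder's inequality,
\[
\Big|\int v\cdot\nabla\rho\,\Delta\rho\,dx\Big|\le \|v\|_{L^q}\|\nabla\rho\|_{L^{2q/(q-2)}}\|\nabla^2\rho\|_{L^2}.
\]
Since $q>3$, we have $\frac{2q}{q-2}<6$, so Gagliardo--Nirenberg gives
\[
\|\nabla\rho\|_{L^{2q/(q-2)}}\le C\|\nabla\rho\|_{L^2}^{1-3/q}\|\nabla\rho\|_{H^1}^{3/q}.
\]
Combining this with the uniform bound $\sup_t\|\nabla\rho\|_{L^2}\le C$ from \eqref{hhh00}, the convective piece is bounded by
\[
C\big(\|\nabla\rho\|_{L^2}^2+\|\nabla^2\rho\|_{L^2}^2\big).
\]
Alternatively, one can integrate by parts to move the derivative onto $v$.

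Putting these together, all terms in $|f'|$ are bounded by
\[
C\big(\|\nabla v\|_{L^2}^2+\|\nabla\rho\|_{L^2}^2+\|\nabla\rho\|_{L^4}^4+\|\nabla^2\rho\|_{L^2}^2\big),
\]
and each of these is integrable on $(0,\infty)$ by \eqref{hhh00}. This gives $f'\in L^1(0,\infty)$ and hence \eqref{16-4}. The main obstacle is the convective term $v\cdot\nabla\rho\,\Delta\rho$. It is handled precisely because the supercritical integrability $q>3$ from Section 8 keeps the interpolation exponent on $\|\nabla^2\rho\|_{L^2}$ at most $2$, so no higher-order norm that is not already time-integrable appears. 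The only issue to double-check is the regularity needed to justify the computation of $f'$, which holds for strong solutions on each finite interval.
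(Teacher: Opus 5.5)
Your proposal is correct and is essentially the paper's argument. Like the paper, you test the density equation with $\Delta\rho$ and bound the time derivative of $\|\nabla\rho\|_{L^2}^2$ by $C\big(\|\nabla v\|_{L^2}^2+\|\nabla\rho\|_{H^1}^2+\|\nabla\rho\|_{L^4}^4\big)$, using the density bounds, $\|v\|_{L^q}$ with $q>3$ and $H^1\hookrightarrow L^{2q/(q-2)}$, and then conclude from the time-integrability in \eqref{hhh00}. The paper uses your one-sided variant, implemented by averaging over $s\in[t-1,t]$.

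Two minor remarks. The appeal to $\sup_t\|\nabla\rho\|_{L^2}$ is unnecessary, since the convective bound is homogeneous of degree two. Your integration-by-parts alternative, which gives $C\int|\nabla v||\nabla\rho|^2dx\le C(\|\nabla v\|_{L^2}^2+\|\nabla\rho\|_{L^4}^4)$, shows that $q>3$ is not actually essential for this step.
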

	\begin{proof}
		The density equation $\eqref{0}_1$ can be rewritten as
		\begin{align*}
			\rho_t+\div(\rho v)-c\alpha\rho^{\alpha-1}\Delta\rho-c\alpha(\alpha-1)\rho^{\alpha-2}|\nabla\rho|^2=0.
		\end{align*}
		Testing the above equation with $\Delta\rho$, and applying Young's inequality, the Sobolev embedding theorem, together with \eqref{14-1} and \eqref{hhh00}, we obtain
		\begin{align*}
			\begin{split}
				&\quad\frac{1}{2}\frac{d}{dt}\int |\nabla\rho|^2dx+\frac{c\alpha}{2}\int \rho^{\alpha-1}|\Delta\rho|^2 dx\\
				&\leq C\int|v|^2 |\nabla\rho|^2dx+C\int \rho^2|\nabla v|^2dx+C\int |\nabla\rho|^4dx\\
				&\leq C\norm{v}_{L^{q}}^2\norm{\nabla\rho}_{H^1}^2+C\norm{\nabla v}_{L^2}^2+C\norm{\nabla \rho}_{L^4}^4\\
				&\leq C\norm{\nabla\rho}_{H^1}^2+C\norm{\nabla \rho}_{L^4}^4+C\norm{\nabla v}_{L^2}^2.
			\end{split}
		\end{align*}
		For convenience, we introduce
		\begin{align*}
			g_0(t):=\frac{1}{2}\|\nabla\rho(t)\|_{L^2}^2.
		\end{align*}
		It follows that, for any $t>2$ and any $s\in[t-1,t]$,
		\begin{align*}
			g_0(t)
			&\leq g_0(s)+\int_s^t g_0'(\zeta)\,d\zeta\\
			&\leq g_0(s)+C\int_{t-1}^t\big(\norm{\nabla\rho}_{H^1}^2+\norm{\nabla \rho}_{L^4}^4+\norm{\nabla v}_{L^2}^2\big)\,d\zeta.
		\end{align*}
		Averaging the above inequality with respect to $s$ over the interval $[t-1,t]$, we obtain
		\begin{align*}
			g_0(t)
			\leq
			\int_{t-1}^{t} g_0(\zeta)\,d\zeta
			+
			C\int_{t-1}^t\big(\norm{\nabla\rho}_{H^1}^2+\norm{\nabla \rho}_{L^4}^4+\norm{\nabla v}_{L^2}^2\big)\,d\zeta.
		\end{align*}
		Passing to the limit as $t\to\infty$ and using \eqref{hhh00}, we conclude that
		\begin{align*}
			\lim_{t\to\infty}g_0(t)=0.
		\end{align*}
		The desired conclusion \eqref{16-4} then follows immediately from the definition of $g_0$. This completes the proof.
	\end{proof}

	\subsection{First-level higher-order estimates}
	
	We begin by recalling the following lemma from \cite[Lemma 6.1]{Gu-Huang-Meng-Zhou-2026}, which plays a crucial role in closing the first-level higher-order estimates.
	
	\begin{lema}\label{Lemma 8.1}
	Suppose that
	\begin{align*}
		\frac{9\sqrt{3}-4\sqrt{2}}{9\sqrt{3}-2\sqrt{2}}
		< \alpha < 1.
	\end{align*}
	Then, there exist a small constant $\eta_\alpha > 0$ 
	depending only on $\alpha$, and a large 
	constant $C > 0$ depending on $\alpha, \nu$, and  
	$\varepsilon$, such that
		\begin{align}\label{ggg0}
			\begin{split}
				\frac{1}{4c}\frac{d}{dt}\int |\nabla\rho|^4 dx
				+ \frac{\eta_\alpha}{2}\int \rho^{\alpha-1}
				|\nabla\rho|^2|\nabla^2\rho|^2 dx
				\leq C\int \rho^{1-\alpha}|\nabla\rho|^2|\nabla(\rho v)|^2 dx.
			\end{split}
		\end{align}
	\end{lema}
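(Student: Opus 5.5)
The plan is to test the density equation $\eqref{0}_1$ against a quartic-type multiplier of $\nabla\rho$, namely $-\divg(|\nabla\rho|^2\nabla\rho)$, and then to use the restriction on $\alpha$ to make the leading second-order quadratic form coercive.

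\textbf{Setting up the identity.} We write $\eqref{0}_1$ in the form
\[
\rho_t-c\alpha\rho^{\alpha-1}\Delta\rho-c\alpha(\alpha-1)\rho^{\alpha-2}|\nabla\rho|^2=-\divg(\rho v),
\]
multiply by $-\divg(|\nabla\rho|^2\nabla\rho)$, integrate over $\mathbb{T}^3$, and divide by $c$. On the left this gives $\frac{1}{4c}\frac{d}{dt}\int|\nabla\rho|^4dx$ plus the integral
\[
\alpha\int\big(\rho^{\alpha-1}\Delta\rho+(\alpha-1)\rho^{\alpha-2}|\nabla\rho|^2\big)\divg(|\nabla\rho|^2\nabla\rho)\,dx.
\]
On the right it gives $\frac1c\int\divg(\rho v)\,\divg(|\nabla\rho|^2\nabla\rho)\,dx$.

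\textbf{Handling the forcing term.} We expand $\divg(|\nabla\rho|^2\nabla\rho)=|\nabla\rho|^2\Delta\rho+2\nabla\rho\cdot\nabla^2\rho\nabla\rho$. Its absolute value is at most $3|\nabla\rho|^2|\nabla^2\rho|$ pointwise. Cauchy--Schwarz with weights $\rho^{\frac{\alpha-1}{2}}$ and $\rho^{\frac{1-\alpha}{2}}$ then bounds the right side by
\[
\frac{\eta_\alpha}{2}\int\rho^{\alpha-1}|\nabla\rho|^2|\nabla^2\rho|^2dx+C\int\rho^{1-\alpha}|\nabla\rho|^2|\nabla(\rho v)|^2dx.
\]
Here we use $|\divg(\rho v)|\le\sqrt3|\nabla(\rho v)|$. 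This is exactly the form of \eqref{ggg0}.

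\textbf{The main step: coercivity.} We must show that the diffusion integral is at least $\eta_\alpha\int\rho^{\alpha-1}|\nabla\rho|^2|\nabla^2\rho|^2dx$. For the $\Delta\rho$ part, we integrate by parts in the $\Delta\rho\,|\nabla\rho|^2\Delta\rho$ and cross terms, as in the standard $|\nabla\rho|^2$-weighted Bochner computation. This turns $\int\rho^{\alpha-1}|\nabla\rho|^2(\Delta\rho)^2$ into $\int\rho^{\alpha-1}|\nabla\rho|^2|\nabla^2\rho|^2$, together with terms of the form
\[
\int\rho^{\alpha-1}|\nabla^2\rho\nabla\rho|^2,\qquad \int\rho^{\alpha-2}|\nabla\rho|^2\,(\nabla\rho\cdot\nabla^2\rho\nabla\rho),\qquad \int\rho^{\alpha-2}|\nabla\rho|^2\Delta\rho\,|\nabla\rho|^2.
\]
These lower-order-looking terms come from derivatives hitting $\rho^{\alpha-1}$. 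The $(\alpha-1)\rho^{\alpha-2}|\nabla\rho|^2$ part produces $\int\rho^{\alpha-3}|\nabla\rho|^6$-type terms and further mixed terms.

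At each point we set $X=\rho^{\frac{\alpha-1}{2}}|\nabla\rho|\nabla^2\rho$ and $Y=\rho^{\frac{\alpha-3}{2}}|\nabla\rho|^2\,\hat n\otimes\hat n$, where $\hat n=\nabla\rho/|\nabla\rho|$. The integrand then becomes a quadratic form in $(X,Y)$ whose coefficients depend only on $\alpha$. The term $\int\rho^{\alpha-3}|\nabla\rho|^6$ is the $Y$-diagonal entry and has coefficient proportional to $(1-\alpha)$-type constants. The mixed terms involve $\Delta\rho$ and $\hat n\cdot\nabla^2\rho\,\hat n$. These are controlled through the sharp inequalities $|\Delta\rho|\le\sqrt3|\nabla^2\rho|$ and $|\hat n\cdot\nabla^2\rho\,\hat n|\le|\nabla^2\rho|$. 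This is where the $\sqrt3$ and $\sqrt2$ in the threshold should arise. For instance, one decomposes $\nabla^2\rho$ into its $\hat n\hat n$ component and its orthogonal part; the orthogonal trace is bounded by $\sqrt2$ times the norm of that part.

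Positive-definiteness of the resulting $2\times2$ or $3\times3$ matrix reduces to a polynomial inequality in $\alpha$. We expect this inequality to hold exactly for $\alpha>\frac{9\sqrt3-4\sqrt2}{9\sqrt3-2\sqrt2}$, and this yields $\eta_\alpha>0$. This algebraic optimization, choosing the splitting so that the discriminant condition matches the stated threshold, is the main obstacle. We would first try completing the square in the $\int\rho^{\alpha-3}|\nabla\rho|^6$ variable and then checking the remaining Hessian form with the trace bounds above.

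Finally, we absorb the $\frac{\eta_\alpha}{2}$ contribution from the forcing bound into the left side, which gives \eqref{ggg0}.
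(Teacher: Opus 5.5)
The paper does not prove this lemma; it quotes it from Lemma 6.1 of Gu--Huang--Meng--Zhou, so I assess your argument on its own terms.

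\textbf{What is correct.} Testing with $-\divg(|\nabla\rho|^2\nabla\rho)$ gives
\[
\frac{1}{4c}\frac{d}{dt}\int|\nabla\rho|^4dx+A=\frac1c\int\divg(\rho v)\,\divg(|\nabla\rho|^2\nabla\rho)\,dx,\qquad A=\int\Delta\rho^\alpha\,\divg(|\nabla\rho|^2\nabla\rho)\,dx,
\]
and your treatment of the forcing term works. The sharp pointwise constant is $\sqrt{11}$, not $3$, since $\divg(|\nabla\rho|^2\nabla\rho)=|\nabla\rho|^2\mathrm{tr}\big(\nabla^2\rho\,(I+2\hat n\otimes\hat n)\big)$, but this is harmless.

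\textbf{The gap.} The coercivity $A\ge\eta_\alpha\int\rho^{\alpha-1}|\nabla\rho|^2|\nabla^2\rho|^2dx$ is the whole content of the lemma, and you only state that you expect it. The mechanism you sketch would not deliver it. With $G=\nabla\rho^\alpha$ and $F=|\nabla\rho|^2\nabla\rho$, one has exactly $\int\divg G\,\divg F\,dx=\int\partial_jG_i\,\partial_iF_j\,dx$ on $\mathbb{T}^3$, so no Bochner step is needed. This gives
\[
A=\alpha\int\rho^{\alpha-1}\big(|\nabla\rho|^2|\nabla^2\rho|^2+2|\nabla^2\rho\nabla\rho|^2\big)\,dx-3\alpha(1-\alpha)B,\qquad B:=\int\rho^{\alpha-2}|\nabla\rho|^2\,\nabla\rho\cdot\nabla^2\rho\nabla\rho\,dx.
\]
There is no $\int\rho^{\alpha-3}|\nabla\rho|^6dx$ term at all. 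In your variables the $Y$-diagonal entry is therefore zero while the $X$--$Y$ entry is not, so the pointwise form is indefinite. (Your $Y$ is also missing a factor $|\nabla\rho|$; it should be $\rho^{\frac{\alpha-3}{2}}|\nabla\rho|^3$.) The sign of any sextic term you create depends on which term you integrate by parts. Doing it inside the $(\alpha-1)\rho^{\alpha-2}|\nabla\rho|^2$ contribution, which is where you place it, yields $-\alpha(1-\alpha)(2-\alpha)\int\rho^{\alpha-3}|\nabla\rho|^6dx$, the wrong sign.

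\textbf{How to close it.} Integrate by parts a tunable fraction of $B$ in the favourable direction, using $|\nabla\rho|^2\nabla\rho\cdot\nabla^2\rho\nabla\rho=\frac14\nabla\rho\cdot\nabla|\nabla\rho|^4$:
\[
4B=-\int\rho^{\alpha-2}|\nabla\rho|^4\Delta\rho\,dx+(2-\alpha)\int\rho^{\alpha-3}|\nabla\rho|^6\,dx.
\]
Write $-3\alpha(1-\alpha)B=-3\alpha(1-\alpha)(1+\mu)B+3\alpha(1-\alpha)\mu B$ with $\mu>0$, and insert the identity into the last term. This creates a positive coefficient in front of $Y^2$, where $Y=\rho^{\frac{\alpha-3}{2}}|\nabla\rho|^3$. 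In a frame with $e_1=\hat n$, let $a=\hat n\cdot X\hat n$, let $b\in\mathbb{R}^2$ be the mixed entries, and let $D$ be the remaining $2\times2$ block of $X=\rho^{\frac{\alpha-1}{2}}|\nabla\rho|\nabla^2\rho$. The integrand becomes
\[
\alpha\big(3a^2+4|b|^2+|D|^2\big)-3\alpha(1-\alpha)\Big[\big(1+\tfrac{5\mu}{4}\big)a+\tfrac{\mu}{4}\,\mathrm{tr}D\Big]Y+\tfrac34\,\alpha(1-\alpha)(2-\alpha)\mu\,Y^2 .
\]
Use $|D|^2\ge\frac12(\mathrm{tr}D)^2$ and complete the square in $Y$. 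The form is then positive definite provided $\frac{2-\alpha}{1-\alpha}>\frac1\mu+\frac52+\frac{31}{16}\mu$. The choice $\mu=4/\sqrt{31}$ makes this hold for every $\alpha>\frac{1+\sqrt{31}}{3+\sqrt{31}}\approx0.767$, which contains the stated range ($\approx0.778$). The resulting $\eta_\alpha$ depends only on $\alpha$.

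So your strategy can be completed, but only after supplying this identity, the sign choice $\mu>0$ and the optimization in $\mu$. You should also not expect the resulting condition to reproduce the stated threshold exactly.
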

	
	We next recall another lemma from \cite[Lemma 6.2]{Gu-Huang-Meng-Zhou-2026}, which reveals the elliptic structure inherent in the effective velocity system and will be repeatedly used in the subsequent analysis.
	
	\begin{lema}\label{Lema 8.2}
		Assume that $\alpha,\nu>0$ and $0<\varepsilon\le\nu$. Let $w\in H^3$ and $F\in H^1$
		satisfy
		\begin{align}\label{10-6}
			\nu\Delta w+\big(\nu-c+(\alpha-1)(2\nu-c)\big) \nabla\divg w=F,
		\end{align}
		where $c=\nu+\sqrt{\nu^2-\varepsilon^2}$. Then there exists a
		constant $C>0$ depending only on $\alpha,\nu,\varepsilon$
		such that
		\begin{align}
			\|\nabla^2 w\|_{L^2}&\leq C\|F\|_{L^2},\label{10-4}\\
			\|\nabla^3 w\|_{L^2}&\leq C\|\nabla F\|_{L^2}. \label{10-5}
		\end{align}
	\end{lema}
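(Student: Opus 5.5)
The argument works entirely on the Fourier side, since we are on $\mathbb{T}^3$ and the operator has constant coefficients. Set
\[
a:=\nu-c+(\alpha-1)(2\nu-c),
\]
so \eqref{10-6} reads $\nu\Delta w+a\nabla\divg w=F$. Writing $w=\sum_k \hat w(k)e^{2\pi i k\cdot x}$, the symbol of the operator at frequency $k\neq0$ is the matrix
\[
-4\pi^2\bigl(\nu|k|^2\mathbb{I}+a\,k\otimes k\bigr).
\]
Its eigenvalues are $-4\pi^2\nu|k|^2$, with multiplicity two on $k^\perp$, and $-4\pi^2(\nu+a)|k|^2$ in the direction $k$. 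Once both $\nu>0$ and $\nu+a>0$ are known, we get a pointwise bound
\[
|k|^2|\hat w(k)|\le C|\hat F(k)|,\qquad C=\frac{1}{4\pi^2\min\{\nu,\nu+a\}}.
\]
Plancherel then gives \eqref{10-4}, because $\|\nabla^2w\|_{L^2}^2\simeq\sum|k|^4|\hat w(k)|^2$. Multiplying by one more factor $|k|$ gives \eqref{10-5} in the same way. The zero mode plays no role, since it is annihilated by every derivative.

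The substantive step is checking ellipticity, namely that $\nu+a>0$. We have
\[
\nu+a=(2\nu-c)+(\alpha-1)(2\nu-c)=\alpha(2\nu-c).
\]
Because $c=\nu+\sqrt{\nu^2-\varepsilon^2}$ and $\varepsilon>0$, we have $c<2\nu$, so $\nu+a=\alpha(2\nu-c)>0$ using $\alpha>0$. Hence the constant depends only on $\alpha,\nu,\varepsilon$, as claimed.

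An alternative, real-space proof is also available, which avoids the Fourier eigenvalue computation. First, taking the divergence of the equation gives $(\nu+a)\Delta\divg w=\divg F$, so $\|\nabla\divg w\|_{L^2}\le C\|F\|_{L^2}$ by testing against $\divg w$ and integrating by parts on the torus. Second, $\nu\Delta w=F-a\nabla\divg w$ gives $\|\Delta w\|_{L^2}\le C\|F\|_{L^2}$. Third, the periodic identity $\|\nabla^2w\|_{L^2}=\|\Delta w\|_{L^2}$ yields \eqref{10-4}. For \eqref{10-5}, apply the same argument to $\partial_j w$, which solves the equation with right-hand side $\partial_j F$; the assumptions $w\in H^3$ and $F\in H^1$ justify this differentiation. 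The only point requiring care is the sign $\nu+a>0$, which is where $\varepsilon>0$ (equivalently $c<2\nu$) enters.
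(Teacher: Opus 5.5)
Your proof is correct and complete. The paper gives no argument for this lemma; it quotes it from \cite[Lemma 6.2]{Gu-Huang-Meng-Zhou-2026}, so there is no in-text proof to compare your route against.

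The Fourier-multiplier argument settles the lemma on its own. The only structural input is the one you isolate:
\[
\nu+a=\alpha(2\nu-c)=\alpha\bigl(\nu-\sqrt{\nu^2-\varepsilon^2}\bigr)>0 .
\]
Combined with $\nu>0$, this makes the symbol $\nu|k|^2\mathbb{I}+a\,k\otimes k$ uniformly positive definite relative to $|k|^2$. Tracking the $4\pi^2$ factors, they cancel exactly. Both \eqref{10-4} and \eqref{10-5} therefore hold with the explicit constant $C=1/\min\{\nu,\alpha(2\nu-c)\}$.

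One small point concerns your real-space variant. When you apply the first step to $\partial_j w\in H^2$, the identity $(\nu+a)\Delta\divg\partial_j w=\divg\partial_j F$ holds only in $H^{-1}$. Testing it against $\divg\partial_j w\in H^1$ is legitimate by duality, or by mollifying, since mollification commutes with the constant-coefficient operator. It is worth saying this rather than just invoking $w\in H^3$. The Fourier route avoids the issue entirely.
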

	
	The following proposition establishes the first level of higher-order estimates.
	\begin{prop}\label{Prop 8.2}
		Assume that \eqref{3d gamma} holds. Then there exists a constant $C>0$, depending only on
		$\gamma,\alpha,\nu,\varepsilon,E_0,M$, and the initial data, such that
		\begin{align}\label{hhh0}
			\begin{split}
				\sup_{0\leq t<\infty}\big(\norm{\nabla^2\rho}_{L^2}&+\norm{\nabla v}_{L^2}+\norm{\rho_t}_{L^2}\big)\\
				&+\int_0^\infty \big(\norm{\nabla^3 \rho}_{L^2}^2+\norm{\nabla\rho_t}_{L^2}^2+\norm{v_t}_{L^2}^2+\norm{\nabla^2 v}_{L^2}^2\big)dt\leq C.
			\end{split}
		\end{align}
	\end{prop}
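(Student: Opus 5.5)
The plan is to couple Lemma~\ref{Lemma 8.1} with an $H^1$-type energy estimate for $v$ obtained by testing \eqref{0}$_2$ with $v_t$. The elliptic estimate of Lemma~\ref{Lema 8.2} will convert $\norm{v_t}_{L^2}$ into control of $\nabla^2 v$. All right-hand sides will be bounded by (small)$\times$(dissipation) plus (time-integrable quantity from \eqref{hhh00})$\times$(energy), so that Gr\"onwall on $[0,\infty)$ closes with a time-independent constant.

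\textbf{Step 1 (density gradient).} Using \eqref{14-1}, the right-hand side of \eqref{ggg0} is at most
\[
C\int|\nabla\rho|^4|v|^2dx+C\int|\nabla\rho|^2|\nabla v|^2dx .
\]
For the first term I use H\"older with $v\in L^q$, $q>3$, from \eqref{hhh00}. I then interpolate $|\nabla\rho|^2\in L^{2q/(q-2)}$ between $L^2$ and $\norm{\nabla|\nabla\rho|^2}_{L^2}\le C\norm{|\nabla\rho||\nabla^2\rho|}_{L^2}$. Since $q>3$, the exponent on the dissipation is strictly less than $2$, so Young's inequality gives $\frac{\eta_\alpha}{8}\int\rho^{\alpha-1}|\nabla\rho|^2|\nabla^2\rho|^2+C\norm{\nabla\rho}_{L^4}^4$. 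For the second term I write
\[
\int|\nabla\rho|^2|\nabla v|^2\le \norm{\nabla\rho}_{L^4}^2\norm{\nabla v}_{L^4}^2\le C\norm{\nabla\rho}_{L^4}^2\norm{\nabla v}_{L^2}^{1/2}\norm{\nabla^2 v}_{L^2}^{3/2},
\]
and absorb it by $\delta\norm{\nabla^2 v}_{L^2}^2+C\norm{\nabla\rho}_{L^4}^8\norm{\nabla v}_{L^2}^2$. Note that $\norm{\nabla\rho}_{L^4}^4\cdot\norm{\nabla\rho}_{L^4}^4$ is (integrable)$\times$(energy).

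\textbf{Step 2 (effective velocity).} I multiply \eqref{0}$_2$ by $v_t$ to get
\[
\frac{d}{dt}\frac12\int\rho^\alpha\mathbb F[v]:\nabla v\,dx+\int\rho|v_t|^2=\frac12\int(\rho^\alpha)_t\mathbb F[v]:\nabla v-\int\rho u\cdot\nabla v\cdot v_t-\int\nabla P\cdot v_t .
\]
Coercivity of $\int\rho^\alpha\mathbb F[v]:\nabla v$ in $\norm{\nabla v}_{L^2}^2$ follows from $\delta_0>0$ and the density bounds. Next I rewrite the equation as
\[
\nu\Delta v+(\nu-c+(\alpha-1)(2\nu-c))\nabla\div v=\rho^{1-\alpha}(v_t+u\cdot\nabla v)+\rho^{-\alpha}\nabla P-\rho^{-\alpha}\nabla\rho^\alpha\cdot\mathbb F[v],
\]
using that $\nabla(\div v)$ and $\div(\nabla v)^\top$ coincide. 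Lemma~\ref{Lema 8.2} then gives
\[
\norm{\nabla^2v}_{L^2}\le C\big(\norm{v_t}_{L^2}+\norm{|u||\nabla v|}_{L^2}+\norm{\nabla\rho}_{L^2}+\norm{|\nabla\rho||\nabla v|}_{L^2}\big).
\]
Since $u=v-c\alpha\rho^{\alpha-2}\nabla\rho$, the convective terms are bounded by $\norm{v}_{L^q}\norm{\nabla v}_{L^{2q/(q-2)}}$ plus $\norm{\nabla\rho}_{L^4}\norm{\nabla v}_{L^4}$, and are handled by Gagliardo--Nirenberg exactly as in Step 1. The term $(\rho^\alpha)_t$ is expanded via \eqref{0}$_1$ as $\nabla\rho\cdot v+\rho\div v+\nabla^2\rho$-terms. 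Its contribution is bounded by $\norm{\rho_t}_{L^3}\norm{\nabla v}_{L^3}^2$, and is again absorbed after interpolation, using $\norm{\nabla^2\rho}_{L^2}$ and $\norm{\nabla|\nabla\rho|^2}$ from Step 1. The pressure term is bounded by $\norm{\nabla\rho}_{L^2}\norm{v_t}_{L^2}$, which is of the form $C\norm{\nabla\rho}_{L^2}^2$, integrable by \eqref{hhh00}.

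\textbf{Step 3 (second derivatives of $\rho$, and closing).} Adding Steps 1--2, I obtain
\[
\frac{d}{dt}\Big(\norm{\nabla\rho}_{L^4}^4+\norm{\nabla v}_{L^2}^2\Big)+\mathcal D\le C a(t)\Big(\norm{\nabla\rho}_{L^4}^4+\norm{\nabla v}_{L^2}^2\Big)+C b(t),
\]
with $a,b\in L^1(0,\infty)$ by \eqref{hhh00}. Gr\"onwall then gives uniform bounds. For $\norm{\nabla^2\rho}_{L^2}$, $\norm{\rho_t}_{L^2}$ and the $\nabla^3\rho$, $\nabla\rho_t$ integrals, I differentiate \eqref{0}$_1$ and test with $\nabla\Delta\rho$ (equivalently, take $\rho_t$ in $H^1$). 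The dominant term $\nabla^2(\rho v)$ is controlled by $\norm{\nabla^2 v}_{L^2}$, now known to be in $L^2_t$, together with lower-order terms using the bounds above. This closes with another Gr\"onwall argument; $\norm{\rho_t}_{L^2}$ is then read off from the equation.

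The main obstacle is the cubic term $\int(\rho^\alpha)_t|\nabla v|^2$ and the coupling $\int|\nabla\rho|^2|\nabla v|^2$. The absorption must be uniform in time, and each coefficient multiplying the energy must be globally time-integrable rather than merely bounded. If the naive interpolation fails, I would first try including $\norm{\nabla^2\rho}_{L^2}^2$ in the Gr\"onwall functional from the start.
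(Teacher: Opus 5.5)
Your proposal has a genuine gap: the differential inequality you obtain is not of the linear Gr\"onwall form you claim in Step 3. In Step 1 the coupling term produces
\[
C\norm{\nabla\rho}_{L^4}^8\norm{\nabla v}_{L^2}^2=C\norm{\nabla\rho}_{L^4}^4\cdot\big(\norm{\nabla\rho}_{L^4}^4\norm{\nabla v}_{L^2}^2\big),
\]
which is (integrable)$\times$(energy)$^2$. You dropped the factor $\norm{\nabla v}_{L^2}^2$ when calling it (integrable)$\times$(energy). The same term reappears in Step 2 through $\norm{|\nabla\rho||\nabla v|}_{L^2}$ in the elliptic estimate. An inequality $E'\le a(t)E^2+b(t)$ with $a,b\in L^1(0,\infty)$ gives no bound for large data, and nothing here is small.

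Step 2 has a second problem. The term $\frac12\int(\rho^\alpha)_t\mathbb F[v]:\nabla v\,dx$ contains $\frac{c\alpha}{2}\int\rho^{\alpha-1}\Delta\rho^\alpha\,\mathbb F[v]:\nabla v\,dx$. Your bound via $\norm{\rho_t}_{L^3}$ therefore needs $\norm{\nabla^2\rho}_{L^3}$, i.e.\ third derivatives of $\rho$. These are not among the dissipation terms of Steps 1--2, since Lemma~\ref{Lemma 8.1} only provides $\norm{|\nabla\rho||\nabla^2\rho|}_{L^2}^2$. So Steps 1--2 cannot be closed before Step 3. Even if you add $\norm{\nabla^2\rho}_{L^2}^2$ to the functional, as you suggest at the end, H\"older--Gagliardo--Nirenberg on $\int|\nabla^2\rho||\nabla v|^2dx$ leaves a term like $\norm{\nabla^2\rho}_{L^2}^2\norm{\nabla v}_{L^2}^4$, which is again superlinear.

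The missing idea is to integrate by parts so that one derivative of $v$ is traded for a factor $v$. This lets you use the uniform bound $\sup_t\norm{v}_{L^q}$, $q>3$, from \eqref{hhh00}. For instance,
\[
\int|\nabla\rho|^2|\nabla v|^2dx\le C\int|v||\nabla v||\nabla\rho||\nabla^2\rho|dx+C\int|v||\nabla^2 v||\nabla\rho|^2dx.
\]
Each piece is then bounded by $\delta\big(\norm{\nabla^2 v}_{L^2}^2+\norm{|\nabla\rho||\nabla^2\rho|}_{L^2}^2\big)+C\big(\norm{\nabla v}_{L^2}^2+\norm{\nabla\rho}_{L^4}^4\big)$. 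This follows from the same H\"older/Gagliardo--Nirenberg arguments you used for $\int|\nabla\rho|^4|v|^2dx$ and for $\norm{|v||\nabla v|}_{L^2}$.

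Similarly, the paper moves the derivatives off $\div(\rho v)$ and $\Delta\rho^\alpha$ in the $(\rho^\alpha)_t$ term onto the other factors. This produces terms such as $\int|\nabla\Delta\rho^\alpha||\nabla v||v|dx$, which require the dissipation $\norm{\nabla^3\rho^\alpha}_{L^2}^2$. That is why the paper tests the density equation with $\Delta\rho_t$, giving energy $\int\rho^{\alpha-1}|\Delta\rho|^2dx$ and dissipation $\norm{\nabla\rho_t}_{L^2}^2+\norm{\nabla\Delta\rho^\alpha}_{L^2}^2$. It combines this with the $v_t$-estimate, Lemma~\ref{Lema 8.2}, and Lemma~\ref{Lemma 8.1} from the outset. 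The weights are chosen so that $\norm{\nabla^2 v}_{L^2}^2$ and $\int|\nabla\rho|^2|\nabla^2\rho|^2dx$ are absorbed.

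Every right-hand-side term is then linear, and one reaches \eqref{I_0''''}. Its right-hand side, $C\big(\norm{\nabla^2\rho}_{L^2}^2+\norm{\nabla\rho}_{L^4}^4+\norm{\nabla\rho}_{L^2}^2+\norm{\nabla v}_{L^2}^2\big)$, is integrable by \eqref{hhh00}. A direct time integration, with no Gr\"onwall factor at all, gives the uniform bounds.
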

	\begin{proof}
		First, we rewrite the density equation $\eqref{0}_1$ in the form
		\begin{align}\label{hhh0.5}
			\rho_t+\div(\rho v)-c\alpha\rho^{\alpha-1}\Delta\rho-c\alpha(\alpha-1)\rho^{\alpha-2}|\nabla\rho|^2=0.
		\end{align}
		Testing the above equation against $\Delta\rho_t$, integrating over $\mathbb{T}^3$, and performing integration by parts, we infer from Young's inequality together with \eqref{14-1} that
		\begin{align}\label{hhh1}
			\begin{split}
				&\quad\frac{c\alpha}{2}\frac{d}{dt}\int \rho^{\alpha-1}|\Delta\rho|^2dx+\int |\nabla\rho_t|^2dx\\
				&\leq C\int |\nabla\rho_t||\nabla^2(\rho v)|dx+C\int |\nabla\rho_t|(|\nabla\rho||\nabla^2\rho|+|\nabla\rho|^3)dx+\frac{c\alpha}{2}\int (\rho^{\alpha-1})_t|\Delta\rho|^2 dx\\
				&\leq \frac{1}{2}\int |\nabla\rho_t|^2dx+C\int (|\nabla^2(\rho v)|^2+ |\nabla\rho|^2|\nabla^2 \rho|^2+|\nabla\rho|^6)dx+\frac{c\alpha}{2}\int (\rho^{\alpha-1})_t|\Delta\rho|^2 dx.
			\end{split}
		\end{align}
		Moreover, applying equation $\eqref{0}_1$ once more, one can choose a sufficiently small constant $\varepsilon_0>0$ such that
		\begin{align}\label{hhh2}
			\varepsilon_0\int |\nabla\Delta\rho^\alpha|^2dx\leq \frac{1}{4}\int |\nabla\rho_t|^2dx+C\int |\nabla^2(\rho v)|^2dx.
		\end{align}
		Combining \eqref{hhh1} with \eqref{hhh2}, we arrive at
		\begin{align}\label{hhh3}
			\begin{split}
				&\quad\frac{c\alpha}{2}\frac{d}{dt}\int \rho^{\alpha-1}|\Delta\rho|^2dx+\frac{1}{4}\int |\nabla\rho_t|^2dx+\varepsilon_0\int |\nabla\Delta\rho^\alpha|^2dx\\
				&\leq A_1\int |\nabla^2 v|^2dx+C\int |\nabla\rho|^2|\nabla v|^2dx+C\int|\nabla^2\rho|^2|v|^2dx\\
				&\quad+C\int |\nabla\rho|^2|\nabla^2 \rho|^2dx+C\int|\nabla\rho|^6dx+\frac{c\alpha}{2}\int (\rho^{\alpha-1})_t|\Delta\rho|^2 dx,
			\end{split}
		\end{align}
		where $A_1$ denotes the coefficient of the first term on the right-hand side, depending only on the quantities specified in Proposition~\ref{Prop 8.2}.
		
		Next, we multiply $\eqref{0}_2$ by $v_t$, integrate the resulting identity over $\mathbb{T}^3$, and apply integration by parts, Young's inequality, together with \eqref{14-1}, to obtain
		\begin{align}\label{hhh4}
			\begin{split}
				&\quad\frac{1}{2}\frac{d}{dt}\int\rho^\alpha\mathbb{F}[v]:\nabla vdx+\int \rho |v_t|^2dx\\
				&\leq C\int \rho|u||\nabla v||v_t|dx+C\int|\nabla\rho||v_t|dx+\int\frac{(\rho^\alpha)_t}{2}\mathbb{F}[v]:\nabla vdx\\
				&\leq \frac{1}{2}\int \rho |v_t|^2dx+C\int\left( |v|^2|\nabla v|^2+|\nabla\rho|^2|\nabla v|^2+|\nabla\rho|^2\right)dx+\int\frac{(\rho^\alpha)_t}{2}\mathbb{F}[v]:\nabla vdx,
			\end{split}
		\end{align}
		where we have used the identity
		\begin{align*}
			\mathbb{F}[v_t]:\nabla v=\mathbb{F}[v]:\nabla v_t.
		\end{align*}
		Furthermore, equation $\eqref{0}_2$ can be viewed as the following elliptic system:
		\begin{align}\label{ell sys}
			\begin{split}
				&\quad\nu\Delta v+\left((\nu-c)+(\alpha-1)(2\nu-c)\right)\nabla\divg v\\
				&=\frac{1}{\rho^\alpha}\left(\rho v_t+\rho (v-c\alpha\rho^{\alpha-2}\nabla\rho)\cdot\nabla v+\nabla P-\nabla\rho^\alpha\cdot\mathbb{F}[v]\right).
			\end{split}
		\end{align}
		Applying Lemma~\ref{Lema 8.2} together with \eqref{14-1}, we can choose a sufficiently small constant $\varepsilon_1>0$ such that
		\begin{align}\label{hhh4.5}
			\varepsilon_1\int|\nabla^2 v|^2dx\leq \frac{1}{4}\int \rho |v_t|^2dx+C\int  \left(|v|^2|\nabla v|^2+|\nabla\rho|^2|\nabla v|^2+|\nabla\rho|^2\right)dx.
		\end{align}
		Combining \eqref{hhh4} and \eqref{hhh4.5}, we deduce
		\begin{align}\label{hhh5}
			\begin{split}
				&\quad\frac{1}{2}\frac{d}{dt}\int\rho^\alpha\mathbb{F}[v]:\nabla vdx+\frac{1}{4}\int \rho |v_t|^2dx+\varepsilon_1\int|\nabla^2 v|^2dx\\
				&\leq C\int |v|^2|\nabla v|^2dx+C\int|\nabla\rho|^2|\nabla v|^2dx+C\int|\nabla\rho|^2dx+\int\frac{(\rho^\alpha)_t}{2}\mathbb{F}[v]:\nabla vdx.
			\end{split}
		\end{align}
		
		Finally, multiplying \eqref{hhh5} by $2A_1/\varepsilon_1$ and adding the resulting inequality to \eqref{hhh3}, we conclude that
		\begin{align}\label{I_0}
			\begin{split}
				&\quad\frac{c\alpha}{2}\frac{d}{dt}\int \rho^{\alpha-1}|\Delta\rho|^2dx+\frac{A_1}{\varepsilon_1}\frac{d}{dt}\int\rho^\alpha\mathbb{F}[v]:\nabla vdx\\
				&\quad+\frac{1}{4}\int |\nabla\rho_t|^2dx+\varepsilon_0\int |\nabla\Delta\rho^\alpha|^2dx+\frac{A_1}{2\varepsilon_1}\int \rho |v_t|^2dx+A_1\int|\nabla^2 v|^2dx\\
				&\leq C\int |\nabla\rho|^2|\nabla v|^2dx+C\int|\nabla^2\rho|^2|v|^2dx+C\int |v|^2|\nabla v|^2dx+C\int|\nabla\rho|^2dx\\
				&\quad+C\int |\nabla\rho|^2|\nabla^2 \rho|^2dx+C\int|\nabla\rho|^6dx\\
				&\quad+\frac{c\alpha}{2}\int (\rho^{\alpha-1})_t|\Delta\rho|^2 dx+\int\frac{(\rho^\alpha)_t}{2}\mathbb{F}[v]:\nabla vdx=:\sum_{i=1}^{8}I_i.
			\end{split}
		\end{align}
		It remains to estimate the terms $I_1,\ldots,I_8$ separately.
		
		For the term $I_1$, integrating by parts gives
		\begin{align*}
			\begin{split}
				I_1
				&=C\sum_{i,j=1}^3\int |\nabla\rho|^2(\partial_i v_j)^2dx\\
				&\leq C\int |v||\nabla v||\nabla\rho||\nabla^2\rho|dx
				+C\int |v||\nabla^2v||\nabla\rho|^2dx\\
				&=:I_1^1+I_1^2.
			\end{split}
		\end{align*}
		We begin with the estimate of $I_1^1$. By Hölder's inequality, Young's inequality, the Sobolev embedding theorem, \eqref{14-1}, \eqref{hhh00}, and the fact that $q>3$, we obtain
		\begin{align*}
			\begin{split}
				I_1^1
				&\leq C\|v\|_{L^{q}}
				\|\nabla v\|_{L^{\frac{6q}{2q-3}}}
				\||\nabla\rho||\nabla^2\rho|\|_{L^{\frac{6q}{4q-3}}}\\
				&\leq
				C\|v\|_{L^{q}}
				\|\nabla v\|_{L^{\frac{6q}{2q-3}}}
				\||\nabla\rho||\nabla^2\rho|\|_{L^2}^{\frac{2q+3}{3q}}
				\||\nabla\rho||\nabla^2\rho|\|_{L^1}^{\frac{q-3}{3q}}\\
				&\leq
				C\|\nabla v\|_{H^1}
				\||\nabla\rho||\nabla^2\rho|\|_{L^2}^{\frac{2q+3}{3q}}
				\||\nabla\rho||\nabla^2\rho|\|_{L^1}^{\frac{q-3}{3q}}\\
				&\leq
				\frac{A_1}{32}\|\nabla v\|_{H^1}^2
				+\delta\int |\nabla\rho|^2|\nabla^2\rho|^2dx
				+C_\delta
				\left(
				\int |\nabla\rho||\nabla^2\rho|dx
				\right)^2\\
				&\leq
				\frac{A_1}{32}\|\nabla v\|_{H^1}^2
				+\delta\int |\nabla\rho|^2|\nabla^2\rho|^2dx
				+C_\delta\int |\nabla^2\rho|^2dx,
			\end{split}
		\end{align*}
		where $\delta>0$ is a sufficiently small constant to be fixed later.
		To control the second term on the right-hand side of the above inequality, we first observe from \eqref{14-1} and Young's inequality that
		\begin{align}\label{hhh7}
			\begin{split}
				\delta\int |\nabla\rho|^2|\nabla^2\rho|^2dx
				&\leq
				\delta C
				\int
				\rho^{2-2\alpha}
				|\nabla\rho^\alpha|^2
				|\nabla(\rho^{1-\alpha}\nabla\rho^\alpha)|^2dx\\
				&\leq
				\delta C
				\int
				|\nabla\rho^\alpha|^2
				|\nabla^2\rho^\alpha|^2dx
				+\delta C
				\int
				|\nabla\rho^\alpha|^6dx.
			\end{split}
		\end{align}
		We next estimate the first term on the right-hand side of \eqref{hhh7}. Integrating by parts twice and applying \eqref{14-1} together with Young's inequality, we infer
		\begin{align*}
			\begin{split}
				\int |\nabla\rho^\alpha|^2|\nabla^2\rho^\alpha|^2dx
				&=
				\sum_{i=1}^3
				\int
				(\partial_i\rho^\alpha)^2
				|\nabla^2\rho^\alpha|^2dx\\
				&\leq
				C
				\int
				\rho^\alpha
				|\nabla\rho^\alpha|
				|\nabla^2\rho^\alpha|
				|\nabla^3\rho^\alpha|dx
				+
				C
				\int
				\rho^\alpha
				|\nabla^2\rho^\alpha|^3dx\\
				&\leq
				C
				\int
				|\nabla\rho^\alpha|
				|\nabla^2\rho^\alpha|
				|\nabla^3\rho^\alpha|dx
				+
				C
				\sum_{i,j=1}^3
				\int
				(\partial_{ij}\rho^\alpha)^2
				|\nabla^2\rho^\alpha|dx\\
				&\leq
				C
				\int
				|\nabla\rho^\alpha|
				|\nabla^2\rho^\alpha|
				|\nabla^3\rho^\alpha|dx\\
				&\leq
				\frac12
				\int
				|\nabla\rho^\alpha|^2
				|\nabla^2\rho^\alpha|^2dx
				+
				C
				\int
				|\nabla^3\rho^\alpha|^2dx,
			\end{split}
		\end{align*}
		which immediately yields
		\begin{align}\label{14-2}
			\int
			|\nabla\rho^\alpha|^2
			|\nabla^2\rho^\alpha|^2dx
			\leq
			C
			\int
			|\nabla^3\rho^\alpha|^2dx.
		\end{align}
		It remains to estimate the second term in \eqref{hhh7}. Repeating the same argument based on integration by parts, together with \eqref{14-1}, Young's inequality, and \eqref{14-2}, we derive
		\begin{align*}
			\int |\nabla\rho^\alpha|^6dx
			&=
			\sum_{i=1}^3
			\int
			(\partial_i\rho^\alpha)^2
			|\nabla\rho^\alpha|^4dx\\
			&\leq
			C
			\int
			\rho^\alpha
			|\nabla\rho^\alpha|^4
			|\nabla^2\rho^\alpha|dx\\
			&\leq
			C
			\int
			|\nabla\rho^\alpha|^4
			|\nabla^2\rho^\alpha|dx\\
			&=
			C
			\sum_{i=1}^3
			\int
			(\partial_i\rho^\alpha)^2
			|\nabla\rho^\alpha|^2
			|\nabla^2\rho^\alpha|dx\\
			&\leq
			C
			\int
			|\nabla\rho^\alpha|^2
			|\nabla^2\rho^\alpha|^2dx
			+
			C
			\int
			|\nabla\rho^\alpha|^3
			|\nabla^3\rho^\alpha|dx\\
			&\leq
			\frac12
			\int
			|\nabla\rho^\alpha|^6dx
			+
			C
			\int
			|\nabla^3\rho^\alpha|^2dx,
		\end{align*}
		and hence
		\begin{align}\label{14-3}
			\int
			|\nabla\rho^\alpha|^6dx
			\leq
			C
			\int
			|\nabla^3\rho^\alpha|^2dx.
		\end{align}
		Combining \eqref{14-2}, \eqref{14-3}, and \eqref{hhh7}, we arrive at
		\begin{align}\label{14-4}
			\delta
			\int
			|\nabla\rho|^2
			|\nabla^2\rho|^2dx
			\leq
			\delta A_2
			\int
			|\nabla^3\rho^\alpha|^2dx,
		\end{align}
		where $A_2>0$ depends only on the quantities appearing in Proposition~\ref{Prop 8.2}. Choosing $\delta>0$ sufficiently small such that
		\[
		\delta A_2\leq \frac{\varepsilon_0}{32},
		\]
		and combining the above estimate with \eqref{14-1}, we conclude that
		\begin{align}\label{I_11'}
			\begin{split}
				I_1^1&=C\int |v||\nabla v||\nabla\rho||\nabla^2\rho|dx\\
				&\leq\frac{A_1}{32}\|\nabla v\|_{H^1}^2+\frac{\varepsilon_0}{32}\|\nabla^3\rho^\alpha\|_{L^2}^2+C\int |\nabla^2\rho|^2dx.
			\end{split}
		\end{align}
		For $I_1^2$, Hölder's inequality and the interpolation inequality lead to
		\begin{align}\label{I_1^2'0}
			\begin{split}
				I_1^2&\leq C\|v\|_{L^q}\|\nabla^2 v\|_{L^2}\|\nabla\rho^\alpha\|_{L^{12}}^{\frac{6q+12}{5q}}\|\nabla\rho^\alpha\|_{L^2}^{\frac{4q-12}{5q}}.
			\end{split}
		\end{align}
		To estimate the $L^{12}$-norm of $\nabla\rho^\alpha$, we perform integration by parts and apply Hölder's inequality together with \eqref{14-1}, obtaining
		\begin{align*}
			\begin{split}
				\int|\nabla\rho^\alpha|^{12}dx
				&=C\sum_{i=1}^3\int (\partial_i \rho^\alpha)^2|\nabla\rho^\alpha|^{10}dx\\
				&\leq C\int \rho^\alpha|\nabla\rho^\alpha|^{10}|\nabla^2 \rho^\alpha|dx \\
				&\leq C\int |\nabla\rho^\alpha|^{10}|\nabla^2\rho^\alpha|dx\\
				&\leq C\|\nabla\rho^\alpha\|_{L^{12}}^{10}\|\nabla^2\rho^\alpha\|_{L^6}.
			\end{split}
		\end{align*}
		Consequently, the Sobolev embedding theorem yields
		\begin{align}\label{14-10}
			\|\nabla\rho^{\alpha}\|_{L^{12}}
			\leq
			C\|\nabla^2\rho^\alpha\|_{L^6}^{\frac{1}{2}}
			\leq
			C\|\nabla^2 \rho^\alpha\|_{H^1}^{\frac{1}{2}}.
		\end{align}
		Inserting \eqref{14-10} into \eqref{I_1^2'0}, and then using \eqref{14-1}, \eqref{hhh00}, together with Young's inequality, we arrive at
		\begin{align}\label{I_1^2'}
			\begin{split}
				I_1^2&=C\int |v||\nabla^2 v||\nabla\rho|^2dx\\
				&\leq C\|v\|_{L^q}\|\nabla^2 v\|_{L^2}\|\nabla^2\rho^\alpha\|_{H^1}^{\frac{3q+6}{5q}}\|\nabla\rho^\alpha\|_{L^2}^{\frac{4q-12}{5q}}\\
				&\leq \frac{A_1}{32}\|\nabla^2 v\|_{L^2}^2+\frac{\varepsilon_0}{32}\|\nabla^2 \rho^\alpha\|_{H^1}^2+C\|\nabla\rho^\alpha\|_{L^2}^4\\
				&\leq \frac{A_1}{32}\|\nabla^2 v\|_{L^2}^2+\frac{\varepsilon_0}{32}\|\nabla^2 \rho^\alpha\|_{H^1}^2+C\|\nabla\rho\|_{L^2}^2.
			\end{split}
		\end{align}
		
		For the term $I_2$, Hölder's inequality together with \eqref{14-1} yields
		\begin{align}\label{I_222'}
			\begin{split}
				I_2&=C\int |v|^2|\nabla^2\rho|^2dx\\
				&\leq C\|v\|_{L^q}^2\|\nabla^2 \rho\|_{L^{\frac{2q}{q-2}}}^2\\
				&\leq C\|v\|_{L^q}^2\|\nabla(\rho^{1-\alpha}\nabla\rho^\alpha)\|_{L^{\frac{2q}{q-2}}}^2\\
				&\leq C\|v\|_{L^q}^2\|\nabla^2 \rho^\alpha\|_{L^{\frac{2q}{q-2}}}^2+C\|v\|_{L^q}^2\|\nabla\rho^\alpha\|_{L^{\frac{4q}{q-2}}}^4.
			\end{split}
		\end{align}
		The first term on the right-hand side can be estimated by means of the Gagliardo--Nirenberg inequality, the Sobolev embedding theorem, Young's inequality, together with \eqref{hhh00} and \eqref{14-1}, as follows:
		\begin{align}\label{42-1'}
			\begin{split}
				&\quad C\|v\|_{L^q}^2\|\nabla^2 \rho^\alpha\|_{L^{\frac{2q}{q-2}}}^2\\
				&\leq C  \|\nabla^2 \rho^\alpha\|_{L^2}^{\frac{2q-6}{q}}\|\nabla^2 \rho^\alpha\|_{H^1}^{\frac{6}{q}}\\
				&\leq \frac{\varepsilon_0}{32}\|\nabla^2 \rho^\alpha\|_{H^1}^2+C\|\nabla^2 \rho^\alpha\|_{L^2}^2\\
				&\leq \frac{\varepsilon_0}{32}\|\nabla^2 \rho^\alpha\|_{H^1}^2+C\|\nabla^2 \rho\|_{L^2}^2+C\|\nabla\rho\|_{L^4}^4.
			\end{split}
		\end{align}
		As for the second term, combining the Gagliardo--Nirenberg inequality with \eqref{14-10} gives
		\begin{align}\label{42-2'}
			\begin{split}
				&\quad C\|v\|_{L^q}^2\|\nabla\rho^\alpha\|_{L^{\frac{4q}{q-2}}}^4\\
				&\leq C  \|\nabla\rho^\alpha\|_{L^2}^{\frac{8q-24}{5q}}\|\nabla\rho^\alpha\|_{L^{12}}^{\frac{12q+24}{5q}}\\
				&\leq C  \|\nabla\rho^\alpha\|_{L^2}^{\frac{8q-24}{5q}}\|\nabla^2\rho^\alpha\|_{H^1}^{\frac{6q+12}{5q}}\\
				&\leq \frac{\varepsilon_0}{32}\|\nabla^2 \rho^\alpha\|_{H^1}^2+C\|\nabla\rho^\alpha\|_{L^2}^4\\
				&\le \frac{\varepsilon_0}{32}\|\nabla^2 \rho^\alpha\|_{H^1}^2+C\|\nabla\rho\|_{L^4}^4.
			\end{split}
		\end{align}
		Substituting \eqref{42-1'} and \eqref{42-2'} into \eqref{I_222'}, we arrive at
		\begin{align}\label{I_2'}
			I_2
			\leq
			\frac{\varepsilon_0}{16}\|\nabla^2\rho^\alpha\|_{H^1}^2
			+C\|\nabla^2 \rho\|_{L^2}^2
			+C\|\nabla\rho\|_{L^4}^4.
		\end{align}

		For  the term $I_3$, combining H\"older's inequality, the Gagliardo--Nirenberg inequality, Young's inequality, and \eqref{hhh00}, we obtain
		\begin{align}\label{I_3'}
			\begin{split}
				I_3&= C\int |v|^2|\nabla v|^2dx\\
				&\leq C\|v\|_{L^q}^2\|\nabla v\|_{L^{\frac{2q}{q-2}}}^2\\
				&\leq C\|v\|_{L^q}^2\|\nabla v\|_{L^2}^{\frac{2q-6}{q}}\|\nabla v\|_{H^1}^{\frac{6}{q}}\\
				&\leq \frac{A_1}{32}\|\nabla^2 v\|_{L^2}^2+C\|\nabla v\|_{L^2}^2.
			\end{split}
		\end{align}
		
		For  the term $I_4$, 
		\begin{align}\label{I_4'}
			\begin{split}
				I_4=C\int |\nabla\rho|^2dx.
			\end{split}
		\end{align}
		
		For the combined term $I_5+I_6$, integration by parts, together with \eqref{14-1} and Young's inequality, gives
		\begin{align}\label{11-222}
			\begin{split}
				\int |\nabla\rho|^6dx
				&\leq \sum_{i=1}^3\int |\nabla\rho|^4(\partial_i\rho)^2dx\\
				&\leq C\int \rho |\nabla\rho|^4|\nabla^2\rho|dx\\
				&\leq \frac{1}{2}\int |\nabla\rho|^6dx
				+C\int |\nabla\rho|^2|\nabla^2\rho|^2dx,
			\end{split}
		\end{align}
		which implies that
		\begin{align}\label{I_5+I_6'}
			\begin{split}
				I_5+I_6
				\leq C\int |\nabla\rho|^2|\nabla^2\rho|^2dx.
			\end{split}
		\end{align}
		
		For $I_7$, it follows from $\eqref{0}_1$, integration by parts, \eqref{14-1}, and Young's inequality that
		\begin{align}\label{I_7'00}
			\begin{split}
				I_7
				&= -\frac{c\alpha(\alpha-1)}{2}\sum_{i=1}^3\int\rho^{\alpha-2}\partial_i(\rho v_i)|\Delta\rho|^2dx+\frac{c^2\alpha(\alpha-1)}{2}\sum_{i=1}^3\int\rho^{\alpha-2}\Delta\rho^\alpha\Delta\rho \partial_{ii}\rho dx\\
				&= \frac{c\alpha(\alpha-1)}{2}\sum_{i=1}^3\int\rho v_i\partial_i(\rho^{\alpha-2}|\Delta\rho|^2)dx-\frac{c^2\alpha(\alpha-1)}{2}\sum_{i=1}^3\int\partial_i(\rho^{\alpha-2}\Delta\rho^\alpha\Delta\rho)\partial_{i}\rho dx\\
				&\leq C\int \rho|v||\Delta\rho||\nabla^3 \rho|dx+C\int \rho|v||\nabla\rho||\Delta\rho|^2dx\\
				&\quad+C\int|\nabla\rho|^2|\Delta\rho^\alpha||\nabla^2\rho|dx+C\int |\nabla\Delta\rho^\alpha||\nabla^2\rho||\nabla\rho|dx+C\int |\Delta\rho^\alpha||\nabla^3\rho||\nabla\rho|dx.
			\end{split}
		\end{align}
		Notice that \eqref{14-1} implies
		\begin{align}\label{33-1}
			\begin{split}
				|\nabla^3\rho|&\leq C\sum_{i,j,k}|\partial_{ijk}\rho|\leq C\sum_{i,j,k} |\partial_{ij}\big(\rho^{1-\alpha}\partial_k\rho^\alpha\big)|\\
				&\leq C\sum_{i,j,k}|\partial_{ij}\rho^{1-\alpha}||\partial_k\rho^\alpha|+C\sum_{i,j,k}|\partial_i\rho^{1-\alpha}||\partial_{jk}\rho^\alpha|+C\sum_{i,j,k}\rho^{1-\alpha}|\partial_{ijk}\rho^\alpha|\\
				&\leq C\sum_{i,j,k}\rho^{1-2\alpha}|\partial_{ij}\rho^{\alpha}||\partial_k\rho^\alpha|+C\sum_{i,j,k}\rho^{1-3\alpha}|\partial_{i}\rho^{\alpha}||\partial_j\rho^\alpha||\partial_k\rho^\alpha|\\
				&\quad+C\sum_{i,j,k}\rho^{1-\alpha}|\partial_{ijk}\rho^\alpha|\\
				&\leq C\big(|\nabla^3\rho^\alpha|+|\nabla^2\rho^\alpha||\nabla\rho^\alpha|+|\nabla\rho^\alpha|^3 \big).
			\end{split}
		\end{align}
		Consequently, Young's inequality yields
		\begin{align}\label{40-2}
			\begin{split}
				&\quad C\int \rho|v||\Delta\rho||\nabla^3 \rho|dx+C\int |\Delta\rho^\alpha||\nabla^3\rho||\nabla\rho|dx\\
				&\le C\int (|v||\Delta\rho|+|\Delta\rho||\nabla\rho|+|\nabla\rho|^3)(|\nabla^3\rho^\alpha|+|\nabla^2\rho^\alpha||\nabla\rho^\alpha|+|\nabla\rho^\alpha|^3 )dx\\
				&\le \frac{\varepsilon_0}{64}\|\nabla^3\rho^\alpha\|_{L^2}^2+C\int(|v|^2|\Delta\rho|^2+|\nabla^2\rho|^2|\nabla\rho|^2+|\nabla\rho|^6+|\nabla^2\rho^\alpha|^2|\nabla\rho^\alpha|^2+|\nabla\rho^\alpha|^6) dx\\
				&\le \frac{\varepsilon_0}{64}\|\nabla^3\rho^\alpha\|_{L^2}^2+C\int(|v|^2|\Delta\rho|^2+|\nabla\rho|^2|\nabla^2\rho|^2+|\nabla\rho|^6)dx,
			\end{split}
		\end{align}
		where the last inequality follows from
		\begin{align*}
			|\nabla^2\rho^\alpha|\le C|\nabla^2\rho|+C|\nabla\rho|^2,\qquad
			|\nabla\rho^\alpha|\le C|\nabla\rho|.
		\end{align*}
		The remaining terms can be estimated directly by Young's inequality:
		\begin{align}\label{40-3}
			\begin{split}
				C\int \rho|v||\nabla\rho||\Delta\rho|^2dx
				&\le C\int|v|^2|\Delta\rho|^2+C\int|\nabla\rho|^2|\nabla^2\rho|^2dx,\\
				C\int|\nabla\rho|^2|\Delta\rho^\alpha||\nabla^2\rho| dx
				&\le C\int |\nabla\rho|^2(|\nabla^2\rho|+|\nabla\rho|^2)|\nabla^2\rho|dx\\
				&\le C\int |\nabla\rho|^2|\nabla^2\rho|^2dx+C\int |\nabla\rho|^4|\nabla^2\rho|dx\\
				&\le C\int |\nabla\rho|^2|\nabla^2\rho|^2dx+C\int |\nabla\rho|^6dx,\\
				C\int |\nabla\Delta\rho^\alpha||\nabla^2\rho||\nabla\rho|dx
				&\le \frac{\varepsilon_0}{64}\|\nabla^3\rho^\alpha\|_{L^2}^2+C\int |\nabla\rho|^2|\nabla^2\rho|^2dx.
			\end{split}
		\end{align}
		Substituting \eqref{40-2} and \eqref{40-3} into \eqref{I_7'00}, and then applying \eqref{11-222} together with \eqref{I_2'}, we arrive at
		\begin{align}\label{I_7'}
			\begin{split}
				I_7
				&\leq \frac{\varepsilon_0}{32}\|\nabla^3\rho^\alpha\|_{L^2}^2+C\int |v|^2|\Delta\rho|^2dx+ C\int \big(|\nabla\rho|^2|\nabla^2\rho|^2+|\nabla\rho|^6\big)dx\\
				&\leq \frac{\varepsilon_0}{32}\|\nabla^3\rho^\alpha\|_{L^2}^2+C\int |v|^2|\Delta\rho|^2dx+ C\int |\nabla\rho|^2|\nabla^2\rho|^2dx\\
				&\leq \frac{3\varepsilon_0}{32}\|\nabla^2\rho^\alpha\|_{H^1}^2+C\int |\nabla\rho|^2|\nabla^2\rho|^2dx+C\|\nabla^2 \rho\|_{L^2}^2+C\|\nabla\rho\|_{L^4}^4.
			\end{split}
		\end{align}
		
		For $I_8$, using integration by parts,  Young's inequality, and \eqref{14-1}, we have
		\begin{align}\label{I_8,}
			\begin{split}
				I_8&=\frac{\alpha}{2}\int\rho^{\alpha-1}\rho_t\mathbb{F}[v]:\nabla vdx\\
				&=\frac{c\alpha}{2}\sum_{i, j=1}^3\int \rho^{\alpha-1}\Delta\rho^\alpha(\mathbb{F}[v])_{ij}\partial_i v_jdx-\frac{\alpha}{2}\sum_{i=1}^3\int \rho^{\alpha-1}\partial_i(\rho v_i)\mathbb{F}[v]:\nabla vdx\\
				&=- \frac{c\alpha}{2}\sum_{i, j=1}^3\int \partial_i(\rho^{\alpha-1}\Delta\rho^\alpha(\mathbb{F}[v])_{ij})v_j dx+\frac{\alpha}{2}\sum_{i=1}^3\int\rho v_i\partial_i(\rho^{\alpha-1}\mathbb{F}[v]:\nabla v)dx\\
				&\leq C\int |\nabla\rho||\Delta\rho^\alpha||\nabla v||v|dx+C\int |\nabla\Delta\rho^\alpha||\nabla v||v|dx+C\int |\Delta\rho^\alpha||\nabla^2 v||v|dx\\
				&\quad+C\int |\nabla\rho||v||\nabla v|^2dx+C\int |v||\nabla v||\nabla^2 v|dx\\
				&\leq \frac{\varepsilon_0}{32}\|\nabla^3 \rho^\alpha\|_{L^2}^2+\frac{A_1}{32}\|\nabla^2 v\|_{L^2}^2+C\int |\nabla\rho||\Delta\rho||v||\nabla v|dx\\
				&\quad+C\int |v|^2|\nabla v|^2dx+C\int |v|^2|\Delta\rho|^2dx+C\int |v|^2|\nabla\rho|^4dx+C\int |\nabla\rho|^2|\nabla v|^2dx,
			\end{split}
		\end{align}
		where in the last inequality we have used
		\begin{align*}
			\begin{split}
				\int |\nabla\rho||\Delta\rho^\alpha||\nabla v||v|dx&\leq C\int (|\nabla\rho|^2+|\Delta\rho|)|\nabla\rho||\nabla v||v|dx\\
				&\leq C\int |v|^2|\nabla\rho|^4dx+C\int |\nabla\rho|^2|\nabla v|^2dx+C\int |\nabla\rho||\Delta\rho||v||\nabla v|dx,\\
				\int |\Delta\rho^\alpha|^2|v|^2dx&\leq C\int (|\nabla\rho|^4+|\Delta\rho|^2)|v|^2dx,\\
				\int |\nabla\rho||v||\nabla v|^2&\leq C\int |v|^2|\nabla v|^2dx+C\int |\nabla\rho|^2|\nabla v|^2dx. 
			\end{split}
		\end{align*}
		We now estimate the terms on the right-hand side of \eqref{I_8,}. By applying
		\eqref{I_11'} to the third term,
		\eqref{I_3'} to the fourth,
		\eqref{I_2'} to the fifth,
		\eqref{42-2'} to the sixth,
		and combining \eqref{I_11'} with \eqref{I_1^2'} for the seventh term,
		we obtain
		\begin{align}\label{I_8'}
			\begin{split}
				I_8\leq \frac{7\varepsilon_0}{32}\|\nabla^3 \rho^\alpha\|_{L^2}^2+\frac{5A_1}{32}\|\nabla^2 v\|_{L^2}^2+C\|\nabla^2 \rho\|_{L^2}^2+C\|\nabla \rho\|_{L^4}^4+C\|\nabla\rho\|_{L^2}^2+C\|\nabla v\|_{L^2}^2.
			\end{split}
		\end{align}
		
		Collecting the estimates \eqref{I_11'}, \eqref{I_1^2'}, \eqref{I_2'}, \eqref{I_3'}, \eqref{I_4'}, \eqref{I_5+I_6'}, \eqref{I_7'}, and \eqref{I_8'}, and inserting them into \eqref{I_0}, together with an application of \eqref{14-1}, we arrive at
		\begin{align}\label{I_0''}
			\begin{split}
				&\quad\frac{c\alpha}{2}\frac{d}{dt}\int \rho^{\alpha-1}|\Delta\rho|^2dx+\frac{A_1}{\varepsilon_1}\frac{d}{dt}\int\rho^\alpha\mathbb{F}[v]:\nabla vdx\\
				&+\frac{1}{4}\int |\nabla\rho_t|^2dx+\frac{9\varepsilon_0}{16}\int |\nabla\Delta\rho^\alpha|^2dx+\frac{A_1}{2\varepsilon_1}\int \rho |v_t|^2dx+\frac{3A_1}{4}\int|\nabla^2 v|^2dx\\
				&\leq C\int|\nabla\rho|^2|\nabla^2 \rho|^2dx+C\|\nabla^2 \rho\|_{L^2}^2+C\|\nabla \rho\|_{L^4}^4+C\|\nabla\rho\|_{L^2}^2+C\|\nabla v\|_{L^2}^2\\
				&\leq A_4\int\rho^{\alpha-1}|\nabla^2\rho|^2|\nabla \rho|^2dx+C\|\nabla^2 \rho\|_{L^2}^2+C\|\nabla \rho\|_{L^4}^4+C\|\nabla\rho\|_{L^2}^2+C\|\nabla v\|_{L^2}^2,
			\end{split}
		\end{align}
		where $A_4$ denotes the coefficient of the first term on the right-hand side of the above inequality, depending only on the quantities specified in Proposition~\ref{Prop 8.2}.
		
		To absorb the first term on the right-hand side of \eqref{I_0''}, we multiply \eqref{ggg0} by $4A_4/\eta_\alpha$, add the resulting inequality to \eqref{I_0''}, and invoke \eqref{14-1} to obtain
		\begin{align}\label{I_0'''}
			\begin{split}
				&\quad\frac{c\alpha}{2}\frac{d}{dt}\int \rho^{\alpha-1}|\Delta\rho|^2dx+\frac{A_1}{\varepsilon_1}\frac{d}{dt}\int\rho^\alpha\mathbb{F}[v]:\nabla vdx+\frac{A_4}{\eta_{\alpha}c}\frac{d}{dt}\int |\nabla\rho|^4 dx \\
				&+\frac{1}{4}\int |\nabla\rho_t|^2dx+\frac{9\varepsilon_0}{16}\int |\nabla\Delta\rho^\alpha|^2dx+\frac{A_1}{2\varepsilon_1}\int \rho |v_t|^2dx+\frac{3A_1}{4}\int|\nabla^2 v|^2dx\\
				&\quad+A_4\int\rho^{\alpha-1}|\nabla^2\rho|^2|\nabla \rho|^2dx\\
				&\leq C\int |v|^2|\nabla\rho|^4dx+C\int |\nabla\rho|^2|\nabla v|^2dx+C\|\nabla^2 \rho\|_{L^2}^2+C\|\nabla \rho\|_{L^4}^4+C\|\nabla\rho\|_{L^2}^2+C\|\nabla v\|_{L^2}^2.
			\end{split}
		\end{align}
		To estimate the first two terms on the right-hand side of \eqref{I_0'''}, we invoke \eqref{42-2'} for the first one and combine \eqref{I_11'} with \eqref{I_1^2'} for the second, which gives
		\begin{align}\label{I_0''''}
			\begin{split}
				&\quad\frac{c\alpha}{2}\frac{d}{dt}\int \rho^{\alpha-1}|\Delta\rho|^2dx+\frac{A_1}{\varepsilon_1}\frac{d}{dt}\int\rho^\alpha\mathbb{F}[v]:\nabla vdx+\frac{A_4}{\eta_{\alpha}c}\frac{d}{dt}\int |\nabla\rho|^4 dx \\
				&+\frac{1}{4}\int |\nabla\rho_t|^2dx+\frac{15\varepsilon_0}{32}\int |\nabla\Delta\rho^\alpha|^2dx+\frac{A_1}{2\varepsilon_1}\int \rho |v_t|^2dx+\frac{11A_1}{16}\int|\nabla^2 v|^2dx\\
				&\quad+A_4\int\rho^{\alpha-1}|\nabla^2\rho|^2|\nabla \rho|^2dx\\
				&\leq C\|\nabla^2 \rho\|_{L^2}^2+C\|\nabla \rho\|_{L^4}^4+C\|\nabla\rho\|_{L^2}^2+C\|\nabla v\|_{L^2}^2.
			\end{split}
		\end{align}
		Since $c\in[\nu,2\nu)$, $\alpha\in(2/3,1)$, and $|\divg v|\leq \sqrt{3}|\nabla v|$, we infer that
		\begin{align}\label{15-4}
			\begin{split}
				\mathbb{F}[v]:\nabla v
				&=\nu|\nabla v|^2+(\nu-c)\nabla v:(\nabla v)^\top+(\alpha-1)(2\nu-c)(\divg v)^2\\
				&\ge \nu |\nabla v|^2-(c-\nu)|\nabla v|^2-3(1-\alpha)(2\nu-c)|\nabla v|^2\\
				&\ge (3\alpha-2)(2\nu-c)|\nabla v|^2.
			\end{split}
		\end{align}
		Consequently, integrating \eqref{I_0''''} over the time and invoking \eqref{hhh00} together with \eqref{14-1}, we arrive at
		\begin{align}\label{10-7}
			\begin{split}
				&\sup_{0\leq t<\infty}\big(\norm{\nabla^2\rho}_{L^2}+\norm{\nabla v}_{L^2}\big)\\
				+\int_0^\infty \big(\norm{\nabla^3 \rho^\alpha}_{L^2}^2&+\norm{\nabla\rho_t}_{L^2}^2+\norm{v_t}_{L^2}^2+\norm{\nabla^2 v}_{L^2}^2+\norm{|\nabla\rho||\nabla^2\rho|}_{L^2}^2\big)\,dt\leq C.
			\end{split}
		\end{align}
		Combining the above estimate with \eqref{33-1} and \eqref{11-222}, we conclude that
		\begin{align*}
			\int_0^\infty\int |\nabla^3 \rho|^2dxdt
			&\leq C\int_0^\infty\int \big(|\nabla^3\rho^\alpha|^2+|\nabla^2\rho|^2|\nabla\rho|^2+|\nabla\rho|^6\big)dxdt\\
			&\leq C\int_0^\infty\int \big(|\nabla^3\rho^\alpha|^2+|\nabla^2\rho|^2|\nabla\rho|^2\big)dxdt\\
			&\leq C.
		\end{align*}
		Furthermore, it follows from $\eqref{0}_1$, H\"older's inequality, and the Sobolev embedding that
		\begin{align*}
			\|\rho_t\|_{L^2}&\leq C\|\nabla(\rho v)\|_{L^2}+C\|\Delta\rho^{\alpha}\|_{L^2}\\
			&\leq C\|\nabla\rho\|_{L^{\frac{2q}{q-2}}}\|v\|_{L^q}+C\|\rho\|_{L^\infty}\|\nabla v\|_{L^2}+C\|\nabla^2\rho\|_{L^2}+C\|\nabla\rho\|_{L^4}^2\\
			&\leq C\|\nabla\rho\|_{H^1}\|v\|_{L^q}+C\|\rho\|_{L^\infty}\|\nabla v\|_{L^2}+C\|\nabla^2\rho\|_{L^2}+C\|\nabla\rho\|_{H^1}^2,
		\end{align*}
		which, together with \eqref{14-1}, \eqref{10-7}, and \eqref{hhh00}, yields
		\begin{align}\label{10-9}
			\sup_{0\le t<\infty}\|\rho_t\|_{L^2}\leq C. 
		\end{align}
		
		This completes the proof of Proposition~\ref{Prop 8.2}.
	\end{proof}

	The following proposition establishes the temporal decay of $L^2$-norms of $\nabla^2\rho$ and $\nabla v$.
	\begin{prop}\label{Prop 8.3}
		Assume that \eqref{3d gamma} holds. Then
		\begin{align}\label{16-3}
			\lim_{t\to\infty}\big(\norm{\nabla^2 \rho(t)}_{L^2}+\norm{\nabla v(t)}_{L^2}\big)=0.
		\end{align}
	\end{prop}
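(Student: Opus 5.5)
We argue exactly as in the proof of Proposition~\ref{Prop 8.1.5}, now applied to the first-level energy functional from the proof of Proposition~\ref{Prop 8.2}. Define
\[
g_1(t):=\frac{c\alpha}{2}\int \rho^{\alpha-1}|\Delta\rho|^2dx+\frac{A_1}{\varepsilon_1}\int\rho^\alpha\mathbb{F}[v]:\nabla v\,dx+\frac{A_4}{\eta_\alpha c}\int|\nabla\rho|^4dx .
\]
By \eqref{14-1} and \eqref{15-4}, $g_1$ controls $\|\Delta\rho\|_{L^2}^2+\|\nabla v\|_{L^2}^2$ from below, up to a constant. On $\mathbb{T}^3$ we have $\|\nabla^2\rho\|_{L^2}=\|\Delta\rho\|_{L^2}$. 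The upper bound is $g_1\le C(\|\nabla^2\rho\|_{L^2}^2+\|\nabla v\|_{L^2}^2+\|\nabla\rho\|_{L^4}^4)$. It therefore suffices to prove $g_1(t)\to0$.

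\textbf{Step 1: integrability of $g_1$.} We show $\int_0^\infty g_1\,dt<\infty$. The term $\int_0^\infty(\|\nabla^2\rho\|_{L^2}^2+\|\nabla\rho\|_{L^4}^4+\|\nabla v\|_{L^2}^2)\,dt$ is finite by \eqref{hhh00}. The remaining term is $\int\rho^\alpha\mathbb{F}[v]:\nabla v\,dx\le C\|\nabla v\|_{L^2}^2$ by \eqref{14-1}, so it is integrable as well.

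\textbf{Step 2: the derivative of $g_1$ is controlled by integrable quantities.} Dropping the nonnegative dissipation terms in \eqref{I_0''''} gives
\[
g_1'(t)\le C\big(\|\nabla^2\rho\|_{L^2}^2+\|\nabla\rho\|_{L^4}^4+\|\nabla\rho\|_{L^2}^2+\|\nabla v\|_{L^2}^2\big)=:h(t).
\]
By \eqref{hhh00}, $h\in L^1(0,\infty)$.

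\textbf{Step 3: averaging.} For $t>2$ and $s\in[t-1,t]$ we have $g_1(t)\le g_1(s)+\int_{t-1}^t h\,d\zeta$. Averaging over $s\in[t-1,t]$ yields
\[
g_1(t)\le\int_{t-1}^t g_1\,d\zeta+\int_{t-1}^t h\,d\zeta .
\]
Both integrals tend to $0$ as $t\to\infty$ by Steps 1--2. Hence $g_1(t)\to0$, which implies \eqref{16-3}.

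\textbf{Main obstacle.} The delicate point is that \eqref{I_0''''} is only a differential inequality, whose right-hand side must be genuinely $L^1$ in time. This already holds by \eqref{hhh00}. We also need the coercivity of $g_1$, that is, positivity of $\mathbb{F}[v]:\nabla v$ via \eqref{15-4} together with the uniform lower and upper density bounds \eqref{14-1}. The term $\int|\nabla\rho|^4dx$ inside $g_1$ is harmless, since it is nonnegative and integrable in time.
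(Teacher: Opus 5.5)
Your proposal is correct and follows the paper's proof essentially verbatim. It uses the same functional $g_1$, the same differential inequality obtained from \eqref{I_0''''} by dropping the dissipation terms, the same averaging over $[t-1,t]$, and the same conclusion via \eqref{15-4} and \eqref{14-1}. Beyond the paper, you make explicit two points it leaves implicit: that $g_1\in L^1(0,\infty)$, and that $\|\nabla^2\rho\|_{L^2}=\|\Delta\rho\|_{L^2}$ on $\mathbb{T}^3$.
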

	\begin{proof}
		It follows from \eqref{I_0''''} that
		\begin{align*}
			\begin{split}
				&\quad\frac{c\alpha}{2}\frac{d}{dt}\int \rho^{\alpha-1}|\Delta\rho|^2 dx+\frac{A_1}{\varepsilon_1}\frac{d}{dt}\int \rho^\alpha\mathbb{F}[v]:\nabla vdx+\frac{A_4}{\eta_{\alpha}c}\frac{d}{dt}\int |\nabla\rho|^4 dx\\
				&\leq C\norm{\nabla^2 \rho}_{L^2}^2+C\norm{\nabla \rho}_{L^4}^4+C\norm{\nabla \rho}_{L^2}^2+C\norm{\nabla v}_{L^2}^2.
			\end{split}
		\end{align*}
		For convenience, we introduce the quantity
		\begin{align*}
			g_1(t):=\int \Big(\frac{c\alpha}{2} \rho^{\alpha-1}|\Delta\rho|^2 +\frac{A_1}{\varepsilon_1} \rho^\alpha\mathbb{F}[v]:\nabla v+\frac{A_4}{\eta_{\alpha}c}|\nabla\rho|^4\Big)(t) dx.
		\end{align*}
		Then, for any $t>2$ and any $s\in[t-1,t]$, it follows that
		\begin{align*}
			g_1(t)&\leq g_1(s)+\int_s^t g'_1(\zeta)\,d\zeta\\
			&\leq g_1(s)+C\int_{t-1}^t\Big(\norm{\nabla^2 \rho}_{L^2}^2+\norm{\nabla \rho}_{L^4}^4+\norm{\nabla \rho}_{L^2}^2+\norm{\nabla v}_{L^2}^2\Big)\,d\zeta.
		\end{align*}
		Averaging the above inequality with respect to $s$ over the interval $[t-1,t]$ gives
		\begin{align*}
			g_1(t)\leq \int_{t-1}^{t} g_1(\zeta)\,d\zeta
			+C\int_{t-1}^t\Big(\norm{\nabla^2 \rho}_{L^2}^2+\norm{\nabla \rho}_{L^4}^4+\norm{\nabla \rho}_{L^2}^2+\norm{\nabla v}_{L^2}^2\Big)\,d\zeta.
		\end{align*}
		Letting $t\to\infty$ and using \eqref{14-1} and \eqref{hhh00}, we conclude that
		\begin{align*}
			g_1(t)\to 0,\qquad\text{as }t\to\infty.
		\end{align*}
		The desired conclusion \eqref{16-3} then follows immediately from \eqref{15-4} and \eqref{14-1}. This completes the proof of Proposition~\ref{Prop 8.3}.
	\end{proof}

	\subsection{Second-level higher-order estimates}
	The following proposition provides a uniform-in-time estimate for the higher-order derivatives of the effective velocity.
	\begin{prop}\label{Prop 8.5}
		Assume that \eqref{3d gamma} holds. Then there exists a constant $C>0$, depending only on
		$\gamma,\alpha,\nu,\varepsilon,E_0,M$, and the initial data, such that
		\begin{align}\label{hhh8}
			\sup_{0\leq t<\infty}\big(\norm{v_t}_{L^2}+\norm{\nabla^2 v}_{L^2}\big)+\int_0^\infty \big(\norm{\nabla v_t}_{L^2}^2+\norm{\nabla^3 v}_{L^2}^2\big)dt\leq C.
		\end{align}
	\end{prop}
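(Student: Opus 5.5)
The plan is to differentiate the momentum equation $\eqref{0}_2$ in time and test with $v_t$. This gives an energy inequality for $\int\rho|v_t|^2dx$ with dissipation $\int\rho^\alpha\mathbb{F}[v_t]:\nabla v_t\,dx$. By the same algebra as in \eqref{15-4}, together with Korn-type coercivity on $\mathbb{T}^3$ and \eqref{14-1}, this dissipation bounds $c_0\|\nabla v_t\|_{L^2}^2$ from below.

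Concretely, applying $\partial_t$ to $\rho v_t+\rho u\cdot\nabla v+\nabla P=\divg(\rho^\alpha\mathbb{F}[v])$ and testing with $v_t$, I will use $\rho_t=-\divg(\rho v)+c\Delta\rho^\alpha$ to handle the term $\rho_t|v_t|^2$. I integrate by parts as in the standard treatment of $\rho_t|v_t|^2$, and obtain
\begin{align*}
\frac12\frac{d}{dt}\int\rho|v_t|^2dx+c_0\|\nabla v_t\|_{L^2}^2
&\le C\int\big(|\rho_t||u||\nabla v||v_t|+\rho|u_t||\nabla v||v_t|+\rho|u||\nabla v_t||\nabla v|\big)dx\\
&\quad+C\int\big(|\rho_t||\nabla v||\nabla v_t|+|\rho_t||\nabla v_t|+|\nabla\rho||v||v_t||\nabla v_t|+|v||v_t||\nabla v_t|\big)dx.
\end{align*}
Here $u=v-c\alpha\rho^{\alpha-2}\nabla\rho$ and $u_t=v_t-c\alpha(\rho^{\alpha-2}\nabla\rho)_t$, so $|u_t|\le|v_t|+C|\nabla\rho_t|+C|\rho_t||\nabla\rho|$.

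Each right-hand term is then estimated by H\"older, Gagliardo--Nirenberg and Young, absorbing $\delta\|\nabla v_t\|_{L^2}^2$. The remaining factors are only quantities already known to be bounded uniformly in time or integrable on $(0,\infty)$: $\sup_t(\|v\|_{L^q}+\|\nabla v\|_{L^2}+\|\nabla^2\rho\|_{L^2}+\|\rho_t\|_{L^2})$ from \eqref{hhh00} and \eqref{hhh0}, and $\int_0^\infty(\|\nabla\rho_t\|_{L^2}^2+\|v_t\|_{L^2}^2+\|\nabla^2v\|_{L^2}^2+\|\nabla^3\rho\|_{L^2}^2)dt$. For instance, $\int\rho|v_t|^2|\nabla v|dx\le C\|v_t\|_{L^3}\|v_t\|_{L^2}\|\nabla v\|_{L^6}\le\delta\|\nabla v_t\|_{L^2}^2+C\|v_t\|_{L^2}^2(1+\|\nabla^2 v\|_{L^2}^2)$, using $\|v_t\|_{H^1}\lesssim\|v_t\|_{L^2}+\|\nabla v_t\|_{L^2}$. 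Similarly, $\int|\nabla\rho_t||\nabla v||v_t|dx\le C\|\nabla\rho_t\|_{L^2}\|\nabla v\|_{L^3}\|v_t\|_{L^6}$ is handled the same way. The target is a differential inequality
\[
\frac{d}{dt}\int\rho|v_t|^2dx+c_0\|\nabla v_t\|_{L^2}^2\le C\,a(t)\int\rho|v_t|^2dx+C\,b(t),
\]
with $a,b\in L^1(0,\infty)$. Typically $a\lesssim 1+\|\nabla^2v\|_{L^2}^2+\|\nabla^3\rho\|_{L^2}^2$, and $b$ collects $\|\nabla\rho_t\|_{L^2}^2$, $\|v_t\|_{L^2}^2$ and similar terms. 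Since $a$ must be integrable, the constant part of $a$ has to be avoided. To this end I will keep $\|v_t\|_{L^2}^2$ in $b$ rather than in $a\int\rho|v_t|^2$, which works since $\|v_t\|_{L^2}^2\in L^1(0,\infty)$. Gr\"onwall's inequality on $(0,\infty)$ then gives the uniform bound $\sup_t\|v_t\|_{L^2}$ and $\int_0^\infty\|\nabla v_t\|_{L^2}^2dt<\infty$. The initial value $\|\sqrt\rho v_t(0)\|_{L^2}$ is bounded through the equation by $(\rho_0,u_0)\in H^3\times H^2$.

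With $v_t$ controlled, Lemma~\ref{Lema 8.2} applied to the elliptic system \eqref{ell sys} gives $\|\nabla^2v\|_{L^2}\le C(\|v_t\|_{L^2}+\|v\|_{L^q}\|\nabla v\|_{L^{2q/(q-2)}}+\|\nabla\rho\|_{L^4}\|\nabla v\|_{L^4}+\|\nabla\rho\|_{L^2})$. The term $\|\nabla v\|_{L^{2q/(q-2)}}$ is interpolated between $\|\nabla v\|_{L^2}$ and $\|\nabla^2v\|_{L^2}$ and absorbed, which yields $\sup_t\|\nabla^2v\|_{L^2}\le C$. Next, \eqref{10-5} bounds $\|\nabla^3v\|_{L^2}$ by $\|\nabla F\|_{L^2}$. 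This involves $\|\nabla v_t\|_{L^2}$, $\|\nabla^3\rho\|_{L^2}$ and products such as $|\nabla\rho||\nabla^2v|$, $|\nabla^2\rho||\nabla v|$, $|v||\nabla^2 v|$ and $|\nabla v|^2$. All of these are square-integrable in time given the sup-bounds just obtained together with $\int_0^\infty\|\nabla^3\rho\|_{L^2}^2dt<\infty$.

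The main obstacle is the term coming from $(\rho u)_t$ with $u_t$ containing $\nabla\rho_t$, together with the $\rho^\alpha_t\mathbb{F}[v]$ term. Roughly, these are $\int|\nabla\rho_t||u||\nabla v||v_t|dx$ and $\int|\rho_t||\nabla v||\nabla v_t|dx$. The time-integrability must come from $\|\nabla\rho_t\|_{L^2}^2$ or $\|\nabla^2v\|_{L^2}^2$, not from a constant. The second term is bounded by $\|\rho_t\|_{L^3}\|\nabla v\|_{L^6}\|\nabla v_t\|_{L^2}\le\delta\|\nabla v_t\|_{L^2}^2+C\|\rho_t\|_{H^1}^2\|\nabla^2 v\|_{L^2}^2$. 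Since $\sup_t\|\nabla^2v\|_{L^2}$ is not yet known, I will treat this as $a(t)\|\nabla^2 v\|_{L^2}^2$ with $a=\|\rho_t\|_{H^1}^2\in L^1$, and bound $\|\nabla^2v\|_{L^2}^2\lesssim\int\rho|v_t|^2dx+(\text{known})$ via \eqref{ell sys}, so that it closes in the Gr\"onwall argument.
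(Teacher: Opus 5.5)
Your proposal is correct and follows essentially the same route as the paper. You differentiate the momentum equation in time, test with $v_t$, and close a Gr\"onwall inequality whose coefficients are integrable on $(0,\infty)$ by Propositions~\ref{Prop 8.0} and \ref{Prop 8.2}; then you recover $\nabla^2 v$ and $\nabla^3 v$ from Lemma~\ref{Lema 8.2}.

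The only real difference is one of bookkeeping. The paper differentiates the divided form $\eqref{0'}_2$, so the term $(\rho^{\alpha-1})_t\divg\mathbb{F}[v]$ appears and forces it to carry $\varepsilon_4\|\nabla^3 v\|_{L^2}^2$ inside the differential inequality. Your conservative form keeps $(\rho^\alpha)_t\mathbb{F}[v]$ inside a divergence and instead feeds $\|\nabla^2 v\|_{L^2}^2\lesssim\|v_t\|_{L^2}^2+C$ back into Gr\"onwall. Note that your claim $\int_0^\infty\|\rho_t\|_{H^1}^2dt<\infty$ needs Poincar\'e's inequality, which applies because $\int\rho_t\,dx=0$.
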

	\begin{proof}
		Since $\rho>0$ on $\mathbb{T}^3\times[0,\infty)$, the system \eqref{0} can be rewritten in the form
		\begin{equation}
			\label{0'}
			\left\{
			\begin{array}{l}
				\rho_t+\div(\rho v)-c\Delta\rho^\alpha=0,\\
				v_t-\rho^{\alpha-1}\divg\mathbb{F}[v]=-u \cdot\nabla v-\gamma\rho^{\gamma-2}\nabla\rho+\alpha\rho^{\alpha-2}\nabla\rho\cdot\mathbb{F}[v].
			\end{array}
			\right.
		\end{equation}
		Differentiating \eqref{0'}$_2$ with respect to time gives
		\begin{align*}
			&\quad v_{tt}-(\rho^{\alpha-1})_t\divg \mathbb{F}[v]-\rho^{\alpha-1}\divg \mathbb{F}[v_t]\\
			&=-u_t\cdot\nabla v-u\cdot\nabla v_t-\gamma(\rho^{\gamma-2})_t\nabla\rho -\gamma\rho^{\gamma-2}\nabla\rho_t\\
			&\quad+\alpha(\rho^{\alpha-2})_t\nabla\rho\cdot\mathbb{F}[v]+\alpha\rho^{\alpha-2}\nabla\rho_t\cdot\mathbb{F}[v]+\alpha\rho^{\alpha-2}\nabla\rho\cdot\mathbb{F}[v_t].
		\end{align*}
		Testing the above equation with $v_t$, integrating over $\mathbb{T}^3$, and then applying integration by parts, Young's inequality, \eqref{14-1}, and the elliptic estimate stated in Lemma~\ref{Lema 8.2}, we infer that there exist sufficiently small positive constants $\varepsilon_3$ and $\varepsilon_4$, depending only on the quantities specified in Proposition~\ref{Prop 8.5}, such that (see the derivation of equation~(205) in Proposition~6.3 of \cite{Gu-Huang-Meng-Zhou-2026} for the details)
		\begin{align}\label{J_0}
			\begin{split}
				&\quad\frac{1}{2}\frac{d}{dt}\int |v_t|^2dx
				+\frac{\varepsilon_3}{2}\int |\nabla v_t|^2dx
				+\varepsilon_4\int |\nabla^3 v|^2dx\\
				&\leq C\int |\nabla\rho||\nabla v_t||v_t|dx
				+C\int |\rho_t||\nabla^2 v||v_t|dx
				+C\int |\nabla v||v_t|^2dx\\
				&\quad+C\int |\rho_t||\nabla\rho||\nabla v||v_t|dx
				+C\int |\rho_t||\nabla v||\nabla v_t|dx
				+C\int |v||\nabla v_t||v_t|dx\\
				&\quad+C\int |\rho_t||\nabla\rho||v_t|dx
				+C\int |\nabla\rho_t||v_t|dx\\
				&\quad+C\int |\nabla\rho|^2|v_t|^2dx
				+C\int |\nabla\rho|^2|v|^2|\nabla v|^2dx
				+C\int |\nabla v|^4dx\\
				&\quad+C\int |v|^2|\nabla^2 v|^2dx
				+C\int |\nabla\rho|^4|\nabla v|^2dx
				+C\int |\nabla^2\rho|^2|\nabla v|^2dx\\
				&\quad+C\int |\nabla\rho|^2|\nabla^2 v|^2dx
				+C\int |\nabla\rho|^4dx
				+C\int |\nabla^2\rho|^2dx\\
				&=: \sum_{j=1}^{17}J_j.
			\end{split}
		\end{align}
		
		We now turn to estimating each term. For $J_1$, Hölder's inequality, Young's inequality, the Sobolev embedding theorem, along with \eqref{14-1} and \eqref{hhh0}, yields
		\begin{align}\label{J_1}
			\begin{split}
				J_1&\leq \delta\|\nabla v_t\|_{L^2}^2+C_\delta\|\nabla\rho\|_{L^\infty}^2\|v_t\|_{L^2}^2\\
				&\leq \delta\|\nabla v_t\|_{L^2}^2+C_\delta\|\nabla\rho\|_{H^2}^2\|v_t\|_{L^2}^2\\
				&\leq \delta\|\nabla v_t\|_{L^2}^2+C_\delta(1+\|\nabla^3\rho\|_{L^2}^2)\|v_t\|_{L^2}^2,
			\end{split}
		\end{align}
		where $\delta>0$ is a sufficiently small constant to be fixed later.
		
		For $J_2$, an application of Hölder's inequality, the Gagliardo--Nirenberg inequality, Young's inequality, and \eqref{hhh0} leads to
		\begin{align}\label{J_2}
			\begin{split}
				J_2&\leq C\|\rho_t\|_{L^6}\|\nabla^2 v\|_{L^3}\|v_t\|_{L^2}\\
				&\leq C\|\rho_t\|_{H^1}\|\nabla^2 v\|_{L^2}^{\frac{1}{2}}\|\nabla^2 v\|_{H^1}^{\frac{1}{2}}\|v_t\|_{L^2}\\
				&\leq \delta\|\nabla^3 v\|_{L^2}^2+C_\delta\|\nabla^2 v\|_{L^2}^2+C_\delta\|\rho_t\|_{H^1}^2\|v_t\|_{L^2}^2\\
				&\leq  \delta\|\nabla^3 v\|_{L^2}^2+C_\delta\|\nabla^2 v\|_{L^2}^2+C_\delta(1+\|\nabla\rho_t\|_{L^2}^2)\|v_t\|_{L^2}^2.
			\end{split}
		\end{align}
		
		For $J_3$, the combination of Hölder's inequality, the Gagliardo--Nirenberg inequality, \eqref{hhh0}, and Young's inequality yields
		\begin{align}\label{J_3}
			\begin{split}
				J_3&\leq C\|\nabla v\|_{L^2}\|v_t\|_{L^4}^2\\
				&\leq C\|v_t\|_{L^2}^{\frac{1}{2}}\|v_t\|_{H^1}^{\frac{3}{2}}\\
				&\leq \delta\|v_t\|_{H^1}^2+C_\delta\|v_t\|_{L^2}^2.
			\end{split}
		\end{align}
		
		For $J_4$, combining H\"older's inequality, the Sobolev embedding theorem, \eqref{14-1}, \eqref{hhh0}, and Young's inequality, we derive
		\begin{align}\label{J_4}
			\begin{split}
				J_4&\leq C\|\rho_t\|_{L^2}\|\nabla\rho\|_{L^\infty}\|\nabla v\|_{L^\infty}\|v_t\|_{L^2}\\
				&\leq C\|\nabla\rho\|_{H^2}\|\nabla v\|_{H^2}\|v_t\|_{L^2}\\
				&\leq \delta\|\nabla v\|_{H^2}^2+C_\delta\|\nabla \rho\|_{H^2}^2\|v_t\|_{L^2}^2\\
				&\leq \delta\|\nabla v\|_{H^2}^2+C_\delta(1+\|\nabla^3\rho\|_{L^2}^2)\|v_t\|_{L^2}^2. 
			\end{split}
		\end{align}
		
		For $J_5$, by H\"older's inequality, the Gagliardo--Nirenberg inequality, Young's inequality, and \eqref{hhh0}, we obtain
		\begin{align}\label{J_5}
			\begin{split}
				J_5&\leq C\|\rho_t\|_{L^2}\|\nabla v\|_{L^\infty}\|\nabla v_t\|_{L^2}\\
				&\leq C\|\nabla v\|_{L^2}^{\frac{1}{4}}\|\nabla v\|_{H^2}^{\frac{3}{4}}\|\nabla v_t\|_{L^2}\\
				&\leq \delta\|\nabla v_t\|_{L^2}^2+\delta\|\nabla v\|_{H^2}^2+C_\delta\|\nabla v\|_{L^2}^2.
			\end{split}
		\end{align}
		
		For $J_6$, by H\"older's inequality, the Gagliardo--Nirenberg inequality, \eqref{hhh0}, \eqref{hhh00}, and Young's inequality, we obtain
		\begin{align}\label{J_6}
			\begin{split}
				J_6&\leq C\|v\|_{L^6}\|\nabla v_t\|_{L^2}\|v_t\|_{L^{3}}\\
				&\leq C\|\nabla v_t\|_{L^2}\|v_t\|_{L^{2}}^{\frac{1}{2}}\|v_t\|_{H^1}^{\frac{1}{2}}\\
				&\leq \delta\|v_t\|_{H^1}^2+C_\delta\|v_t\|_{L^2}^2.
			\end{split}
		\end{align}
		
		For $J_7-J_9$, employing H\"older's inequality, Young's inequality, the Sobolev embedding, \eqref{14-1}, and \eqref{hhh0}, we obtain 
		\begin{align}\label{J_7-9}
			\begin{split}
				\sum_{j=7}^{9}J_j&\leq C\|\rho_t\|_{L^2}\|\nabla\rho\|_{L^\infty}\|v_t\|_{L^2}+C\|\nabla\rho_t\|_{L^2}\|v_t\|_{L^2}+C\|\nabla\rho\|_{L^\infty}^2\|v_t\|_{L^2}^2\\
				&\leq C\|\nabla\rho\|_{H^2}^2+C\|v_t\|_{L^2}^2+C\|\nabla\rho_t\|_{L^2}^2+C\|\nabla\rho\|_{H^2}^2\|v_t\|_{L^2}^2\\
				&\leq C\|\nabla\rho\|_{H^2}^2+C\|v_t\|_{L^2}^2+C\|\nabla\rho_t\|_{L^2}^2+C(1+\|\nabla^3\rho\|_{L^2}^2)\|v_t\|_{L^2}^2.
			\end{split}
		\end{align}
		
		For the remaining terms $J_{10}$--$J_{17}$, by H\"older's inequality, the Sobolev embedding theorem, Young's inequality, together with \eqref{hhh0} and \eqref{hhh00}, we derive the following estimates:
		\begin{align}\label{J_10-17}
			\begin{split}
				\sum_{j=10}^{17}J_j&\leq C\norm{\nabla\rho}_{L^6}^2\norm{v}_{L^6}^2\norm{\nabla v}_{L^6}^2+C\norm{\nabla v}_{L^4}^4+C\norm{v}_{L^6}^2\norm{\nabla^2 v}_{L^3}^2+C\norm{\nabla\rho}_{L^6}^4\norm{\nabla v}_{L^6}^2\\
				&\quad +C\norm{\nabla^2\rho}_{L^2}^2\norm{\nabla v}_{L^\infty}^2+C\norm{\nabla \rho}_{L^6}^2\norm{\nabla^2 v}_{L^3}^2+C\norm{\nabla\rho}_{L^4}^4+C\norm{\nabla^2 \rho}_{L^2}^2\\
				&\leq C\norm{\nabla\rho}_{H^1}^2\norm{v}_{H^1}^2\norm{\nabla v}_{H^1}^2+C\norm{\nabla v}_{L^2}^2\norm{\nabla v}_{L^\infty}^2+C\norm{v}_{H^1}^2\norm{\nabla^2 v}_{L^3}^2+C\norm{\nabla\rho}_{H^1}^4\norm{\nabla v}_{H^1}^2\\
				&\quad +C\norm{\nabla^2\rho}_{L^2}^2\norm{\nabla v}_{L^\infty}^2+C\norm{\nabla \rho}_{L^6}^2\norm{\nabla^2 v}_{L^3}^2+C\norm{\nabla\rho}_{L^4}^4+C\norm{\nabla^2 \rho}_{L^2}^2\\
				&\leq C\norm{\nabla v}_{H^1}^2+C\norm{\nabla v}_{L^\infty}^2+C\norm{\nabla^2 v}_{L^3}^2+C\norm{\nabla\rho}_{L^4}^4+C\norm{\nabla^2 \rho}_{L^2}^2\\
				&\leq \delta\norm{\nabla v}_{H^2}^2+C_\delta\norm{\nabla v}_{H^1}^2+C\norm{\nabla\rho}_{L^4}^4+C\norm{\nabla^2 \rho}_{L^2}^2.
			\end{split}
		\end{align}
		
		Substituting the estimates \eqref{J_1}--\eqref{J_10-17} into \eqref{J_0}, and choosing $\delta>0$ sufficiently small, we arrive at
		\begin{align}\label{J_0'}
			\begin{split}
				&\quad\frac{1}{2}\frac{d}{dt}\int v_t^2dx+\frac{ \varepsilon_3}{4}\int |\nabla v_t|^2dx+\frac{\varepsilon_4}{2}\int |\nabla^3 v|^2dx\\
				&\leq C\big(\norm{\nabla^3\rho}_{L^2}^2+\norm{\nabla\rho_t}_{L^2}^2\big)\norm{v_t}_{L^2}^2\\
				&\quad+C\big(\norm{\nabla v}_{H^1}^2+\norm{v_t}_{L^2}^2+\norm{\nabla\rho}_{H^2}^2+\norm{\nabla\rho_t}_{L^2}^2+\norm{\nabla\rho}_{L^4}^4\big).
			\end{split}
		\end{align}
		Applying Gr\"onwall's inequality together with \eqref{hhh00} and \eqref{hhh0}, we obtain
		\begin{align}\label{17-1}
			\sup_{0\leq t<\infty}\norm{v_t}_{L^2}+\int_0^\infty \big(\norm{\nabla v_t}_{L^2}^2+\norm{\nabla^3 v}_{L^2}^2\big)dt\leq C.
		\end{align}
		
		It remains to estimate $\norm{\nabla^2 v}_{L^2}$. Since $v$ satisfies the elliptic system \eqref{ell sys}, it follows from \eqref{14-1}, H\"older's inequality, \eqref{hhh00}, and \eqref{hhh0} that
		\begin{align}\label{100-2}
			\begin{split}
				\norm{\nabla^2 v}_{L^2}
				&\leq C\norm{v_t}_{L^2}+C\norm{v\cdot\nabla v}_{L^2}+C\norm{\nabla\rho\cdot\nabla v}_{L^2}+C\norm{\nabla\rho}_{L^2}\\
				&\leq C\norm{v_t}_{L^2}+C\norm{v}_{L^6}\norm{\nabla v}_{L^3}+C\norm{\nabla\rho}_{L^6}\norm{\nabla v}_{L^3}+C\norm{\nabla\rho}_{L^2}\\
                &\leq C\norm{v_t}_{L^2}+C\norm{\nabla v}_{L^3}+C\norm{\nabla\rho}_{L^2}\\
				&\leq C\norm{v_t}_{L^2}+C\norm{\nabla v}_{L^2}^{\frac{1}{2}}\norm{\nabla v}_{H^1}^{\frac{1}{2}}+C\norm{\nabla\rho}_{L^2}\\
				&\leq \frac{1}{2}\norm{\nabla^2 v}_{L^2}+C\norm{v_t}_{L^2}+C\norm{\nabla v}_{L^2}+C\norm{\nabla\rho}_{L^2}.
			\end{split}
		\end{align}
		Therefore, combining the above estimate with \eqref{17-1}, \eqref{hhh0}, and \eqref{hhh00}, we deduce that
		\begin{align*}
			\sup_{0\leq t<\infty}\norm{\nabla^2 v}_{L^2}\leq C.
		\end{align*}
		
		This completes the proof of Proposition~\ref{Prop 8.5}.
	\end{proof}
	
	\begin{prop}\label{Prop 8.4}
		Assume that \eqref{3d gamma} holds. Then
		\begin{align}\label{100-1}
			\lim_{t\to\infty}\norm{\nabla^2 v(t)}_{L^2}=0.
		\end{align}
	\end{prop}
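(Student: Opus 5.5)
Since $v$ solves the elliptic system \eqref{ell sys}, Lemma~\ref{Lema 8.2} together with \eqref{14-1} gives, exactly as in \eqref{100-2},
\begin{align*}
\norm{\nabla^2 v(t)}_{L^2}\le C\norm{v_t(t)}_{L^2}+C\norm{\nabla v(t)}_{L^2}+C\norm{\nabla\rho(t)}_{L^2}.
\end{align*}
By Propositions~\ref{Prop 8.1.5} and \ref{Prop 8.3}, the last two terms tend to zero as $t\to\infty$. The proof therefore reduces to showing
\begin{align*}
\lim_{t\to\infty}\norm{v_t(t)}_{L^2}=0.
\end{align*}

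To prove this, we use the same averaging device as in Propositions~\ref{Prop 8.1.5} and \ref{Prop 8.3}, with
\[
g_2(t):=\tfrac12\norm{v_t(t)}_{L^2}^2.
\]
By \eqref{hhh0}, $g_2\in L^1(0,\infty)$, so $\int_{t-1}^t g_2\,d\zeta\to0$. For the differential inequality we return to \eqref{J_0'} and bound its right-hand side using the now available uniform bound $\sup_t\norm{v_t}_{L^2}\le C$ from \eqref{hhh8}. The first term is then controlled by $C(\norm{\nabla^3\rho}_{L^2}^2+\norm{\nabla\rho_t}_{L^2}^2)$, so that
\begin{align*}
g_2'(t)\le h(t):=C\big(\norm{\nabla^3\rho}_{L^2}^2+\norm{\nabla\rho_t}_{L^2}^2+\norm{\nabla v}_{H^1}^2+\norm{v_t}_{L^2}^2+\norm{\nabla\rho}_{H^2}^2+\norm{\nabla\rho}_{L^4}^4\big).
\end{align*}
Every term in $h$ is integrable on $(0,\infty)$ by \eqref{hhh00}, \eqref{hhh0} and \eqref{hhh8}. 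Note that $\norm{\nabla\rho}_{H^2}^2$ is controlled by $\norm{\nabla\rho}_{L^2}^2+\norm{\nabla^2\rho}_{L^2}^2+\norm{\nabla^3\rho}_{L^2}^2$, and each of these is integrable in time.

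For $t>2$ and $s\in[t-1,t]$ we have
\[
g_2(t)\le g_2(s)+\int_{t-1}^t h\,d\zeta .
\]
Averaging in $s$ over $[t-1,t]$ gives
\[
g_2(t)\le \int_{t-1}^t g_2\,d\zeta+\int_{t-1}^t h\,d\zeta.
\]
Both integrals tend to zero as $t\to\infty$, since $g_2,h\in L^1(0,\infty)$. Hence $\norm{v_t(t)}_{L^2}\to0$, and \eqref{100-1} follows from the elliptic bound above.

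The main obstacle is checking that \eqref{J_0'} holds on all of $[0,\infty)$ in the stated form, so that every coefficient on its right-hand side is integrable in time. The potential difficulty is the product term $(\norm{\nabla^3\rho}^2+\norm{\nabla\rho_t}^2)\norm{v_t}^2$. This is harmless once the uniform bound on $\norm{v_t}_{L^2}$ from \eqref{hhh8} is used, since the product then becomes an integrable function of time.
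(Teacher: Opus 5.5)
Your proposal is correct and follows essentially the same route as the paper. You reduce, via the elliptic bound \eqref{100-2} and the decay results \eqref{16-4} and \eqref{16-3}, to showing $\|v_t(t)\|_{L^2}\to0$. You then obtain that decay from \eqref{J_0'}, absorbing the product term with the uniform bound \eqref{hhh8} and applying the same averaging argument with $g_2=\frac12\|v_t\|_{L^2}^2$.
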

	\begin{proof}
		We first establish the following claim:
		\begin{align}\label{17-2}
			\lim\limits_{t\to\infty}\norm{v_t(t)}_{L^2}=0.
		\end{align}
		Indeed, it follows from \eqref{J_0'} and \eqref{hhh8} that
		\begin{align*}
			\begin{split}
				\frac{1}{2}\frac{d}{dt}\int v_t^2dx
				\leq C\Big(\norm{\nabla v}_{H^1}^2+\norm{v_t}_{L^2}^2+\norm{\nabla\rho}_{H^2}^2+\norm{\nabla\rho_t}_{L^2}^2+\norm{\nabla\rho}_{L^4}^4\Big).
			\end{split}
		\end{align*}
		For convenience, we introduce the quantity
		\[
		g_2(t):=\frac{1}{2}\norm{v_t(t)}_{L^2}^2.
		\]
		Then, for any $t>2$ and any $s\in[t-1,t]$, we have
		\begin{align*}
			g_2(t)
			&\leq g_2(s)+\int_s^t g'_2(\zeta)\,d\zeta\\
			&\leq g_2(s)
			+C\int_{t-1}^t\Big(\norm{\nabla v}_{H^1}^2+\norm{v_t}_{L^2}^2+\norm{\nabla\rho}_{H^2}^2+\norm{\nabla\rho_t}_{L^2}^2+\norm{\nabla\rho}_{L^4}^4\Big)\,d\zeta.
		\end{align*}
		Averaging the above inequality with respect to $s$ over the interval $[t-1,t]$, we obtain
		\begin{align*}
			g_2(t)
			\leq \int_{t-1}^{t} g_2(\zeta)\,d\zeta
			+C\int_{t-1}^t\Big(\norm{\nabla v}_{H^1}^2+\norm{v_t}_{L^2}^2+\norm{\nabla\rho}_{H^2}^2+\norm{\nabla\rho_t}_{L^2}^2+\norm{\nabla\rho}_{L^4}^4\Big)\,d\zeta.
		\end{align*}
		Letting $t\to\infty$ and using \eqref{hhh0} together with \eqref{hhh00}, we conclude that
		\begin{align*}
			g_2(t)\to 0,\qquad\text{as }t\to\infty,
		\end{align*}
		which proves the claim.
		
		Finally, combining the decay estimates \eqref{17-2}, \eqref{16-3}, and \eqref{16-4} with the elliptic estimate \eqref{100-2}, we immediately obtain \eqref{100-1}. This completes the proof.
	\end{proof}

	The following proposition provides a uniform-in-time estimate for the higher-order derivatives of the density.
	\begin{prop}\label{Prop 8.7}
		Assume that \eqref{3d gamma} holds. Then there exists a constant $C>0$, depending only on
		$\gamma,\alpha,\nu,\varepsilon,E_0,M$, and the initial data, such that
		\begin{align}\label{hhh9}
			\sup_{0\leq t<\infty}\big(\norm{\nabla^3 \rho}_{L^2}+\norm{\nabla\rho_t}_{L^2}\big)+\int_0^\infty \big(\norm{\nabla^4 \rho}_{L^2}^2+\norm{\nabla^2 \rho_t}_{L^2}^2+\norm{\rho_{tt}}_{L^2}^2\big)dt\leq C.
		\end{align}
	\end{prop}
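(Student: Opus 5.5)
The plan is to differentiate the density equation in time and run a parabolic energy estimate for $\rho_t$ at the level of $\nabla\rho_t$. All lower-order quantities are then treated as known, namely \eqref{14-1}, \eqref{hhh00}, \eqref{hhh0} and \eqref{hhh8}. Write the density equation as
\[
\rho_t-c\alpha\rho^{\alpha-1}\Delta\rho=c\alpha(\alpha-1)\rho^{\alpha-2}|\nabla\rho|^2-\divg(\rho v).
\]
Differentiating in $t$ gives
\[
\rho_{tt}-c\alpha\rho^{\alpha-1}\Delta\rho_t=c\alpha(\rho^{\alpha-1})_t\Delta\rho+\partial_t\big(c\alpha(\alpha-1)\rho^{\alpha-2}|\nabla\rho|^2\big)-\divg(\rho v)_t .
\]
We test this identity with $-\Delta\rho_t$ and also with $\rho_{tt}$. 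The first test yields $\frac12\frac{d}{dt}\|\nabla\rho_t\|_{L^2}^2+c\alpha\int\rho^{\alpha-1}|\Delta\rho_t|^2dx$ on the left. The second, after integrating by parts in the $\Delta\rho_t$ term, yields $\|\rho_{tt}\|_{L^2}^2+\frac{c\alpha}{2}\frac{d}{dt}\int\rho^{\alpha-1}|\nabla\rho_t|^2dx$ plus commutator terms involving $\nabla\rho^{\alpha-1}$ and $(\rho^{\alpha-1})_t$. The upper and lower bounds in \eqref{14-1} give $\|\nabla^2\rho_t\|_{L^2}\le C\|\Delta\rho_t\|_{L^2}$ on $\mathbb T^3$.

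The right-hand side terms are bounded as follows. For $(\rho^{\alpha-1})_t\Delta\rho$ we use $\|\rho_t\|_{L^6}\|\nabla^2\rho\|_{L^3}\le C\|\rho_t\|_{H^1}\|\nabla^2\rho\|_{H^1}$. For $\partial_t(\rho^{\alpha-2}|\nabla\rho|^2)$ we use $|\rho_t||\nabla\rho|^2+|\nabla\rho||\nabla\rho_t|$ together with $\|\nabla\rho\|_{L^\infty}\le C\|\nabla\rho\|_{H^2}$. For $\divg(\rho v)_t$ we use $\rho_t\divg v+\nabla\rho_t\cdot v+\rho\divg v_t+\nabla\rho\cdot v_t$. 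Since $\sup_t\|v\|_{H^2}\le C$ by \eqref{hhh0} and \eqref{hhh8}, we have $v,\nabla v\in L^\infty_tL^6$ and $v\in L^\infty$. After Young's inequality every term is either absorbed into the dissipation $\delta(\|\Delta\rho_t\|_{L^2}^2+\|\rho_{tt}\|_{L^2}^2)$ or bounded by
\[
C\big(1+\|\nabla^3\rho\|_{L^2}^2\big)\|\nabla\rho_t\|_{L^2}^2+C\big(\|\nabla\rho_t\|_{L^2}^2+\|\nabla v_t\|_{L^2}^2+\|\nabla^3\rho\|_{L^2}^2+\|\nabla^2 v\|_{L^2}^2+\|\nabla\rho\|_{L^4}^4\big).
\]
The coefficient $\|\nabla^3\rho\|_{L^2}^2$ and every forcing term are integrable on $(0,\infty)$ by \eqref{hhh0}, \eqref{hhh8} and \eqref{hhh00}. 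Gr\"onwall's inequality therefore gives $\sup_t\|\nabla\rho_t\|_{L^2}+\int_0^\infty(\|\nabla^2\rho_t\|_{L^2}^2+\|\rho_{tt}\|_{L^2}^2)dt\le C$. The initial value $\|\nabla\rho_t(0)\|_{L^2}$ is controlled by $\rho_0\in H^3$ and $u_0\in H^2$.

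The $\nabla^3\rho$ and $\nabla^4\rho$ bounds then follow elliptically. From the equation we have
\[
c\Delta\rho^\alpha=\rho_t+\divg(\rho v).
\]
Taking one spatial derivative gives $\|\nabla\Delta\rho^\alpha\|_{L^2}\le\|\nabla\rho_t\|_{L^2}+C\|\nabla^2(\rho v)\|_{L^2}$. The last norm is uniformly bounded, since $\|\nabla^2\rho\|_{L^2}$ and $\|v\|_{H^2}$ are bounded and $\|\nabla\rho\|_{L^\infty}$ is handled by interpolation with a small multiple of $\|\nabla^3\rho\|_{L^2}$. Converting $\nabla^3\rho^\alpha$ to $\nabla^3\rho$ through \eqref{33-1} and the bounds on $\int|\nabla\rho|^6dx$ gives $\sup_t\|\nabla^3\rho\|_{L^2}\le C$. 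Taking two derivatives gives $\|\nabla^4\rho\|_{L^2}\le C\|\nabla^2\rho_t\|_{L^2}+C\|\nabla^3(\rho v)\|_{L^2}+(\text{lower order})$, where the lower-order part contains products such as $|\nabla\rho||\nabla^3\rho|$ and $|\nabla^2\rho|^2$. The term $\|\nabla^3(\rho v)\|_{L^2}$ is square-integrable in time by \eqref{hhh8} ($\nabla^3 v\in L^2_tL^2$) together with the uniform $H^3$ bound on $\rho$ just obtained. This yields $\int_0^\infty\|\nabla^4\rho\|_{L^2}^2dt\le C$.

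The main obstacle is the Gr\"onwall step: the coefficient in front of $\|\nabla\rho_t\|_{L^2}^2$ must be integrable on $(0,\infty)$ and not merely bounded, or the estimate will not be uniform in time. This is why I would keep every occurrence of $\|\nabla\rho\|_{L^\infty}^2$ and $\|\nabla^2\rho\|_{L^3}^2$ expressed through $1+\|\nabla^3\rho\|_{L^2}^2$. If a term produces a bounded but non-integrable factor, the fallback is the device of Proposition~\ref{Prop 8.1.5}: average $g(t)=\|\nabla\rho_t(t)\|_{L^2}^2$ over $[t-1,t]$ and use $\int_0^\infty\|\nabla\rho_t\|_{L^2}^2dt<\infty$ from \eqref{hhh0}, so that only a local-in-time Gr\"onwall with uniformly bounded constants is needed.
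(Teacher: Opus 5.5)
Your proposal is correct and follows essentially the paper's route. You time-differentiate the density equation, test it so as to control $\int\rho^{\alpha-1}|\nabla\rho_t|^2dx$, $\|\rho_{tt}\|_{L^2}^2$ and $\|\nabla^2\rho_t\|_{L^2}^2$, and close by Gr\"onwall with a time-integrable coefficient. You then recover $\nabla^3\rho$ and $\nabla^4\rho$ elliptically; writing the equation as $c\Delta\rho^\alpha=\rho_t+\divg(\rho v)$ together with \eqref{33-1}, instead of the paper's \eqref{17-8.5}, is only a cosmetic difference. One small precision: in the $\nabla^4\rho$ step, square-integrability of $\|\nabla^3(\rho v)\|_{L^2}$ and of the lower-order products (e.g.\ $v\,\nabla^3\rho$, $|\nabla\rho||\nabla^3\rho|$) needs $\int_0^\infty\|\nabla^3\rho\|_{L^2}^2\,dt<\infty$ from \eqref{hhh0}, not merely the uniform $H^3$ bound.
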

	\begin{proof}
		Differentiating $\eqref{0'}_1$ with respect to $t$ gives
		\begin{align}\label{17-3}
			\rho_{tt}+(\divg(\rho v))_t-c\alpha(\alpha-1)(\rho^{\alpha-2}|\nabla\rho|^2)_t-c\alpha(\rho^{\alpha-1}\Delta\rho)_t=0.
		\end{align}
		Testing the above equation with $\rho_{tt}$, integrating over $\mathbb{T}^3$, and applying Young's inequality together with \eqref{14-1}, while taking into account the identity
		\begin{align*}
			\Delta\rho_t
			=\frac{1}{c\alpha\rho^{\alpha-1}}
			\big(\rho_{tt}+(\divg (\rho v))_t
			-c\alpha(\alpha-1)(\rho^{\alpha-2}|\nabla\rho|^2)_t
			-c\alpha(\rho^{\alpha-1})_t\Delta\rho\big),
		\end{align*}
		we conclude that there exists a sufficiently small constant $\varepsilon_5>0$, depending only on the quantities specified in Proposition~\ref{Prop 8.7}, such that (see the derivation of equation~(221) in Proposition~6.4 of \cite{Gu-Huang-Meng-Zhou-2026} for the detailed computations)
		\begin{align}\label{K_0}
			\begin{split}
				&\quad\frac{c\alpha}{2}\frac{d}{dt}\int \rho^{\alpha-1}|\nabla\rho_t|^2dx+\frac{1}{4}\int \rho_{tt}^2dx+\varepsilon_5\int |\nabla^2 \rho_t|^2dx\\
				&\leq C\int |\rho_t||\nabla\rho_t|^2dx
				+C\int |\rho_t|^2|\Delta\rho|^2dx
				+C\int |\rho_t|^2|\nabla\rho|^4dx
				+C\int |\nabla\rho|^2|\nabla\rho_t|^2dx\\
				&\quad
				+C\int |\nabla\rho_t|^2|v|^2dx
				+C\int|\nabla\rho|^2|v_t|^2dx
				+C\int |\rho_t|^2|\nabla v|^2dx
				+C\int |\nabla v_t|^2dx\\
				&=:\sum_{k=1}^{8}K_k.
			\end{split}
		\end{align}
		
		It remains to estimate each term. For $K_1$, by H\"older's inequality, the Gagliardo--Nirenberg inequality, \eqref{hhh0}, and Young's inequality, we have
		\begin{align}\label{K_1}
			\begin{split}
				K_1&\leq C\|\rho_t\|_{L^2}\|\nabla\rho_t\|_{L^4}^2\\
				&\leq C\|\nabla\rho_t\|_{L^2}^{\frac{1}{2}}\|\nabla\rho_t\|_{H^1}^{\frac{3}{2}}\\
				&\leq \delta\|\nabla^2\rho_t\|_{L^2}^2+C_\delta\|\nabla\rho_t\|_{L^2}^2,
			\end{split}
		\end{align}
		where $\delta>0$ is a sufficiently small constant to be specified later.
		
		For $K_2-K_4$, by H\"older's inequality, the Sobolev embedding, \eqref{14-1}, \eqref{hhh00} and \eqref{hhh0}, 
		\begin{align}\label{K_2}
			\begin{split}
				\sum_{k=2}^{4}K_k&\leq C\|\rho_t\|_{L^6}^2\|\Delta\rho\|_{L^6}^2+C\|\rho_t\|_{L^6}^2\|\nabla\rho\|_{L^6}^4+C\|\nabla\rho\|_{L^\infty}^2\|\nabla\rho_t\|_{L^2}^2\\
				&\leq C\|\rho_t\|_{H^1}^2\|\nabla^2 \rho\|_{H^1}^2+C\|\rho_t\|_{H^1}^2\|\nabla\rho\|_{H^1}^4+C\|\nabla\rho\|_{H^2}^2\|\nabla\rho_t\|_{L^2}^2\\
				&\leq C(1+\norm{\nabla\rho_t}_{L^2}^2)\norm{\nabla^2\rho}_{H^1}^2+C(1+\norm{\nabla\rho_t}_{L^2}^2)\norm{\nabla\rho}_{H^1}^2+C\|\nabla\rho\|_{H^2}^2\|\nabla\rho_t\|_{L^2}^2\\
				&\le C(1+\norm{\nabla\rho_t}_{L^2}^2)\norm{\nabla\rho}_{H^2}^2. 
			\end{split}
		\end{align}
		
		For $K_5-K_8$, we apply H\"older's inequality, the Sobolev embedding, \eqref{14-1}, \eqref{hhh00}, \eqref{hhh0}, and \eqref{hhh8} to obtain the following estimates:
		\begin{align}\label{K_5}
			\begin{split}
				\sum_{k=5}^{8}K_k&\leq C\|\nabla\rho_t\|_{L^2}^2\|v\|_{L^\infty}^2+C\|\nabla\rho\|_{L^\infty}^2\|v_t\|_{L^2}^2+C\|\rho_t\|_{L^6}^2\|\nabla v\|_{L^6}^2+C\|\nabla v_t\|_{L^2}^2\\
				&\leq C\|\nabla\rho_t\|_{L^2}^2\|v\|_{H^2}^2+C\|\nabla\rho\|_{H^2}^2\|v_t\|_{L^2}^2+C\|\rho_t\|_{H^1}^2\|\nabla v\|_{H^1}^2+C\|\nabla v_t\|_{L^2}^2\\
				&\leq C(\norm{\nabla\rho_t}_{L^2}^2+\norm{\nabla\rho}_{H^2}^2+\norm{\nabla v}_{H^1}^2+\norm{\nabla v_t}_{L^2}^2).
			\end{split}
		\end{align}
		
		Substituting the estimates \eqref{K_1}--\eqref{K_5} into \eqref{K_0}, and choosing $\delta>0$ sufficiently small, we arrive at
		\begin{align}\label{100-4}
			\begin{split}
				&\quad\frac{c\alpha}{2}\frac{d}{dt}\int \rho^{\alpha-1}|\nabla\rho_t|^2dx+\frac{1}{4}\int \rho_{tt}^2dx+\frac{\varepsilon_5}{2}\int |\nabla^2 \rho_t|^2dx\\
				&\le C\norm{\nabla\rho}_{H^2}^2\norm{\nabla\rho_t}_{L^2}^2
				+C\big(\norm{\nabla\rho_t}_{L^2}^2+\norm{\nabla\rho}_{H^2}^2+\norm{\nabla v}_{H^1}^2+\norm{\nabla v_t}_{L^2}^2\big).
			\end{split}
		\end{align}
		Applying Gr\"onwall's inequality in conjunction with \eqref{14-1}, \eqref{hhh00}, \eqref{hhh0}, and \eqref{hhh8}, we deduce that
		\begin{align}\label{17-8}
			\sup_{0\leq t<\infty}\norm{\nabla\rho_t}_{L^2}
			+\int_0^\infty\big(\norm{\rho_{tt}}_{L^2}^2+\norm{\nabla^2\rho_t}_{L^2}^2\big)dt
			\leq C.
		\end{align}
		
		We next derive a uniform-in-time estimate for $\|\nabla^3\rho\|_{L^2}$. Since $\rho>0$ on $\mathbb{T}^3\times[0,\infty)$, equation \eqref{hhh0.5} can be rewritten as
		\begin{align}\label{17-8.5}
			\Delta\rho=\frac{1}{c\alpha\rho^{\alpha-1}}\big(\rho_t+\divg(\rho v)-c\alpha(\alpha-1)\rho^{\alpha-2}|\nabla\rho|^2\big).
		\end{align}
		Applying the standard $L^p$ estimates for elliptic equations, together with H\"older's inequality, the Sobolev embedding,  \eqref{14-1}, \eqref{hhh0}, and \eqref{hhh8}, we obtain
		\begin{align}\label{100-7}
			\begin{split}
				\int |\nabla^3\rho|^2dx&\leq C\int |\nabla\rho_t|^2dx+C\int |\nabla\rho|^2|\rho_t|^2dx+C\int |\nabla\rho|^2|\nabla(\rho v)|^2dx\\
				&\quad +C\int |\nabla^2(\rho v)|^2dx+C\int |\nabla\rho|^6dx+C\int |\nabla\rho|^2|\nabla^2\rho|^2dx\\
				&\leq C\|\nabla\rho_t\|_{L^2}^2+C\|\nabla\rho\|_{L^\infty}^2\|\rho_t\|_{L^2}^2+C\|\nabla\rho\|_{L^4}^4\|v\|_{L^\infty}^2+C\|\nabla\rho\|_{L^6}^2\|\nabla v\|_{L^6}^2\\
				&\quad +C\|\nabla^2 v\|_{L^2}^2+C\|\nabla^2 \rho\|_{L^2}^2\|v\|_{L^\infty}^2+C\|\nabla\rho\|_{L^6}^6+C\|\nabla\rho\|_{L^\infty}^2\|\nabla^2 \rho\|_{L^2}^2\\
				&\leq C\|\nabla\rho_t\|_{L^2}^2+C\|\nabla\rho\|_{L^\infty}^2\|\rho_t\|_{L^2}^2+C\|\nabla\rho\|_{H^1}^4\|v\|_{H^2}^2+C\|\nabla\rho\|_{H^1}^2\|\nabla v\|_{H^1}^2\\
				&\quad +C\|\nabla^2 v\|_{L^2}^2+C\|\nabla^2 \rho\|_{L^2}^2\|v\|_{H^2}^2+C\|\nabla\rho\|_{H^1}^6+C\|\nabla\rho\|_{L^\infty}^2\|\nabla^2 \rho\|_{L^2}^2\\
				&\leq C\|\nabla\rho\|_{L^\infty}^2+C\|\nabla\rho_t\|_{L^2}^2+C\|\nabla\rho\|_{H^1}^2+C\|\nabla^2 v\|_{L^2}^2\\
				&\leq \frac{1}{2}\|\nabla^3\rho\|_{L^2}^2+C\|\nabla\rho_t\|_{L^2}^2+C\|\nabla\rho\|_{H^1}^2+C\|\nabla^2 v\|_{L^2}^2,
			\end{split}
		\end{align}
		where the last inequality follows from the Gagliardo--Nirenberg inequality and  Young's inequality. Consequently, combining the above estimate with \eqref{17-8}, \eqref{hhh00}, \eqref{hhh0}, and \eqref{hhh8}, we conclude that
		\begin{align}\label{17-9}
			\sup_{0\leq t<\infty}\|\nabla^3 \rho\|_{L^2}\leq C.
		\end{align}

		Finally, we derive an $L^2(0,\infty;L^2)$ estimate for $\nabla^4\rho$. Applying the standard elliptic estimates to \eqref{17-8.5}, together with H\"older's inequality, gives
		\begin{align}\label{17-10}
			\begin{split}
				\int |\nabla^4 \rho|^2dx&\leq C\int (|\nabla^2\rho|^2+|\nabla\rho|^4)|\rho_t|^2dx+C\int |\nabla\rho|^2|\nabla\rho_t|^2dx+C\int |\nabla^2\rho_t|^2dx\\
				&\quad +C\int (|\nabla^2\rho|^2+|\nabla\rho|^4)|\nabla(\rho v)|^2dx+C\int |\nabla\rho|^2|\nabla^2(\rho v)|^2dx+C\int |\nabla^3(\rho v)|^2dx\\
				&\quad+C\int (|\nabla^2\rho|^2+|\nabla\rho|^4)|\nabla\rho|^4dx+C\int (|\nabla^2\rho|^4+|\nabla\rho|^2|\nabla^3\rho|^2)dx\\
				&\leq C\big(\|\nabla^2\rho\|_{L^6}^2+\|\nabla\rho\|_{L^\infty}^4\big)\big(\|\rho_t\|_{L^6}^2+\|\nabla (\rho v)\|_{L^6}^2+\|\nabla\rho\|_{L^\infty}^4\big)\\
				&\quad +C\|\nabla\rho\|_{L^\infty}^2\big(\|\nabla\rho_t\|_{L^2}^2+\|\nabla^2(\rho v)\|_{L^2}^2\big)\\
				&\quad +C\big(\|\nabla^2\rho_t\|_{L^2}^2+\|\nabla^3(\rho v)\|_{L^2}^2+\|\nabla^2\rho\|_{L^4}^4+\|\nabla\rho\|_{L^\infty}^2\|\nabla^3\rho\|_{L^2}^2\big).
			\end{split}
		\end{align}
		The following estimates are consequences of \eqref{14-1}, \eqref{hhh00}, \eqref{hhh0}, \eqref{hhh8}, \eqref{17-8}, \eqref{17-9}, and the Sobolev embedding:
		\begin{align*}
			\|\nabla^2\rho\|_{L^6}^2+\|\nabla\rho\|_{L^\infty}^4
			&\leq C\|\nabla\rho\|_{H^2}^2,\\
			\|\rho_t\|_{L^6}^2+\|\nabla (\rho v)\|_{L^6}^2+\|\nabla\rho\|_{L^\infty}^4
			&\leq C\big(\|\rho_t\|_{H^1}^2+\|\rho\|_{H^2}^2\|v\|_{H^2}^2+\|\nabla\rho\|_{H^2}^2\big)\leq C,\\
			\|\nabla\rho\|_{L^\infty}^2\big(\|\nabla\rho_t\|_{L^2}^2+\|\nabla^2(\rho v)\|_{L^2}^2\big)
			&\leq C\|\nabla\rho\|_{H^2}^2\big(\|\nabla\rho_t\|_{L^2}^2+\|\rho\|_{H^2}^2\|v\|_{H^2}^2\big)\leq C\|\nabla\rho\|_{H^2}^2,\\
			\|\nabla^3(\rho v)\|_{L^2}^2
			&\leq C\|\rho\|_{L^\infty}^2\|\nabla^3 v\|_{L^2}^2+C\|\nabla^2\rho\|_{L^6}^2\|\nabla v\|_{L^6}^2\\
			&\quad +C\|\nabla\rho\|_{L^6}^2\|\nabla^2 v\|_{L^6}^2+C\|\nabla^3\rho\|_{L^2}^2\|v\|_{L^\infty}^2\\
			&\le C\big(\|\nabla v\|_{H^2}^2+\|\nabla^3\rho\|_{L^2}^2\big),\\
			\|\nabla^2\rho\|_{L^4}^4
			&\le C\|\nabla^2\rho\|_{H^1}^4
			\le C\|\nabla^2\rho\|_{H^1}^2,\\
			\|\nabla\rho\|_{L^\infty}^2\|\nabla^3\rho\|_{L^2}^2
			&\le C\|\nabla\rho\|_{H^2}^2.
		\end{align*}
		Substituting the above estimates into \eqref{17-10}, we obtain
		\begin{align*}
			\|\nabla^4\rho\|_{L^2}^2
			\leq C\big(\|\nabla\rho\|_{H^2}^2+\|\nabla^2\rho_t\|_{L^2}^2+\|\nabla v\|_{H^2}^2\big).
		\end{align*}
		Integrating the above inequality over $(0,\infty)$ and invoking \eqref{hhh00}, \eqref{hhh0}, \eqref{hhh8}, and \eqref{17-8}, we conclude that
		\begin{align}\label{17-11}
			\int_0^\infty\|\nabla^4\rho\|_{L^2}^2\,dt\leq C.
		\end{align}
		
		This completes the proof of Proposition~\ref{Prop 8.7}.
	\end{proof}
	\begin{prop}\label{Prop 8.8}
		Assume that \eqref{3d gamma} holds. Then
		\begin{align}\label{100-3}
			\lim_{t\to\infty}\norm{\nabla^3 \rho(t)}_{L^2}=0.
		\end{align}
	\end{prop}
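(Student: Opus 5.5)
The natural route is to follow the same pattern as Propositions~\ref{Prop 8.1.5}, \ref{Prop 8.3} and \ref{Prop 8.4}: first show $\lim_{t\to\infty}\|\nabla\rho_t(t)\|_{L^2}=0$ by the averaging argument, and then transfer this decay to $\nabla^3\rho$ through the elliptic estimate \eqref{100-7}.

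\textbf{Step 1 (decay of $\nabla\rho_t$).} Set
\[
g_3(t):=\frac{c\alpha}{2}\int\rho^{\alpha-1}|\nabla\rho_t|^2dx .
\]
By \eqref{100-4}, together with the uniform bound $\sup_t\|\nabla\rho\|_{H^2}\le C$ (from \eqref{hhh0}, \eqref{hhh9} and \eqref{hhh00}), we get
\[
g_3'(t)\le C\big(\|\nabla\rho_t\|_{L^2}^2+\|\nabla\rho\|_{H^2}^2+\|\nabla v\|_{H^1}^2+\|\nabla v_t\|_{L^2}^2\big)=:h(t).
\]
Each term of $h$ is integrable on $(0,\infty)$:
\begin{itemize}
\item $\int_0^\infty\|\nabla\rho\|_{L^2}^2\,dt$ and $\int_0^\infty\|\nabla^2\rho\|_{L^2}^2\,dt$ are finite by \eqref{hhh00};
\item $\int_0^\infty\|\nabla^3\rho\|_{L^2}^2\,dt$ and $\int_0^\infty\|\nabla\rho_t\|_{L^2}^2\,dt$ are finite by \eqref{hhh0};
\item $\int_0^\infty\|\nabla v\|_{H^1}^2\,dt$ is finite by \eqref{hhh00} and \eqref{hhh0};
\item $\int_0^\infty\|\nabla v_t\|_{L^2}^2\,dt$ is finite by \eqref{hhh8}.
\end{itemize}
Hence $h\in L^1(0,\infty)$, and also $g_3\in L^1(0,\infty)$ by \eqref{hhh0} and \eqref{14-1}.

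For $t>2$ and $s\in[t-1,t]$ we have $g_3(t)\le g_3(s)+\int_{t-1}^t h\,d\zeta$. Averaging in $s$ over $[t-1,t]$ gives
\[
g_3(t)\le\int_{t-1}^tg_3\,d\zeta+\int_{t-1}^th\,d\zeta .
\]
Both integrals are tails of convergent integrals, so $g_3(t)\to0$. By \eqref{14-1} this yields $\|\nabla\rho_t(t)\|_{L^2}\to0$.

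\textbf{Step 2 (transfer to $\nabla^3\rho$).} Use the final line of \eqref{100-7}, which holds pointwise in time:
\[
\|\nabla^3\rho\|_{L^2}^2\le C\|\nabla\rho_t\|_{L^2}^2+C\|\nabla\rho\|_{H^1}^2+C\|\nabla^2v\|_{L^2}^2 .
\]
Every term on the right tends to zero as $t\to\infty$:
\begin{itemize}
\item $\|\nabla\rho_t\|_{L^2}\to0$ by Step 1;
\item $\|\nabla\rho\|_{L^2}\to0$ by \eqref{16-4};
\item $\|\nabla^2\rho\|_{L^2}\to0$ by \eqref{16-3};
\item $\|\nabla^2v\|_{L^2}\to0$ by \eqref{100-1}.
\end{itemize}
This proves \eqref{100-3}.

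The main point to check is Step 2. The constant in the last line of \eqref{100-7} must be uniform in time, and that line must hold before any time integration. This requires that the intermediate absorptions used only the uniform bounds \eqref{14-1}, \eqref{hhh00}, \eqref{hhh0} and \eqref{hhh8}. In particular, $\|\nabla\rho\|_{L^\infty}^2$ must be interpolated by Gagliardo--Nirenberg between $\|\nabla\rho\|_{H^1}$ and $\|\nabla^3\rho\|_{L^2}$ and absorbed, which is exactly what was done there. If instead one wants to avoid \eqref{100-7}, one can apply the averaging argument directly to $\|\nabla^3\rho\|_{L^2}^2$. This uses $\int_0^\infty\|\nabla^3\rho\|_{L^2}^2dt<\infty$, and $\frac{d}{dt}\|\nabla^3\rho\|_{L^2}^2\le\|\nabla^3\rho\|_{L^2}^2+\|\nabla^3\rho_t\|_{L^2}^2$. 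The problem with this alternative is that it needs control of $\nabla^3\rho_t$, which is not available, so Step 2 as stated is the preferred route.
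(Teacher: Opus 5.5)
Your proposal is correct and follows the paper's proof essentially verbatim. You first obtain decay of $\|\nabla\rho_t\|_{L^2}$ by applying the averaging argument to $g_3$ via \eqref{100-4}, then transfer it to $\nabla^3\rho$ through the absorbed final line of \eqref{100-7} together with \eqref{16-4}, \eqref{16-3}, and \eqref{100-1}. The only cosmetic difference is that you control the term $\|\nabla\rho\|_{H^2}^2\|\nabla\rho_t\|_{L^2}^2$ in \eqref{100-4} with $\sup_t\|\nabla\rho\|_{H^2}$ rather than $\sup_t\|\nabla\rho_t\|_{L^2}$; both bounds come from \eqref{hhh9}, and either works.
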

	\begin{proof}
		We first establish the following decay property:
		\begin{align}\label{100-6}
			\lim\limits_{t\to\infty}\norm{\nabla\rho_t(t)}_{L^2}=0.
		\end{align}
		Indeed, it follows from \eqref{100-4} and \eqref{hhh9} that
		\begin{align}\label{100-5}
			\begin{split}
				\frac{c\alpha}{2}\frac{d}{dt}\int \rho^{\alpha-1}|\nabla\rho_t|^2dx
				\le C\big(\norm{\nabla\rho_t}_{L^2}^2+\norm{\nabla\rho}_{H^2}^2+\norm{\nabla v}_{H^1}^2+\norm{\nabla v_t}_{L^2}^2\big).
			\end{split}
		\end{align}
		For convenience, we define
		\begin{align*}
			g_3(t):=\frac{c\alpha}{2}\int \rho^{\alpha-1}|\nabla\rho_t|^2(t)\,dx.
		\end{align*}
		Then, for any $t>2$ and any $s\in[t-1,t]$, we have
		\begin{align*}
			g_3(t)
			&\leq g_3(s)+\int_s^t g_3'(\zeta)\,d\zeta\\
			&\leq g_3(s)
			+C\int_{t-1}^t\big(\norm{\nabla\rho_t}_{L^2}^2+\norm{\nabla\rho}_{H^2}^2+\norm{\nabla v}_{H^1}^2+\norm{\nabla v_t}_{L^2}^2\big)\,d\zeta.
		\end{align*}
		Averaging the above inequality with respect to $s$ over the interval $[t-1,t]$, we obtain
		\begin{align*}
			g_3(t)
			\leq \int_{t-1}^{t} g_3(\zeta)\,d\zeta
			+C\int_{t-1}^t\big(\norm{\nabla\rho_t}_{L^2}^2+\norm{\nabla\rho}_{H^2}^2+\norm{\nabla v}_{H^1}^2+\norm{\nabla v_t}_{L^2}^2\big)\,d\zeta.
		\end{align*}
		Letting $t\to\infty$ and invoking \eqref{14-1}, \eqref{hhh00}, \eqref{hhh0}, and \eqref{hhh8}, we conclude that
		\begin{align*}
			g_3(t)\to 0,\qquad\text{as }t\to\infty,
		\end{align*}
		which, together with the uniform bound of $\rho$, proves \eqref{100-6}.
		
		Finally, combining the decay estimates \eqref{100-6}, \eqref{16-4}, \eqref{16-3}, and \eqref{100-1} with the elliptic estimate \eqref{100-7}, we arrive at \eqref{100-3}. This completes the proof of Proposition~\ref{Prop 8.8}.
	\end{proof}
	
	\section{Proofs of the Main Theorems}
	
	In this section, we complete the proofs of Theorems \ref{Thm 1.1} and \ref{Thm 1.1'}.
	
	\subsection{Proof of Theorem \ref{Thm 1.1}}
	
	Under the assumptions of Theorem \ref{Thm 1.1} in the three-dimensional case, Propositions \ref{Prop 5.3} and \ref{Prop 7.2} establish the uniform upper and lower bounds for the density. Furthermore, Propositions \ref{Prop 8.1.5}, \ref{Prop 8.3}, \ref{Prop 8.4}, and \ref{Prop 8.8} yield
	\[
	\lim_{t\to\infty}
	\Big(
	\|\rho(t)-\rho_s\|_{H^3}
	+\|\nabla v(t)\|_{H^1}
	\Big)
	=0.
	\]
	Since $u=v-c\alpha\rho^{\alpha-2}\nabla\rho,$ we further obtain
	\[
	\lim_{t\to\infty}\|\nabla u(t)\|_{H^1}=0.
	\]
	Therefore, all the conclusions of Theorem \ref{Thm 1.1} follow, and the proof is complete.
	
	\subsection{Proof of Theorem \ref{Thm 1.1'}}
	
	Under the assumptions of Theorem \ref{Thm 1.1'} in the three-dimensional case, Proposition \ref{Prop 5.3} establishes the uniform upper bound for the density. Therefore, all the assertions of Theorem \ref{Thm 1.1'} follow immediately, which completes the proof.
	
	\section*{Acknowledgments}
	X Huang is partially supported by Chinese Academy of Sciences Project for Young Scientists in Basic Research (Grant No. YSBR-031), National Natural Science Foundation of China (Grant Nos. 12494542, 11688101) and National Key R\&D Program of China (Grant No. 2021YFA1000801). 
	
	\vspace{1cm}
	\noindent\textbf{Data availability statement.} Data sharing is not applicable to this article.
	
	\vspace{0.3cm}
	\noindent\textbf{Conflict of interest.} The authors declare that they have no conflict of interest.

\end{document}